\newcolumntype{M}[1]{>{\centering\arraybackslash}m{#1}}
\newtheorem{theoreme}{Theorem}[section]
\newtheorem{lemme}[theoreme]{Lemma}
\newtheorem{proposition}[theoreme]{Proposition}
\newtheorem{corollaire}[theoreme]{Corollary}
\theoremstyle{definition}
\newtheorem{definition}[theoreme]{Definition}
\theoremstyle{remark}
\newtheorem{remarque}[theoreme]{Remark}
\numberwithin{equation}{section}
\author{Salim \textsc{Rostam}}
\title{Cyclotomic Yokonuma--Hecke algebras are cyclotomic quiver Hecke algebras}
\date{}
\newcommand*{\Y}{\mathrm{Y}}
\renewcommand*{\H}{\mathrm{H}}
\newcommand*{\K}{\mathcal{K}}
\DeclareMathOperator{\nrelbar}{\,\not\!\!\!\!\;\text{---}\,}
\newcommand*{\comp}{\models}
\newcommand*{\Mat}{\mathrm{Mat}}
\newcommand*{\tuple}[1]{\boldsymbol{#1}}
\newcounter{BKproof}
\renewcommand{\theBKproof}{\Roman{BKproof}}
\begin{document}
\maketitle

\abstract{We prove that cyclotomic Yokonuma--Hecke algebras of type A are cyclotomic quiver Hecke algebras and we give an explicit isomorphism with its inverse, using a similar result of Brundan and Kleshchev on cyclotomic Hecke algebras. The quiver we use is given by disjoint copies of cyclic quivers. We  relate this work to an isomorphism of Lusztig.}

\section*{Introduction}

Iwahori--Hecke algebras appeared first in the context of finite Chevalley groups, as centralizer algebras of the induced representation from the trivial representation on a Borel subgroup.
Since then, both their structure and their representation theory have been intensively studied.
In particular, they have been defined independently as deformations of the group algebra of finite Coxeter groups. 
Further, connections with many other objects and theories have been established (this includes the theory of quantum groups, knot theory, etc.).
Many variations and generalisations of the ``classical'' Iwahori--Hecke algebras have yet been defined. Among these, the following ones will catch our interest in this paper: Ariki--Koike algebras, Yokonuma--Hecke algebras and finally cyclotomic quiver Hecke algebras.

In their seminal paper, Ariki and Koike  \cite{ArKo} introduced and studied generalisations of Iwahori--Hecke algebras of type A and B: the so called Ariki--Koike algebras. It turns out that these algebras can be seen as cyclotomic quotients of affine Hecke algebras of type A, and also as deformations of the group algebra of the complex reflection group $G(\ell, 1, n)$. Such deformations, in the general case of complex reflection groups, have been defined by Broué, Malle and Rouquier \cite{BMR}: in that sense, Ariki--Koike algebras are the Hecke algebras associated with $G(\ell, 1, n)$.

One of the most important results on the representation theory of Ariki--Koike algebras is \emph{Ariki's categorification theorem} \cite{Ar} (proving a conjecture of Lascoux, Leclerc and Thibon \cite{LLT}). This result implies that the decomposition matrices of such algebras can be computed using the canonical bases for quantum groups in affine type A. Partially motivated by this work, Khovanov and Lauda \cite{KhLau1, KhLau2} and Rouquier \cite{Rou} have independently defined the same algebra, known as \emph{quiver Hecke algebra} or \textit{KLR algebra}, in order to categorify quantum groups. In fact, they have shown that we have the following algebra isomorphism:
\[
U_{\mathbb{A}}^-(\mathfrak{g}) \simeq [\mathrm{Proj}(\H)] = \bigoplus_{\beta \in Q^+} [\mathrm{Proj}(\H_{\beta})]
\]
where $U_{\mathbb{A}}^-(\mathfrak{g})$ is the integral form of the negative half of the quantum group $U_q(\mathfrak{g})$ associated with a symmetrizable Cartan datum, with $\mathbb{A} = \mathbb{Z}[q, q^{-1}]$, the set $Q^+$ is the positive root lattice associated with the Cartan datum, the algebra $\H = \oplus_{\beta \in Q^+} \H_{\beta}$ is the quiver Hecke algebra corresponding to this Cartan datum and $[\mathrm{Proj}(\H)]$ is the Grothendieck group of the additive category of finitely generated graded projective $\H$-modules. A cyclotomic version of this theorem was conjectured in \cite{KhLau1}, the \emph{cyclotomic categorification conjecture}, which was later proved by Kang and Kashiwara \cite{KanKa}. More specifically, for each dominant weight $\tuple{\Lambda}$ the algebra $\H$ has a cyclotomic quotient $\H^{\tuple{\Lambda}}$ which categorifies the corresponding highest weight module $V(\tuple{\Lambda})$.

A big step towards understanding these cyclotomic quiver Hecke algebras was made by Brundan and Kleshchev \cite{BrKl} and independentely by Rouquier \cite{Rou}. The first two authors proved that cyclotomic Hecke algebras of type A are particular cases of cyclotomic quiver Hecke algebras; a similar result in the affine case has also been proved by Rouquier.
Brundan and Kleshchev also noticed that the cyclotomic Hecke algebra inherits the natural grading of the cyclotomic quiver Hecke algebra, whose grading allows in particular to study the graded representation theory of cyclotomic Hecke algebras (see for example \cite{BrKl2}). Moreover, they established a connection between the cyclotomic categorification theorem in type A for quiver Hecke algebras and Ariki's categorification theorem.

On the other hand, Yokonuma \cite{Yo} defined the Yokonuma--Hecke algebras in the study  of finite Chevalley groups: they arise once again as centralizer algebras of the induced representation from the trivial representation, but now on a maximal unipotent subgroup. Their natural presentation in type A has been transformed since (see \cite{Ju1, Ju2, JuKa, ChPA}), and the one we use here is given in \cite{ChPou}.
Similarly to Ariki--Koike algebras, Yokonuma--Hecke algebras of type A can be viewed as deformations of the group algebra of $G(d, 1, n)$. This deformation, unlike in the Ariki--Koike case, ``respects'' the wreath product structure $G(d, 1, n) \simeq (\mathbb{Z} / d\mathbb{Z})\wr \mathfrak{S}_n$.
The representation theory of Yokonuma--Hecke algebras has been first studied by Thiem \cite{Th1, Th2, Th3}, while a combinatorial approach to this representation theory in type A has been given in \cite{ChPA, ChPA2}. In this latter paper \cite{ChPA2}, Chlouveraki and Poulain d'Andecy introduced and studied generalisations of these algebras: the affine Yokonuma--Hecke algebras and their cyclotomic quotients, which generalise affine Hecke algebras of type A and Ariki--Koike algebras respectively. The interest in Yokonuma--Hecke algebras has grown recently: in \cite{CJKL}, the authors  defined a link invariant from Yokonuma--Hecke algebras which is stronger than the famous ones (such as the HOMFLYPT polynomial) obtained from classical Iwahori--Hecke algebras of type A and Ariki--Koike algebras.


The first aim of this paper is to show that cyclotomic Yokonuma--Hecke algebras are particular cases of cyclotomic quiver Hecke algebras, generalising thus the results of Brundan and Kleshchev \cite{BrKl}; our goal will be achieved in Section~\ref{section:isomorphism} with Theorem~\ref{theorem:main}. In fact, every known result on the cyclotomic quiver Hecke algebra can be applied to the cyclotomic Yokonuma--Hecke algebra: this includes the cyclotomic categorification theorem and the existence of a graded representation theory. In order to prove the main result Theorem~\ref{theorem:main}, our strategy is to define inverse algebra homomorphisms, by constructing the images of the defining generators of the corresponding algebras; we proceed as in \cite{BrKl}. In particular, we give an argument to avoid doing the calculations of \cite{BrKl} again, see for instance Remark~\ref{remark:brundan_kleshchev_ja=ja+1}. In Section~\ref{section:degenerate}, as in \cite{BrKl} we consider the degenerate case: we will define the degenerate cyclotomic Yokonuma--Hecke algebras and show that they are cyclotomic quiver Hecke algebras as well (Theorem~\ref{theorem:main_degenerate}). Finally, in Section~\ref{section:disjoint_guiver} we relate our results to an isomorphism obtained in \cite{Lu, JaPA, PA}. To that end, we first prove a general result on (cyclotomic) quiver Hecke algebras, when the quiver is the disjoint union of full subquivers. Although similar situations have already been studied in the literature (see for instance \cite[Theorem 3.15]{SVV} or \cite[Lemma 5.33]{RSVV}), the result we obtain in our context seems to be new and of independent interest.

We give now a brief overview of this article.
Given a base field $F$ and $d, n \in \mathbb{N}^*$, we first define in Section~\ref{section:setting} the cyclotomic Yokonuma--Hecke algebra $\widehat{\Y}_{d, n}^{\tuple{\Lambda}}(q)$ where $q \in F \setminus\{0, 1\}$ has order $e \in \mathbb{N}_{\geq 2} \cup \{\infty\}$ in $F^\times$ and $\tuple{\Lambda}$ is a finitely-supported $e$-tuple of non-negative integers. We also define the quiver Hecke algebra $\H_\alpha(Q)$ in full generality, where $\alpha$ is a composition of $n$ indexed by a set $\K$ and $Q = (Q_{k, k'})_{k, k' \in \K}$ is a matrix satisfying some properties. Considering particular cases for the matrix $Q = (Q_{k, k'})$, we define the cyclotomic quiver Hecke algebra $\H_{\alpha}^{\tuple{\Lambda}}(\Gamma)$ where $\tuple{\Lambda}$ is now a finitely-supported tuple indexed by $\K$ and $\Gamma$ is a loop-free quiver without any multiple edge; in particular, with the exception of Section~\ref{section:disjoint_guiver} we consider the case where $\tuple{\Lambda}$ is given by $d$ copies of the previous $e$-tuple $\tuple{\Lambda}$ and $\Gamma$ is the quiver given by $d$ disjoint copies of the (cyclic) quiver $\Gamma_e$ with $e$ vertices used in \cite{BrKl}.
We begin Section~\ref{section:quiver_hecke_generators} by considering in $\widehat{\Y}_{d, n}^{\tuple{\Lambda}}(q)$ a natural system $\{e(\alpha)\}_{\alpha \comp_{ed} n}$ of pairwise orthogonal central idempotents. Then, we define the ``quiver Hecke generators'' of $\widehat{\Y}_{\alpha}^{\tuple{\Lambda}}(q) \coloneqq e(\alpha) \Y_{d, n}^{\tuple{\Lambda}}(q)$ and we check that they verify the defining relations of $\H_{\alpha}^{\tuple{\Lambda}}(\Gamma)$. In Section~\ref{section:yokonuma_hecke_generators} we define the ``Yokonuma--Hecke generators'' of $\H_{\alpha}^{\tuple{\Lambda}}(\Gamma)$ and again check the corresponding defining relations. We conclude the proof of the main theorem in Section~\ref{section:isomorphism} by showing that we have defined inverse algebra homomorphisms. We justify in Section~\ref{section:degenerate} that the isomorphism of Theorem~\ref{theorem:main} remains true for the \emph{degenerate} cyclotomic Yokonuma--Hecke algebra $\widehat{\Y}_{d, n}^{\tuple{\Lambda}}(1)$ that we define in \textsection\ref{subsection:degenerate_cyclotomic_algebra}.
We end the section with Corollary~\ref{corollary:link_nondegenerate_degenerate}, which states that, under some conditions, the algebras $\widehat{\Y}_{d, n}^{\tuple{\Lambda}}(q)$ and $\widehat{\Y}_{d, n}^{\tuple{\Lambda}}(1)$ are isomorphic. Finally, we begin Section~\ref{section:disjoint_guiver} by some quick calculations about the minimal length representatives of the cosets of a Young subgroup in the symmetric group on $n$ letters $\mathfrak{S}_n$. The main results of the section are given in Theorems~\ref{theorem:disjoint_guiver} and \ref{theorem:disjoint_guiver_cyclotomic}, where we prove an isomorphism about (cyclotomic) ``disjoint quiver'' Hecke algebra. We end the paper with Theorem~\ref{theorem:commutative_diagram_JaPA}: we   show that we recover the isomorphism $\widehat{\Y}_{d, n}^{\tuple{\Lambda}}(q) \simeq \oplus_{\lambda\comp_d n} \Mat_{m_\lambda} \widehat{\H}_\lambda^{\tuple{\Lambda}}(q)$ of \cite{Lu, JaPA, PA}, where $m_\lambda \coloneqq \frac{n!}{\lambda_1 ! \cdots \lambda_d !}$ and the algebra $\widehat{\H}_\lambda^{\tuple{\Lambda}}(q)$ is a tensor product of cyclotomic Hecke algebras.

\paragraph*{Acknowledgements}
I am grateful to Maria Chlouveraki for many discussions about the proof and a careful reading of the paper, to Nicolas Jacon for his corrections and to Loïc Poulain d'Andecy for useful conversations around this work.

\section{Setting}
\label{section:setting}

Let $d, n \in \mathbb{N}^*$ and let $F$ be a field which contains a primitive $d$th root of unity $\xi$; in particular, the characteristic of $F$ does not divide $d$. We consider an element $q \in F^{\times}$ and we define $e \in \mathbb{N}^* \cup\{\infty\}$ as the smallest integer such that $1 + q + \dots + q^{e-1} = 0$. We will sometimes use the \emph{quantum characteristic} of $F$, given by:
\[
\mathrm{char}_q(F) \coloneqq
\begin{cases}
e & \text{if } e < \infty,
\\
0 & \text{if } e = \infty,
\end{cases}
\]
in particular $\mathrm{char}_1(F)$ is exactly the usual characteristic of $F$. Except in Section~\ref{section:degenerate}, the element $q$ will always be taken different from $1$.
We set $I \coloneqq \mathbb{Z}/\mathrm{char}_q(F)\mathbb{Z}, J \coloneqq \mathbb{Z} / d\mathbb{Z} \simeq \{1, \dots, d\}$; unless mentioned otherwise, we have $K \coloneqq I \times J$.
 
If $\K$ is a set, we will refer to a finitely-supported tuple of non-negative integers $\tuple{\Lambda} = (\Lambda_k)_{k \in \K} \in \mathbb{N}^{(\K)}$ as a \emph{weight}. We say that a finitely-supported tuple $\alpha = (\alpha_k)_{k \in \K} \in \mathbb{N}^{(\K)}$ is a $\K$-composition of $n$ and we write $\alpha \comp_{\K} n$ if ($\alpha_k \neq 0$ for finitely many $k \in \K$ and) $\sum_{k \in \K} \alpha_k = n$. If $\alpha \comp_{\K} n$, we denote by $\K^\alpha$ the subset of $\K^n$ formed by the elements $\tuple{k} = (k_1, \dots, k_n) \in \K^n$ such that:
\[
\forall k \in \K, \#\big\lbrace a \in \{1, \dots, n\} : k_a = k\big\rbrace = \alpha_k,
\]
that is, $(k_1, \dots, k_n) \in \K^\alpha$ if and only if for all $k \in \K$, there are exactly $\alpha_k$ integers $a \in \{1, \dots, n\}$ such that $k_a = k$.
The subsets $\K^\alpha$ are the orbits of $\K^n$ under the natural action of the symmetric group on $n$ letters $\mathfrak{S}_n$, in particular each $\K^\alpha$ is finite.
 
\subsection{Cyclotomic Yokonuma--Hecke algebras}

Let $\tuple{\Lambda} = (\Lambda_i)_{i \in I} \in \mathbb{N}^{(I)}$ be a weight; we assume that its \emph{level} $\ell(\tuple{\Lambda}) \coloneqq \sum_{i \in I} \Lambda_i$ verifies $\ell(\tuple{\Lambda}) > 0$.
The \emph{cyclotomic Yokonuma--Hecke algebra of type A}, denoted by $\widehat{\Y}_{d, n}^{\tuple{\Lambda}}(q)$, is the unitary associative $F$-algebra generated by the elements
\begin{equation}
\label{equation:generators_yokonumahecke}
g_1, \dots, g_{n-1}, t_1, \dots, t_n, X_1
\end{equation}

subject to the following relations:
\begin{align}
\label{relation:ordre_t_a}
t_a^d &= 1, \\
\label{relation:ta_tb}
t_a t_b &= t_b t_a, \\
\label{relation:t_b_g_a}
t_b g_a &= g_a t_{s_a(b)}, \\
\label{relation:ordre_ga}
g_a^2 &= q + (q-1) g_a e_a,
\\
\label{relation:ga_gb}
g_a g_b &= g_b g_a \qquad \forall |a-b| > 1,\\
\label{relation:tresse_ga}
g_{a+1} g_a g_{a+1} &= g_a g_{a+1} g_a, 
\end{align}
where $s_a$ is the transposition $(a, a+1) \in \mathfrak{S}_n$ and $e_a \coloneqq \frac{1}{d} \sum_{j \in J} t_a^j t_{a+1}^{-j}$, together with the following relations:
\begin{align}
\label{relation:X1_g1_X1_g1}
X_1 g_1 X_1 g_1 &= g_1 X_1 g_1 X_1, \\
\label{relation:X1_ga}
X_1 g_a &= g_a X_1  \qquad \forall a > 1, \\
\label{relation:X1_tb}
X_1 t_b &= t_b X_1,
\end{align}
and finally the cyclotomic one:
\begin{equation}
\label{relation:X1_cyclo}
\prod_{i \in I} (X_1 - q^i)^{\Lambda_i} = 0.
\end{equation}

Note that the presentation comes from $\cite{ChPA2}$, excepting the normalisation in \eqref{relation:ordre_ga} which was used in \cite{ChPou}. In particular, it comes from \eqref{relation:X1_cyclo} that $X_1$ is invertible in $\widehat{\Y}_{d, n}^{\tuple{\Lambda}}(q)$.
When $d = 1$, we recover the \emph{cyclotomic Hecke algebra} $\widehat{\H}_n^{\tuple{\Lambda}}(q)$ of \cite{BrKl}; it is the cyclotomic Yokonuma--Hecke algebra $\widehat{\Y}_{1, n}^{\tuple{\Lambda}}(q)$. In particular, the element $e_a$ becomes $1$. We write $g_a^{\H}$ (respectively $X_1^{\H}$) for the element $g_a$ (resp. $X_1$) when $d = 1$, that is, considered in $\widehat{\H}_n^{\tuple{\Lambda}}(q)$.

Following \cite{ChPA2}, we define inductively $X_{a+1}$ for $a \in \{1, \dots, n-1\}$ by
\begin{equation}
\label{equation:Xa+1_ga_Xa}
qX_{a+1} \coloneqq  g_a X_a g_a
\end{equation}
(note that the $q$ comes from our different normalisation in \eqref{relation:ordre_ga}). As for $X_1$, we introduce the notation $X_a^{\H}$ to denote $X_a$ in the case $d = 1$. The family $\{t_1, \dots, t_n, X_1, \dots, X_n\}$ is commutative and we have the following equalities:
\begin{align}
\label{relation:g_a_X_b}
g_a X_b &= X_b g_a \qquad \forall b \neq a, a+1, \\
\label{relation:g_a_X_a+1}
g_a X_{a+1} &= X_a g_a + (q-1) X_{a+1}e_a, \\
\label{relation:X_a+1_g_a}
X_{a+1} g_a &= g_a X_a + (q-1) X_{a+1} e_a.
\end{align}

The proof of the following result is the same as in \cite[Proposition 4.7]{ChPA2}, where we write $g_w \coloneqq g_{a_1} \cdots g_{a_r}$ for a reduced expression $w = s_{a_1} \cdots s_{a_r} \in \mathfrak{S}_n$; by Matsumoto's theorem (see for instance \cite[Theorem 1.2.2]{GePf}) the value of $g_w$ does not depend on the choice of the reduced expression, since the generators $g_a$ satisfy the same braid relations as the $s_a \in \mathfrak{S}_n$.
\begin{proposition}
\label{proposition:yokonuma-hecke_finite_dimensional}
The algebra $\widehat{\Y}_{d, n}^{\tuple{\Lambda}}(q)$ is a finite-dimensional $F$-vector space and a generating family is given by the elements $g_w X_1^{u_1}\cdots X_n^{u_n} t_1^{v_1} \cdots t_n^{v_n}$ for $w \in \mathfrak{S}_n$, $u_a \in \{0, \dots, \ell(\tuple{\Lambda}) - 1\}$ and  $v_a \in J$.
\end{proposition}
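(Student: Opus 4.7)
The plan is to follow the strategy of \cite[Proposition~4.7]{ChPA2}: show that the $F$-linear span $A$ of the given monomials coincides with the whole algebra $\widehat{\Y}_{d,n}^{\tuple{\Lambda}}(q)$. Since the indexing set $\mathfrak{S}_n\times\{0,\dots,\ell(\tuple{\Lambda})-1\}^n\times J^n$ is finite, this simultaneously yields the spanning statement and the finite-dimensionality claim. I would proceed in two steps: first establish spanning by normal-form monomials with unbounded $X$-exponents, then cut those exponents down using the cyclotomic relation.

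For the first step, set $A_\infty\coloneqq\mathrm{span}\{g_w X_1^{u_1}\cdots X_n^{u_n} t_1^{v_1}\cdots t_n^{v_n}:w\in\mathfrak{S}_n,\,u_a\in\mathbb{N},\,v_a\in J\}$. Since $1\in A_\infty$, it suffices to check stability of $A_\infty$ under right multiplication by each generator in \eqref{equation:generators_yokonumahecke}. The commutativity of $\{t_1,\dots,t_n,X_1,\dots,X_n\}$ noted after \eqref{equation:Xa+1_ga_Xa}, together with $t_a^d=1$, keeps the $t$- and $X$-blocks in their respective positions; the $t$'s are migrated past $g_w$ via \eqref{relation:t_b_g_a}. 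The $X$'s are pushed past the $g$'s using \eqref{relation:g_a_X_b}, \eqref{relation:g_a_X_a+1} and \eqref{relation:X_a+1_g_a}, which introduce factors involving $e_a$---a polynomial in $t_a$ and $t_{a+1}$ that can be reabsorbed into the $t$-block. Finally, products of $g_a$'s are reduced to linear combinations of well-defined $g_w$'s using the braid relations \eqref{relation:ga_gb} and \eqref{relation:tresse_ga} (via Matsumoto's theorem, applied to $\mathfrak{S}_n$) and the quadratic relation \eqref{relation:ordre_ga}, which handles squared generators at the cost of yet another $e_a$-term absorbed into the $t$-block.

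For the second step, the cyclotomic relation \eqref{relation:X1_cyclo} expresses $X_1^{\ell(\tuple{\Lambda})}$ as a polynomial of strictly lower degree in $X_1$, bounding $u_1<\ell(\tuple{\Lambda})$. To bound $u_a$ for $a>1$, I would argue by induction on $a$, using $qX_{a+1}=g_a X_a g_a$ from \eqref{equation:Xa+1_ga_Xa} together with the invertibility of $g_a$ (which follows from \eqref{relation:ordre_ga}, since $g_a$ commutes with $e_a$ and $g_a(g_a-(q-1)e_a)=q$): any polynomial relation satisfied by $X_a$ conjugates to one of the same degree on $X_{a+1}$, modulo monomials of strictly smaller complexity---say lexicographic on $(u_n,\dots,u_1)$ after reduction via step~1. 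A descent argument on this complexity then forces every $u_a<\ell(\tuple{\Lambda})$. The main obstacle is precisely this propagation step: the idempotent $e_a$ appearing in the quadratic relation prevents the clean Ariki--Koike-style conjugation argument from going through verbatim, and the bookkeeping must carefully absorb the $e_a$-terms into the $t$-block at each stage of the induction.
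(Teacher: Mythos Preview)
Your approach is correct and is precisely the strategy the paper invokes (it simply points to \cite[Proposition~4.7]{ChPA2}); the same argument is spelled out in full for the degenerate case in Proposition~\ref{proposition:degenerate_yh_finite_dimensional} of this paper. There is, however, a minor organisational difference worth noting.

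You split the argument into (i) spanning by monomials with \emph{unbounded} $X$-exponents, then (ii) propagating the cyclotomic relation from $X_1$ to each $X_a$ to truncate the exponents. Step~(ii) is the part you yourself flag as delicate, and indeed conjugating $\prod_i(X_1-q^i)^{\Lambda_i}=0$ by $g_1\cdots g_{a-1}$ does not cleanly produce a monic degree-$\ell(\tuple\Lambda)$ relation on $X_a$, because $g_a^2\neq 1$ introduces extra terms of the \emph{same} $X$-degree multiplied by $g_a e_a$. The paper (and \cite{ChPA2,ArKo}) sidesteps this entirely: one works directly with the \emph{bounded} span $A$ and checks closure under right multiplication by the generators $g_a,t_a,X_1$ only. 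Since $X_2,\dots,X_n$ are not generators but are built from $g_a$'s and $X_1$ via \eqref{equation:Xa+1_ga_Xa}, no separate propagation is needed. Closure under $X_1$ is immediate from \eqref{relation:X1_cyclo}; closure under $g_a$ uses the analogue of \eqref{degenerate_equation:fa_xau_xa+1v} (this is \cite[Lemma~2.15]{ChPA2}), whose correction terms have $X$-exponents bounded by $\max(u_a,u_{a+1})-1$, hence stay below $\ell(\tuple\Lambda)$. This buys you exactly the economy you were looking for in your second step.
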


\begin{remarque}
\label{remark:basis_cyclotomic_YH}
The above family is even an $F$-basis of $\widehat{\Y}^{\tuple{\Lambda}}_{d, n}(q)$, see \cite[Theorem 4.15]{ChPA2}.
\end{remarque}

\subsection{Cyclotomic quiver Hecke algebras}
\label{subsection:quiver_hecke_algebras}

In their landmark paper \cite{KhLau1}, starting from a quiver without loop and multiple edges Khovanov and Lauda have constructed the so-called ``quiver Hecke algebra''. Independently, Rouquier \cite{Rou} made a similar construction, where the underlying object is a matrix $Q = (Q_{k, k'})_{k, k' \in \K}$ which contains the case of quivers. Here, we will first give this definition of \cite{Rou}, which generality will be used in Section~\ref{section:disjoint_guiver} only, and then specialise to the case of quivers.

\subsubsection{General definition}
\label{subsubsection:general_definition_quiver}
Let $\K$ be a set, $A$ a commutative ring, $u$ and $v$ two indeterminates and $Q = (Q_{k, k'})_{k, k' \in \K}$ a matrix satisfying the following conditions:
\begin{itemize}
\item the polynomials $Q_{k, k'} \in A[u, v]$ verify $Q_{k, k'}(u, v) = Q_{k', k}(v, u)$ for all $k, k' \in \K$;
\item we have $Q_{k, k} = 0$ for all $k \in \K$.
\end{itemize}

Let $\alpha \comp_{\K} n$. The \emph{quiver Hecke algebra} $\H_\alpha(Q)$ associated with $(Q_{k, k'})_{k, k' \in \K}$ at $\alpha$ is the unitary associative $A$-algebra with generating set
\begin{equation}
\label{equation:quiver_generators_alpha}
\{e(\tuple{k})\}_{\tuple{k} \in \K^\alpha} \cup \{y_1, \dots, y_n\} \cup \{\psi_1, \dots, \psi_{n-1}\}
\end{equation}
and the following relations:
\begin{align}
\label{relation:quiver_sum_alpha_e(i)}
\sum_{\tuple{k} \in \K^\alpha} e(\tuple{k}) &= 1,
\\
\label{relation:quiver_e(i)e(i')}
e(\tuple{k})e(\tuple{k}') &= \delta_{\tuple{k}, \tuple{k}'} e(\tuple{k}),
\\
\label{relation:quiver_y_ae(i)}
y_a e(\tuple{k}) &= e(\tuple{k}) y_a,
\\
\label{relation:quiver_psiae(i)}
\psi_a e(\tuple{k}) &= e(s_a \cdot \tuple{k}) \psi_a,
\\
\label{relation:quiver_ya_yb}
y_a y_b &= y_b y_a,
\\
\label{relation:quiver_psia_yb}
\psi_a y_b &= y_b \psi_a \quad \text{if } b \neq a, a+1,
\\
\label{relation:quiver_psia_psib}
\psi_a \psi_b &= \psi_b \psi_a \quad \text{if } |a-b| > 1,
\\
\label{relation:quiver_psia_ya+1}
\psi_a y_{a+1} e(\tuple{k}) &= \begin{cases}
(y_a \psi_a + 1)e(\tuple{k}) & \text{if } k_a = k_{a+1}, \\
y_a \psi_a e(\tuple{k}) & \text{if } k_a \neq k_{a+1},
\end{cases}
\\
\label{relation:quiver_ya+1_psia}
y_{a+1} \psi_a e(\tuple{k}) &= \begin{cases}
(\psi_a y_a + 1)e(\tuple{k}) & \text{if } k_a = k_{a+1}, \\
\psi_a y_a e(\tuple{k}) & \text{if } k_a \neq k_{a+1},
\end{cases}
\\
\label{relation:quiverQ_psia^2}
\psi_a^2 e(\tuple{k}) &= Q_{k_a, k_{a+1}}(y_a, y_{a+1}) e(\tuple{k}), 
\\
\label{relation:quiverQ_tresse}
\psi_{a+1}\psi_a \psi_{a+1}e(\tuple{k}) &= \begin{cases}
\psi_a \psi_{a+1} \psi_a e(\tuple{k}) + \frac{Q_{k_a, k_{a+1}}(y_a, y_{a+1}) - Q_{k_{a+2}, k_{a+1}}(y_{a+2}, y_{a+1})}{y_a - y_{a+2}} e(\tuple{k})
& \text{if } k_a = k_{a+2},
\\
\psi_a \psi_{a+1} \psi_a e(\tuple{k})
& \text{otherwise.}
\end{cases}
\end{align}

\begin{remarque}
Let $\tuple{k} \in \K^\alpha, a \in \{1, \dots, n - 2\}$ and let $P \coloneqq Q_{k_a, k_{a+1}}$; the relation \eqref{relation:quiverQ_tresse} for $k_a = k_{a+2}$ is:
\begin{equation}
\label{equation:relation_quiver_tresse_sans_fraction}
\psi_{a+1}\psi_a \psi_{a+1}e(\tuple{k}) = \psi_a \psi_{a+1} \psi_a e(\tuple{k}) + 
\frac{P(y_a, y_{a+1}) - P(y_{a+2}, y_{a+1})}{y_a - y_{a+2}} e(\tuple{k}).
\end{equation}
Writing $P(u, v) = \sum_{m \geq 0} u^m P_m(v)$, 
we get that the right side of \eqref{equation:relation_quiver_tresse_sans_fraction} is well-defined and is an element of $A[y_a, y_{a+1}, y_{a+2}]e(\tuple{k})$.
\end{remarque}

\begin{remarque}
The generators in \cite{Rou}  are given by $1_{\tuple{k}} \coloneqq e(\tuple{k})$, $x_{a, \tuple{k}} \coloneqq y_a e(\tuple{k})$ and $\tau_{a, \tuple{k}} \coloneqq \psi_a e(\tuple{k})$.
\end{remarque}

When the set $\K$ is finite, in a similar way we can define the quiver Hecke algebra $\H_n(Q)$ as the unitary associative $A$-algebra with generating set
\begin{equation}
\label{equation:quiver_generators_n}
\{e(\tuple{k})\}_{\tuple{k} \in \K^n} \cup \{y_1, \dots, y_n\} \cup \{\psi_1, \dots, \psi_{n-1}\}
\end{equation}
together with the same relations \eqref{relation:quiver_e(i)e(i')}--\eqref{relation:quiverQ_tresse}, where \eqref{relation:quiver_sum_alpha_e(i)} is replaced by:
\begin{equation}
\label{relation:quiver_sum_n_e(i)}
\sum_{\tuple{k} \in \K^n} e(\tuple{k}) = 1.
\end{equation}
Defining for $\alpha \comp_{\K} n$ the central idempotent $e(\alpha) \coloneqq \sum_{\tuple{k} \in \K^\alpha} e(\tuple{k}) \in \H_n(Q)$, we have:
\[
e(\alpha)\H_n(Q) \simeq \H_\alpha(Q),
\]
thus:
\begin{equation}
\label{equation:Hn(Q)_simeq_oplus_Halpha(Q)}
\H_n(Q) \simeq \bigoplus_{\alpha \comp_{\K} n} \H_\alpha(Q)
\end{equation}
(note that this equality can be seen as a definition of $\H_n(Q)$ is $\K$ is infinite).

For each $w \in \mathfrak{S}_n$, we now choose a reduced expression $w = s_{a_1} \cdots s_{a_r}$ and we set:
\begin{equation}
\label{equation:definition_psi_w}
\psi_w \coloneqq \psi_{a_1} \cdots \psi_{a_r} \in \H_n(Q).
\end{equation}
Although this reduced expression dependence differs from the usual case of Hecke algebras (for instance), we are still able to give a basis of $\H_n(Q)$. In fact, we have the following theorem (\cite[Theorem 3.7]{Rou}, \cite[Theorem 2.5]{KhLau1}).

\begin{theoreme}
\label{theorem:base_quiver}
The family $\{\psi_w y_1^{r_1} \cdots y_n^{r_n} e(\tuple{k}) : w \in \mathfrak{S}_n, r_a \in \mathbb{N}, \tuple{k} \in \K^\alpha\}$ is a basis of the free $A$-module $\H_\alpha(Q)$.
\end{theoreme}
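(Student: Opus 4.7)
The plan is to prove this standard basis theorem in two halves: a spanning argument that uses only the defining relations, followed by a linear independence argument via an explicit faithful polynomial representation.

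For spanning, I would show by induction that every product of generators reduces to an $A$-linear combination of the stated monomials. Using \eqref{relation:quiver_psiae(i)} together with the orthogonality \eqref{relation:quiver_e(i)e(i')} and completeness \eqref{relation:quiver_sum_alpha_e(i)} of the $e(\tuple{k})$, we may assume we are working with a single idempotent on the right. The commutation relations \eqref{relation:quiver_ya_yb} and \eqref{relation:quiver_psia_yb} move $y$-generators freely past $\psi$-generators whose index is not adjacent. The swap relations \eqref{relation:quiver_psia_ya+1} and \eqref{relation:quiver_ya+1_psia} allow one to push any $y_a$ or $y_{a+1}$ past a $\psi_a$ at the cost of a term of strictly smaller $\psi$-length. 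After these rearrangements, the word takes the form $\psi_{a_1}\cdots \psi_{a_r} y_1^{r_1}\cdots y_n^{r_n} e(\tuple{k})$; if $s_{a_1}\cdots s_{a_r}$ is not reduced, the braid relations \eqref{relation:quiver_psia_psib}, \eqref{relation:quiverQ_tresse} combined with the quadratic relation \eqref{relation:quiverQ_psia^2} produce a strictly shorter word modulo lower-degree corrections, so induction on $\psi$-length concludes. Finally, \eqref{relation:quiverQ_tresse} shows that changing the choice of reduced expression underlying \eqref{equation:definition_psi_w} only modifies the result by terms of strictly shorter $\psi$-length, so the spanning claim is independent of that choice.

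For linear independence, I would follow Khovanov--Lauda and Rouquier and construct a representation of $\H_\alpha(Q)$ on the free $A$-module $V_\alpha \coloneqq \bigoplus_{\tuple{k} \in \K^\alpha} A[y_1, \dots, y_n] e(\tuple{k})$: the idempotent $e(\tuple{k}')$ projects onto the $\tuple{k}'$-summand, $y_a$ acts as multiplication by the variable of the same name, and $\psi_a$ acts as a divided-difference operator $(y_{a+1}-y_a)^{-1}(s_a - 1)$ on the $\tuple{k}$-summand when $k_a = k_{a+1}$ and, when $k_a \neq k_{a+1}$, maps $f\, e(\tuple{k})$ to a suitably $Q_{k_a, k_{a+1}}$-normalised multiple of $s_a(f)\, e(s_a \cdot \tuple{k})$, chosen so that \eqref{relation:quiverQ_psia^2} holds on the nose. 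After checking that all the relations \eqref{relation:quiver_sum_alpha_e(i)}--\eqref{relation:quiverQ_tresse} survive this action, I would apply a putative linear relation to each ``vacuum'' vector $1 \cdot e(\tuple{k}_0)$: the element $\psi_w y_1^{r_1}\cdots y_n^{r_n} e(\tuple{k}_0)$ lands in the $w \cdot \tuple{k}_0$-summand with leading monomial essentially $y_{w(1)}^{r_1}\cdots y_{w(n)}^{r_n}$ modulo strictly lower-order corrections, and a triangularity argument recovers all coefficients and establishes freeness.

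The main obstacle is the construction of the polynomial representation, and in particular pinning down the correct normalisation of $\psi_a$ in the case $k_a \neq k_{a+1}$ so that both the quadratic relation \eqref{relation:quiverQ_psia^2} and the braid relation \eqref{relation:quiverQ_tresse} hold exactly as stated for an arbitrary matrix $Q$. Once this verification is complete, the triangularity on the polynomial module combined with the spanning step of the first paragraph yields the claimed $A$-basis of $\H_\alpha(Q)$.
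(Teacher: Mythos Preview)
The paper does not prove this theorem at all: it is stated with the attribution ``\cite[Theorem 3.7]{Rou}, \cite[Theorem 2.5]{KhLau1}'' and then used as a black box. Your outline is essentially the strategy of those two references, so in that sense you are reproducing the intended proof rather than deviating from it.

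That said, your sketch of the linear-independence half is too optimistic at one point. You assert that $\psi_w y_1^{r_1}\cdots y_n^{r_n} e(\tuple{k}_0)$ lands in the $w\cdot\tuple{k}_0$-summand with leading monomial $y_{w(1)}^{r_1}\cdots y_{w(n)}^{r_n}$, and that a triangularity argument then recovers the coefficients. This is fine when the stabiliser of $\tuple{k}_0$ in $\mathfrak{S}_n$ is trivial, but in general many distinct $w$ give the same $w\cdot\tuple{k}_0$, and on those summands $\psi_a$ acts as a genuine divided-difference operator which \emph{lowers} degree rather than permuting variables. So the ``leading monomial'' description is wrong on the nose, and the naive triangularity collapses. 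The actual proofs in \cite{KhLau1,Rou} handle this by a more refined filtration (Bruhat order on $w$, together with the classical basis theorem for the nilHecke algebra on each block of equal indices), not by a single leading-term comparison. Similarly, your ``suitably $Q_{k_a,k_{a+1}}$-normalised multiple of $s_a(f)$'' hides a genuine choice: one must break the symmetry between $k_a$ and $k_{a+1}$ (e.g.\ via a total order on $\K$, letting $\psi_a$ act as $s_a$ one way and as $Q_{k_a,k_{a+1}}(y_a,y_{a+1})\,s_a$ the other) in order for both \eqref{relation:quiverQ_psia^2} and \eqref{relation:quiverQ_tresse} to hold over $A[y_1,\dots,y_n]$ without inverting anything. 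Your spanning argument, by contrast, is complete as written.
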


We conclude this paragraph by introducing cyclotomic quotients of these quiver Hecke algebras; let $\alpha \comp_{\K} n$ and $\tuple{\Lambda} = (\Lambda_k)_{k \in \K} \in \mathbb{N}^{(\K)}$ be a weight. Following \cite[\textsection 4.1]{KanKa}, we define the \emph{cyclotomic} quiver Hecke algebra $\H^{\tuple{\Lambda}}_\alpha(Q)$ at $\alpha$ as the quotient of the quiver Hecke algebra $\H_\alpha(Q)$ by the following relations:
\begin{equation}
\label{relation:quiver_cyclo_KK}
\sum_{m = 0}^{\Lambda_{k_1}} c_m y_1^m e(\tuple{k}) = 0 \qquad \forall \tuple{k} \in \K^\alpha,
\end{equation}
where $c_m \in A$ with $c_{\Lambda_{k_1}} = 1$. Similarly, if $\K$ is finite we define the cyclotomic quiver Hecke algebra $\H_n^{\tuple{\Lambda}}(Q)$. 

\begin{theoreme}[\protect{\cite[Corollary 4.4]{KanKa}}]
\label{theorem:generating_family_cyclotomic_quiver}
The $A$-module $\H_\alpha^{\tuple{\Lambda}}(Q)$ is finitely generated.
\end{theoreme}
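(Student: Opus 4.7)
By Theorem~\ref{theorem:base_quiver}, the images of the elements $\psi_w y_1^{r_1} \cdots y_n^{r_n} e(\tuple{k})$ span $\H_\alpha^{\tuple{\Lambda}}(Q)$ as an $A$-module. Since $\mathfrak{S}_n$ and $\K^\alpha$ are both finite (the latter was noted just after its definition), finite generation reduces to bounding uniformly the exponents $r_1, \dots, r_n$ modulo the cyclotomic ideal. The plan is to propagate the cyclotomic relation from position~$1$ to every position via the intertwiner-like relations.

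The first step is immediate: the defining relation \eqref{relation:quiver_cyclo_KK} has leading coefficient $c_{\Lambda_{k_1}} = 1$, hence $y_1^{\Lambda_{k_1}} e(\tuple{k}) = -\sum_{m < \Lambda_{k_1}} c_m y_1^m e(\tuple{k})$, so $r_1$ may always be taken below $\Lambda_{\max} := \max_{k \in \K} \Lambda_k$, which is finite because $\tuple{\Lambda}$ is finitely supported.

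For the other positions, I would use the consequences of \eqref{relation:quiver_psia_ya+1} and \eqref{relation:quiver_ya+1_psia}: iterating them (together with \eqref{relation:quiver_y_ae(i)}) yields, for every $m \geq 1$,
\[
\psi_a y_{a+1}^m e(\tuple{k}) = y_a^m \psi_a e(\tuple{k}) + \delta_{k_a, k_{a+1}} \sum_{i+j = m-1} y_a^i y_{a+1}^j e(\tuple{k}),
\]
which exchanges a power of $y_{a+1}$ against the same power of $y_a$ up to strictly lower total $y$-degree. By induction on $a$, any spanning element with $r_a \geq N_a$ (for a suitable threshold $N_a$) can then be rewritten as an $A$-combination of spanning elements with strictly smaller total $y$-degree: one pushes the offending power leftwards through $\psi_{a-1}, \psi_{a-2}, \dots$ down to position~$1$, at the cost of altering $w$ and $\tuple{k}$ via \eqref{relation:quiver_psiae(i)}, and then applies the first step. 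Ordering the spanning set lexicographically by $(r_1 + \cdots + r_n, \ell(w))$, each such reduction strictly decreases the order, leaving only finitely many irreducible elements.

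The main obstacle is the case $k_a = k_{a+1}$: there $Q_{k_a, k_{a+1}} = 0$ by the standing hypothesis on $Q$, so \eqref{relation:quiverQ_psia^2} gives $\psi_a^2 e(\tuple{k}) = 0$ and one cannot use $\psi_a^2$ directly to ``collapse'' a high power of $y_{a+1}$ into $y_a$. One must instead exploit precisely the ``$+1$'' correction terms in \eqref{relation:quiver_psia_ya+1}--\eqref{relation:quiver_ya+1_psia}, which appear exactly in this situation, and carefully invoke the braid relation \eqref{relation:quiverQ_tresse} when rearranging longer products of $\psi$'s. This bookkeeping, verifying that no sequence of reductions re-introduces arbitrarily large $y$-exponents, is the technical heart of the argument and is carried out in full in \cite{KanKa}.
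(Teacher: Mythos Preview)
The paper does not supply its own proof of this statement; it is quoted directly from \cite[Corollary~4.4]{KanKa} and used as a black box, so there is nothing in the paper to compare your argument against beyond that citation.

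That said, your sketch has a genuine gap at the ``pushing leftward'' step. The identity you display relates $\psi_a y_{a+1}^m e(\tuple{k})$ to $y_a^m \psi_a e(\tuple{k})$ modulo lower total $y$-degree; it does \emph{not} relate the bare element $y_{a+1}^m e(\tuple{k})$ to anything of smaller degree. To strip off the extra $\psi_a$ one must multiply again by $\psi_a$ and invoke \eqref{relation:quiverQ_psia^2}, but then $\psi_a^2 e(\tuple{k}) = Q_{k_a,k_{a+1}}(y_a,y_{a+1})\,e(\tuple{k})$ reintroduces $y$-powers whose degree is governed by $Q$ rather than by $\tuple{\Lambda}$, so there is no reason your lexicographic quantity $(\sum r_i,\ell(w))$ should drop. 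Concretely, starting from $\psi_w y^{\tuple{r}} e(\tuple{k})$ with $r_a$ large, nothing in the relations lets you trade $r_a$ for $r_1$ without either inserting new $\psi$'s (which then have to be absorbed into $\psi_w$, possibly producing non-reduced words) or passing through $\psi^2$-terms that undo the gain. A direct rewriting proof along these lines is not known for general $Q$; the argument in \cite{KanKa} is categorical, proceeding through exactness properties of the restriction and induction functors on the module category. In the special quiver case used later in the paper, nilpotency of each $y_a$ (Lemma~\ref{lemme:y_a_nilpotent}) does admit an elementary induction as in \cite[Lemma~2.1]{BrKl}, but that relies on the explicit linear form of the $Q_{k,k'}$ there and does not extend to arbitrary $Q$.
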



\subsubsection{Case of quivers}
\label{subsubsection:case_quivers}

With the exception of Section~\ref{section:disjoint_guiver}, throughout this paper our matrix $Q$ will always be associated with a loop-free quiver without multiple edges and with vertex set $\K$. If $\Gamma$ is such a quiver, following \cite[\textsection 3.2.4]{Rou} we associate the following matrix $(Q_{k, k'})_{k, k' \in \K}$:
\begin{equation}
\label{equation:Q_associated_quiver}
Q_{k, k'}(u, v) \coloneqq \begin{cases}
0 & \text{if } k = k', \\
1 & \text{if } k \nrelbar k', \\
v - u & \text{if } k \to k', \\
u - v & \text{if } k \leftarrow k', \\
-(u-v)^2 & \text{if } k \leftrightarrows k',
\end{cases}
\end{equation}
where:
\begin{itemize}
\item we write $k \nrelbar k'$ when $k \neq k'$ and neither $(k, k')$ or $(k', k)$ is an edge of $\Gamma$;
\item we write $k \to k'$ when $(k, k')$ is an edge of $\Gamma$ and $(k', k)$ is not;
\item we write $k \leftarrow k'$ when $(k', k)$ is an edge of $\Gamma$ and $(k, k')$ is not;
\item we write $k \leftrightarrows k'$ when both $(k, k')$ and $(k', k)$ are edges of $\Gamma$.
\end{itemize}
Moreover, we define:
\[
\H_\alpha(\Gamma) \coloneqq \H_\alpha(Q),
\]
and if $\K$ is finite we also set $\H_n(\Gamma) \coloneqq \H_n(Q)$. Note that, in the setting of \eqref{equation:Q_associated_quiver}, the defining relations \eqref{relation:quiverQ_psia^2} and \eqref{relation:quiverQ_tresse} become in $\H_\alpha(\Gamma)$:
\begin{align}
\label{relation:quiver_psia^2}
\psi_a^2 e(\tuple{k}) &= \begin{cases}
0 & \text{if } k_a = k_{a+1}, \\
e(\tuple{k}) & \text{if } k_a \nrelbar k_{a+1}, \\
(y_{a+1} - y_a)e(\tuple{k}) & \text{if } k_a \to k_{a+1}, \\
(y_a - y_{a+1})e(\tuple{k}) & \text{if } k_a \leftarrow k_{a+1}, \\
(y_{a+1} - y_a)(y_a - y_{a+1})e(\tuple{k}) & \text{if } k_a \leftrightarrows k_{a+1},
\end{cases}
\\
\label{relation:quiver_tresse}
\psi_{a+1}\psi_a \psi_{a+1}e(\tuple{k}) &= \begin{cases}
(\psi_a \psi_{a+1}\psi_a -1)e(\tuple{k}) & \text{if } k_{a+2} = k_a \to k_{a+1}, \\
(\psi_a \psi_{a+1}\psi_a +1)e(\tuple{k}) & \text{if } k_{a+2} = k_a \leftarrow k_{a+1}, \\
(\psi_a \psi_{a+1} \psi_a + 2y_{a+1} - y_a - y_{a+2})e(\tuple{k}) & \text{if } k_{a+2} = k_a \leftrightarrows k_{a+1}, \\
\psi_a \psi_{a+1} \psi_a e(\tuple{k}) & \text{otherwise.}
\end{cases}
\end{align}

\bigskip
We now give a remarkable fact about quiver Hecke algebras; its proof only requires a simple check of the different defining relations.

\begin{proposition}
\label{proposition:gradation_quiver_Hecke_algebra}
Let $\Gamma$ be a loop-free quiver without multiple edges with vertex set $\K$. The quiver Hecke algebra $\H_\alpha(\Gamma)$ is $\mathbb{Z}$-graded through:
\begin{gather*}
\deg e(\tuple{k}) = 0,
\\
\deg y_a e(\tuple{k}) = 2,
\\
\deg \psi_a e(\tuple{k}) = -c_{k_a, k_{a+1}},
\end{gather*}
where $C = (c_{k, k'})_{k, k' \in \K}$ is the \emph{Cartan matrix} of $\Gamma$, defined by:
\begin{equation}
\label{equation:Cartan_matrix}
c_{k, k'} \coloneqq \begin{cases}
2 & \text{if } k = k',
\\
0 & \text{if } k \nrelbar k',
\\
-1 & \text{if } k \to k' \text{ or } k \leftarrow k',
\\
-2 & \text{if } k \leftrightarrows k'.
\end{cases}
\end{equation}
\end{proposition}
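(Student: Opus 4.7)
The plan is to verify, relation by relation, that each of the defining relations \eqref{relation:quiver_sum_alpha_e(i)}--\eqref{relation:quiverQ_tresse} (specialised to the quiver case via \eqref{relation:quiver_psia^2}--\eqref{relation:quiver_tresse}) is homogeneous with respect to the proposed degree assignment. Because the Cartan matrix $C$ is symmetric, the relations involving a transposition of two consecutive indices (notably \eqref{relation:quiver_psiae(i)}) are immediate: the degree $-c_{k_a, k_{a+1}}$ of $\psi_a e(\tuple{k})$ equals the degree $-c_{k_{a+1}, k_a}$ of $\psi_a e(s_a \cdot \tuple{k})$. The idempotent, commutation and disjoint-support relations \eqref{relation:quiver_sum_alpha_e(i)}--\eqref{relation:quiver_psia_psib} are then checked at a glance.

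The only non-trivial cases are \eqref{relation:quiver_psia_ya+1}, \eqref{relation:quiver_ya+1_psia}, \eqref{relation:quiver_psia^2} and \eqref{relation:quiverQ_tresse}. For the two ``straightening'' relations, in the equal case $k_a = k_{a+1}$ one has $c_{k_a, k_{a+1}} = 2$, so both $\psi_a y_{a+1} e(\tuple{k})$ and $y_a \psi_a e(\tuple{k})$ have degree $-2 + 2 = 0 = \deg e(\tuple{k})$; in the unequal case the extra summand is absent, and equality of degrees follows from $\deg y_{a+1} = \deg y_a$. For \eqref{relation:quiver_psia^2} one reads off from \eqref{relation:quiver_psia^2} that the right-hand side is a polynomial in $y_a, y_{a+1}$ whose total $y$-degree matches $-2 c_{k_a, k_{a+1}}$ in each of the five cases $k_a = k_{a+1}$, $k_a \nrelbar k_{a+1}$, $k_a \to k_{a+1}$, $k_a \leftarrow k_{a+1}$, $k_a \leftrightarrows k_{a+1}$ (with the convention that $0$ is homogeneous of any degree).

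The braid relation \eqref{relation:quiverQ_tresse}, in its form \eqref{relation:quiver_tresse}, is where I would be most careful. Both $\psi_{a+1}\psi_a \psi_{a+1} e(\tuple{k})$ and $\psi_a \psi_{a+1} \psi_a e(\tuple{k})$ have degree
\[
-c_{k_a, k_{a+1}} - c_{k_a, k_{a+2}} - c_{k_{a+1}, k_{a+2}},
\]
as one sees by pushing the idempotent to the right through \eqref{relation:quiver_psiae(i)} and using the symmetry of $C$. In the special case $k_a = k_{a+2}$ (the only one contributing an additional term) this common degree becomes $-2 c_{k_a, k_{a+1}} - 2$, and one checks case by case that the correction term in \eqref{relation:quiver_tresse} carries exactly this degree: $\pm 1$ of degree $0 = -2(-1)-2$ when $k_a \to k_{a+1}$ or $k_a \leftarrow k_{a+1}$, and $2y_{a+1}-y_a-y_{a+2}$ of degree $2 = -2(-2)-2$ when $k_a \leftrightarrows k_{a+1}$; in the remaining subcases the correction vanishes, which is compatible with any degree.

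The only conceptual obstacle is the bookkeeping in the braid case: one must track how $\psi_{a+1}e(\tuple{k}) = e(s_{a+1}\cdot\tuple{k})\psi_{a+1}$ permutes the components of $\tuple{k}$ so that the three successive $\psi$ factors each pick up the correct $-c_{\bullet,\bullet}$, and then invoke the symmetry of $C$ to match the two sides. Once this is done, the verification of all relations shows that the prescription extends to an algebra $\mathbb{Z}$-grading on $\H_\alpha(\Gamma)$.
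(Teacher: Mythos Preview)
Your proposal is correct and follows exactly the approach the paper indicates: the paper simply states that ``its proof only requires a simple check of the different defining relations,'' and you have carried out precisely this relation-by-relation verification, including the careful bookkeeping for \eqref{relation:quiver_psia^2} and the braid relation \eqref{relation:quiver_tresse}. There is nothing to add.
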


\bigskip
We now define the quivers which will be particularly important to us; we recall that $I = \mathbb{Z}/\mathrm{char}_q(F)\mathbb{Z}$ and $J = \mathbb{Z}/d\mathbb{Z}$. We denote by $\Gamma_e$ the following quiver:
\begin{itemize}
\item the vertices are the elements of $I$;
\item for each $i \in I$ there is a directed edge from $i$ to $i + 1$.
\end{itemize}
In particular, for $i, i' \in I$:
\begin{itemize}
\item we have $i \to i'$ if and only if $i' = i + 1$ and $i \neq i' + 1$;
\item we have $i \leftarrow i'$ if and only if $i = i' + 1$ and $i' \neq i + 1$;
\item we have $i \leftrightarrows i'$ if and only if $i = i' + 1$ and $i' = i + 1$ (thus this only happens when $e = 2$);
\item we have $i \nrelbar i'$ if and only if $i \neq i', i'\pm 1$.
\end{itemize}

We give some examples in Figure~\ref{figure:gamma_e}.
\begin{figure}[h]
\centering
\begin{tabular}{lM{10cm}}
Quiver $\Gamma_2$
&
$0 \leftrightarrows 1$
\\
\\
Quiver $\Gamma_4$
&
\begin{tikzpicture}[>=angle 90]
\node (0) at (0, 1) {$0$};
\node (1) at (1, 1) {$1$};
\node (2) at (1, 0) {$2$};
\node (3) at (0, 0) {$3$};

\draw[->] (0) -- (1);
\draw[->] (1) -- (2);
\draw[->] (2) -- (3);
\draw[->] (3) -- (0);
\end{tikzpicture}
\\
\\
Quiver $\Gamma_{\infty}$
&
\begin{tikzpicture}[>=angle 90]
\node (-3) at (-4, 0) {$\cdots$};
\node (-2) at (-2.6, 0) {$-2$};
\node (-1) at (-1.2, 0) {$-1$};
\foreach \i in {0,1,2}
	\node (\i) at (\i, 0) {$\i$};
\node (3) at (3.2, 0) {$\cdots$};
\foreach \i [count=\j from -2] in {-3,...,2}
	\draw[->] (\i) -- (\j);
\end{tikzpicture}
\end{tabular}
\caption{Three examples of quivers $\Gamma_e$}
\label{figure:gamma_e}
\end{figure}
We now define the quiver
\begin{equation}
\label{equation:gamma=union_gammae}
\Gamma \coloneqq \coprod_{j \in J} \Gamma_e
\end{equation}
given by $d$ disjoint copies of $\Gamma_e$. Hence, our quiver $\Gamma$ is described in the following way:
\begin{itemize}
\item the vertices are the elements of $\K \coloneqq K = I \times J$;
\item for each $(i, j) \in K$ there is a directed edge from $(i, j)$ to $(i+1, j)$.
\end{itemize}
In particular, there is an arrow between $(i, j)$ and $(i', j')$ in $\Gamma$ if and only if there is an arrow between $i$ and $i'$ in $\Gamma_e$ and $j = j'$. Moreover, the set $K$ is finite if and only if $e$ is finite.

We consider the diagonal action of $\mathfrak{S}_n$ on $K^n \simeq I^n \times J^n$, that is, $\sigma \cdot (\tuple{i}, \tuple{j}) \coloneqq (\sigma \cdot \tuple{i}, \sigma \cdot \tuple{j})$. We will need the following notation:
\begin{gather*}
I^{\alpha} \coloneqq \{\tuple{i} \in I^n : \exists \tuple{j} \in J^n, (\tuple{i}, \tuple{j}) \in K^{\alpha}\},
\\
J^{\alpha} \coloneqq \{\tuple{j} \in J^n : \exists \tuple{i} \in I^n, (\tuple{i}, \tuple{j}) \in K^{\alpha}\}.
\end{gather*}
The sets $I^{\alpha}$ and $J^{\alpha}$ are finite and stable under the action of $\mathfrak{S}_n$; note that $K^{\alpha}$ is included in $I^{\alpha} \times J^{\alpha}$ (we don't have the equality in general).

Let now $\tuple{\Lambda} \coloneqq (\Lambda_k)_{k \in K} \in \mathbb{N}^{(K)}$  be a weight. The cyclotomic quiver Hecke algebra $\H_\alpha^{\tuple{\Lambda}}(\Gamma)$ is given by the quotient of the quiver Hecke algebra $\H_\alpha(\Gamma)$ by the relations:
\begin{equation}
\label{relation:quiver_cyclo_y1}
y_1^{\Lambda_{k_1}} e(\tuple{k}) = 0, \qquad \forall \tuple{k} \in K^\alpha;
\end{equation}
note that this is indeed a particular case of \eqref{relation:quiver_cyclo_KK}. Note that the grading described in Proposition~\ref{proposition:gradation_quiver_Hecke_algebra} is compatible with this quotient.

\begin{remarque}
\label{remark:kl_algebra_brkl}
The \emph{cyclotomic Khovanov--Lauda algebra} of \cite{BrKl} is the quiver Hecke algebra $\H_{\alpha}^{\tuple{\Lambda}}(\Gamma_e)$, that is, the algebra $\H_{\alpha}^{\tuple{\Lambda}}(\Gamma)$ for $d = 1$. We write $e^{\H}(\tuple{i})$, $y_a^{\H}$ and $\psi_a^{\H}$ the generators of $\H_{\alpha}^{\tuple{\Lambda}}(\Gamma_e)$;  the reason for this notation will appear in \textsection\ref{subsection:definition_images_quiverhecke_generators}.
\end{remarque}

The proof of the following result is the same as in \cite[Lemma 2.1]{BrKl}.
\begin{lemme}
\label{lemme:y_a_nilpotent}
The elements $y_a \in \H_{\alpha}^{\tuple{\Lambda}}(\Gamma)$ are nilpotent for $a \in \{1, \dots, n\}$.
\end{lemme}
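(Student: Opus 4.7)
The plan is to dispatch first the case $a=1$ using only the cyclotomic relation, and then handle $a \geq 2$ via a bounded-grading argument on $\H_\alpha^{\tuple{\Lambda}}(\Gamma)$.

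For $a = 1$, I would observe that the cyclotomic relation \eqref{relation:quiver_cyclo_y1} reads $y_1^{\Lambda_{k_1}} e(\tuple{k}) = 0$ for every $\tuple{k} \in K^\alpha$. Since $K^\alpha$ is finite and $\tuple{\Lambda}$ has finite support, the integer $N := \max_{\tuple{k} \in K^\alpha} \Lambda_{k_1}$ is well-defined, and using the completeness relation \eqref{relation:quiver_sum_alpha_e(i)} I would then conclude that
$y_1^N = y_1^N \sum_{\tuple{k}} e(\tuple{k}) = \sum_{\tuple{k}} y_1^N e(\tuple{k}) = 0$.

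For $a \geq 2$, I would invoke the $\mathbb{Z}$-grading provided by Proposition~\ref{proposition:gradation_quiver_Hecke_algebra}. The cyclotomic relations \eqref{relation:quiver_cyclo_y1} are homogeneous (the generator attached to $\tuple{k}$ has degree $2\Lambda_{k_1}$), so the grading descends to $\H_\alpha^{\tuple{\Lambda}}(\Gamma)$. Theorem~\ref{theorem:generating_family_cyclotomic_quiver} ensures that this quotient is a finitely generated $F$-module, hence a finite-dimensional $F$-vector space; a finite-dimensional $\mathbb{Z}$-graded algebra is supported on some bounded interval $[-D, D] \subseteq \mathbb{Z}$. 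Since $y_a^N$ is homogeneous of degree $2N$, the inequality $2N > D$ forces $y_a^N = 0$.

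The hard part, if one insists on arguing directly by induction on $a$ (more in the calculational style of Brundan--Kleshchev), would be to transfer the nilpotency of $y_a$ to $y_{a+1}$ through the commutation formula $y_{a+1} \psi_a = \psi_a y_a + \epsilon_a$ coming from \eqref{relation:quiver_ya+1_psia}, where $\epsilon_a := \sum_{\tuple{k} :\, k_a = k_{a+1}} e(\tuple{k})$, combined with $\psi_a^2 e(\tuple{k}) = Q_{k_a, k_{a+1}}(y_a, y_{a+1}) e(\tuple{k})$. The awkward point is that the $\epsilon_a$-correction must be absorbed and $\psi_a$ fails to be invertible on precisely those components where $k_a = k_{a+1}$; the grading argument above sidesteps both obstructions entirely.
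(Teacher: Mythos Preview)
Your proof is correct. The paper does not supply its own argument but defers to \cite[Lemma~2.1]{BrKl}, which is precisely the direct inductive approach you allude to in your closing paragraph: one passes from $y_a$ to $y_{a+1}$ by playing the relations \eqref{relation:quiver_psia_ya+1}, \eqref{relation:quiver_ya+1_psia} and \eqref{relation:quiver_psia^2} against each other, splitting into the cases $k_a = k_{a+1}$ and $k_a \neq k_{a+1}$. Your route through the grading and Theorem~\ref{theorem:generating_family_cyclotomic_quiver} is genuinely different and much shorter, but it reverses the paper's intended logical flow: the sentence immediately following the lemma uses nilpotency of the $y_a$ (together with Theorem~\ref{theorem:base_quiver}) to \emph{recover} a special case of Theorem~\ref{theorem:generating_family_cyclotomic_quiver} by elementary means. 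If Theorem~\ref{theorem:generating_family_cyclotomic_quiver} is taken as input, that corollary becomes vacuous. So both arguments are valid; the inductive one is self-contained, while yours imports the Kang--Kashiwara finiteness result as a black box.
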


As a corollary, together with Theorem~\ref{theorem:base_quiver} we recover a particular case of Theorem~\ref{theorem:generating_family_cyclotomic_quiver}: in particular the algebra $\H_{\alpha}^{\tuple{\Lambda}}(\Gamma)$ is a finite-dimensional $F$-vector space.

\begin{remarque}
Let us assume $e < \infty$; the algebra $\H_n^{\tuple{\Lambda}}(\Gamma)$ is hence finite-dimensional. However, comparing with Remark~\ref{remark:basis_cyclotomic_YH}, it does not seem that easy to get its $F$-dimension; an answer will be given with Theorem~\ref{theorem:main} and \eqref{equation:main_theorem_full_algebra}.
\end{remarque}

\section{Quiver Hecke generators of \texorpdfstring{$\widehat{\Y}_{\alpha}^{\tuple{\Lambda}}(q)$}{YalphaLambda(q)}}
\label{section:quiver_hecke_generators}

Let $\tuple{\Lambda} = (\Lambda_k)_{k \in K} \in \mathbb{N}^{(K)}$ be a weight; we assume that $\ell(\tuple{\Lambda}) = \sum_{k \in K} \Lambda_k$ verifies $\ell(\tuple{\Lambda}) > 0$. Moreover, we suppose that for any $i \in I$ and $j, j' \in J$, we have:
\[
\Lambda_{i, j} = \Lambda_{i, j'} \eqqcolon \Lambda_i.
\]
In particular, we will write $\tuple{\Lambda}$ as well for the weight $(\Lambda_i)_{i \in I}$.

In this section, our first task is to define some central idempotents $e(\alpha) \in \widehat{\Y}_{d, n}^{\tuple{\Lambda}}(q)$ with $\sum_\alpha e(\alpha) = 1$ for $\alpha \comp_K n$. We will then prove the following theorem.

\begin{theoreme}
\label{theorem:morphism_quiver_yh}
For any $\alpha \comp_K n$, we can construct an explicit algebra homomorphism:
\[
\rho : \H_\alpha^{\tuple{\Lambda}}(\Gamma) \to \widehat{\Y}_\alpha^{\tuple{\Lambda}}(q),
\]
where $\widehat{\Y}_\alpha^{\tuple{\Lambda}}(q) \coloneqq e(\alpha) \widehat{\Y}_{d, n}^{\tuple{\Lambda}}(q)$.
\end{theoreme}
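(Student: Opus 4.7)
The plan is to follow Brundan--Kleshchev's strategy while carefully separating the ``Hecke'' part (involving the $X_a$'s) from the ``wreath'' part (involving the $t_a$'s), so that the delicate calculations in \cite{BrKl} can be reused rather than redone.

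\emph{Step 1: construction of the idempotents.} I first note that the family $\{t_1,\dots,t_n,X_1,\dots,X_n\}$ commutes in $\widehat{\Y}_{d,n}^{\tuple{\Lambda}}(q)$. Since $t_a^d=1$ and $F$ contains a primitive $d$th root of unity $\xi$, the element $t_a$ is diagonalisable with eigenvalues in $\{\xi^j : j\in J\}$, giving orthogonal idempotents $f_a(j)\coloneqq \frac{1}{d}\sum_{m\in J}\xi^{-jm}t_a^m$ with $\sum_{j\in J}f_a(j)=1$ and $t_af_a(j)=\xi^jf_a(j)$. Using the cyclotomic relation \eqref{relation:X1_cyclo} together with the conjugation relations \eqref{equation:Xa+1_ga_Xa} and the commutation of $\{X_1,\ldots,X_n\}$, one checks that every $X_a$ has spectrum contained in $\{q^i:i\in I\}$, so standard interpolation yields orthogonal idempotents $h_a(i)$ such that $(X_a-q^i)h_a(i)$ is nilpotent, $h_a(i)h_a(i')=\delta_{i,i'}h_a(i)$ and $\sum_{i\in I}h_a(i)=1$. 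Setting
\[
e(\tuple{k})\coloneqq \prod_{a=1}^n h_a(i_a)f_a(j_a)\qquad\text{for } \tuple{k}=((i_a,j_a))_{a=1}^n\in K^n,
\]
I obtain a complete family of pairwise orthogonal idempotents, and defining $e(\alpha)\coloneqq\sum_{\tuple{k}\in K^\alpha}e(\tuple{k})$ produces a central idempotent because $K^\alpha$ is an $\mathfrak{S}_n$-orbit and the $g_a$'s act by permuting the joint eigenspaces (relations \eqref{relation:t_b_g_a}, \eqref{relation:g_a_X_b}--\eqref{relation:X_a+1_g_a}).

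\emph{Step 2: definition of $\rho$.} Mimicking \cite{BrKl}, I set $\rho(e(\tuple{k}))=e(\tuple{k})$ and, inside $e(\alpha)\widehat{\Y}_{d,n}^{\tuple{\Lambda}}(q)$,
\[
\rho(y_a)\coloneqq \sum_{\tuple{k}\in K^\alpha}\bigl(1-q^{-i_a}X_a\bigr)e(\tuple{k}),
\]
so that $y_ae(\tuple{k})$ becomes a local topologically nilpotent coordinate at the eigenvalue $q^{i_a}$. For $\rho(\psi_a)$ I declare $\rho(\psi_a)e(\tuple{k})\coloneqq g_a P_a(\tuple{k})e(\tuple{k})+Q_a(\tuple{k})e(\tuple{k})$, where $P_a(\tuple{k})$ and $Q_a(\tuple{k})$ are rational expressions in $X_a,X_{a+1}$ chosen case-by-case according to whether $j_a=j_{a+1}$ (so that $e_ae(\tuple{k})=e(\tuple{k})$) or $j_a\neq j_{a+1}$ (so that $e_ae(\tuple{k})=0$), and inside the first case according to the relative position of $i_a$ and $i_{a+1}$ in $\Gamma_e$. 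Concretely, when $j_a\neq j_{a+1}$ the absence of $e_a$ in \eqref{relation:g_a_X_a+1}--\eqref{relation:X_a+1_g_a} reduces the braid generator $g_a$ to an ordinary permutation-of-eigenspaces operator, and I take $\rho(\psi_a)e(\tuple{k})=g_ae(\tuple{k})$; when $j_a=j_{a+1}$ I copy verbatim the Brundan--Kleshchev formulas from the Hecke case, which is possible because the restriction of our algebra to a single $J$-stratum behaves exactly like an Iwahori--Hecke algebra.

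\emph{Step 3: verification of the quiver Hecke relations.} The idempotent relations \eqref{relation:quiver_sum_alpha_e(i)}--\eqref{relation:quiver_psiae(i)} are immediate from Step~1. The polynomial relations \eqref{relation:quiver_ya_yb}--\eqref{relation:quiver_psia_psib} and the ``inhomogeneous'' commutations \eqref{relation:quiver_psia_ya+1}--\eqref{relation:quiver_ya+1_psia} follow from the defining relations \eqref{relation:g_a_X_b}--\eqref{relation:X_a+1_g_a} together with the explicit $q$-expansion of $(1-q^{-i_a}X_a)$. The cyclotomic relation \eqref{relation:quiver_cyclo_y1} translates to $(X_1-q^{i_1})^{\Lambda_{i_1}}e(\tuple{k})=0$, which holds because on the range of $e(\tuple{k})$ the only possible factor of \eqref{relation:X1_cyclo} that is not invertible is $(X_1-q^{i_1})$, with multiplicity $\Lambda_{i_1}$. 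The genuinely delicate relations are the quadratic one \eqref{relation:quiver_psia^2} and the braid one \eqref{relation:quiver_tresse}, and this is where the main obstacle lies.

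\emph{Main obstacle and how to bypass it.} A direct verification of \eqref{relation:quiver_psia^2}--\eqref{relation:quiver_tresse} would require lengthy manipulations of rational expressions in $X_a,X_{a+1},X_{a+2}$ mixed with the braid relations for the $g_a$'s, essentially duplicating the computations of \cite{BrKl}. To avoid this, I argue componentwise on the $J$-part: (i) if $j_a\neq j_{a+1}$ then $e_ae(\tuple{k})=0$, so relation \eqref{relation:ordre_ga} reduces to $g_a^2e(\tuple{k})=qe(\tuple{k})$ and the required identities follow by direct substitution, using that $(i_a,j_a)\nrelbar(i_{a+1},j_{a+1})$ in $\Gamma$; (ii) if $j_a=j_{a+1}$ then $e_ae(\tuple{k})=e(\tuple{k})$ and, locally, the subalgebra generated by $g_a$, $X_a$, $X_{a+1}$ and the idempotents is a homomorphic image of a cyclotomic Hecke algebra, so the corresponding identity is a formal consequence of the analogous identity in $\widehat{\H}_n^{\tuple{\Lambda}}(q)$ already proved by Brundan--Kleshchev. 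The braid relation \eqref{relation:quiver_tresse} is treated identically after analysing all cases for the triple $(j_a,j_{a+1},j_{a+2})$: either two consecutive $j$'s differ, in which case the relevant $e_b$'s vanish on $e(\tuple{k})$ and the computation is trivial, or all three are equal and we invoke \cite{BrKl} again. This case-reduction is the place where the remark about avoiding the \cite{BrKl} calculations enters, and it is the only step where real work is required.
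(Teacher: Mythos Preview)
Your overall strategy matches the paper's approach closely, but there are two concrete gaps that make the argument as written incorrect.

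\textbf{Gap 1: the normalisation of $\psi_a$ when $j_a\neq j_{a+1}$.} You set $\rho(\psi_a)e(\tuple{k})=g_ae(\tuple{k})$ in this case, and then claim that \eqref{relation:quiver_psia^2} follows because $g_a^2e(\tuple{k})=qe(\tuple{k})$. But the relation you need is $\psi_a^2e(\tuple{k})=e(\tuple{k})$ (since $k_a\nrelbar k_{a+1}$ in $\Gamma$), so your formula gives $qe(\tuple{k})$ instead of $e(\tuple{k})$. A scalar normalisation is required: the paper introduces constants $f_{a,\tuple{j}}\in\{1,q\}$ depending on whether $j_a<j_{a+1}$ or $j_a>j_{a+1}$, sets $\psi_ae(\tuple{j})=f_{a,\tuple{j}}^{-1}g_ae(\tuple{j})$, and uses the key identity $f_{a,\tuple{j}}f_{a,s_a\cdot\tuple{j}}=q$ to recover $\psi_a^2e(\tuple{k})=e(\tuple{k})$. (If $F$ contains $\sqrt{q}$ one can simply take $f_{a,\tuple{j}}=\sqrt{q}$, but in general this artefact is unavoidable.)

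\textbf{Gap 2: the mixed cases in the braid relation.} Your dichotomy ``either two consecutive $j$'s differ, in which case the computation is trivial, or all three are equal and we invoke \cite{BrKl}'' is false. Consider $j_a=j_{a+1}\neq j_{a+2}$: here $\psi_a$ is given by the full Brundan--Kleshchev formula (involving the intertwiner $\Phi_a$ and the power series $Q_a$), while $\psi_{a+1}$ is the rescaled $g_{a+1}$. Checking \eqref{relation:quiver_tresse} then requires commuting these two kinds of expressions past one another, which is neither trivial nor covered by \cite{BrKl}. The paper treats the four mixed patterns $(j_0=j_1\neq j_2)$, $(j_0\neq j_1=j_2)$, $(j_0=j_2\neq j_1)$, $\#\{j_0,j_1,j_2\}=3$ separately and explicitly; each needs the commutation properties of $\Phi_a$ with the idempotents (Lemma~\ref{lemma:Phi_a}), the transformation rule $Q_{a+1}^{s_a}(\tuple{k})=Q_a^{s_{a+1}}(s_{a+1}s_a\cdot\tuple{k})$ (Lemma~\ref{lemma:relation_Qa}), and the matching identities among the $f_{a,\tuple{j}}$'s. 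These computations are shorter than the ones in \cite{BrKl}, but they are not vacuous.
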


Note that $\widehat{\Y}_{\alpha}^{\tuple{\Lambda}}(q)$ is a unitary algebra (if not reduced to $\{0\}$), with unit $e(\alpha)$.
To define this algebra homomorphism, it suffices to define the images of the generators \eqref{equation:quiver_generators_alpha} and check that they verify the defining relations of the cyclotomic  quiver Hecke algebra: the same strategy was used by Brundan and Kleshchev in \cite{BrKl} for $d = 1$.

For a generator $g$ of $\H_{\alpha}^{\tuple{\Lambda}}(\Gamma)$, we will use as well the notation $g$ for the corresponding element that we will define in $\widehat{\Y}_{\alpha}^{\tuple{\Lambda}}(q)$. There will be no possible confusion since we will work with elements of $\widehat{\Y}_{\alpha}^{\tuple{\Lambda}}(q)$.

\subsection{Definition of the images of the generators}
\label{subsection:definition_images_quiverhecke_generators}

We define now our different ``quiver Hecke generators''.

\subsubsection{Image of \texorpdfstring{$e(\tuple{i}, \tuple{j})$}{e(i, j)}}
Let  $M$ be a finite-dimensional $\widehat{\Y}_{d, n}^{\tuple{\Lambda}}(q)$-module. 
Each $X_a$ acts on $M$ as an endomorphism of the finite-dimensional $F$-vector space (see Proposition~\ref{proposition:yokonuma-hecke_finite_dimensional}); in particular, by \eqref{relation:X1_cyclo} the eigenvalues of $X_1$ can be written $q^i$ for $i \in I$. Hence, applying \cite[Lemma 5.2]{CuWa} we know that the eigenvalues of each $X_a$ are of the form $q^i$ for  $i \in I$. Concerning the $t_a$, by $(\ref{relation:ordre_t_a})$ (they are diagonalizable and) their eigenvalues are $d$th roots of unity.

As the elements of the family $\{X_a, t_a\}_{1 \leq a \leq n}$ pairwise commute, 
using Cayley--Hamilton theorem we can write $M$ as the direct sum of its \emph{weight spaces} (simultaneous generalized eigenspaces)
\begin{equation}
\label{equation:weight_space}
M(\tuple{i}, \tuple{j}) \coloneqq \left\lbrace v \in M : (X_a - q^{i_a})^N v = (t_a - \xi^{j_a}) v = 0 \text{ for all } 1 \leq a \leq n\right\rbrace
\end{equation}
for $(\tuple{i}, \tuple{j}) \in I^n \times J^n$, where $N \gg 0$ and $\xi$ is the given primitive $d$th root of unity in $F$ that we considered at the very beginning of Section~\ref{section:setting}. Observe that some $M(\tuple{i}, \tuple{j})$ may be reduced to zero; in fact, only a finite number of them are non-zero.
\begin{remarque}
\label{remark:e_a_e(ij)}
The element $e_a$ acts on $M(\tuple{i}, \tuple{j})$ as $0$ if $j_a \neq j_{a+1}$ and as $1$ if $j_a = j_{a+1}$.
\end{remarque}

We can now consider the family of projections $\{e(\tuple{k})\}_{\tuple{k} \in K^n}$ associated with the decomposition $M = \oplus_{\tuple{k} \in K^n} M(\tuple{k})$: the element $e(\tuple{k})$ is the projection onto $M(\tuple{k})$ along $\oplus_{\tuple{k}' \neq \tuple{k}} M(\tuple{k}')$, in particular $e(\tuple{k})^2 = 0$ and if $\tuple{k} \neq \tuple{k}'$ then $e(\tuple{k}) e(\tuple{k}') = 0$. Moreover, only a finite number of $e(\tuple{k})$ are non-zero.

As the $e(\tuple{k})$ are polynomials in $X_1, \dots, X_n, t_1, \dots t_n$ (in fact $e(\tuple{k})$ is the product of commuting projections onto the corresponding generalized eigenspaces of $X_a$ and $t_a$), they belong to $\widehat{\Y}_{d, n}^{\tuple{\Lambda}}(q)$. 
\begin{remarque}
\label{remark:e(k)_independent_M}
The above polynomials do not depend on the finite-dimensional $\widehat{\Y}_{d, n}^{\tuple{\Lambda}}(q)$-module $M$.
\end{remarque}

We are now able to define our central idempotents. We set, for $\alpha \comp_K n$:
\[
e(\alpha) \coloneqq \sum_{\tuple{k} \in K^{\alpha}} e(\tuple{k})
\]
(since $K^{\alpha}$ is a $\mathfrak{S}_n$-orbit, the element $e(\alpha)$ is indeed central). Though we will not use this fact, we can notice that according to \cite[Corollary 3.2]{PA} and \cite{LyMa}, the subalgebras $\widehat{\Y}_\alpha^{\tuple{\Lambda}}(q)  \coloneqq e(\alpha)\widehat{\Y}_{d, n}^{\tuple{\Lambda}}(q)$ which are not reduced to zero are the \emph{blocks} of the Yokonuma--Hecke algebra $\widehat{\Y}_{d, n}^{\tuple{\Lambda}}(q)$; see also \cite[\textsection 6.3]{CuWa}. For $d = 1$, we recover the element $e_\alpha$ of \cite[(1.3)]{BrKl}.

We will sometimes need the following elements:
\begin{equation}
\label{equation:definition_e(i)_e(j)}
\begin{aligned}
e(\alpha)(\tuple{i}) &\coloneqq \sum_{\tuple{j} \in J^{\alpha}} e(\alpha)e(\tuple{i}, \tuple{j})
= \sum_{\substack{\tuple{j} \in J^\alpha \\ (\tuple{i}, \tuple{j}) \in K^\alpha}} e(\tuple{i}, \tuple{j}),
\\
e(\alpha)(\tuple{j}) &\coloneqq \sum_{\tuple{i} \in I^{\alpha}} e(\alpha) e(\tuple{i}, \tuple{j})
= \sum_{\substack{\tuple{i} \in I^\alpha \\ (\tuple{i}, \tuple{j}) \in K^\alpha}} e(\tuple{i}, \tuple{j}).
\end{aligned}
\end{equation}
For $d = 1$, we recover with $e(\alpha)(\tuple{i})$ the element $e(\tuple{i})$ of \cite[\textsection 4.1]{BrKl}; we denote it by $e^{\H}(\tuple{i})$.
Finally, note that:
\[
e(\alpha)(\tuple{i})\cdot e(\alpha)(\tuple{j}) = e(\alpha)(\tuple{j})\cdot e(\alpha)(\tuple{i}) = e(\tuple{i}, \tuple{j}).
\]

From now, unless  mentioned otherwise,
we always work in $\widehat{\Y}_{\alpha}^{\tuple{\Lambda}}(q)$; every relation should be multiplied by $e(\alpha)$ and we write $e(\tuple{i})$ (respectively $e(\tuple{j})$) for $e(\alpha)(\tuple{i})$ (resp. $e(\alpha)(\tuple{j})$).

We give now a few useful lemmas.

\begin{lemme}
\label{lemma:ga_Xa}
If $1 \leq a < n$ and $\tuple{j} \in J^{\alpha}$ is such that $j_a \neq j_{a+1}$ then we have:
\begin{align*}
g_a^2 e(\tuple{j}) &= q e(\tuple{j}), \\
g_a X_{a+1} e(\tuple{j}) &= X_a g_a e(\tuple{j}), \\
X_{a+1} g_a e(\tuple{j}) &= g_a X_a e(\tuple{j}).
\end{align*}
\end{lemme}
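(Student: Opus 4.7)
The plan is to reduce all three identities to a single observation: whenever $\tuple{j} \in J^{\alpha}$ satisfies $j_a \neq j_{a+1}$, we have $e_a \, e(\tuple{j}) = 0$ in $\widehat{\Y}_{d,n}^{\tuple{\Lambda}}(q)$. Granting this, each of the three identities follows from a defining relation by right-multiplying with $e(\tuple{j})$: the first from \eqref{relation:ordre_ga}, the second from \eqref{relation:g_a_X_a+1}, and the third from \eqref{relation:X_a+1_g_a}, in each case killing the summand containing the factor $e_a$. The rewriting is legitimate because $e_a$, being a polynomial in $t_a$ and $t_{a+1}$, commutes with $e(\tuple{j})$ (the entire family $\{t_b, X_b\}_{1 \leq b \leq n}$ being commutative).

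To prove $e_a \, e(\tuple{j}) = 0$, I would first establish the finer statement $e_a \, e(\tuple{k}) = 0$ for every $\tuple{k} = (\tuple{i}', \tuple{j}) \in K^n$ with $j_a \neq j_{a+1}$, and then sum over $\tuple{i}'$ with $(\tuple{i}', \tuple{j}) \in K^{\alpha}$ to recover $e_a \, e(\tuple{j})$. The individual equality $e_a \, e(\tuple{k}) = 0$ will follow from Remark~\ref{remark:e_a_e(ij)}: on any finite-dimensional module $M$ the operator $e_a$ acts as zero on the weight space $M(\tuple{k})$, while $e(\tuple{k})$ is by construction the projection onto $M(\tuple{k})$, so the product $e_a \, e(\tuple{k})$ acts as zero on $M$. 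Specialising $M$ to the left regular module, which is finite-dimensional by Proposition~\ref{proposition:yokonuma-hecke_finite_dimensional}, and using Remark~\ref{remark:e(k)_independent_M} to view $e(\tuple{k})$ as a fixed polynomial in the $X_b$ and $t_b$ independent of $M$, evaluating this operator identity at $1 \in \widehat{\Y}_{d,n}^{\tuple{\Lambda}}(q)$ yields the algebra identity $e_a \, e(\tuple{k}) = 0$.

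No serious obstacle is expected. The only conceptual point is the passage from a module-theoretic vanishing to an algebraic one, which is immediate via the regular representation; the rest of the argument is purely formal substitution in the defining relations. It may also be worth noting that the same technique (exploiting a weight-space action together with the commutativity of $\{t_b, X_b\}$ to kill an $e_a$ factor) will underlie many subsequent simplifications in the paper, so the lemma is really a prototype for this style of argument.
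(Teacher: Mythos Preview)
Your proposal is correct and follows essentially the same approach as the paper: both reduce to the vanishing $e_a\, e(\tuple{j}) = 0$ (via Remark~\ref{remark:e_a_e(ij)}) and then invoke the defining relations \eqref{relation:ordre_ga}, \eqref{relation:g_a_X_a+1}, \eqref{relation:X_a+1_g_a}. You simply spell out in more detail the passage from the module-theoretic statement of Remark~\ref{remark:e_a_e(ij)} to the algebraic identity via the regular representation, which the paper leaves implicit.
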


\begin{proof}
We deduce it the from relations \eqref{relation:ordre_ga}, \eqref{relation:g_a_X_a+1} and \eqref{relation:X_a+1_g_a} and from  $e_a e(\tuple{j}) = 0$ (since $j_a \neq j_{a+1}$, see Remark~\ref{remark:e_a_e(ij)}).
\end{proof}

For the next lemma, we should compare with \cite[$(15)$]{JaPA}.

\begin{lemme}
\label{lemme:g_a_e(j)}
For $1 \leq a < n$ and $\tuple{j} \in J^{\alpha}$ we have $g_a e(\tuple{j}) = e(s_a \cdot \tuple{j})g_a$.
\end{lemme}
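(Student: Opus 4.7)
The plan is to separate the joint eigenspace projection $e(\tuple{j})$ into a ``$t$-part'' and a ``central $e(\alpha)$-part'', and then push $g_a$ through the $t$-part by means of the intertwining relation \eqref{relation:t_b_g_a}. Concretely, since $\mathrm{char}(F) \nmid d$ and $F$ contains a primitive $d$-th root of unity $\xi$, each $t_a$ is diagonalizable with spectrum in $\{\xi^j : j \in J\}$, and the Lagrange polynomial $p_j(T) \coloneqq \frac{1}{d}\sum_{c = 0}^{d-1} \xi^{-jc} T^c$ projects onto the $\xi^j$-eigenspace. I would introduce the polynomial
\[
\varepsilon_t(\tuple{j}) \coloneqq \prod_{a = 1}^n p_{j_a}(t_a) \in F[t_1, \dots, t_n] \subseteq \widehat{\Y}_{d, n}^{\tuple{\Lambda}}(q),
\]
and observe that the $\varepsilon_t(\tuple{j}')$ form a family of orthogonal idempotents summing to $1$. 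A quick check on a finite-dimensional module (using Remark~\ref{remark:e(k)_independent_M} to lift the equality to the algebra) shows that $e(\tuple{i}', \tuple{j}') \varepsilon_t(\tuple{j}) = \delta_{\tuple{j}, \tuple{j}'} e(\tuple{i}', \tuple{j}')$, and summing over $(\tuple{i}', \tuple{j}') \in K^\alpha$ yields the key identity
\[
e(\tuple{j}) = e(\alpha)\, \varepsilon_t(\tuple{j}).
\]

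Next, I would push $g_a$ past $\varepsilon_t(\tuple{j})$. Rewriting \eqref{relation:t_b_g_a} as $g_a t_b = t_{s_a(b)} g_a$ and iterating on monomials, a straightforward induction gives, for any polynomial $P \in F[T_1, \dots, T_n]$,
\[
g_a\, P(t_1, \dots, t_n) = P(t_{s_a(1)}, \dots, t_{s_a(n)})\, g_a.
\]
Applying this to $P = \prod_a p_{j_a}(T_a)$ and reindexing via the involution $s_a$ (noting that $(s_a \cdot \tuple{j})_c = j_{s_a(c)}$) yields
\[
g_a\, \varepsilon_t(\tuple{j}) = \prod_c p_{j_c}(t_{s_a(c)})\, g_a = \prod_{c'} p_{(s_a \cdot \tuple{j})_{c'}}(t_{c'})\, g_a = \varepsilon_t(s_a \cdot \tuple{j})\, g_a.
\]

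Combining the two steps together with the centrality of $e(\alpha)$ and the stability of $K^\alpha$ under $\mathfrak{S}_n$ (which ensures $s_a \cdot \tuple{j} \in J^\alpha$), I would conclude
\[
g_a e(\tuple{j}) = g_a\, e(\alpha) \varepsilon_t(\tuple{j}) = e(\alpha)\, g_a \varepsilon_t(\tuple{j}) = e(\alpha)\, \varepsilon_t(s_a \cdot \tuple{j})\, g_a = e(s_a \cdot \tuple{j})\, g_a.
\]
The only real obstacle is securing the factorisation $e(\tuple{j}) = e(\alpha)\varepsilon_t(\tuple{j})$, which is where one must exploit that the $t_a$ are genuinely diagonalizable (so their joint spectral projector is a polynomial in the $t$'s alone) while the $X_a$ are only generalised-eigenspace-decomposable; once that separation of variables is in hand, the rest of the argument is a direct consequence of \eqref{relation:t_b_g_a}.
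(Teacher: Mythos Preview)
Your argument is correct, but it differs from the paper's proof. The paper works module-theoretically: taking $M = \widehat{\Y}_{\alpha}^{\tuple{\Lambda}}(q)$ as a module over itself, relation~\eqref{relation:t_b_g_a} shows that $g_a$ sends the weight space $M(\tuple{j}')$ into $M(s_a \cdot \tuple{j}')$, and the identity $g_a e(\tuple{j}) = e(s_a \cdot \tuple{j}) g_a$ is then verified on each summand $M(\tuple{j}')$ of the decomposition $M = \bigoplus_{\tuple{j}'} M(\tuple{j}')$. Your route is instead purely algebraic: you exhibit $e(\tuple{j})$ explicitly as $e(\alpha)\,\varepsilon_t(\tuple{j})$ with $\varepsilon_t(\tuple{j})$ a Lagrange-interpolation polynomial in the $t_b$'s, and then push $g_a$ through this polynomial term by term via~\eqref{relation:t_b_g_a}.

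What each buys: the paper's argument is shorter and immediately reusable---the same weight-space reasoning is invoked verbatim in Lemma~\ref{lemma:ga_e(i,j)} for the $X$-part, where no clean polynomial formula for the generalised eigenprojector is available. Your approach, on the other hand, is more self-contained (no appeal to the regular module or to Remark~\ref{remark:e(k)_independent_M} is strictly needed once the factorisation $e(\tuple{j}) = e(\alpha)\varepsilon_t(\tuple{j})$ is established), and it makes transparent exactly which algebraic feature of the $t_a$---their diagonalisability over $F$---is doing the work.
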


\begin{proof}
Let $M \coloneqq \widehat{\Y}_{\alpha}^{\tuple{\Lambda}}(q)$.
 Given the relation $(\ref{relation:t_b_g_a})$, we see that $g_a$ maps $M(\tuple{j})$ to $M(s_a \cdot \tuple{j})$. Fix now $\tuple{j} \in J^\alpha$ and let $\tuple{j}' \in J^\alpha$ and $v \in M(\tuple{j}')$. If $\tuple{j}' = \tuple{j}$ then we get:
\[
g_a e(\tuple{j}) v = e(s_a \cdot \tuple{j}) g_a v \; (= g_a v),
\]
and if $\tuple{j}' \neq \tuple{j}$, since $g_a v \in M(s_a \cdot \tuple{j}')$ we have:
\[
g_a e(\tuple{j}) v = e(s_a \cdot \tuple{j})g_a v \; (= 0).
\]
Hence, $e(s_a \cdot \tuple{j})g_a$ and $g_a e(\tuple{j})$ coincide on each $M(\tuple{j}')$ for $\tuple{j}' \in J^\alpha$ thus coincide on $M = \oplus_{\tuple{j}'} M(\tuple{j}')$ and we conclude since $M = \widehat{\Y}_{\alpha}^{\tuple{\Lambda}}(q)$ is a unitary algebra.
\end{proof}

\begin{corollaire}
\label{corollaire:ga_e(j)_commutent}
Let $1 \leq a < n$ and $\tuple{j} \in J^{\alpha}$; if $j_a = j_{a+1}$ then $g_a$ and $e(\tuple{j})$ commute.
\end{corollaire}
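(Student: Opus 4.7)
The plan is to derive this immediately from the preceding Lemma~\ref{lemme:g_a_e(j)}. Recall that lemma states $g_a e(\tuple{j}) = e(s_a \cdot \tuple{j}) g_a$ for any $\tuple{j} \in J^{\alpha}$, where $s_a \cdot \tuple{j}$ denotes the action of the transposition $s_a = (a, a+1)$ on $\tuple{j}$ by permutation of coordinates.

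The key observation is purely combinatorial: if $j_a = j_{a+1}$, then swapping the $a$th and $(a+1)$th coordinates of $\tuple{j}$ leaves $\tuple{j}$ unchanged, so $s_a \cdot \tuple{j} = \tuple{j}$. Consequently $e(s_a \cdot \tuple{j}) = e(\tuple{j})$, and substituting into the conclusion of Lemma~\ref{lemme:g_a_e(j)} yields
\[
g_a e(\tuple{j}) = e(\tuple{j}) g_a,
\]
which is the desired commutation. So the whole argument is one line, and there is essentially no obstacle: the content was already packaged in the preceding lemma. I do not foresee any subtlety — one just has to spell out that equality of two consecutive entries of $\tuple{j}$ makes $s_a$ fix $\tuple{j}$.
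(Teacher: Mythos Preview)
Your proposal is correct and is exactly the approach intended by the paper: the corollary is stated immediately after Lemma~\ref{lemme:g_a_e(j)} with no written proof, precisely because it follows at once from $s_a \cdot \tuple{j} = \tuple{j}$ when $j_a = j_{a+1}$.
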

%

\begin{remarque}[About Brundan and Kleshchev's proof - \theBKproof]
\stepcounter{BKproof}
\label{remark:brundan_kleshchev_ja=ja+1}
Let $1 \leq a < n$; if $\tuple{j} \in J^{\alpha}$ verifies $j_a = j_{a+1}$, the following relations are verified between $g_a e(\tuple{j})$ and the $X_b e(\tuple{j})$ for $1 \leq b \leq n$ in the unitary algebra $e(\tuple{j})\widehat{\Y}_{\alpha}^{\tuple{\Lambda}}(q)e(\tuple{j})$:
\begin{gather*}
g_a^2 = q  + (q-1)g_a,
\\
q X_{a+1} = g_a X_a g_a,
\\
\phantom{\forall b \neq a, a+1\qquad} g_a  X_b = X_b g_a \qquad \forall b \neq a, a+1,
\\
g_a X_{a+1} = X_a  g_a + (q-1)X_{a+1},
\\
X_{a+1} g_a = g_a X_a + (q-1)X_{a+1}.
\end{gather*}
These are exactly the relations \eqref{relation:ordre_ga}, \eqref{equation:Xa+1_ga_Xa}--\eqref{relation:X_a+1_g_a} for $\widehat{\H}_n^{\tuple{\Lambda}}(q)$ (that is, for $d = 1$).
Hence, when these only elements, together with the $e^{\H}(\tuple{i})$ for any $\tuple{i} \in I^{\alpha}$, and these only relations, together with those involving the $e^{\H}(\tuple{i})$, are used to prove any relation $(*)$ in \cite[\textsection 4]{BrKl} (in $\widehat{\H}_{\alpha}^{\tuple{\Lambda}}(q)$), we claim that the \emph{same proof} in $\widehat{\Y}_{\alpha}^{\tuple{\Lambda}}(q)$ holds for $(*)$, involving $g_a e(\tuple{j})$ instead of $g_a^{\H}$, the elements $X_b^{\pm 1}e(\tuple{j})$ instead of ${X_b^{\H}}^{\pm 1}$ and $e(\tuple{i}, \tuple{j})$ instead of $e^{\H}(\tuple{i})$. If $\tuple{j}$ verifies in addition $j_{a+1} = j_{a+2}$, we can add to the previous list the element $g_{a+1} e(\tuple{j})$, which stands for $g_{a+1}^{\H}$.

\end{remarque}

\begin{lemme}
\label{lemma:ga_e(i,j)}
For $1 \leq a < n$ and $(\tuple{i}, \tuple{j}) \in K^{\alpha}$ such that $j_a \neq j_{a+1}$ we have $g_a e(\tuple{i}) e(\tuple{j}) = e(s_a \cdot \tuple{i}) g_a e(\tuple{j})$, that is,  $g_a e(\tuple{i}, \tuple{j}) = e(s_a \cdot (\tuple{i}, \tuple{j}))g_a$.
\end{lemme}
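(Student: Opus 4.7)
The plan is to adapt the module-theoretic argument used for Lemma~\ref{lemme:g_a_e(j)}. Working in the regular module $M \coloneqq \widehat{\Y}_{\alpha}^{\tuple{\Lambda}}(q)$, decomposed as $M = \bigoplus_{(\tuple{i}', \tuple{j}') \in K^{\alpha}} M(\tuple{i}', \tuple{j}')$ along its weight spaces \eqref{equation:weight_space}, it suffices to show that the operators $g_a e(\tuple{i})e(\tuple{j})$ and $e(s_a \cdot \tuple{i}) g_a e(\tuple{j})$ agree on each summand $M(\tuple{i}', \tuple{j}')$, since $M$ is unitary.

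First, I would prove the key intertwining claim: for any $(\tuple{i}', \tuple{j}') \in K^{\alpha}$ with $j'_a \neq j'_{a+1}$, the operator $g_a$ sends $M(\tuple{i}', \tuple{j}')$ into $M(s_a \cdot (\tuple{i}', \tuple{j}'))$. For the $t$-part, this is exactly the content of Lemma~\ref{lemme:g_a_e(j)} (i.e., relation \eqref{relation:t_b_g_a}). For the $X$-part, since $j'_a \neq j'_{a+1}$, Lemma~\ref{lemma:ga_Xa} provides the relations $g_a X_b = X_b g_a$ ($b \neq a, a+1$), $X_a g_a = g_a X_{a+1}$ and $X_{a+1} g_a = g_a X_a$ after multiplication by $e(\tuple{j}')$. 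Rewriting these as $(X_b - q^{i'_b}) g_a = g_a(X_b - q^{i'_b})$ for $b \neq a, a+1$ and $(X_a - q^{i'_{a+1}}) g_a = g_a (X_{a+1} - q^{i'_{a+1}})$, $(X_{a+1} - q^{i'_a}) g_a = g_a (X_a - q^{i'_a})$ on $M(\tuple{j}')$, one iterates to obtain $(X_b - q^{(s_a \cdot \tuple{i}')_b})^N g_a v = 0$ for all $b$ and any $v \in M(\tuple{i}', \tuple{j}')$, which is what we need.

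Next, using that $e(\tuple{i})$ and $e(\tuple{j})$ are polynomials in the commuting family $\{X_a, t_a\}$, hence commute, one checks both sides on $v \in M(\tuple{i}', \tuple{j}')$. If $\tuple{j}' \neq \tuple{j}$, then $e(\tuple{j}) v = 0$, so the right side vanishes, while on the left $e(\tuple{i}) e(\tuple{j}) v = e(\tuple{j}) e(\tuple{i}) v = 0$ as well. If $\tuple{j}' = \tuple{j}$ but $\tuple{i}' \neq \tuple{i}$, the left side vanishes directly, while for the right side $g_a v \in M(s_a \cdot \tuple{i}', s_a \cdot \tuple{j})$ by the intertwining claim (the hypothesis $j_a \neq j_{a+1}$ applies to $\tuple{j}' = \tuple{j}$), and $s_a \cdot \tuple{i}' \neq s_a \cdot \tuple{i}$ gives $e(s_a \cdot \tuple{i}) g_a v = 0$. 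Finally, when $(\tuple{i}', \tuple{j}') = (\tuple{i}, \tuple{j})$, both sides evaluate to $g_a v$.

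The only mildly delicate step is the iteration turning the first-order intertwining relations of Lemma~\ref{lemma:ga_Xa} into the $N$-th power statement required by the definition \eqref{equation:weight_space} of the weight spaces; once this is observed, the rest reduces to a careful bookkeeping over the weight-space decomposition, entirely parallel to the proof of Lemma~\ref{lemme:g_a_e(j)}. The equivalent formulation $g_a e(\tuple{i}, \tuple{j}) = e(s_a \cdot (\tuple{i}, \tuple{j})) g_a$ then follows by applying Lemma~\ref{lemme:g_a_e(j)} once more to move the factor $e(\tuple{j})$ across $g_a$ on the right-hand side.
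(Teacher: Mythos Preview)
Your proposal is correct and follows essentially the same route as the paper: the paper invokes Lemma~\ref{lemma:ga_Xa} and then says one argues ``as in Lemma~\ref{lemme:g_a_e(j)}'', which is precisely the weight-space intertwining argument you spell out in detail, followed by a final application of Lemma~\ref{lemme:g_a_e(j)} to pass from $g_a e(\tuple{i}) e(\tuple{j}) = e(s_a \cdot \tuple{i}) g_a e(\tuple{j})$ to $g_a e(\tuple{i}, \tuple{j}) = e(s_a \cdot (\tuple{i}, \tuple{j})) g_a$.
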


\begin{proof}
Given Lemma~\ref{lemma:ga_Xa}, we show as in Lemma~\ref{lemme:g_a_e(j)} that $g_a e(\tuple{i}) e(\tuple{j}) = e(s_a \cdot \tuple{i}) g_a e(\tuple{j})$; we get the final result applying Lemma~\ref{lemme:g_a_e(j)}.
\end{proof}

\subsubsection{Image of \texorpdfstring{$y_a$}{ya}}

We are now able to define the elements $y_a$ for $1 \leq a \leq n$. We saw in Lemma~\ref{lemme:y_a_nilpotent} that these elements (in $\H_{\alpha}^{\tuple{\Lambda}}(\Gamma)$) are nilpotent, hence we define the following elements of $\widehat{\Y}_{\alpha}^{\tuple{\Lambda}}(q)$ for $1 \leq a \leq n$:
\[
y_a \coloneqq \sum_{\tuple{i} \in I^{\alpha}} (1 - q^{-i_a}X_a)e(\tuple{i}) \in \widehat{\Y}_\alpha^{\tuple{\Lambda}}(q).
\]
We can notice that $\sum_{\tuple{i}} (q^{i_a} - X_a) e(\tuple{i})$ is the nilpotent part of the Jordan--Chevalley decomposition of $X_a$; in particular, $y_a$ is nilpotent. As a consequence, we will be able to make calculations in the ring $F[[y_1, \dots, y_n]]$ of power series in the commuting variables $y_1, \dots, y_n$. We will sometimes also need the following element:
\[
y_a(\tuple{i}) \coloneqq q^{i_a}(1 - y_a),
\]
which verifies:
\begin{equation}
\label{equation:ya(i)e(i)=Xae(i)}
y_a(\tuple{i}) e(\tuple{i}) = X_a e(\tuple{i}).
\end{equation}

We end this paragraph with a lemma.

\begin{lemme}
\label{lemma:ga_ya}
For $\tuple{j} \in J^{\alpha}$ such that $j_a \neq j_{a+1}$ we have:
\begin{align*}
g_a y_{a+1} e(\tuple{j}) &= y_a g_a e(\tuple{j}),
\\
y_{a+1} g_a e(\tuple{j}) &= g_a y_a e(\tuple{j}).
\end{align*}
\end{lemme}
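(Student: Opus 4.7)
The plan is to prove the first identity by direct computation, expanding $y_{a+1}$ via its definition and pushing $g_a$ through each factor. The key tools are all at hand: Lemma~\ref{lemma:ga_Xa} allows us to trade $g_a X_{a+1}$ for $X_a g_a$ whenever we are localized on $e(\tuple{j})$ with $j_a \neq j_{a+1}$, while Lemma~\ref{lemma:ga_e(i,j)} lets us move $g_a$ past $e(\tuple{i}, \tuple{j})$ at the cost of applying $s_a$ to $\tuple{i}$ (and to $\tuple{j}$).

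Concretely, first I would write
\[
g_a y_{a+1} e(\tuple{j}) = \sum_{\tuple{i} \in I^\alpha} \bigl(g_a - q^{-i_{a+1}} g_a X_{a+1}\bigr) e(\tuple{i}, \tuple{j}),
\]
where the sum is effectively over those $\tuple{i}$ with $(\tuple{i}, \tuple{j}) \in K^\alpha$. Using Lemma~\ref{lemma:ga_Xa} to replace $g_a X_{a+1} e(\tuple{j})$ by $X_a g_a e(\tuple{j})$, this becomes $\sum_{\tuple{i}} (1 - q^{-i_{a+1}} X_a) g_a e(\tuple{i}, \tuple{j})$. Then by Lemma~\ref{lemma:ga_e(i,j)} I rewrite $g_a e(\tuple{i}, \tuple{j}) = e(s_a \cdot \tuple{i}, s_a \cdot \tuple{j}) g_a$, and relabel the summation index by $\tuple{i}' = s_a \cdot \tuple{i}$ (which is legitimate since $I^\alpha$ is $\mathfrak{S}_n$-stable). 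Since $i'_a = i_{a+1}$, the prefactor becomes $(1 - q^{-i'_a} X_a)$, which is exactly $y_a$ on the $e(\tuple{i}')$-component. Gathering $e(s_a \cdot \tuple{j}) = \sum_{\tuple{i}'} e(\tuple{i}', s_a \cdot \tuple{j})$ and applying Lemma~\ref{lemme:g_a_e(j)} to rewrite $e(s_a \cdot \tuple{j}) g_a = g_a e(\tuple{j})$, I recover $y_a g_a e(\tuple{j})$.

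The second identity follows the same template, now starting from $y_{a+1} g_a e(\tuple{j}) = y_{a+1} e(s_a \cdot \tuple{j}) g_a$ (Lemma~\ref{lemme:g_a_e(j)}), expanding $y_{a+1}$, applying Lemma~\ref{lemma:ga_e(i,j)} in the opposite direction (valid because $s_a \cdot \tuple{j}$ also has distinct $a$, $a{+}1$ entries) to reintroduce $g_a$ on the right, and finally using the third identity of Lemma~\ref{lemma:ga_Xa}, namely $X_{a+1} g_a e(\tuple{j}) = g_a X_a e(\tuple{j})$, to combine everything into $g_a y_a e(\tuple{j})$.

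I do not expect any genuine obstacle: the whole argument is a bookkeeping exercise, with the only potentially confusing point being the reindexing $\tuple{i}' = s_a \cdot \tuple{i}$ that converts an $X_{a+1}$-weight $q^{-i_{a+1}}$ into an $X_a$-weight $q^{-i'_a}$. Care must be taken that $(\tuple{i}, \tuple{j}) \in K^\alpha$ is equivalent to $(s_a \cdot \tuple{i}, s_a \cdot \tuple{j}) \in K^\alpha$, which is immediate from the $\mathfrak{S}_n$-stability of $K^\alpha$.
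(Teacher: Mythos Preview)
Your proposal is correct and follows essentially the same approach as the paper: expand $y_{a+1}$ via its definition, apply Lemma~\ref{lemma:ga_Xa} to swap $g_a X_{a+1}$ for $X_a g_a$, use Lemma~\ref{lemma:ga_e(i,j)} to move the idempotent past $g_a$, and reindex the sum by $s_a \cdot \tuple{i}$. The paper writes the computation slightly more tersely (using the form $g_a e(\tuple{i})e(\tuple{j}) = e(s_a \cdot \tuple{i}) g_a e(\tuple{j})$ directly from Lemma~\ref{lemma:ga_e(i,j)} rather than passing through $e(s_a \cdot \tuple{j})g_a$ and then Lemma~\ref{lemme:g_a_e(j)}), and omits the second identity as analogous, but the substance is identical.
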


\begin{proof}
Indeed, $g_a y_{a+1} e(\tuple{j}) = \sum_{\tuple{i}} (g_a - q^{-i_{a+1}}g_a X_{a+1}) e(\tuple{i}, \tuple{j})$ and applying Lemmas~\ref{lemma:ga_Xa} and \ref{lemma:ga_e(i,j)} we get:
\begin{align*}
g_a y_{a+1} e(\tuple{j}) &= \sum_{\tuple{i} \in I^{\alpha}} (1 - q^{-i_{a+1}}X_a) e(s_a \cdot \tuple{i}) g_a e(\tuple{j})
\\
&= \sum_{\tuple{i} \in I^{\alpha}} (1 - q^{-i_a} X_a) e(\tuple{i}) g_a e(\tuple{j})
\\
g_a y_{a+1} e(\tuple{j}) &= y_a g_a e(\tuple{j}).
\end{align*}
\end{proof}

\subsubsection{Image of \texorpdfstring{$\psi_a$}{psia}}

We first define some invertible elements $Q_a(\tuple{i}, \tuple{j}) \in F[[y_a, y_{a+1}]]^{\times}$ for $1 \leq a < n$ and $(\tuple{i}, \tuple{j}) \in K^{\alpha}$ by:
\[
Q_a(\tuple{i}, \tuple{j}) \coloneqq 
\begin{cases}
\begin{cases}
1 - q + qy_{a+1} - y_a & \text{if } i_a = i_{a+1},
\\
(y_a(\tuple{i}) - q y_{a+1}(\tuple{i})) / (y_a(\tuple{i}) - y_{a+1}(\tuple{i})) & \text{if } i_a \nrelbar i_{a+1},
\\
(y_a(\tuple{i}) - q y_{a+1}(\tuple{i})) / (y_a(\tuple{i}) - y_{a+1}(\tuple{i}))^2 & \text{if } i_a \to i_{a+1},
\\
q^{i_a} & \text{if } i_a \leftarrow i_{a+1},
\\
q^{i_a} / (y_a(\tuple{i}) - y_{a+1}(\tuple{i})) & \text{if } i_a \leftrightarrows i_{a+1},
\end{cases}
& \text{if } j_a = j_{a+1},
\\
f_{a, \tuple{j}} & \text{if } j_a \neq j_{a+1},
\end{cases}
\]
where $f_{a, \tuple{j}} \in \{1, q\}$ is given for $j_a \neq j_{a+1}$ by:
\[
f_{a, \tuple{j}} \coloneqq \begin{cases}
q & \text{if } j_a < j_{a+1},
\\
1 & \text{if } j_a > j_{a+1},
\end{cases}
\]
with $<$ being a total ordering on $J = \mathbb{Z} / d \mathbb{Z} \simeq \{1, \dots, d\}$. 
For $d = 1$ or for $j_a = j_{a+1}$, the power series $Q_a(\tuple{i}, \tuple{j})$ coincides with the definition  given at \cite[(4.36)]{BrKl}.

\begin{remarque}
\label{remark:Qa_dependence}
The elements $Q_a(\tuple{i}, \tuple{j})$ depend only on $(i_a, i_{a+1})$ and $(j_a, j_{a+1})$. Moreover, as in \cite{BrKl} the explicit expression of $Q_a(\tuple{i}, \tuple{j})$ for $j_a = j_{a+1}$ does not really matter; only its properties are essential, for instance, those in Lemma~\ref{lemma:relation_Qa}.
\end{remarque}

\begin{remarque}
\label{remark:f_aj}
The scalar $f_{a, \tuple{j}}$ is only an artefact: if $q$ admits a square root $q^{1/2}$ in $F$, we can simply set $f_{a, \tuple{j}} \coloneqq q^{1/2}$.
\end{remarque}

Finally, we give an easy lemma about these $f_{a, \tuple{j}}$.

\begin{lemme}
\label{lemme:f_aj}
If $j_a \neq j_{a+1}$ then $f_{a, \tuple{j}} f_{a, s_a \cdot \tuple{j}} = q$.
\end{lemme}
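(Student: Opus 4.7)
The plan is a straightforward two-case analysis based on the total ordering $<$ on $J$. Since the hypothesis $j_a \neq j_{a+1}$ forces exactly one of the strict inequalities $j_a < j_{a+1}$ or $j_a > j_{a+1}$ to hold, the definition of $f_{a,\tuple{j}}$ selects exactly one of the values $q$ or $1$.

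The key observation is that the transposition $s_a = (a,a+1)$ swaps the entries in positions $a$ and $a+1$, so $(s_a\cdot\tuple{j})_a = j_{a+1}$ and $(s_a\cdot\tuple{j})_{a+1} = j_a$. Consequently, the strict inequality characterizing the case is reversed when passing from $\tuple{j}$ to $s_a\cdot\tuple{j}$. In particular, if $j_a < j_{a+1}$, then $(s_a\cdot\tuple{j})_a > (s_a\cdot\tuple{j})_{a+1}$, so $f_{a,\tuple{j}} = q$ while $f_{a, s_a\cdot\tuple{j}} = 1$, and the product is $q$; the case $j_a > j_{a+1}$ is symmetric, giving $1\cdot q = q$ again.

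There is essentially no obstacle here: the statement is immediate from the definition, and the only thing one needs to verify is that the two cases in the definition of $f_{a,\tuple{j}}$ are interchanged by the action of $s_a$. I would write the proof in one or two lines.
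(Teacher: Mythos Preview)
Your proof is correct and is exactly the argument the paper has in mind: the lemma is stated there as ``an easy lemma'' with no proof given, and your two-case analysis (the transposition $s_a$ swaps $j_a$ and $j_{a+1}$, reversing the strict inequality, so exactly one factor is $q$ and the other is $1$) is the intended justification.
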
 

We introduce a notation for a power series $Q \in F[[y_1, \dots, y_n]]$: if $\sigma \in \mathfrak{S}_n$ is a permutation, we denote by $Q^{\sigma}$ the power series $Q^{\sigma}(y_1, \dots, y_n) \coloneqq Q(y_{\sigma^{-1}(1)}, \dots, y_{\sigma^{-1}(n)})$; note that $(Q^{\sigma})^{\rho} = Q^{\sigma\rho}$ (we get a right action).

We will use later the following properties verified by $Q_a(\tuple{i}, \tuple{j})$ (see \cite[(4.35)]{BrKl}), where $Q^{\sigma}(\tuple{i}, \tuple{j}) \coloneqq Q(\tuple{i}, \tuple{j})^{\sigma}$.
\begin{lemme}
\label{lemma:relation_Qa}
We have:
\begin{align*}
Q_{a+1}^{s_a}(\tuple{i}, \tuple{j}) &= Q_a^{s_{a+1}}(s_{a+1}s_a \cdot (\tuple{i}, \tuple{j})),
\\
Q_a^{s_{a+1}}(\tuple{i}, \tuple{j}) &= Q_{a+1}^{s_a}(s_a s_{a+1} \cdot (\tuple{i}, \tuple{j})).
\end{align*}
\end{lemme}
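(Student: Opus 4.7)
The proof is essentially a bookkeeping verification that exploits the ``translation invariance'' of the definition of $Q_a(\tuple{i}, \tuple{j})$. Inspecting the piecewise formula, one sees that there exists a single universal expression $\Phi = \Phi(u, v; k, k', \ell, \ell')$---a power series in $u, v$ whose coefficients depend on the discrete parameters $k, k' \in I$ and $\ell, \ell' \in J$---such that, for every $a \in \{1, \dots, n-1\}$,
\[
Q_a(\tuple{i}, \tuple{j}) = \Phi(y_a, y_{a+1}; i_a, i_{a+1}, j_a, j_{a+1}).
\]
This is the content of Remark~\ref{remark:Qa_dependence}: each case in the definition depends only on the pair $(i_a, i_{a+1})$ (via $i_a = i_{a+1}$, $i_a \nrelbar i_{a+1}$, $i_a \to i_{a+1}$, $i_a \leftarrow i_{a+1}$, or $i_a \leftrightarrows i_{a+1}$) and on the pair $(j_a, j_{a+1})$ (equal, or one of the two strict orderings), and the formula in each case involves $y_a, y_{a+1}$ in a uniform way.

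Granting this, each relation reduces to matching two evaluations of $\Phi$. For the first equation, since $s_a$ swaps $y_a \leftrightarrow y_{a+1}$ and fixes every other variable, I compute
\[
Q_{a+1}^{s_a}(\tuple{i}, \tuple{j}) = \Phi(y_{a+1}, y_{a+2}; i_{a+1}, i_{a+2}, j_{a+1}, j_{a+2})^{s_a} = \Phi(y_a, y_{a+2}; i_{a+1}, i_{a+2}, j_{a+1}, j_{a+2}).
\]
For the other side, using the convention $(\sigma \cdot \tuple{i})_b = i_{\sigma^{-1}(b)}$, the tuple $s_{a+1}s_a \cdot (\tuple{i}, \tuple{j})$ has entries $(i_{a+1}, i_{a+2})$ and $(j_{a+1}, j_{a+2})$ at positions $a, a+1$; applying $s_{a+1}$ to the resulting power series in $y_a, y_{a+1}$ then yields
\[
Q_a^{s_{a+1}}\bigl(s_{a+1}s_a \cdot (\tuple{i}, \tuple{j})\bigr) = \Phi(y_a, y_{a+2}; i_{a+1}, i_{a+2}, j_{a+1}, j_{a+2}),
\]
which matches the previous display. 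The second equation is handled by the symmetric computation: this time $s_a s_{a+1} \cdot (\tuple{i}, \tuple{j})$ has entries $(i_a, i_{a+1})$ and $(j_a, j_{a+1})$ at positions $a+1, a+2$, and both sides reduce to $\Phi(y_a, y_{a+2}; i_a, i_{a+1}, j_a, j_{a+1})$.

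There is no genuine obstacle: the proof is purely bookkeeping. The only point requiring care is to keep straight the two conventions that both use $\sigma^{-1}$---the action on tuples $(\sigma \cdot \tuple{i})_b = i_{\sigma^{-1}(b)}$ and the action on power series $Q^{\sigma}(y_1, \ldots, y_n) = Q(y_{\sigma^{-1}(1)}, \ldots, y_{\sigma^{-1}(n)})$---so that the parameter shifts induced by $s_{a+1}s_a$ (respectively $s_a s_{a+1}$) and the variable permutation induced by $s_{a+1}$ (respectively $s_a$) align correctly. An alternative brute-force case analysis over the various situations for $(i_a, i_{a+1})$ and $(j_a, j_{a+1})$ is also possible but substantially longer.
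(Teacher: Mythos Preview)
Your proof is correct and takes essentially the same approach as the paper. Both arguments rest on Remark~\ref{remark:Qa_dependence}: the paper writes out the case-by-case formula for $Q_a(s_{a+1}s_a\cdot(\tuple{i},\tuple{j}))$ explicitly and then applies $s_{a+1}$, whereas you package the same observation as the existence of a universal expression $\Phi(u,v;k,k',\ell,\ell')$ and track the two evaluations---the substance is identical, your formulation is just slightly more abstract.
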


\begin{proof}
We check only the first equality, the second being straightforward considering $(\tuple{i}', \tuple{j}') \coloneqq s_a s_{a+1} \cdot (\tuple{i}, \tuple{j})$. For $i \in I$, let us write $y_a(i) \coloneqq q^i(1 - y_a)$; in particular, we have $y_a(i_a) = y_a(\tuple{i})$. Noticing that $s_{a+1}s_a = (a, a+2, a+1)$, we get for $j_{a+1} = j_{a+2}$:
\[
Q_a(s_{a+1} s_a \cdot (\tuple{i}, \tuple{j})) =
\begin{cases}
1 - q + qy_{a+1} - y_a & \text{if } i_{a+1} = i_{a+2},
\\
(y_a(i_{a+1}) - q y_{a+1}(i_{a+2})) / (y_a(i_{a+1}) - y_{a+1}(i_{a+
2})) & \text{if } i_{a+1} \nrelbar i_{a+2},
\\
(y_a(i_{a+1}) - q y_{a+1}(i_{a+2})) / (y_a(i_{a+1}) - y_{a+1}(i_{a+2}))^2 & \text{if } i_{a+1} \to i_{a+2},
\\
q^{i_{a+1}} & \text{if } i_{a+1} \leftarrow i_{a+2},
\\
q^{i_{a+1}} / (y_a(i_{a+1}) - y_{a+1}(i_{a+2})) & \text{if } i_{a+1} \leftrightarrows i_{a+2},
\end{cases}
\]
and we conclude using $y_a(i_{a+1})^{s_{a+1}} = y_a(i_{a+1})$ and $y_{a+1}(i_{a+2})^{s_{a+1}} = y_{a+2}(i_{a+2})$. If $j_{a+1} \neq j_{a+2}$, we have $Q_a(s_{a+1}s_a \cdot (\tuple{i}, \tuple{j})) = f_{a, (a, a+2, a+1)\cdot \tuple{j}} = \begin{cases}
q & \text{if } j_{a+1} < j_{a+2},
\\
1 & \text{if } j_{a+1} > j_{a+2},
\end{cases}$ and this is exactly $f_{a+1, \tuple{j}}$.
\end{proof}

We now introduce the following element of $\widehat{\Y}_{\alpha}^{\tuple{\Lambda}}(q)$:
\[
\Phi_a \coloneqq g_a + (1-q)\sum_{\substack{(\tuple{i}, \tuple{j}) \in K^{\alpha} \\ i_a \neq i_{a+1} \\ j_a = j_{a+1}}} \left(1 - X_a X_{a+1}^{-1}\right)^{-1} e(\tuple{i}, \tuple{j}) + \sum_{\substack{(\tuple{i}, \tuple{j}) \in K^{\alpha} \\ i_a = i_{a+1} \\ j_a = j_{a+1}}} e(\tuple{i}, \tuple{j}),
\]
where ${(1 - X_a X_{a+1}^{-1})}^{-1}e(\tuple{k})$ denotes the inverse of $(1 - X_a X_{a+1}^{-1})e(\tuple{k})$ in $e(\tuple{k})\widehat{\Y}_{\alpha}^{\tuple{\Lambda}}(q)e(\tuple{k})$. Note that this element is indeed invertible, since for $\tuple{k} = (\tuple{i}, \tuple{j})$ with $i_a \neq i_{a+1}$ its only eigenvalue $1 - q^{i_a - i_{a+1}}$ is non-zero, thanks to the definition of $I$.
In particular, we have:
\[
\Phi_a e(\tuple{j}) = g_a e(\tuple{j}) \qquad \text{if } j_a \neq j_{a+1}.
\]
For $d = 1$ we get the ``intertwining element'' defined in \cite[\textsection 4.2]{BrKl}, and we write it $\Phi_a^{\H}$.

\begin{remarque}
Though we will not need this until Section~\ref{section:yokonuma_hecke_generators}, we define now the  power series $P_a(\tuple{i}, \tuple{j}) \in F[[y_a, y_{a+1}]]$ for $1 \leq a < n$ and $(\tuple{i}, \tuple{j}) \in K^{\alpha}$ by:
\[
P_a(\tuple{i}, \tuple{j}) \coloneqq \begin{cases}
\begin{cases}
1 & \text{if } i_a = i_{a+1},
\\
(1-q){(1 - y_a(\tuple{i})y_{a+1}(\tuple{i})^{-1})}^{-1} & \text{if } i_a \neq i_{a+1},
\end{cases}
& \text{if } j_a = j_{a+1},
\\
0 & \text{if } j_a \neq j_{a+1}.
\end{cases}
\]
For $d = 1$ or for $j_a = j_{a+1}$ we recover the definition given at \cite[(4.27)]{BrKl}.  Moreover, we have the following equality:
\begin{equation}
\label{equation:Phia_ga_Pa}
\Phi_a = \sum_{\tuple{k} \in K^{\alpha}} \left(g_a + P_a(\tuple{k})\right)e(\tuple{k}).
\end{equation}
Indeed, the only non-obvious fact to check is $P_a(\tuple{i}, \tuple{j})e(\tuple{i}, \tuple{j}) = (1-q){(1-X_a X_{a+1}^{-1})}^{-1} e(\tuple{i}, \tuple{j})$ if $i_a \neq i_{a+1}$ and $j_a = j_{a+1}$, but this is clear by \eqref{equation:ya(i)e(i)=Xae(i)}.
We will also use the following equality (the same one as in Lemma~\ref{lemma:relation_Qa}):
\begin{equation}
\label{equation:Pa_sa}
P_{a+1}^{s_a}(\tuple{i}, \tuple{j}) = P_a^{s_{a+1}}(s_{a+1}s_a \cdot(\tuple{i}, \tuple{j})).
\end{equation}
\end{remarque}

\begin{lemme}
\label{lemma:Phi_a}
We have the following properties:
\begin{align}
\label{equation:phia_e(j)}
\Phi_a e(\tuple{j}) &= e(s_a \cdot \tuple{j})\Phi_a,
\\
\label{equation:phia_e(ij)}
\Phi_a e(\tuple{i}, \tuple{j}) &= e(s_a \cdot (\tuple{i}, \tuple{j}))\Phi_a,
\\
\label{equation:phia_Xb}
\phantom{\forall b \neq a, a+1 \qquad}\Phi_a X_b &= X_b \Phi_a  \qquad\forall  b \neq a, a+1,
\\
\label{equation:phia_yb}
\phantom{\forall b \neq a, a+1 \qquad}\Phi_a y_b &= y_b \Phi_a  \qquad\forall b \neq a, a+1,
\\
\label{equation:phia_Qb}
\phantom{\forall |b-a| > 1 \qquad}\Phi_a Q_b(\tuple{k}) &= Q_b(\tuple{k}) \Phi_a \qquad\forall |b-a| > 1,
\\
\label{equation:phia_phib}
\phantom{\forall |b-a| > 1 \qquad} \Phi_a \Phi_b &= \Phi_b\Phi_a \qquad \forall |b-a|>1.
\end{align}
\end{lemme}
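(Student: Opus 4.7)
The plan is to split $\Phi_a = g_a + C_a$, where the correction
\[
C_a \coloneqq (1-q)\sum_{\substack{(\tuple{i}, \tuple{j}) \in K^{\alpha} \\ i_a \neq i_{a+1},\, j_a = j_{a+1}}} (1 - X_a X_{a+1}^{-1})^{-1} e(\tuple{i}, \tuple{j}) + \sum_{\substack{(\tuple{i}, \tuple{j}) \in K^{\alpha} \\ i_a = i_{a+1},\, j_a = j_{a+1}}} e(\tuple{i}, \tuple{j})
\]
is supported on idempotents $e(\tuple{i},\tuple{j})$ with $j_a = j_{a+1}$ and is a (power-series) expression in the pairwise commuting family $\{X_1,\dots,X_n,t_1,\dots,t_n\}$. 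Then I handle the six identities in order, reusing each one for the next.

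For \eqref{equation:phia_e(j)}, I write $\Phi_a e(\tuple{j}) = g_a e(\tuple{j}) + C_a e(\tuple{j})$. The first term equals $e(s_a\cdot\tuple{j})g_a$ by Lemma~\ref{lemme:g_a_e(j)}. For the second, if $j_a\neq j_{a+1}$ then both $C_a e(\tuple{j})$ and $e(s_a\cdot\tuple{j})C_a$ vanish (no term in $C_a$ survives), while if $j_a=j_{a+1}$ then $s_a\cdot\tuple{j}=\tuple{j}$ and $C_a$ commutes with $e(\tuple{j})$ because $C_a$ lies in the commutative subalgebra generated by the $X_b$'s and $t_b$'s. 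For \eqref{equation:phia_e(ij)}, when $j_a\neq j_{a+1}$ we have $\Phi_a e(\tuple{i},\tuple{j})=g_a e(\tuple{i},\tuple{j})$ and Lemma~\ref{lemma:ga_e(i,j)} gives the result. When $j_a=j_{a+1}$, the description of $\Phi_a e(\tuple{j})$ in terms of $g_a e(\tuple{j})$ and the $X_b e(\tuple{j})$'s is \emph{formally identical} to Brundan--Kleshchev's intertwiner $\Phi_a^{\H}$, so by Remark~\ref{remark:brundan_kleshchev_ja=ja+1} the identity $\Phi_a^{\H} e^{\H}(\tuple{i})=e^{\H}(s_a\cdot\tuple{i})\Phi_a^{\H}$ from \cite[\textsection 4.2]{BrKl} transfers to $\Phi_a e(\tuple{i},\tuple{j})=e(s_a\cdot(\tuple{i},\tuple{j}))\Phi_a$ verbatim.

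Identity \eqref{equation:phia_Xb} for $b \neq a, a+1$ is immediate: $g_a$ commutes with $X_b$ by \eqref{relation:g_a_X_b}, the idempotents $e(\tuple{i},\tuple{j})$ commute with $X_b$ (being polynomials in the commuting family $\{X_c,t_c\}$), and $X_a X_{a+1}^{-1}$ commutes with $X_b$. Summing, $C_a$ commutes with $X_b$. For \eqref{equation:phia_yb}, I expand $y_b = \sum_{\tuple{i}}(1-q^{-i_b}X_b)e(\tuple{i})$ and push $\Phi_a$ past $e(\tuple{i})$ using the already-proved \eqref{equation:phia_e(ij)} (summed over $\tuple{j}$, giving $\Phi_a e(\tuple{i})=e(s_a\cdot\tuple{i})\Phi_a$) and past $X_b$ using \eqref{equation:phia_Xb}; reindexing $\tuple{i}'=s_a\cdot\tuple{i}$ uses that $i_b=i'_b$ when $b\neq a,a+1$, so the scalar $q^{-i_b}$ is preserved. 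For \eqref{equation:phia_Qb} with $|b-a|>1$, the indices $b$ and $b+1$ are distinct from $a,a+1$, so $Q_b(\tuple{k})$ is a power series in $y_b,y_{b+1}$ (with coefficients that are scalars, independent of the $y_c$'s) multiplied by idempotents fixed by $s_a$ on their $b,b+1$ coordinates; both factors commute with $\Phi_a$ by \eqref{equation:phia_yb} and \eqref{equation:phia_e(ij)}.

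Finally, for \eqref{equation:phia_phib} with $|b-a|>1$, I expand $\Phi_a\Phi_b = (g_a+C_a)(g_b+C_b)$ into four terms. The term $g_a g_b = g_b g_a$ by the braid-like relation \eqref{relation:ga_gb}. The mixed terms and $C_aC_b$ match their mirror images after applying \eqref{equation:phia_e(ij)}, \eqref{equation:phia_Xb}, and the fact that $C_a,C_b$ lie in the commutative subalgebra generated by $X_c,t_c$ and hence commute with each other; combined with the commutation of $g_a$ with $X_c$ for $c\neq a,a+1$ (relation \eqref{relation:g_a_X_b}), everything collapses to $\Phi_b\Phi_a$. The main obstacle is the clean bookkeeping for \eqref{equation:phia_e(ij)} in the case $j_a=j_{a+1}$: the content of the proof is really Brundan--Kleshchev's calculation, and the crucial step is checking carefully that \emph{exactly} the relations listed in Remark~\ref{remark:brundan_kleshchev_ja=ja+1} are used, so the transfer principle applies without redoing their computations.
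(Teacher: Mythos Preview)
Your proof is correct and follows essentially the same approach as the paper: decompose $\Phi_a = g_a + C_a$ (the paper writes $\Phi_a' = \Phi_a - g_a$), handle \eqref{equation:phia_e(j)}--\eqref{equation:phia_Xb} directly from the lemmas on $g_a e(\tuple{j})$ and $g_a e(\tuple{i},\tuple{j})$, invoke Remark~\ref{remark:brundan_kleshchev_ja=ja+1} for \eqref{equation:phia_e(ij)} when $j_a=j_{a+1}$, derive \eqref{equation:phia_yb} and \eqref{equation:phia_Qb} from the earlier identities, and expand the product $(g_a+C_a)(g_b+C_b)$ for \eqref{equation:phia_phib}. One small inaccuracy: in your treatment of \eqref{equation:phia_Qb} you describe $Q_b(\tuple{k})$ as ``multiplied by idempotents'', but $Q_b(\tuple{k})$ is simply an element of $F[[y_b,y_{b+1}]]$, so \eqref{equation:phia_yb} alone suffices---no idempotent commutation is needed there.
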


\begin{proof}
We will use results from \cite[Lemma 4.1]{BrKl} (which is this lemma for $d = 1$).
\begin{itemize}
\item[(\ref{equation:phia_e(j)})] Using Lemma~\ref{lemme:g_a_e(j)}, it is clear if $j_a \neq j_{a+1}$ since then $\Phi_a e(\tuple{j}) = g_a e(\tuple{j}) = e(s_a \cdot \tuple{j}) g_a = e(s_a\cdot \tuple{j})\Phi_a$. Using Corollary~\ref{corollaire:ga_e(j)_commutent} it is clear if $j_a = j_{a+1}$  since $e(\tuple{j})$ commutes with every term in the definition of $\Phi_a$.

\item[(\ref{equation:phia_e(ij)})] If $j_a = j_{a+1}$, we claim that the relation comes applying \eqref{equation:phia_e(j)} and Remark~\ref{remark:brundan_kleshchev_ja=ja+1} on the equality $\Phi_a^{\H}e^{\H}(\tuple{i}) = e^{\H}(s_a \cdot \tuple{i})\Phi_a^{\H}$. 
If $j_a \neq j_{a+1}$ it follows directly from Lemma~\ref{lemma:ga_e(i,j)}.

\item[(\ref{equation:phia_Xb})] Straightforward using \eqref{relation:g_a_X_b} and since $e(\tuple{i}, \tuple{j})$ are polynomials in $X_1, \dots, X_n$.

\item[(\ref{equation:phia_yb})] Using \eqref{equation:definition_e(i)_e(j)}, \eqref{equation:phia_e(ij)} and \eqref{equation:phia_Xb} we get:
\begin{gather*}
\Phi_a y_b = \sum_{\tuple{i}, \tuple{j}} (1 - q^{-i_b}X_b)e(s_a\cdot (\tuple{i}, \tuple{j})) \Phi_a
\\
= \sum_{\tuple{i}} (1 - q^{-i_b}X_b)e(s_a\cdot \tuple{i}) \Phi_a = y_b \Phi_a,
\end{gather*}
since $(s_a \cdot \tuple{i})_b = i_b$.

\item[(\ref{equation:phia_Qb})] Since $Q_b(\tuple{k}) \in F[[y_b, y_{b+1}]]$ and $b \neq a, a+1$ and $b+1 \neq a, a+1$ it follows from \eqref{equation:phia_yb}.

\item[(\ref{equation:phia_phib})] Let us write $\Phi_a' \coloneqq \Phi_a - g_a$. Using \eqref{relation:g_a_X_b} and Lemma~\ref{lemma:ga_e(i,j)} we get:
\begin{gather*}
\Phi_a' g_b = g_b\left((1-q)\sum_{\substack{i_a \neq i_{a+1} \\ j_a = j_{a+1}}} {(1 - X_a X_{a+1}^{-1})}^{-1} e(s_b\cdot (\tuple{i}, \tuple{j}))\right.
\\ + \left.\sum_{\substack{i_a = i_{a+1} \\ j_a = j_{a+1}}} e(s_b\cdot(\tuple{i}, \tuple{j}))\right) = g_b \Phi_a',
\end{gather*}
and exchanging $a$ and $b$ we get $g_a \Phi_b' = \Phi_b' g_a$. Noticing that $\Phi_a' \Phi_b' = \Phi_b' \Phi_a'$ (we don't use here $|b-a| > 1$) and using \eqref{relation:ga_gb}, we get $\Phi_a \Phi_b = (g_a + \Phi_a')(g_b + \Phi_b') = g_a g_b + \Phi_a' g_b + g_a \Phi_b' + \Phi_a'\Phi_b' = g_b g_a + g_b \Phi_a' + \Phi_b' g_a + \Phi_b' \Phi_a' = (g_b + \Phi_b')(g_a + \Phi_a') = \Phi_b \Phi_a$.
\end{itemize}
\end{proof}

We are now ready to define our elements $\psi_a$ for $1 \leq a < n$:
\[
\psi_a \coloneqq \sum_{\tuple{k} \in K^{\alpha}} \Phi_a Q_a(\tuple{k})^{-1} e(\tuple{k}) \in \widehat{\Y}_{\alpha}^{\tuple{\Lambda}}(q).
\]

As usual, we write $\psi_a^{\H}$ for $\psi_a$ when $d = 1$, and this element $\psi_a^{\H}$ corresponds with the $\psi_a$ of \cite[\textsection 4.3]{BrKl}. Note finally that for $\tuple{j} \in J^{\alpha}$ we have:
\[
\psi_a e(\tuple{j}) = f_{a, \tuple{j}}^{-1} g_a e(\tuple{j}) \qquad \text{if } j_a \neq j_{a+1}.
\]

\subsection{Check of the defining relations}
\label{subsection:check_kl_generators}

We now check the defining relations \eqref{relation:quiver_sum_alpha_e(i)}--\eqref{relation:quiver_ya+1_psia}, \eqref{relation:quiver_psia^2}--\eqref{relation:quiver_tresse} and \eqref{relation:quiver_cyclo_y1} for the elements we have just defined.
The idea is the following: when an element $e(\tuple{i}, \tuple{j})$ lies in a relation to check, if $j_a = j_{a+1}$ then we get immediately the result by Remark~\ref{remark:brundan_kleshchev_ja=ja+1} rewriting the same proof as \cite[Theorem 4.2]{BrKl}, and if $j_a \neq j_{a+1}$ then it will be easy (at least, easier than in \cite{BrKl}) to prove the relation. Recall that we always work in $\widehat{\Y}_{\alpha}^{\tuple{\Lambda}}(q)$ (in particular every relation should be multiplied by $e(\alpha)$) and we write $e(\tuple{i})$ and $e(\tuple{j})$ without any $(\alpha)$.

\paragraph*{(\ref{relation:quiver_cyclo_y1})} 
We do exactly the same proof as in $\widehat{\H}_{\alpha}^{\tuple{\Lambda}}(q)$.
Let $(\tuple{i}, \tuple{j}) \in K^{\alpha}$ and set $M \coloneqq \widehat{\Y}_{\alpha}^{\tuple{\Lambda}}(q)$; recall that the action of $X_1$ on $M$ is given by the action of $e(\alpha)X_1$.  By $(\ref{relation:X1_cyclo})$ we have
$\prod_{i \in I} (X_1 - q^{i})^{\Lambda_{i}} = 0$, hence:
\begin{equation}
\label{equation:relation_quiver_cyclo_y1}
\prod_{i \in I} \left[{(X_1 - q^{i})}^{\Lambda_{i}} e(\tuple{i})\right] = 0.
\end{equation}
As an endomorphism of $M(\tuple{i})$, the element ${(X_1 - q^{i})}^{\Lambda_{i}}$ is invertible if $i \neq i_1$ since its only eigenvalue ${(q^{i_1} - q^{i})}^{\Lambda_{i}}$ is non-zero (note that $\Lambda_{i}$ may be equal to $0$). This means that there exist elements ${(X_1 - q^{i})}^{-\Lambda_{i}} e(\tuple{i})$ such that ${(X_1 - q^{i})}^{\Lambda_{i}} e(\tuple{i}) \cdot {(X_1 - q^{i})}^{-\Lambda_{i}} e(\tuple{i}) = e(\tuple{i})$.    Hence, multiplying by all these inverses, the equation \eqref{equation:relation_quiver_cyclo_y1} becomes:
\[
{(X_1 - q^{i_1})}^{\Lambda_{i_1}}e(\tuple{i}) = 0.
\]
Finally, since $y_1^{\Lambda_{i_1}} = \sum_{\tuple{i}' \in I^{\alpha}} {(1 - q^{-i'_1}X_1) }^{\Lambda_{i_1}} e(\tuple{i}')$
we obtain:
\[
y_1^{\Lambda_{i_1}} e(\tuple{i}, \tuple{j}) = {(1 - q^{-i_1}X_1)}^{\Lambda_{i_1}}e(\tuple{i}, \tuple{j}) = 0.\]

\paragraph*{(\ref{relation:quiver_sum_alpha_e(i)})}
Straightforward from the definition of the $e(\tuple{k})$ for $\tuple{k} \in K^{\alpha}$.

\paragraph*{(\ref{relation:quiver_e(i)e(i')})} Idem.

\paragraph*{(\ref{relation:quiver_y_ae(i)})} Straightforward since $y_a$ and $e(\tuple{k})$ both lie in the commutative subalgebra generated by $t_1, \dots, t_n$ and  $X_1, \dots, X_n$.

\paragraph*{(\ref{relation:quiver_psiae(i)})} Straightforward by \eqref{equation:phia_e(ij)} and since $Q_a(\tuple{k}')$ and $e(\tuple{k}')$ commute with $e(\tuple{k})$.

\paragraph*{(\ref{relation:quiver_ya_yb})} True since $\{X_a\}_a$ is commutative.

\paragraph*{(\ref{relation:quiver_psia_yb})} True by \eqref{equation:phia_yb}.

\paragraph*{(\ref{relation:quiver_psia_psib})} Let $|a - b| > 1$. We have, using \eqref{relation:quiver_psiae(i)}, \eqref{equation:phia_Qb} and \eqref{equation:phia_phib}:
\begin{align*}
\psi_a \psi_b 
&= \sum_{\tuple{k}} \Phi_a Q_a(\tuple{k})^{-1} e(\tuple{k}) \psi_b
\\
&= \sum_{\tuple{k}} \Phi_a Q_a(\tuple{k})^{-1} \psi_b e(s_b\cdot \tuple{k})
\\
&= \sum_{\tuple{k}} \Phi_a Q_a(\tuple{k})^{-1} \Phi_b Q_b(s_b \cdot \tuple{k})^{-1} e(s_b\cdot \tuple{k})
\\
&= \sum_{\tuple{k}} \Phi_b Q_b(s_b \cdot \tuple{k})^{-1} \Phi_a Q_a(\tuple{k})^{-1} e(s_b \cdot \tuple{k}).
\end{align*}
Hence, noticing that (see Remark~\ref{remark:Qa_dependence}) $Q_a(\tuple{k}) = Q_a(s_b \cdot \tuple{k})$ and $Q_b(s_b\cdot \tuple{k})= Q_b(s_a s_b\cdot \tuple{k})$ we get:
\begin{align*}
\psi_a \psi_b
&= \sum_{\tuple{k}} \Phi_b Q_b(s_b\cdot \tuple{k})^{-1} \psi_a e(s_b\cdot \tuple{k})
\\
&= \sum_{\tuple{k}} \Phi_b Q_b(s_b\cdot \tuple{k})^{-1} e(s_a s_b\cdot \tuple{k}) \psi_a
\\
\psi_a \psi_b &= \psi_b \psi_a.
\end{align*}

\paragraph*{(\ref{relation:quiver_psia_ya+1})}

First, if $\tuple{k} = (\tuple{i}, \tuple{j})$ verifies $j_a = j_{a+1}$ then by Remark~\ref{remark:brundan_kleshchev_ja=ja+1} we get from:
\[
\psi_a^{\H}y_{a+1}^{\H}e^{\H}(\tuple{i}) = \begin{cases}
(y_a^{\H}\psi_a^{\H} + 1)e^{\H}(\tuple{i}) & \text{if } i_a = i_{a+1},
\\
y_a^{\H} \psi_a^{\H} e^{\H}(\tuple{i}) & \text{if } i_a \neq i_{a+1},
\end{cases}
\]
the following equality:
\[
\psi_a y_{a+1} e(\tuple{i}, \tuple{j}) = \begin{cases}
(y_a \psi_a + 1)e(\tuple{i}, \tuple{j}) & \text{if } i_a = i_{a+1} \text{ and } j_a = j_{a+1},
\\
y_a \psi_a e(\tuple{i}, \tuple{j}) & \text{if } i_a \neq i_{a+1} \text{ and } j_a = j_{a+1}.
\end{cases}
\]
Hence it remains to deal with the case $j_a \neq j_{a+1}$ (and no condition on $\tuple{i}$). Using Lemma~\ref{lemma:ga_ya} we get:
\begin{align*}
\psi_a y_{a+1} e(\tuple{i}, \tuple{j}) &= \Phi_a Q_a(\tuple{i}, \tuple{j})^{-1} y_{a+1} e(\tuple{i}, \tuple{j})
\\
&= f_{a, \tuple{j}}^{-1} g_a y_{a+1} e(\tuple{i}, \tuple{j})
\\
&= f_{a, \tuple{j}}^{-1} y_a g_a e(\tuple{i}, \tuple{j})
\\
&= y_a \Phi_a Q_a(\tuple{i}, \tuple{j})^{-1} e(\tuple{i}, \tuple{j})
\\
\psi_a y_{a+1} e(\tuple{i}, \tuple{j}) &= y_a \psi_a e(\tuple{i}, \tuple{j}).
\end{align*}
Finally, we have proved:
\[
\psi_a y_{a+1} e(\tuple{k}) = 
\begin{cases}
(y_a \psi_a + 1)e(\tuple{k}) & \text{if } k_a = k_{a+1},
\\
y_a \psi_a e(\tuple{k}) & \text{if } k_a \neq k_{a+1},
\end{cases}
\]
which is exactly \eqref{relation:quiver_psia_ya+1}.

\paragraph*{(\ref{relation:quiver_ya+1_psia})} Similar.

\bigskip

\begin{remarque}
\label{remark:f_psia}
Thanks to relations \eqref{relation:quiver_psia_yb}, \eqref{relation:quiver_psia_ya+1} and \eqref{relation:quiver_ya+1_psia}, given $f \in F[[y_1, \dots, y_n]]$ and $\tuple{k} \in K^{\alpha}$ such that $k_a \neq k_{a+1}$ we have $f \psi_a e(\tuple{k}) = \psi_a f^{s_a} e(\tuple{k})$. In particular, this holds if $j_a \neq j_{a+1}$ with $\tuple{k} = (\tuple{i}, \tuple{j})$.
\end{remarque}

\paragraph*{(\ref{relation:quiver_psia^2})} Once again, the result is straightforward if $j_a = j_{a+1}$ using Remark~\ref{remark:brundan_kleshchev_ja=ja+1}. Let us then suppose $j_a \neq j_{a+1}$; hence, necessarily we have $k_a \nrelbar k_{a+1}$ so we have to prove $\psi_a^2 e(\tuple{k}) = e(\tuple{k})$. We have:
\begin{align*}
\psi_a^2 e(\tuple{k}) &= \psi_a e(s_a\cdot \tuple{k}) \psi_a
\\
&= \Phi_a  Q_a(s_a\cdot \tuple{k})^{-1} e(s_a\cdot \tuple{k}) \psi_a
\\
&= f_{a, s_a\cdot \tuple{j}}^{-1} g_a \Phi_a Q_a(\tuple{k})^{-1} e(\tuple{k})
\\
\psi_a^2 e(\tuple{k}) &= (f_{a, s_a\cdot \tuple{j}} f_{a, \tuple{j}})^{-1} g_a^2 e(\tuple{k}).
\end{align*}
Applying Lemmas~\ref{lemma:ga_Xa} and \ref{lemme:f_aj} we find $\psi_a^2 e(\tuple{k}) = e(\tuple{k})$ (recall $e(\tuple{k}) = e(\tuple{i})e(\tuple{j}) = e(\tuple{j}) e(\tuple{i})$) thus we are done.

\paragraph*{(\ref{relation:quiver_tresse})} If $j_a = j_{a+1} = j_{a+2}$, we get the result using Remark~\ref{remark:brundan_kleshchev_ja=ja+1}. Let us then suppose that we are not in that case: we have to prove $\psi_{a+1} \psi_a \psi_{a+1} e(\tuple{k}) = \psi_a \psi_{a+1} \psi_a e(\tuple{k})$.
We will intensively use \eqref{relation:quiver_psiae(i)}; note also that:
\[
\Phi_a e(\tuple{i}, \tuple{j}) = 
\begin{cases}
\left(g_a + (1-q) {(1-X_a X_{a+1}^{-1})}^{-1}\right) e(\tuple{i}, \tuple{j}) & \text{if } i_a \neq i_{a+1} \text{ and } j_a = j_{a+1},
\\
(g_a + 1) e(\tuple{i}, \tuple{j}) &\text{if } i_a = i_{a+1} \text{ and } j_a = j_{a+1},
\\
g_a e(\tuple{i}, \tuple{j})  &\text{otherwise } (j_a \neq j_{a+1}),
\end{cases}
\]
and:
\[
\psi_a e(\tuple{i}, \tuple{j}) = 
\begin{cases}
\Phi_a Q_a(\tuple{i}, \tuple{j})^{-1} e(\tuple{i}, \tuple{j}) & \text{if } j_a = j_{a+1},
\\
f_{a, \tuple{j}}^{-1} g_a e(\tuple{i}, \tuple{j}) &\text{if } j_a \neq j_{a+1}.
\end{cases}
\]

It is convenient to introduce some notation. The couple $(\tuple{i}, \tuple{j})$ shall only be modified by the action of $s_a$ or $s_{a+1}$, hence we only write $((i_a, i_{a+1}, i_{a+2}), (j_a, j_{a+1}, j_{a+2}))$ for $(\tuple{i}, \tuple{j})$. Moreover, for clarity we forget comas and only write the indexation, substituting $0$ to $a$; thus, $((i_a, i_{a+1}, i_{a+2}), (j_a, j_{a+1}, j_{a+2}))$ becomes $((012), (012))$. Finally, as $\mathfrak{S}_n$ acts diagonally on $I \times J$ we can write $(012)$ instead of $((012), (012))$. Because an example beats lines of explanation, here is one: $\psi_0 e(102)$ stands for $\psi_a e(s_a\cdot \tuple{k})$.

\subparagraph*{Case $j_0 = j_1 \neq j_2$.} Let us first compute $\psi_1 \psi_0 \psi_1 e(012)$ and $\psi_0 \psi_1 \psi_0 e(012)$. We have:
\begin{align*}
\psi_1 \psi_0 \psi_1 e(012) &= \psi_1 \psi_0 e(021)\psi_1
\\
&= \psi_1 e(201)\psi_0 \psi_1
\\
\psi_1 \psi_0 \psi_1 e(012) &= \Phi_1 Q_1(201)^{-1}e(201) \psi_0 \psi_1.
\end{align*}
Since $Q_1(201)^{-1} \in F[[y_1, y_2]]$ and recalling Remark~\ref{remark:f_psia} we get:
\begin{gather*}
\psi_1\psi_0\psi_1 e(012) =  \Phi_1 e(201)\psi_0 Q_1^{s_0}(201)^{-1} \psi_1  =  \Phi_1 e(201) \psi_0 \psi_1 Q_1^{s_0 s_1}(201)^{-1}.
\end{gather*}
By Lemma~\ref{lemma:relation_Qa} we have $Q_1^{s_0 s_1}(201) = Q_1(201)^{s_0 s_1} = (Q_1(201)^{s_0})^{s_1} = (Q_0(s_1 s_0\cdot (201))^{s_1})^{s_1} = Q_0(012)$. Hence:
\[
\psi_1 \psi_0 \psi_1 e(012) =  \Phi_1 e(201) \psi_0 \psi_1 Q_0(012)^{-1}.
\]

As:
\[
\psi_0 \psi_1 \psi_0 e(012) = \psi_0 \psi_1 \Phi_0 e(012) Q_0(012)^{-1},
\]
to have \eqref{relation:quiver_tresse} it suffices to prove:
\begin{equation}
\label{equation:j0=j1neqj2}
\Phi_1 e(201) \psi_0 \psi_1 = \psi_0 \psi_1 \Phi_0 e(012).
\end{equation}
Let us distinguish two subcases.
\begin{itemize}
\item If $i_0 \neq i_1$ then:
\[
\Phi_1 e(201) \psi_0 \psi_1 = \left(g_1 + (1-q){(1 - X_1 X_2^{-1})}^{-1} \right)\psi_0 e(021)\psi_1.
\]
Recalling \eqref{relation:g_a_X_b} and Lemma~\ref{lemma:ga_Xa} we get:
\begin{align*}
\Phi_1 e(201)\psi_0 \psi_1  &= f_{0, (021)}^{-1} \left(g_1 g_0 + (1-q)g_0 {(1 - X_0 X_2^{-1})}^{-1}\right) \psi_1 e(012)
\\&=f_{0, (021)}^{-1} f_{1, (012)}^{-1}  \left(g_1 g_0 g_1 + (1-q)g_0 g_1 {(1 - X_0 X_1^{-1})}^{-1} \right)e(012).
\end{align*}
Using the braid relation \eqref{relation:tresse_ga} this becomes, recalling \eqref{equation:phia_e(ij)}:
\begin{align*}
\Phi_1 e(201) \psi_0 \psi_1  &= f_{0, (021)}^{-1} f_{1, (012)}^{-1}\left(g_0 g_1 g_0 + (1-q)g_0 g_1 {(1 - X_0 X_1^{-1})}^{-1}\right) e(012)
\\
&= f_{0, (021)}^{-1} f_{1, (012)}^{-1}g_0 g_1 \left(g_0 + (1-q){(1 - X_0 X_1^{-1})}^{-1}\right)e(012)
\\
&= f_{0, (021)}^{-1} f_{1, (012)}^{-1} g_0 g_1 \Phi_0 e(012)
\\
\Phi_1 e(201) \psi_0 \psi_1 &= f_{0, (021)}^{-1} f_{1, (012)}^{-1} g_0 g_1 e(102) \Phi_0,
\end{align*}
and then, noticing that $f_{1, (012)} = f_{1, (102)}$ and $f_{0, (021)} = f_{0, (120)}$, we obtain:
\begin{align*}
\Phi_1 e(201) \psi_0 \psi_1  
&= f_{0, (120)}^{-1} f_{1, (012)}^{-1} g_0 g_1 e(102) \Phi_0
\\
&= f_{0, (120)}^{-1} g_0 e(120)\psi_1\Phi_0
\\
\Phi_1 e(201) \psi_0 \psi_1 &= \psi_0 \psi_1 \Phi_0 e(012),
\end{align*}
thus \eqref{equation:j0=j1neqj2} is proved.

\item If $i_0 = i_1$ then:
\begin{align*}
\Phi_1 e(201) &= (g_1 + 1)e(201),
\\
\Phi_0 e(012) &= (g_0 + 1) e(012),
\end{align*}
thus with the same calculation as above (even easier) we get:
\begin{gather*}
\Phi_1 e(201) \psi_0 \psi_1 = f_{0, (021)}^{-1} f_{1, (012)}^{-1}(g_1 g_0 g_1 + g_0 g_1)e(012)
\\
= f_{0, (120)}^{-1} f_{1, (102)}^{-1}(g_0 g_1 g_0 + g_0 g_1)e(012)
= \psi_0 \psi_1 \Phi_0 e(012),
\end{gather*}
so we got \eqref{equation:j0=j1neqj2}.
\end{itemize}
Until the end of the proof we use the same arguments as here, arguments which we will thus not recall.

\subparagraph*{Case $j_0 \neq j_1 = j_2$.} Similar.
\subparagraph*{Case $j_0 = j_2 \neq j_1$.} Once again we begin with the computation of $\psi_1 \psi_0 \psi_1 e(012)$ and $\psi_0 \psi_1 \psi_0 e(012)$. We have:
\[
\psi_1 \psi_0 \psi_1 e(012)
=  \psi_1 \Phi_0 Q_0(021)^{-1} e(021)\psi_1
= \psi_1 \Phi_0 e(021)\psi_1  Q_0^{s_1}(021)^{-1},
\]
and:
\[
\psi_0 \psi_1 \psi_0 e(012)
= \psi_0 \Phi_1 Q_1(102)^{-1} e(102)\psi_0
= \psi_0 \Phi_1 e(102) \psi_0  Q_1^{s_0}(102)^{-1}.
\]
Since $Q_0^{s_1}(021)^{-1} = Q_1^{s_0}(102)^{-1}$, it suffices to prove:
\begin{equation}
\label{equation:j0=j2neqj1}
\psi_1 \Phi_0 e(021) \psi_1 = \psi_0 \Phi_1 e(102)\psi_0.
\end{equation}

Once again we distinguish two subcases.
\begin{itemize}
\item If $i_0 \neq i_2$ then:
\begin{align*}
\psi_1 \Phi_0 e(021) \psi_1
&= \psi_1 e(201) \Phi_0 \psi_1
\\
&= f_{1, (201)}^{-1} g_1 \left(g_0 + (1-q){(1 - X_0 X_1^{-1})}^{-1}\right)\psi_1 e(012)
\\
&= f_{1, (201)}^{-1} f_{1, (012)}^{-1} \left(g_1 g_0 g_1 + (1-q)g_1^2 {(1 - X_0 X_2^{-1})}^{-1}\right)e(012)
\\
\psi_1 \Phi_0 e(021) \psi_1 &= f_{1, (201)}^{-1} f_{1, (012)}^{-1}\left(g_1 g_0 g_1 + (1-q)q{(1-X_0 X_2^{-1})}^{-1}\right) e(012).
\end{align*}
Similarly, we find:
\begin{align*}
\psi_0 \Phi_1 e(102)\psi_0 &= \psi_0 e(120) \Phi_1 \psi_0
\\
&= f_{0, (120)}^{-1} g_0 \left (g_1 + (1-q){(1 - X_1 X_2^{-1})}^{-1}\right)\psi_0 e(012)
\\
&= f_{0, (120)}^{-1}  f_{0, (012)}^{-1}  \left(g_0 g_1 g_0 + (1-q)g_0^2 {(1 - X_0 X_2^{-1})}^{-1}\right)e(012)
\\
\psi_0 \Phi_1 e(102)\psi_0 &= f_{0, (120)}^{-1}  f_{0, (012)}^{-1} \left(g_0 g_1 g_0 + (1-q)q {(1 - X_0 X_2^{-1})}^{-1}\right)e(012),
\end{align*}
thus we conclude since $f_{1, (201)} = f_{0, (012)}$ and $f_{1, (012)} = f_{0, (120)}$ (we see it on this particular case or we can use Lemma~\ref{lemma:relation_Qa}).

\item If $i_0 = i_2$ we get as above, with $\alpha \coloneqq f_{1, (201)}^{-1} f_{1, (012)}^{-1} = f_{0, (120)}^{-1}  f_{0, (012)}^{-1}$:
\begin{gather*}
\psi_1 \Phi_0 e(021) \psi_1 = \alpha(g_1 g_0 g_1 + g_1^2)e(012)
\\
= \alpha (g_1 g_0 g_1 + q) e(012) = \alpha (g_0 g_1 g_0 + q) e(012)
\\
= \alpha (g_0 g_1 g_0 + g_0^2)e(012) = \psi_0 \Phi_1 e(102) \psi_0 e(012).
\end{gather*}
\end{itemize}

\subparagraph*{Case $\#\{j_a, j_{a+1}, j_{a+2}\} = 3$.} We have $j_a \neq j_{a+1}$ and $j_a \neq j_{a+2}$ and $j_{a+1} \neq j_{a+2}$ thus we get immediately:
\begin{gather*}
\psi_1 \psi_0 \psi_1 e(012) = f_{1, (201)}^{-1} f_{0, (021)}^{-1} f_{1, (012)}^{-1} g_1 g_0 g_1 e(012)
\\
= f_{0, (120)}^{-1} f_{1, (102)}^{-1} f_{0, (012)}^{-1} g_0 g_1 g_0 e(012) = \psi_0 \psi_1 \psi_0 e(012),
\end{gather*}
since $f_{1, (201)} = f_{0, (012)}, f_{0, (021)} = f_{1, (102)}$ and $f_{1, (012)} = f_{0, (120)}$.

\section{Yokonuma--Hecke generators of \texorpdfstring{$\H_{\alpha}^{\tuple{\Lambda}}(\Gamma)$}{HalphaLambdaGamma}}
\label{section:yokonuma_hecke_generators}

Let $\tuple{\Lambda}$ be a weight as in Section~\ref{section:quiver_hecke_generators}.
The aim of this section is to prove the following theorem.

\begin{theoreme}
\label{theorem:morphism_yh_quiver}
For any $\alpha \comp_K n$, we can construct an explicit algebra homomorphism:
\[
\sigma :  \widehat{\Y}_{d, n}^{\tuple{\Lambda}}(q) \to \H_\alpha^{\tuple{\Lambda}}(\Gamma).
\]
\end{theoreme}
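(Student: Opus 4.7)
The plan is to imitate the strategy of Section~\ref{section:quiver_hecke_generators} in reverse: we define images in $\H_\alpha^{\tuple{\Lambda}}(\Gamma)$ of the Yokonuma--Hecke generators \eqref{equation:generators_yokonumahecke} and verify the defining relations \eqref{relation:ordre_t_a}--\eqref{relation:X1_cyclo}. The formulas are essentially forced by those of \textsection\ref{subsection:definition_images_quiverhecke_generators}, inverted formally. On the unit $\sum_{\tuple{k} \in K^\alpha} e(\tuple{k})$ of $\H_\alpha^{\tuple{\Lambda}}(\Gamma)$ we set
\[
\sigma(t_a) \coloneqq \sum_{\tuple{k} \in K^\alpha} \xi^{j_a} e(\tuple{k}), \qquad
\sigma(X_1) \coloneqq \sum_{\tuple{k} \in K^\alpha} q^{i_1}(1 - y_1) e(\tuple{k}),
\]
\[
\sigma(g_a) \coloneqq \sum_{\tuple{k} \in K^\alpha} \bigl(\psi_a Q_a(\tuple{k}) - P_a(\tuple{k})\bigr) e(\tuple{k}),
\]
the last line being exactly what \eqref{equation:Phia_ga_Pa} and the identity $\Phi_a e(\tuple{k}) = \psi_a Q_a(\tuple{k}) e(\tuple{k})$ demand. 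We extend $\sigma$ by zero on the blocks $e(\beta) \widehat{\Y}_{d,n}^{\tuple{\Lambda}}(q)$ for $\beta \neq \alpha$. The elements $P_a(\tuple{k}), Q_a(\tuple{k}) \in F[[y_a, y_{a+1}]]$ make sense in $\H_\alpha^{\tuple{\Lambda}}(\Gamma)$ thanks to Lemma~\ref{lemme:y_a_nilpotent}, and the same nilpotency makes $\sigma(X_1)$ invertible.

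The commutativity-type relations \eqref{relation:ordre_t_a}, \eqref{relation:ta_tb}, \eqref{relation:X1_tb}, \eqref{relation:X1_ga} and \eqref{relation:t_b_g_a} follow immediately from orthogonality and completeness of the $e(\tuple{k})$, from $\xi^d = 1$, from the fact that $\sigma(X_1)$ is a polynomial in $y_1$ and the idempotents, and from the shift relation \eqref{relation:quiver_psiae(i)} (which preserves $k_1$ for $a > 1$). For the cyclotomic relation \eqref{relation:X1_cyclo}, observe that on a given $e(\tuple{k})$ the factors $(\sigma(X_1) - q^i)^{\Lambda_i}$ for $i \neq i_1$ are invertible (since $q^{i_1} - q^i \in F^\times$), whereas $(\sigma(X_1) - q^{i_1})e(\tuple{k}) = -q^{i_1} y_1 e(\tuple{k})$; the relation therefore reduces to $y_1^{\Lambda_{i_1}} e(\tuple{k}) = 0$, which is precisely \eqref{relation:quiver_cyclo_y1}.

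The core of the proof consists in checking \eqref{relation:ordre_ga}, \eqref{relation:ga_gb}, \eqref{relation:tresse_ga} and \eqref{relation:X1_g1_X1_g1}. We follow the same dichotomy as in \textsection\ref{subsection:check_kl_generators}: after multiplying by $e(\tuple{k}) = e(\tuple{i}, \tuple{j})$, whenever $j_a = j_{a+1}$ (and also $j_{a+1} = j_{a+2}$ when a triple is involved), the definitions of $P_a, Q_a$ specialise to the Brundan--Kleshchev expressions, and the verification becomes a word-for-word translation of the corresponding check in the $d=1$ case \cite{BrKl} via Remark~\ref{remark:brundan_kleshchev_ja=ja+1} read in reverse (relations in $\widehat{\H}_n^{\tuple{\Lambda}}(q)$ between $g_a^{\H}, X_b^{\H}, e^{\H}(\tuple{i})$ translate into the sought relations in $e(\tuple{k})\H_\alpha^{\tuple{\Lambda}}(\Gamma)$). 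When some $j$-indices differ on $e(\tuple{k})$, the element $e_a$ acts as zero by Remark~\ref{remark:e_a_e(ij)}, so \eqref{relation:ordre_ga} collapses to $\sigma(g_a)^2 e(\tuple{k}) = q\, e(\tuple{k})$; moreover $P_a(\tuple{k}) = 0$ and $Q_a(\tuple{k}) = f_{a, \tuple{j}}$ is a scalar, hence $\sigma(g_a)e(\tuple{k}) = f_{a, \tuple{j}} \psi_a e(\tuple{k})$, and the desired identity follows from Lemma~\ref{lemme:f_aj} together with \eqref{relation:quiver_psia^2} in the case $k_a \nrelbar k_{a+1}$ (forced by $j_a \neq j_{a+1}$).

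The main obstacle, as in Section~\ref{section:quiver_hecke_generators}, is the braid relation \eqref{relation:tresse_ga}: it requires a case analysis on the triple $(j_a, j_{a+1}, j_{a+2})$ directly parallel to the one carried out in \textsection\ref{subsection:check_kl_generators} for the $\psi$'s. The case $j_a = j_{a+1} = j_{a+2}$ is handled by Remark~\ref{remark:brundan_kleshchev_ja=ja+1}; each of the remaining configurations (exactly two equal, in each possible position, or all distinct) produces simplifications in which various $P_a, Q_a$ reduce to scalars $f_{a, \tuple{j}}$, and the identity is obtained by combining the quiver Hecke braid relation \eqref{relation:quiver_tresse}, the symmetry Lemma~\ref{lemma:relation_Qa}, and the scalar identity of Lemma~\ref{lemme:f_aj}. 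The mixed relation \eqref{relation:X1_g1_X1_g1} is verified analogously, splitting according to whether $j_1$ and $j_2$ coincide.
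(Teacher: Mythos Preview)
Your proposal is correct and follows essentially the same route as the paper: the same images for $t_a$, $X_1$, $g_a$, the same dichotomy on $j_a$ versus $j_{a+1}$ (with the Brundan--Kleshchev case transported verbatim and the remaining case collapsing via $P_a(\tuple{k})=0$, $Q_a(\tuple{k})=f_{a,\tuple{j}}$), and the same case analysis for the braid relation.

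Two minor points of comparison. First, invoking Remark~\ref{remark:brundan_kleshchev_ja=ja+1} ``in reverse'' is not quite the right object: that remark lives in $e(\tuple{j})\widehat{\Y}_\alpha^{\tuple{\Lambda}}(q)e(\tuple{j})$, whereas here you need its counterpart in $e(\tuple{j})\H_\alpha^{\tuple{\Lambda}}(\Gamma)e(\tuple{j})$ (namely that $\psi_a e(\tuple{j})$, $y_b e(\tuple{j})$, $e(\tuple{i},\tuple{j})$ satisfy the $\H_\alpha^{\tuple{\Lambda}}(\Gamma_e)$ relations when $j_a=j_{a+1}$). The paper states this as a fresh remark before the verification. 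Second, rather than checking \eqref{relation:X1_g1_X1_g1} directly, the paper defines $X_a$ for \emph{all} $a$ by the uniform formula $\sum_{\tuple{i}} q^{i_a}(1-y_a)e(\tuple{i})$ and verifies the stronger relation $g_a X_{a+1} e(\tuple{k}) = X_a g_a e(\tuple{k}) + (q-1)X_{a+1} e_a e(\tuple{k})$, which is cleaner. Your aside about ``extending $\sigma$ by zero on blocks'' is unnecessary: the homomorphism is defined on the generators of the full algebra $\widehat{\Y}_{d,n}^{\tuple{\Lambda}}(q)$, and that is all that is required.
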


Note that we do not consider yet $\widehat{\Y}_{\alpha}^{\tuple{\Lambda}}(q)$. In particular, it suffices to define the images of the generators \eqref{equation:generators_yokonumahecke} and check if they verify the defining relations of the cyclotomic Yokonuma--Hecke algebra. As in Section~\ref{section:quiver_hecke_generators}, we  use the same notation for a generator and its image.

\subsection{Definition of the images of the generators}
\label{subsection:definition_YH_generators}

It is easier this time to define these images. First, since the elements $y_1, \dots, y_n$ are nilpotent (Lemma~\ref{lemme:y_a_nilpotent}), we can consider power series in these variables. Hence, the quantities $P_a(\tuple{k})$, $Q_a(\tuple{k})$ and $y_a(\tuple{i})$ that we defined in \textsection\ref{subsection:definition_images_quiverhecke_generators} are also well-defined as elements of $\H_{\alpha}^{\tuple{\Lambda}}(\Gamma)$. We define finally as in \eqref{equation:definition_e(i)_e(j)} the elements $e(\tuple{i})$ and $e(\tuple{j})$ of $\H_{\alpha}^{\tuple{\Lambda}}(\Gamma)$ for $\tuple{i} \in I^{\alpha}$ and $\tuple{j} \in J^{\alpha}$.

We recall that $\xi$ is a primitive $d$th root of unity in $F$. Our ``Yokonuma--Hecke generators'' of $\H_{\alpha}^{\tuple{\Lambda}}(\Gamma)$ are given below.
\begin{align*}
\phantom{\text{for } 1 \leq a < n} && g_a &\coloneqq \sum_{\tuple{k} \in K^{\alpha}}\left(\psi_a Q_a(\tuple{k}) - P_a(\tuple{k})\right)e(\tuple{k}) & \text{for } 1 &\leq a < n
\\
\phantom{\text{for } 1 \leq a \leq n}& & t_a &\coloneqq \sum_{\tuple{j} \in J^{\alpha}} \xi^{j_a} e(\tuple{j})  &\text{for } 1 &\leq a \leq n
\\
\phantom{\text{for } 1 \leq a \leq n}&& X_a & \coloneqq \sum_{\tuple{i} \in I^{\alpha}} y_a(\tuple{i}) e(\tuple{i}) &\text{for } 1 &\leq a \leq n
\end{align*}

As usual, we write $g_a^{\H}$ and $X_a^{\H}$ for the corresponding elements when $d = 1$: we recover the elements of \cite[\textsection 4.4]{BrKl}.

\bigskip

\begin{remarque}[About Brundan and Kleshchev's proof - \theBKproof]
\stepcounter{BKproof}
\label{remark:proof_BK_Hecke_generators_ja_ja+1}
This remark is similar to Remark~\ref{remark:brundan_kleshchev_ja=ja+1}. If $\tuple{j} \in J^{\alpha}$ verifies $j_a = j_{a+1}$ and if a relation in \cite[\textsection 4]{BrKl} involves only $\psi_a^{\H}$, $e^{\H}(\tuple{i})$ for $\tuple{i} \in I^{\alpha}$ and $y_b^{\H}$ for $1 \leq b \leq n$, while its proof does not require any cyclotomic relation \eqref{relation:quiver_cyclo_y1}, then by the same proof, the same relation is satisfied between $\psi_a e(\tuple{j})$, $e(\tuple{i}, \tuple{j})$ and $y_b e(\tuple{j})$ in the unitary algebra $e(\tuple{j})\H_{\alpha}^{\tuple{\Lambda}}(\Gamma)e(\tuple{j})$. If $\tuple{j}$ verifies in addition $j_{a+1} = j_{a+2}$, we will be able to add relations with $\psi_{a+1}^{\H}$, which we substitute by $\psi_{a+1}e(\tuple{j})$.
\end{remarque}

\subsection{Check of the defining relations}
\label{subsection:check_yh_generators}

As in \textsection\ref{subsection:check_kl_generators}, we will use Remark~\ref{remark:proof_BK_Hecke_generators_ja_ja+1} when $j_a = j_{a+1}$ to get the result from the same corresponding proof of \cite[Theorem 4.3]{BrKl}, and when $j_a \neq j_{a+1}$ we will need a few calculations.

\paragraph*{(\ref{relation:ordre_t_a})} Straightforward since $e(\tuple{j})e(\tuple{j}') = \delta_{\tuple{j}, \tuple{j}'} e(\tuple{j})$ and $\xi^d = 1$.

\paragraph*{(\ref{relation:ta_tb})} Straightforward since  $e(\tuple{j})e(\tuple{j}') = e(\tuple{j}')e(\tuple{j})$.

\paragraph*{(\ref{relation:t_b_g_a})}
According to \eqref{relation:quiver_sum_alpha_e(i)}, it suffices to prove $t_b g_a e(\tuple{i}, \tuple{j}) = g_a t_{s_a(b)}e(\tuple{i}, \tuple{j})$ for every $(\tuple{i}, \tuple{j}) \in K^{\alpha}$.
For $(\tuple{i}, \tuple{j}) \in K^{\alpha}$, we have, using \eqref{relation:quiver_psiae(i)}:
\begin{align*}
t_b g_a e(\tuple{i}, \tuple{j}) &= t_b (\psi_a Q_a(\tuple{i}, \tuple{j}) - P_a(\tuple{i}, \tuple{j}))e(\tuple{i}, \tuple{j})
\\
&= t_b \left[e(s_a\cdot(\tuple{i}, \tuple{j}))\psi_a Q_a(\tuple{i}, \tuple{j}) - e(\tuple{i}, \tuple{j}) P_a(\tuple{i}, \tuple{j})\right] \notag
\\
&= \xi^{(s_a \cdot \tuple{j})_b} \psi_a Q_a(\tuple{i}, \tuple{j}) e(\tuple{i}, \tuple{j}) - \xi^{j_b} P_a(\tuple{i}, \tuple{j}) e(\tuple{i}, \tuple{j}) 
\\
t_b g_a e(\tuple{i}, \tuple{j})&= \psi_a Q_a(\tuple{i}, \tuple{j})\xi^{(s_a \cdot \tuple{j})_b} e(\tuple{i}, \tuple{j}) - P_a(\tuple{i}, \tuple{j}) \xi^{j_b} e(\tuple{i}, \tuple{j}),
\end{align*}
and:
\[
g_a t_{s_a(b)} e(\tuple{i}, \tuple{j}) = g_a \xi^{j_{s_a(b)}} e(\tuple{i}, \tuple{j})
= \psi_a Q_a(\tuple{i}, \tuple{j})\xi^{j_{s_a(b)}} e(\tuple{i}, \tuple{j}) - P_a(\tuple{i}, \tuple{j})\xi^{j_{s_a(b)}} e(\tuple{i}, \tuple{j}).
\]

As $(s_a \cdot \tuple{j})_b = j_{s_a(b)}$ (by definition of the action of $\mathfrak{S}_n$ on $J^n$), it suffices to prove the following:
\[
P_a(\tuple{i}, \tuple{j}) \xi^{j_b} = P_a(\tuple{i}, \tuple{j}) \xi^{j_{s_a(b)}}.
\]
\begin{itemize}
\item It is clear if $b \notin \{a, a+1\}$ since $b = s_a(b)$.

\item If $b \in \{a, a+1\}$, it is clear if $j_a = j_{a+1}$ and obvious if $j_a \neq j_{a+1}$ since then $P_a(\tuple{i}, \tuple{j}) = 0$.
\end{itemize}

\paragraph*{(\ref{relation:ordre_ga}).} Let $(\tuple{i}, \tuple{j}) \in K^{\alpha}$ and let us prove $g_a^2 e(\tuple{i}, \tuple{j}) = (q + (q-1)g_a e_a)e(\tuple{i}, \tuple{j})$; summing over all $(\tuple{i}, \tuple{j}) \in K^{\alpha}$ will conclude. If $j_a = j_{a+1}$ then it is immediate applying Remark~\ref{remark:proof_BK_Hecke_generators_ja_ja+1} on $(g_a^{\H})^2 = q + (q-1)g_a^{\H}$ and left-multiplying by $e(\tuple{i})$, recalling $e_a e(\tuple{j}) = e(\tuple{j})$ and Corollary~\ref{corollaire:ga_e(j)_commutent}. If now $j_a \neq j_{a+1}$, since $e_a e(\tuple{j}) = 0$ it suffices to prove $g_a^2 e(\tuple{i}, \tuple{j})= q e(\tuple{i}, \tuple{j})$. But, recalling $Q_a(\tuple{i}, \tuple{j}) = f_{a, \tuple{j}}$ and $P_a(\tuple{i}, \tuple{j}) = 0$:
\begin{align*}
g_a^2 e(\tuple{i}, \tuple{j}) &= g_a (\psi_a Q_a(\tuple{i}, \tuple{j}) - P_a(\tuple{i}, \tuple{j}))e(\tuple{i}, \tuple{j})
\\
&= f_{a, \tuple{j}} g_a \psi_a e(\tuple{i}, \tuple{j})
\\
&= f_{a, \tuple{j}} g_a e(s_a\cdot(\tuple{i}, \tuple{j}))\psi_a
\\
&= f_{a, \tuple{j}} (\psi_a Q_a(s_a\cdot(\tuple{i}, \tuple{j})) - P_a(s_a\cdot(\tuple{i}, \tuple{j})))\psi_a e(\tuple{i}, \tuple{j})
\\
g_a^2 e(\tuple{i}, \tuple{j})&= f_{a, \tuple{j}} f_{a, s_a\cdot \tuple{j}} \psi_a^2 e(\tuple{i}, \tuple{j}),
\end{align*}
hence we conclude using Lemma~\ref{lemme:f_aj} and \eqref{relation:quiver_psia^2}, since $j_a \neq j_{a+1}$ implies $(i_a, j_a) \nrelbar (i_{a+1}, j_{a+1})$.

\paragraph*{(\ref{relation:ga_gb}).} Let us prove $g_a g_b e(\tuple{k}) = g_b g_a e(\tuple{k})$ for every $\tuple{k} \in K^{\alpha}$.  By \eqref{relation:quiver_psia_yb} the element $\psi_b$ commutes with the elements $P_a(\tuple{k})$ and $Q_a(\tuple{k})$ of $F[[y_a, y_{a+1}]]$. Moreover, $Q_a(s_b\cdot \tuple{k}) = Q_a(\tuple{k})$ and $P_a(s_b\cdot \tuple{k}) = P_a(\tuple{k})$, hence:
\begin{align*}
g_a g_b e(\tuple{k}) &= g_a (\psi_b Q_b(\tuple{k}) - P_b(\tuple{k})) e(\tuple{k})
\\
&= g_a e(s_b\cdot \tuple{k}) \psi_b Q_b(\tuple{k}) - g_a e(\tuple{k}) P_b(\tuple{k})
\\
&= (\psi_a Q_a(\tuple{k}) - P_a(\tuple{k})) \psi_b Q_b(\tuple{k})e(\tuple{k})  - (\psi_a Q_a(\tuple{k}) - P_a(\tuple{k})) P_b(\tuple{k}) e(\tuple{k})
\\
&= \psi_a \psi_b Q_a(\tuple{k}) Q_b(\tuple{k}) e(\tuple{k}) - \psi_b Q_b(\tuple{k})P_a(\tuple{k})  e(\tuple{k}) - \psi_a Q_a(\tuple{k}) P_b(\tuple{k}) e(\tuple{k})
\\
&\qquad + P_a(\tuple{k}) P_b(\tuple{k}) e(\tuple{k}),
\end{align*}
and we conclude since that expression is symmetric in $a$ and $b$ (recalling \eqref{relation:quiver_psia_psib}).

\paragraph*{(\ref{relation:tresse_ga}).} Again it suffices to prove $g_{a+1} g_a g_{a+1} e(\tuple{i}, \tuple{j}) = g_a g_{a+1} g_a e(\tuple{i}, \tuple{j})$ for all $(\tuple{i}, \tuple{j}) \in K^{\alpha}$. If $j_a = j_{a+1} = j_{a+2}$ we get the result using Remark~\ref{remark:proof_BK_Hecke_generators_ja_ja+1}. Let us then suppose that we are not in that case. We will intensively use \eqref{relation:quiver_psiae(i)}; recall the following fact:
\[
g_a e(\tuple{i}, \tuple{j}) = \begin{cases}
(\psi_a Q_a(\tuple{i}, \tuple{j}) - P_a(\tuple{i}, \tuple{j}))e(\tuple{i}, \tuple{j}) & \text{if } j_a = j_{a+1},
\\
f_{a, \tuple{j}} \psi_a e(\tuple{i}, \tuple{j}) & \text{if } j_a \neq j_{a+1}.
\end{cases}
\]

Finally, as during the proof of \eqref{relation:quiver_tresse} in \textsection\ref{subsection:check_kl_generators}, we  write for example $g_0 e(102)$ instead of $g_a e(s_a\cdot \tuple{k})$. Thus, given our hypothesis on $j_0, j_1$ and $j_2$ we have:
\begin{equation}
\label{equation:demo_yhgenerator_tresse_psi}
\psi_1 \psi_0 \psi_1 e(012) = \psi_0 \psi_1 \psi_0 e(012).
\end{equation}

\subparagraph*{Case $j_0 = j_1 \neq j_2$.} Let us first compute $g_1 g_0 g_1 e(012)$ and $g_0 g_1 g_0 e(012)$. We set $\alpha \coloneqq f_{1, (012)} f_{0, (021)}$; we have:
\begin{align*}
g_1 g_0 g_1 e(012)
&= f_{1, (012)} g_1 g_0 e(021)\psi_1
\\
&= f_{1, (012)} f_{0, (021)} g_1 e(201)\psi_0 \psi_1
\\
&= \alpha \psi_1 Q_1(201)\psi_0 \psi_1 e(012) - \alpha P_1(201)\psi_0 \psi_1 e(012)
\\
g_1 g_0 g_1 e(012)
&= \alpha \psi_1 \psi_0 \psi_1 e(012) Q_1^{s_0 s_1}(201) - \alpha \psi_0 \psi_1 e(012) P_1^{s_0 s_1}(201).
\end{align*}
We have already seen that $Q_1^{s_0 s_1}(201) = Q_0(012)$ and similarly we have  $P_1^{s_0 s_1}(201) = P_0(012)$ (see \eqref{equation:Pa_sa}). Hence we obtain, using \eqref{equation:demo_yhgenerator_tresse_psi} and noticing $f_{1, (012)} = f_{0, (120)}$ and $f_{0, (021)} = f_{1, (102)}$:
\begin{align*}
g_1 g_0 g_1 e(012) &= \alpha \psi_0 \psi_1 \psi_0 e(012) Q_0(012) - \alpha \psi_0 \psi_1 e(012) P_0 (012)
\\
&= \alpha \psi_0 \psi_1 e(102)\psi_0 Q_0(012) - f_{1, (012)}f_{0, (021)} \psi_0 e(021)\psi_1 P_0 (012)
\\
&= f_{1, (102)}f_{0, (120)} \psi_0 e(120)\psi_1 \psi_0 Q_0 (012) - f_{1, (012)} g_0 \psi_1 e(012)P_0(012)
\\
&= f_{1, (102)} g_0 \psi_1 e(102)\psi_0 Q_0(012) - g_0 g_1 P_0(012)e(012)
\\
&= g_0 g_1 (\psi_0 Q_0(012) - P_0(012))e(012)
\\
g_1 g_0 g_1 e(012) &= g_0 g_1 g_0 e(012),
\end{align*}
so we are done.

\subparagraph*{Case $j_0 \neq j_1 = j_2$.} Similar.

\subparagraph*{Case $j_0 = j_2 \neq j_1$.} Given these assumptions we have:
\begin{equation}
\label{equation:demo_yhgenerator_carre_psi}
\psi_0^2 e(012) = \psi_1^2 e(012) = e(012).
\end{equation}

Hence, using \eqref{equation:demo_yhgenerator_carre_psi}, with $\alpha \coloneqq f_{1, (012)} f_{1, (201)}$:
\begin{align*}
g_1 g_0 g_1 e(012) &= f_{1, (012)} g_1 (\psi_0 Q_0(021) - P_0(021))e(021)\psi_1 
\\
&= f_{1, (012)} g_1 e(201)\psi_0 Q_0(021)\psi_1 - f_{1, (012)} g_1 e(021)P_0(021)\psi_1
\\
&= \alpha \psi_1 \psi_0 \psi_1 e(012)Q_0^{s_1}(021) - \alpha \psi_1^2 e(012) P_0^{s_1}(021)
\\
&= \alpha\psi_0 \psi_1 \psi_0 e(012) Q_1^{s_0}(102) - \alpha \psi_0^2 e(012) P_1^{s_0}(102)
\\
&=  \alpha \psi_0 e(120) \psi_1 Q_1(102) \psi_0 - \alpha \psi_0 e(102) P_1(102)\psi_0.
\end{align*}
Noticing $f_{1, (012)} = f_{0, (120)} = f_{0, (102)}$ and $f_{1, (201)} = f_{0, (012)}$ we get finally:
\begin{align*}
g_1 g_0 g_1 e(012) &= f_{0, (012)} g_0 (\psi_1 Q_1(102) - P_1(102))e(102) \psi_0
\\
&= f_{0, (012)} g_0 g_1 \psi_0 e(012) 
\\
g_1 g_0 g_1 e(012)&=   g_0 g_1 g_0 e(012).
\end{align*}

\subparagraph*{Case $\#\{j_0, j_1, j_2\} = 3$.} We get immediately:
\begin{gather*}
g_1 g_0 g_1 e(012)
= f_{1, (201)} f_{0, (021)} f_{1, (012)} \psi_1 \psi_0 \psi_1 e(012)
\\
= f_{0, (120)} f_{1, (102)} f_{0, (012)} \psi_0 \psi_1 \psi_0 e(012)
= g_0 g_1 g_0 e(012),
\end{gather*}
since $f_{1, (201)} = f_{0, (012)}$ and $f_{0, (021)} = f_{1, (102)}$ and $f_{1, (012)} = f_{0, (120)}$.

\paragraph*{(\ref{relation:X1_g1_X1_g1}).} Since for $a \in \{1, \dots, n-1\}$ it is clear that $X_{a+1} X_a = X_a X_{a+1}$, it remains to prove that $q X_{a+1} = g_a X_a g_a$; we will conclude taking $a = 1$. As we proved \eqref{relation:ordre_ga}, it suffices to prove \eqref{relation:g_a_X_a+1}. Let $(\tuple{i}, \tuple{j}) \in K^{\alpha}$ and let us prove:
\[
g_a X_{a+1} e(\tuple{i}, \tuple{j}) = \begin{cases}
X_a g_a e(\tuple{i}, \tuple{j})+ (q-1) X_{a+1}e(\tuple{i}, \tuple{j}) & \text{if } j_a = j_{a+1},
\\
X_a g_a e(\tuple{i}, \tuple{j}) & \text{if } j_a \neq j_{a+1}.
\end{cases}\]

Again, we deduce the case $j_a = j_{a+1}$ from Remark~\ref{remark:proof_BK_Hecke_generators_ja_ja+1}. If $j_a \neq j_{a+1}$ we have, using \eqref{relation:quiver_psiae(i)} and \eqref{relation:quiver_psia_ya+1}:
\begin{align*}
g_a X_{a+1} e(\tuple{i}, \tuple{j}) &= q^{i_{a+1}} g_a e(\tuple{i}, \tuple{j}) (1 - y_{a+1})
\\
&=  q^{i_{a+1}} f_{a, \tuple{j}} \psi_a (1 - y_{a+1}) e(\tuple{i}, \tuple{j})
\\
&= q^{i_{a+1}} f_{a, \tuple{j}}(1 - y_a)e(s_a\cdot(\tuple{i}, \tuple{j})) \psi_a
\\
&= f_{a, \tuple{j}} X_a \psi_a e(\tuple{i}, \tuple{j})
\\
g_a X_{a+1} e(\tuple{i}, \tuple{j}) &= X_a g_a e(\tuple{i}, \tuple{j}).
\end{align*}

\paragraph*{(\ref{relation:X1_ga}).} We prove in fact \eqref{relation:g_a_X_b}, that is, $g_a X_b = X_b g_a$ for $b \neq a, a+1$. As $y_b$ commutes with $\psi_a$ by \eqref{relation:quiver_psia_yb} we have, for any $\tuple{k} \in K^{\alpha}$ (where $y_a(\tuple{k}) \coloneqq y_a(\tuple{i})$ with $\tuple{k} = (\tuple{i}, \tuple{j})$):
\begin{align*}
g_a X_b e(\tuple{k}) 
&= g_a e(\tuple{k}) y_b(\tuple{k})
\\
 &= y_b(\tuple{k}) (\psi_a Q_a(\tuple{k}) - P_a(\tuple{k})) e(\tuple{k})
\\
&= y_b(\tuple{k}) e(s_a\cdot \tuple{k}) \psi_a Q_a(\tuple{k}) - y_b(\tuple{k}) e(\tuple{k}) P_a(\tuple{k})
\\
g_a X_b e(\tuple{k}) &= X_b g_a e(\tuple{k}),
\end{align*}
since $y_b(\tuple{k}) e(s_a\cdot \tuple{k}) = q^{(s_a \cdot \tuple{i})_b}(1 - y_b) e(s_a\cdot \tuple{k}) = q^{i_b}(1 - y_b)e(s_a\cdot \tuple{k}) = X_b e(s_a\cdot \tuple{k})$.

\paragraph*{(\ref{relation:X1_tb}).} We prove in fact $X_a t_b = t_b X_a$ for every $a, b$; that is straightforward from \eqref{relation:quiver_y_ae(i)}.

\paragraph*{(\ref{relation:X1_cyclo}).}
We have, using \eqref{relation:quiver_sum_alpha_e(i)}--\eqref{relation:quiver_y_ae(i)}:
\begin{align*}
\prod_{i \in I}{(X_1 - q^i)}^{\Lambda_i}
&= \prod_{i \in I} \left[\sum_{\tuple{i} \in I^{\alpha}} \left(q^{i_1}(1 - y_1) - q^i\right) e(\tuple{i})\right]^{\Lambda_i}
\\
&= \prod_{i \in I} \left[\sum_{\tuple{i} \in I^{\alpha}} \left(q^{i_1}(1 - y_1) - q^i\right)^{\Lambda_i} e(\tuple{i})\right]
\\
\prod_{i \in I}{(X_1 - q^i)}^{\Lambda_i}
&= \sum_{\tuple{i} \in I^{\alpha}} \prod_{i \in I} \left[ \left(q^{i_1}(1 - y_1) - q^i\right)^{\Lambda_i}e(\tuple{i}) \right].
\end{align*}
Noticing that for each $\tuple{i} \in I^{\alpha}$ the term for $i = i_1$ vanishes by \eqref{relation:quiver_cyclo_y1}, we get the result.

\section{Isomorphism theorem}
\label{section:isomorphism}

We give now the main result of our paper; let $\tuple{\Lambda}$ be a weight as in Sections~\ref{section:quiver_hecke_generators} and \ref{section:yokonuma_hecke_generators}.

\subsection{Statement}

\begin{theoreme}
\label{theorem:main}
There is a presentation of the algebra $\widehat{\Y}_{\alpha}^{\tuple{\Lambda}}(q)$ given by the generators \eqref{equation:quiver_generators_alpha} and the relations \eqref{relation:quiver_e(i)e(i')}--\eqref{relation:quiver_ya+1_psia}, \eqref{relation:quiver_psia^2}--\eqref{relation:quiver_tresse} and \eqref{relation:quiver_cyclo_y1}, that is, we have an algebra isomorphism:
\[
\H_{\alpha}^{\tuple{\Lambda}}(\Gamma) \overset{\sim}\longrightarrow \widehat{\Y}_{\alpha}^{\tuple{\Lambda}}(q).
\]
\end{theoreme}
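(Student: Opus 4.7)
The plan is to show that $\rho$ from Theorem~\ref{theorem:morphism_quiver_yh} and $\sigma$ from Theorem~\ref{theorem:morphism_yh_quiver} are mutually inverse by checking the two compositions directly on the defining generators. First I will assemble the family $\{\sigma_\alpha\}_{\alpha\comp_K n}$ from Theorem~\ref{theorem:morphism_yh_quiver} into a single homomorphism $\sigma : \widehat{\Y}_{d,n}^{\tuple{\Lambda}}(q) \to \bigoplus_{\alpha\comp_K n}\H_\alpha^{\tuple{\Lambda}}(\Gamma)$. The key observation is that under $\sigma_\alpha$, the spectral-projector idempotent $e(\tuple{k}) \in \widehat{\Y}_{d,n}^{\tuple{\Lambda}}(q)$ associated with a weight space must land on the generator $e(\tuple{k}) \in \H_\alpha^{\tuple{\Lambda}}(\Gamma)$ when $\tuple{k}\in K^\alpha$ and on $0$ otherwise: indeed, $\sigma(X_a) - q^{i_a} = -q^{i_a} y_a + \sum_{i'_a\neq i_a}(q^{i'_a}(1-y_a)-q^{i_a})e(\tuple{i}')$ is nilpotent exactly on the summand indexed by those $\tuple{i}'$ with $i'_a=i_a$ (since $y_a$ is nilpotent by Lemma~\ref{lemme:y_a_nilpotent} and the complementary scalar factors are invertible by choice of $I$), and similarly for $t_a$ via $\sigma(t_a)$. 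Consequently $\sigma(e(\alpha))$ is the unit of $\H_\alpha^{\tuple{\Lambda}}(\Gamma)$ and $\sigma_\alpha$ restricts to a map $\widetilde\sigma_\alpha : \widehat{\Y}_\alpha^{\tuple{\Lambda}}(q) \to \H_\alpha^{\tuple{\Lambda}}(\Gamma)$.

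Next I verify $\widetilde\sigma_\alpha \circ \rho = \mathrm{id}$ on the generators \eqref{equation:quiver_generators_alpha} of $\H_\alpha^{\tuple{\Lambda}}(\Gamma)$. The case of $e(\tuple{k})$ follows from the eigenvalue analysis above. For $y_a$, one computes
\[
\widetilde\sigma_\alpha(\rho(y_a)) = \sum_{\tuple{i}\in I^\alpha}\bigl(1 - q^{-i_a}\cdot q^{i_a}(1-y_a)\bigr)e(\tuple{i}) = y_a.
\]
For $\psi_a$, the crucial remark is that Formula~\eqref{equation:Phia_ga_Pa} implies $\sigma(\Phi_a) = \sum_{\tuple{k}\in K^\alpha}(\sigma(g_a) + P_a(\tuple{k}))e(\tuple{k}) = \sum_{\tuple{k}}\psi_a Q_a(\tuple{k})e(\tuple{k})$; using \eqref{relation:quiver_psiae(i)} and the fact that $Q_a(\tuple{k})^{-1}$ commutes with $e(\tuple{k})$ we obtain $\widetilde\sigma_\alpha(\rho(\psi_a)) = \psi_a$.

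Then I verify $\rho \circ \widetilde\sigma_\alpha = \mathrm{id}$ on the generators $\{g_a e(\alpha), t_a e(\alpha), X_1 e(\alpha)\}$. The cases of $t_a e(\alpha)$ and $X_1 e(\alpha)$ are immediate because $\rho(\sum_{\tuple{j}\in J^\alpha}\xi^{j_a}e(\tuple{j})) = t_a e(\alpha)$ and $\rho(\sum_{\tuple{i}\in I^\alpha} q^{i_1}(1-y_1)e(\tuple{i})) = X_1 e(\alpha)$, both directly from the definition of the weight-space idempotents. For $g_a e(\alpha)$, again via \eqref{equation:Phia_ga_Pa},
\[
\rho(\widetilde\sigma_\alpha(g_a e(\alpha))) = \sum_{\tuple{k}\in K^\alpha}\bigl(\Phi_a Q_a(\tuple{k})^{-1}Q_a(\tuple{k}) - P_a(\tuple{k})\bigr)e(\tuple{k}) = \Phi_a e(\alpha) - \sum_{\tuple{k}}P_a(\tuple{k})e(\tuple{k}) = g_a e(\alpha).
\]

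The main obstacle is the bookkeeping around the idempotent $e(\tuple{k})$: one must be careful that the spectral projector in $\widehat{\Y}_\alpha^{\tuple{\Lambda}}(q)$ used to define $\rho(e(\tuple{k}))$ coincides with the original generator under the round trip, and for this the nilpotence of $y_a$ together with the invertibility of $1-q^{i'_a - i_a}$ for $i'_a\neq i_a$ is essential. Once these pointwise checks pass on generators, the composition identities extend to the whole algebras by the universal property of presentations, proving that $\rho$ and $\widetilde\sigma_\alpha$ are inverse isomorphisms.
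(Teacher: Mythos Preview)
Your proposal is correct and follows essentially the same approach as the paper: you restrict $\sigma$ to $\widehat{\Y}_\alpha^{\tuple{\Lambda}}(q)$, then verify $\sigma\circ\rho$ and $\rho\circ\sigma$ are identities on generators, with the weight-space argument (nilpotence of $y_a$ and invertibility of $q^{i'_a}-q^{i_a}$ for $i'_a\neq i_a$) showing that the spectral idempotents match, and the formula \eqref{equation:Phia_ga_Pa} handling the $\psi_a\leftrightarrow g_a$ round trip. The paper carries out exactly these computations in \textsection\ref{section:isomorphism}.
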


As finitely many $\widehat{\Y}_{\alpha}^{\tuple{\Lambda}}(q)$ are non-zero, we deduce from this isomorphism that only finitely many $\H_{\alpha}^{\tuple{\Lambda}}(\Gamma)$ are non-zero. Hence, as $\widehat{\Y}_{d, n}^{\tuple{\Lambda}}(q) = \oplus_{\alpha \comp_K n} \widehat{\Y}_{\alpha}^{\tuple{\Lambda}}(q)$, defining the cyclotomic quiver Hecke algebra of degree $n$ by (recall \eqref{equation:Hn(Q)_simeq_oplus_Halpha(Q)}):
\begin{equation}
\label{equation:HnLambda=oplus}
\H_n^{\tuple{\Lambda}}(\Gamma) \coloneqq \bigoplus_{\alpha \comp_K n} \H_{\alpha}^{\tuple{\Lambda}}(\Gamma),
\end{equation}
we get the unitary algebra isomorphism with the cyclotomic Yokonuma--Hecke algebra:
\begin{equation}
\label{equation:main_theorem_full_algebra}
\H_n^{\tuple{\Lambda}}(\Gamma) \simeq \widehat{\Y}_{d, n}^{\tuple{\Lambda}}(q).
\end{equation}

Recalling that the cyclotomic quiver Hecke algebra is naturally graded (Proposition~\ref{proposition:gradation_quiver_Hecke_algebra}), we get the following corollary.
\begin{corollaire}
The cyclotomic Yokonuma--Hecke algebra inherits the grading of the cyclotomic quiver Hecke algebra.
\end{corollaire}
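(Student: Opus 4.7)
The plan is to show that the two algebra homomorphisms already in hand, $\rho:\H_{\alpha}^{\tuple{\Lambda}}(\Gamma)\to\widehat{\Y}_{\alpha}^{\tuple{\Lambda}}(q)$ from Theorem~\ref{theorem:morphism_quiver_yh} and $\sigma:\widehat{\Y}_{d,n}^{\tuple{\Lambda}}(q)\to\H_{\alpha}^{\tuple{\Lambda}}(\Gamma)$ from Theorem~\ref{theorem:morphism_yh_quiver}, are mutually inverse once $\sigma$ is restricted to the corner $\widehat{\Y}_{\alpha}^{\tuple{\Lambda}}(q)=e(\alpha)\widehat{\Y}_{d,n}^{\tuple{\Lambda}}(q)$. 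The whole argument reduces to three checks: that $\sigma(e(\alpha))=1$, that $\sigma\circ\rho=\mathrm{id}$ on the quiver Hecke generators, and that $\rho\circ\sigma$ equals left multiplication by $e(\alpha)$ on the Yokonuma--Hecke generators.

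For the first check, I would turn $\H_{\alpha}^{\tuple{\Lambda}}(\Gamma)$ into a $\widehat{\Y}_{d,n}^{\tuple{\Lambda}}(q)$-module via $\sigma$ and compute the weight space decomposition~\eqref{equation:weight_space}. Since the $y_a\in\H_{\alpha}^{\tuple{\Lambda}}(\Gamma)$ are nilpotent by Lemma~\ref{lemme:y_a_nilpotent}, on $e(\tuple{i},\tuple{j})\H_{\alpha}^{\tuple{\Lambda}}(\Gamma)$ the element $\sigma(X_a)=\sum_{\tuple{i}'}y_a(\tuple{i}')e(\tuple{i}')$ acts as $q^{i_a}(1-y_a)$, which has generalized eigenvalue $q^{i_a}$, while $\sigma(t_a)$ acts as $\xi^{j_a}$. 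Hence the intrinsic idempotents of $\H_{\alpha}^{\tuple{\Lambda}}(\Gamma)$ coincide with the polynomial idempotents constructed in \textsection\ref{subsection:definition_images_quiverhecke_generators} applied to this module structure: more precisely, $\sigma(e(\tuple{k}))=e(\tuple{k})$ if $\tuple{k}\in K^{\alpha}$ and $\sigma(e(\tuple{k}))=0$ otherwise. Summing over $K^{\alpha}$ then gives $\sigma(e(\alpha))=1$ by~\eqref{relation:quiver_sum_alpha_e(i)}, so $\sigma$ descends to a homomorphism $\widehat{\Y}_{\alpha}^{\tuple{\Lambda}}(q)\to\H_{\alpha}^{\tuple{\Lambda}}(\Gamma)$.

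For $\sigma\circ\rho$ I would check the three families of quiver Hecke generators. The case $e(\tuple{k})$ is the previous paragraph. For $y_a$, a direct substitution yields $\sigma(\rho(y_a))=\sum_{\tuple{i}}(1-q^{-i_a}y_a(\tuple{i}))e(\tuple{i})=\sum_{\tuple{i}}y_a e(\tuple{i})=y_a$. For $\psi_a$, equation~\eqref{equation:Phia_ga_Pa} combined with the definition of the Yokonuma--Hecke generator $g_a$ in $\H_{\alpha}^{\tuple{\Lambda}}(\Gamma)$ yields $\sigma(\Phi_a)=\sum_{\tuple{k}}\psi_a Q_a(\tuple{k})e(\tuple{k})$ (the $P_a$ terms cancel), and then $\sigma(\rho(\psi_a))=\sum_{\tuple{k}}\psi_a Q_a(\tuple{k})e(\tuple{k})Q_a(\tuple{k})^{-1}=\psi_a$, using that $Q_a(\tuple{k})\in F[[y_a,y_{a+1}]]$ commutes with $e(\tuple{k})$.

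The symmetric check $\rho(\sigma(x))=e(\alpha)x$ for $x\in\{t_a,X_a,g_a\}$ then runs the same way: the identities $t_a e(\tuple{i},\tuple{j})=\xi^{j_a}e(\tuple{i},\tuple{j})$ and~\eqref{equation:ya(i)e(i)=Xae(i)} handle $t_a$ and $X_a$, and a use of~\eqref{equation:Phia_ga_Pa} analogous to the one above gives $\rho(\sigma(g_a))=\sum_{\tuple{k}}g_a e(\tuple{k})=g_a e(\alpha)$. Inside $\widehat{\Y}_{\alpha}^{\tuple{\Lambda}}(q)$ this equals $g_a$, so $\rho\circ\sigma$ is the identity on the Yokonuma--Hecke generators and $\rho$ is also surjective. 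The one piece of genuine content in the whole argument is the opening identification of the intrinsic quiver Hecke idempotents $e(\tuple{k})\in\H_{\alpha}^{\tuple{\Lambda}}(\Gamma)$ with the weight space polynomials arising from $\sigma$; once that is in place, both compositions reduce to routine manipulations of the formulas set up in Sections~\ref{section:quiver_hecke_generators} and~\ref{section:yokonuma_hecke_generators}.
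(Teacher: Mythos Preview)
You have written a correct sketch of the proof of Theorem~\ref{theorem:main} (the isomorphism $\H_{\alpha}^{\tuple{\Lambda}}(\Gamma)\simeq\widehat{\Y}_{\alpha}^{\tuple{\Lambda}}(q)$), and your argument matches the paper's own proof in \S\ref{section:isomorphism} essentially step for step: the identification $\sigma(e^{\Y}(\tuple{k}))=e(\tuple{k})$ via weight spaces, the direct check $\sigma\circ\rho=\mathrm{id}$ on $y_a$ and $\psi_a$, and the symmetric check $\rho\circ\sigma=\mathrm{id}$ on $t_a$, $X_a$, $g_a$.

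However, the statement you were asked to prove is the \emph{corollary about the grading}, not Theorem~\ref{theorem:main}. At this point in the paper Theorem~\ref{theorem:main} is already available (its proof is deferred to \S4.2--4.3, but the corollary is stated as a consequence of the theorem, not as a step toward it). The corollary's actual content is a single sentence: since $\H_{\alpha}^{\tuple{\Lambda}}(\Gamma)$ carries the $\mathbb{Z}$-grading of Proposition~\ref{proposition:gradation_quiver_Hecke_algebra} (which passes to the cyclotomic quotient because the relations~\eqref{relation:quiver_cyclo_y1} are homogeneous), the isomorphism of Theorem~\ref{theorem:main} transports that grading to $\widehat{\Y}_{\alpha}^{\tuple{\Lambda}}(q)$. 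Your write-up never mentions the grading at all, so while everything you wrote is correct and relevant background, it does not address the statement in question. The paper itself gives no proof beyond the sentence ``Recalling that the cyclotomic quiver Hecke algebra is naturally graded (Proposition~\ref{proposition:gradation_quiver_Hecke_algebra}), we get the following corollary.''
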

%

Moreover, as we obtain a presentation of $\widehat{\Y}_{d, n}^{\tuple{\Lambda}}(q)$ which does not depend on $q$, we also get another one (see Corollary~\ref{corollary:main_improvement} for a slight improvement).

\begin{corollaire}
Let $q' \in F\setminus \{0, 1\}$. If $\mathrm{char}_{q'}(F) = \mathrm{char}_q(F)$ then the algebras $\widehat{\Y}_{d, n}^{\tuple{\Lambda}}(q)$ and $\widehat{\Y}_{d, n}^{\tuple{\Lambda}}(q')$ are isomorphic.
\end{corollaire}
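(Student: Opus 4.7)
The plan is to deduce this corollary directly from the main isomorphism theorem, observing that the right-hand side of the isomorphism depends on $q$ only through its quantum characteristic.

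More precisely, I would argue as follows. By the isomorphism \eqref{equation:main_theorem_full_algebra}, we have
\[
\widehat{\Y}_{d,n}^{\tuple{\Lambda}}(q) \simeq \H_n^{\tuple{\Lambda}}(\Gamma) = \bigoplus_{\alpha \comp_K n} \H_\alpha^{\tuple{\Lambda}}(\Gamma).
\]
The key point is that the right-hand algebra depends on the parameter $q$ only through the set $I = \mathbb{Z}/\mathrm{char}_q(F)\mathbb{Z}$: indeed, the vertex set $K = I \times J$ of the quiver $\Gamma = \coprod_{j \in J} \Gamma_e$ only depends on $e = \mathrm{char}_q(F)$ and on $d$, and the defining relations \eqref{relation:quiver_e(i)e(i')}--\eqref{relation:quiver_ya+1_psia}, \eqref{relation:quiver_psia^2}--\eqref{relation:quiver_tresse}, \eqref{relation:quiver_cyclo_y1} of $\H_\alpha^{\tuple{\Lambda}}(\Gamma)$ do not involve $q$ anywhere. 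Moreover, since the weight $\tuple{\Lambda}$ is indexed by $I$, the assumption $\mathrm{char}_{q'}(F) = \mathrm{char}_q(F)$ precisely ensures that the same weight $\tuple{\Lambda}$ can be used to form $\widehat{\Y}_{d,n}^{\tuple{\Lambda}}(q')$ as well.

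I would then apply Theorem~\ref{theorem:main} (through \eqref{equation:main_theorem_full_algebra}) a second time to $q'$, noting that the level hypothesis $\ell(\tuple{\Lambda}) > 0$ and the assumption $q' \neq 1$ are satisfied, so that the theorem is applicable. Writing $\Gamma'$ for the quiver constructed from $\mathrm{char}_{q'}(F)$, the equality $\mathrm{char}_{q'}(F) = \mathrm{char}_q(F)$ yields $\Gamma' = \Gamma$ and thus
\[
\widehat{\Y}_{d,n}^{\tuple{\Lambda}}(q') \simeq \H_n^{\tuple{\Lambda}}(\Gamma') = \H_n^{\tuple{\Lambda}}(\Gamma) \simeq \widehat{\Y}_{d,n}^{\tuple{\Lambda}}(q),
\]
which is the desired isomorphism.

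There is no real obstacle here: all the work has already been done in proving Theorem~\ref{theorem:main}, and the corollary is simply the observation that the target presentation is $q$-free once $e$ is fixed. The only point worth flagging (and this is presumably what the "slight improvement" promised in the statement will exploit) is that a careful reader may notice the argument never uses that $q$ and $q'$ lie in the same field, only that the two quiver Hecke algebras produced from $(F, d, n, e, \tuple{\Lambda})$ coincide; this suggests the stronger formulation mentioned in the subsequent Corollary~\ref{corollary:main_improvement}.
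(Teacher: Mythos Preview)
Your proof is correct and follows exactly the same approach as the paper: the paper simply observes that the presentation of $\widehat{\Y}_{d,n}^{\tuple{\Lambda}}(q)$ obtained from Theorem~\ref{theorem:main} does not depend on $q$ (only on $e = \mathrm{char}_q(F)$), and you have spelled out this observation in detail.
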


Let us now prove Theorem~\ref{theorem:main}.
First, as we have a (non-unitary) algebra homomorphism $\widehat{\Y}_{\alpha}^{\tuple{\Lambda}}(q) \to \widehat{\Y}_{d, n}^{\tuple{\Lambda}}(q)$, by Theorem~\ref{theorem:morphism_yh_quiver} we get an algebra homomorphism $\widehat{\Y}_{\alpha}^{\tuple{\Lambda}}(q) \to \H_{\alpha}^{\tuple{\Lambda}}(\Gamma)$, that we still call $\sigma$.
We will prove that $\sigma : \widehat{\Y}_{\alpha}^{\tuple{\Lambda}}(q) \to \H_{\alpha}^{\tuple{\Lambda}}(\Gamma)$ and $\rho : \H_{\alpha}^{\tuple{\Lambda}}(\Gamma) \to \widehat{\Y}_{\alpha}^{\tuple{\Lambda}}(q)$ (from Theorem~\ref{theorem:morphism_quiver_yh})  verify $\sigma \circ \rho = \mathrm{id}_{\H_{\alpha}^{\tuple{\Lambda}}(\Gamma)}$ and $\rho \circ \sigma = \mathrm{id}_{\widehat{\Y}_{\alpha}^{\tuple{\Lambda}}(q)}$. Since these are algebra homomorphisms, it suffices to prove that they are identity on generators. To clarify the proof, let us add a ${}^{\Y}$ on the quiver Hecke generators of $\widehat{\Y}_{\alpha}^{\tuple{\Lambda}}(q)$ and a ${}^{\H}$ on the Yokonuma--Hecke generators of $\H_{\alpha}^{\tuple{\Lambda}}(\Gamma)$ (there isn't any confusion possible with the former notation referring to the case $d = 1$ since we won't use it any more).

\subsection{Proof of \texorpdfstring{$\sigma \circ \rho = \mathrm{id}_{\H_{\alpha}^{\tuple{\Lambda}}(\Gamma)}$}{sigmarho=id}}

We have to check that $\sigma(\rho(e(\tuple{k}))) = e(\tuple{k})$ for all $\tuple{k} \in K^{\alpha}$, that $\sigma(\rho(y_a)) = y_a$ for all $1 \leq a \leq n$ and that $\sigma(\rho(\psi_a)) = \psi_a$ for all $1 \leq a < n$.

Let us start by finding the image of $e(\tuple{k})$ by $\sigma \circ \rho$. By definition of $\rho$ we have $\rho(e(\tuple{k})) = e^{\Y}(\tuple{k})$, so we have to prove $\sigma(e^{\Y}(\tuple{k})) = e(\tuple{k})$. Let $M \coloneqq \H_{\alpha}^{\tuple{\Lambda}}(\Gamma)$; the algebra homomorphism $\sigma$ gives $M$ a structure of $\widehat{\Y}_\alpha^{\tuple{\Lambda}}(q)$-module, finite-dimensional thanks to Theorem~\ref{theorem:generating_family_cyclotomic_quiver}. If $M(\tuple{k})$ denotes the weight space as in \eqref{equation:weight_space}, by Remark~\ref{remark:e(k)_independent_M} we know that the projection onto $M(\tuple{k})$ along $\oplus_{\tuple{k}' \neq \tuple{k}} M(\tuple{k}')$ is given by $\sigma(e^{\Y}(\tuple{k}))$. We prove that $e(\tuple{k})$ is this projection too.

Let $(\tuple{i}, \tuple{j}) \in K^{\alpha}$. For $1 \leq a \leq n$, we have $\sigma(X_a) = \sum_{\tuple{i}'} (q^{i'_a} - q^{i'_a}y_a)e(\tuple{i}')$ so:
\[
\sigma(X_a) - q^{i_a} = \sum_{\tuple{i}' \in I^\alpha} \left[(q^{i'_a} - q^{i_a}) - q^{i'_a} y_a\right] e(\tuple{i}').
\]

Since $y_a$ is nilpotent, thanks to \eqref{relation:quiver_sum_alpha_e(i)}--\eqref{relation:quiver_e(i)e(i')} we have:
\[
\left\lbrace v \in M : (\sigma(X_a) - q^{i_a})^N v = 0\right\rbrace = \left(\sum_{\substack{\tuple{i}' \in I^{\alpha} \\ i'_a = i_a}} e(\tuple{i}')\right) M,
\]
hence, for $N \gg 0$ we have:
\[
M(\tuple{i}) \coloneqq \left\lbrace v \in M : (\sigma(X_a) - q^{i_a})^N v = 0 \; \forall a\right\rbrace = e(\tuple{i})M.
\]

In a similar way we have $M(\tuple{j}) = e(\tuple{j})M$ where $M(\tuple{j}) \coloneqq \{v \in M : (\sigma(t_a) - \xi^{j_a})v = 0$ $\forall a\}$, thus:
\[
M(\tuple{k}) = e(\tuple{k})M.
\]

Hence, as $\oplus_k M(\tuple{k}) = M$ we conclude that $e(\tuple{k})$ is the desired projection and finally $e(\tuple{k}) = \sigma(e^{\Y}(\tuple{k}))$.

The end of the proof is without any difficulty. We have:
\begin{align*}
\sigma(\rho(y_a)) &= \sigma(y_a^{\Y}) = \sum_{\tuple{i} \in I^{\alpha}} [1 - q^{-i_a} \sigma(X_a)] \sigma(e^{\Y}(\tuple{i}))
\\
&= \sum_{\tuple{i} \in I^{\alpha}} [1 - q^{-i_a} X_a^{\H}] e(\tuple{i})
\\
&= \sum_{\tuple{i} \in I^{\alpha}}\left[1 - q^{-i_a} \sum_{\tuple{i}' \in I^{\alpha}} y_a(\tuple{i}')e(\tuple{i}')\right] e(\tuple{i})
\\
&= \sum_{\tuple{i} \in I^{\alpha}} [1 - q^{-i_a} y_a(\tuple{i})] e(\tuple{i})
\\
&= \sum_{\tuple{i} \in I^{\alpha}} [1 - q^{-i_a} q^{i_a}(1 - y_a)] e(\tuple{i})
\\
\sigma(\rho(y_a)) &= y_a.
\end{align*}
Thus, we have $\sigma(Q_a^{\Y}(\tuple{k})) = Q_a(\tuple{k})$ and $\sigma(P_a^{\Y}(\tuple{k})) = P_a(\tuple{k})$, hence, recalling \eqref{equation:Phia_ga_Pa}:
\begin{align*}
\sigma(\rho(\psi_a)) &= \sigma(\psi_a^{\Y}) = \sum_{\tuple{k} \in K^{\alpha}} \sigma(\Phi_a) \sigma(Q_a^{\Y}(\tuple{k}))^{-1} \sigma(e^{\Y}(\tuple{k}))
\\
&= \sum_{\tuple{k} \in K^{\alpha}} \left(\sum_{\tuple{k}' \in K^{\alpha}}\left[\sigma(g_a) + \sigma(P_a^{\Y}(\tuple{k}'))\right]e(\tuple{k}')\right) Q_a(\tuple{k})^{-1} e(\tuple{k})
\\
&= \sum_{\tuple{k} \in K^{\alpha}} (g_a^{\H} + P_a(\tuple{k}))Q_a(\tuple{k})^{-1} e(\tuple{k})
\\
&= \sum_{\tuple{k} \in K^{\alpha}} \left(\left[\sum_{\tuple{k}' \in K^{\alpha}} (\psi_a Q_a(\tuple{k}') - P_a(\tuple{k}'))e(\tuple{k}')\right] + P_a(\tuple{k})\right)Q_a(\tuple{k})^{-1} e(\tuple{k})
\\
&= \sum_{\tuple{k} \in K^{\alpha}} [(\psi_a Q_a(\tuple{k}) - P_a(\tuple{k})) + P_a(\tuple{k})]Q_a(\tuple{k})^{-1} e(\tuple{k})
\\
\sigma(\rho(\psi_a)) &= \psi_a.
\end{align*}

\subsection{Proof of \texorpdfstring{$\rho\circ\sigma = \mathrm{id}_{\widehat{\Y}_{\alpha}^{\tuple{\Lambda}}(q)}$}{rhosigma=id}}

This is even easier: we have to check $\rho(\sigma(g_a)) = g_a$ for $1 \leq a < n$ and $\rho(\sigma(X_a)) = X_a$ and $\rho(\sigma(t_a)) = t_a$ for $1 \leq a \leq n$. We have:
\begin{align*}
\rho(\sigma(g_a)) &= \sum_{\tuple{k} \in K^{\alpha}} [\psi_a^{\Y} Q_a^{\Y}(\tuple{k}) - P_a^{\Y}(\tuple{k})] e^\Y(\tuple{k})
\\
&= \sum_{\tuple{k} \in K^{\alpha}} [\Phi_a Q_a^{\Y}(\tuple{k})^{-1} Q_a^{\Y}(\tuple{k}) - P_a^{\Y}(\tuple{k})]e^\Y(\tuple{k})
\\
&= \sum_{\tuple{k} \in K^{\alpha}} [\Phi_a - P_a^{\Y}(\tuple{k})]e^\Y(\tuple{k})
\\
\rho(\sigma(g_a)) &= g_a.
\end{align*}

Recalling \eqref{equation:ya(i)e(i)=Xae(i)}:
\[
\rho(\sigma(X_a)) = \sum_{\tuple{i} \in I^{\alpha}} y_a^{\Y}(\tuple{i}) e^{\Y}(\tuple{i})
= \sum_{\tuple{i} \in I^{\alpha}} X_a e^\Y(\tuple{i})
= X_a.
\]

Finally:
\[
\rho(\sigma(t_a)) = \sum_{\tuple{j} \in J^{\alpha}} \xi^{j_a} e^{\Y}(\tuple{j})
= \sum_{\tuple{j} \in J^{\alpha}} t_a e^{\Y}(\tuple{j})
 = t_a.
\]

The proof of Theorem~\ref{theorem:main} is now over.

\section{Degenerate case}
\label{section:degenerate}

In this section, we extend the previous results to the case $q = 1$. In particular, we need to define a new ``degenerate'' cyclotomic Yokonuma--Hecke algebra. Many calculations are not written, since they are entirely similar to the non-degenerate case. Note the following  thing: since the cyclotomic quiver Hecke algebra has no $q$ in its presentation, we do not need to define some new cyclotomic quiver Hecke algebra.

Let $\tuple{\Lambda} = (\Lambda_k)_{k \in K} \in \mathbb{N}^{(K)}$ be a weight; we assume that $\ell(\tuple{\Lambda}) = \sum_{k \in K} \Lambda_k$ verifies $\ell(\tuple{\Lambda}) > 0$. Moreover, as in Section~\ref{section:quiver_hecke_generators} we suppose that for any $i \in I$ and $j, j' \in J$, we have:
\[
\Lambda_{i, j} = \Lambda_{i, j'} \eqqcolon \Lambda_i.
\]
In particular, we will write $\tuple{\Lambda}$ as well for the weight $(\Lambda_i)_{i \in I}$.

\subsection{Degenerate cyclotomic Yokonuma--Hecke algebras}
\label{subsection:degenerate_cyclotomic_algebra}

We introduce here the degenerate cyclotomic Yokonuma--Hecke algebra: this algebra can be seen as the rational degeneration of the cyclotomic Yokonuma--Hecke algebra $\widehat{\Y}_{d, n}^{\tuple{\Lambda}}(q)$.

The \emph{degenerate cyclotomic Yokonuma--Hecke algebra of type A}, denoted by $\widehat{\Y}_{d, n}^{\tuple{\Lambda}}(1)$, is the unitary associative $F$-algebra generated by the elements
\begin{equation}
\label{equation:degenerate_generators_yokonumahecke}
f_1, \dots, f_{n-1}, t_1, \dots, t_n, x_1, \dots, x_n
\end{equation}
subject to the following relations:
\begin{align}
\label{degenerate_relation:ordre_t_a}
t_a^d &= 1, \\
\label{degenerate_relation:ta_tb}
t_a t_b &= t_b t_a, \\
\label{degenerate_relation:t_b_f_a}
t_b f_a &= f_a t_{s_a(b)}, \\
\label{degenerate_relation:ordre_fa}
f_a^2 &= 1
\\
\label{degenerate_relation:fa_fb}
f_a f_b &= f_b f_a \qquad \forall |a-b| > 1,\\
\label{degenerate_relation:tresse_fa}
f_{a+1} f_a f_{a+1} &= f_a f_{a+1} f_a, 
\end{align}
where $e_a \coloneqq \frac{1}{d} \sum_{j \in J} t_a^j t_{a+1}^{-j}$, together with the following relations:
\begin{align}
\label{degenerate_relation:xr_xs}
x_a x_b &= x_b x_a,
\\
\label{degenerate_relation:fa_xa+1}
f_a x_{a+1} &= x_a f_a + e_a,
\\
\label{degenerate_relation:fa_xb}
f_a x_b &= x_b f_a \qquad \forall b \neq a, a+1,
\\
\label{degenerate_relation:xa_tb}
x_a t_b &= t_b x_a,
\end{align}
and finally the cyclotomic one:
\begin{equation}
\label{degenerate_relation:x1_cyclo}
\prod_{i \in I} (x_1 - i)^{\Lambda_i} = 0.
\end{equation}

We obtained this presentation by setting $X_a = 1 + (q-1)x_a$ in $\widehat{\Y}_{d, n}^{\tuple{\Lambda}}(q)$, simplifying by $(1-q)$ as much as we can and then setting $q = 1$ (according to the transformation made by Drinfeld \cite{Dr} to define degenerate Hecke algebras). As in the non-degenerate case, the element $e_a$ verifies $e_a^2 = e_a$ and commutes with $f_a$. Finally, note some consequences of  \eqref{degenerate_relation:ordre_fa} and \eqref{degenerate_relation:fa_xa+1}:
\begin{gather}
\label{degenerate_equation:xa+1_fa_xa}
x_{a+1} = f_a x_a f_a + f_a e_a,
\\
\label{degenerate_equation:xa+1_fa}
x_{a+1} f_a = f_a x_a + e_a.
\end{gather}

When $d = 1$, we recover the \emph{degenerate cyclotomic Hecke algebra} $\widehat{\H}_n^{\tuple{\Lambda}}(1)$ of \cite{BrKl}; it is the degenerate cyclotomic Yokonuma--Hecke algebra $\widehat{\Y}^{\tuple{\Lambda}}_{1, n}(1)$. In particular, the element $e_a$ becomes $1$, and $f_a$ (respectively $x_b$) is the element $s_a$ (resp. $x_b$) of \cite[\textsection 3]{BrKl}.

We will use the following lemma (see \cite[Lemma 2.15]{ChPA2} for the non-degenerate case).
\begin{lemme}
For $u, v \in \mathbb{N}$ we have the following equalities:
\begin{gather}
\label{degenerate_equation:fa_xa_xa+1}
f_a x_a x_{a+1} = x_a x_{a+1} f_a,
\\
\label{degenerate_equation:fa_xa+1v}
f_a x_{a+1}^v = x_a^v f_a + e_a\sum_{m = 0}^{v-1} x_a^m x_{a+1}^{v-1-m},
\\
\label{degenerate_equation:fa_xau}
f_a x_a^u = x_{a+1}^u f_a - e_a \sum_{m = 0}^{u-1} x_a^m x_{a+1}^{u-1-m},
\\
\label{degenerate_equation:fa_xau_xa+1v}
f_a x_a^u x_{a+1}^v
=
\begin{dcases}
x_a^v x_{a+1}^u f_a + e_a\sum_{m = 0}^{v-u-1} x_a^{u+m} x_{a+1}^{v-1-m}
&
\text{if } u \leq v,
\\
x_a^v x_{a+1}^u f_a - e_a\sum_{m = 0}^{u-v-1} x_a^{u-1+m} x_{a+1}^{v-m}
&
\text{if } u \geq v.
\end{dcases}
\end{gather}
\end{lemme}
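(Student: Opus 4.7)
The proof plan is a cascade of inductions that rests on the commutation behaviour of $e_a$. The key preliminary observation is that $e_a$ commutes with $x_a$, $x_{a+1}$, and $f_a$: the first two follow from \eqref{degenerate_relation:xa_tb} since $e_a$ is a polynomial in $t_a, t_{a+1}$, and the third follows from \eqref{degenerate_relation:t_b_f_a} together with the symmetry $j \leftrightarrow -j$ in the sum defining $e_a$ over $J = \mathbb{Z}/d\mathbb{Z}$. I will also freely use that $x_a, x_{a+1}$ commute, and the two basic rewriting rules $f_a x_{a+1} = x_a f_a + e_a$ and $x_{a+1} f_a = f_a x_a + e_a$ from \eqref{degenerate_relation:fa_xa+1} and \eqref{degenerate_equation:xa+1_fa}.

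For \eqref{degenerate_equation:fa_xa_xa+1} the plan is a direct computation: rewrite $f_a x_a$ using \eqref{degenerate_equation:xa+1_fa} as $x_{a+1} f_a - e_a$, so that $f_a x_a x_{a+1} = x_{a+1} (f_a x_{a+1}) - e_a x_{a+1}$; then apply \eqref{degenerate_relation:fa_xa+1} to $f_a x_{a+1}$. The two $e_a x_{a+1}$ terms cancel, leaving $x_a x_{a+1} f_a$.

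For \eqref{degenerate_equation:fa_xa+1v} and \eqref{degenerate_equation:fa_xau}, I will argue by induction on $v$ (respectively $u$). The base case $v=1$ of \eqref{degenerate_equation:fa_xa+1v} is \eqref{degenerate_relation:fa_xa+1}; the inductive step consists in right-multiplying the induction hypothesis by $x_{a+1}$, invoking \eqref{degenerate_relation:fa_xa+1} on the isolated $f_a x_{a+1}$ factor that appears, pulling $e_a$ past $x_a$-powers, and re-indexing the resulting sum by a shift $m \mapsto m+1$ together with absorbing one extra term. The argument for \eqref{degenerate_equation:fa_xau} is entirely symmetric, with base case $u=1$ coming from \eqref{degenerate_equation:xa+1_fa}.

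Finally, \eqref{degenerate_equation:fa_xau_xa+1v} is obtained by combining the two previous identities. Writing $f_a x_a^u x_{a+1}^v = (f_a x_a^u)\, x_{a+1}^v$, I apply \eqref{degenerate_equation:fa_xau} to obtain $x_{a+1}^u f_a x_{a+1}^v - e_a \sum_{m=0}^{u-1} x_a^m x_{a+1}^{u+v-1-m}$, then \eqref{degenerate_equation:fa_xa+1v} on the middle factor to produce $x_a^v x_{a+1}^u f_a$ plus $e_a \sum_{n=0}^{v-1} x_a^n x_{a+1}^{u+v-1-n}$. All that remains is the bookkeeping: subtract the two sums (whose terms share the form $x_a^k x_{a+1}^{u+v-1-k}$) and split according to whether $u \le v$ or $u \ge v$ to re-index the surviving range so that all exponents are non-negative. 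This last step is where care is required, and I expect it to be the main (purely notational) obstacle.
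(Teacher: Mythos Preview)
Your proposal is correct and follows exactly the route sketched in the paper's own proof: deduce \eqref{degenerate_equation:fa_xa_xa+1} directly from the basic commutation rules, prove \eqref{degenerate_equation:fa_xa+1v} and \eqref{degenerate_equation:fa_xau} by induction on the exponent, and obtain \eqref{degenerate_equation:fa_xau_xa+1v} by composing the two and cancelling the overlapping ranges of the resulting sums. The paper's proof is in fact just a one-line outline of precisely this plan, so your write-up is a faithful expansion of it.
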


\begin{proof}
We deduce \eqref{degenerate_equation:fa_xa_xa+1} from different previous relations. The relations \eqref{degenerate_equation:fa_xa+1v} and \eqref{degenerate_equation:fa_xau} can be proved by an easy induction. The equality \eqref{degenerate_equation:fa_xau_xa+1v} follows finally from these previous equalities.
\end{proof}

As the elements $g_a$ for $1 \leq a < n$ verify the same braid relations as the $s_a \in \mathfrak{S}_n$, for each $w \in\mathfrak{S}_n$ there is a well-defined element $g_w \coloneqq g_{a_1} \cdots g_{a_r} \in \widehat{\Y}_{d, n}^{\tuple{\Lambda}}(1)$ which does not depend on the reduced expression $w = s_{a_1} \cdots s_{a_r}$.

\begin{proposition}
\label{proposition:degenerate_yh_finite_dimensional}
The algebra $\widehat{\Y}_{d, n}^{\tuple{\Lambda}}(1)$ is a finite-dimensional $F$-vector space and a family of generators is given by the elements $f_w x_1^{u_1} \cdots x_n^{u_n} t_1^{v_1} \cdots t_n^{v_n} $ for $w \in \mathfrak{S}_n, u_a \in \{0, \dots, \ell(\tuple{\Lambda}) - 1\}$ and $v_a \in J$.
\end{proposition}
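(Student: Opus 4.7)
The plan is to mimic the argument of \cite[Proposition 4.7]{ChPA2} (to which the author alludes in Proposition~\ref{proposition:yokonuma-hecke_finite_dimensional}) in the degenerate setting. Let $\mathcal{B}$ denote the proposed family; since $\lvert\mathcal{B}\rvert \leq n! \cdot \ell(\tuple{\Lambda})^n \cdot d^n < \infty$, finite-dimensionality will follow from the spanning statement. The algebra $\widehat{\Y}_{d,n}^{\tuple{\Lambda}}(1)$ is generated by the elements in \eqref{equation:degenerate_generators_yokonumahecke}, so it suffices to prove that the $F$-span $V \coloneqq F\text{-}\mathrm{Span}(\mathcal{B})$ is stable under right multiplication by each $f_a$, $t_b$ and $x_c$; since $1 \in V$ (take $w = 1$ and all exponents $0$), this will force $V = \widehat{\Y}_{d,n}^{\tuple{\Lambda}}(1)$.

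First I would dispose of the easy cases. Right multiplication by $t_b$ is handled by \eqref{degenerate_relation:xa_tb}, which lets $t_b$ slide past the $x_a$'s without modification; then \eqref{degenerate_relation:ta_tb} rearranges the $t$-monomial into standard order, and \eqref{degenerate_relation:ordre_t_a} reduces each exponent modulo $d$. Right multiplication by $f_a$ reduces via the braid relations \eqref{degenerate_relation:fa_fb}, \eqref{degenerate_relation:tresse_fa} and the quadratic relation \eqref{degenerate_relation:ordre_fa} to the standard fact (Matsumoto-type) that $f_w f_a$ equals either $f_{ws_a}$ (if $\ell(ws_a) > \ell(w)$) or $f_{ws_a}$ (if $\ell(ws_a) < \ell(w)$, using $f_a^2 = 1$), up to possibly needing to commute past $x$'s and $t$'s first.

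The delicate case is right multiplication by $x_c$. After using \eqref{degenerate_relation:xa_tb} to commute $x_c$ past the $t$'s and \eqref{degenerate_relation:xr_xs} to commute $x$'s amongst themselves, the core issue is pushing $x_c$ through $f_w$. Working along a reduced expression of $w$ and applying \eqref{degenerate_relation:fa_xb} together with the commutation formulas \eqref{degenerate_equation:fa_xa+1v}, \eqref{degenerate_equation:fa_xau}, \eqref{degenerate_equation:fa_xau_xa+1v}, one expresses $f_w x_c$ as a sum of terms of the form $f_{w'} x_1^{u_1}\cdots x_n^{u_n}$ where either $\ell(w') < \ell(w)$ (error terms involving $e_a$) or $w' = w$ with the new $x$-monomial having the same total degree. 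An induction on $\ell(w)$ then reduces matters to showing that each $x_a$ can be taken with exponent $< \ell(\tuple{\Lambda})$ in a standard monomial.

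The main obstacle, and the heart of the proof, is establishing that every $x_a$ satisfies a monic polynomial relation of degree $\ell(\tuple{\Lambda})$. For $a = 1$ this is exactly \eqref{degenerate_relation:x1_cyclo}. For $a > 1$ one proceeds by induction using \eqref{degenerate_equation:xa+1_fa_xa}, which gives $x_{a+1} = f_a x_a f_a + f_a e_a$: if $p(x_a) = 0$ is the monic degree-$\ell(\tuple{\Lambda})$ relation obtained by induction, then conjugating by $f_a$ and using \eqref{degenerate_equation:fa_xau_xa+1v} produces a relation of the form $p(x_{a+1}) = R$ where $R$ is a linear combination of monomials of strictly smaller total $x$-degree (the $e_a$ correction terms). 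A secondary induction on total $x$-degree then allows one to rewrite $x_{a+1}^{\ell(\tuple{\Lambda})}$ as an $F$-linear combination of lower powers of $x_{a+1}$ with coefficients in the subalgebra generated by the other $x_b$'s, the $f_b$'s with $b \neq a$, and the $t_b$'s. Combining this bound with the reduction steps above yields $V = \widehat{\Y}_{d,n}^{\tuple{\Lambda}}(1)$ and completes the proof.
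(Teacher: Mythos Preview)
Your overall strategy---show that the span $V$ of the proposed family contains $1$ and is stable under right multiplication by each generator---is exactly the paper's. But you have inverted the roles of the ``easy'' and ``hard'' generators, and this creates a genuine gap in your treatment of $x_c$.

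In the paper, right multiplication by $f_a$ is the substantive case: one commutes $f_a$ leftward past the $t$'s (via \eqref{degenerate_relation:t_b_f_a}) and past the $x_b$ with $b\neq a,a+1$ (via \eqref{degenerate_relation:fa_xb}), and then applies \eqref{degenerate_equation:fa_xau_xa+1v} to the block $x_a^{u_a}x_{a+1}^{u_{a+1}}f_a$. The point---which you gloss over with ``up to possibly needing to commute past $x$'s''---is that every $x$-exponent appearing in \eqref{degenerate_equation:fa_xau_xa+1v}, in both the leading term and the $e_a$-correction terms, is strictly below $\max(u_a,u_{a+1})\le \ell(\tuple{\Lambda})-1$. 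So stability under $f_a$ already preserves the exponent bound. Once this is done, stability under $x_c$ is essentially free: the relation \eqref{degenerate_equation:xa+1_fa_xa} gives $\alpha x_{a+1}=(\alpha f_a)\,x_a\,f_a+(\alpha f_a)\,e_a$, which by the already-established stability under $f_a$, under $t_b$ (for $e_a$), and inductively under $x_a$, reduces everything to the base case $\alpha x_1\in V$, which is immediate from \eqref{degenerate_relation:x1_cyclo}.

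Your alternative route---prove that each $x_a$ satisfies a monic degree-$\ell(\tuple{\Lambda})$ relation---does not work as sketched. From $p(x_a)=0$ and \eqref{degenerate_equation:fa_xau} one obtains $p(x_{a+1})f_a = e_a\cdot(\text{polynomial in }x_a,x_{a+1})$, hence $p(x_{a+1})=e_a\cdot(\cdots)\cdot f_a$. The right-hand side \emph{does} involve $f_a$, contradicting your claim that the coefficients lie in the subalgebra generated by the $x_b$, the $t_b$, and the $f_b$ with $b\neq a$. Worse, the correction polynomial involves powers of $x_a$ up to $\ell(\tuple{\Lambda})-1$, which after multiplication by the ambient $x_a^{u_a}$ can push the $x_a$-exponent above $\ell(\tuple{\Lambda})-1$; your ``secondary induction on total $x$-degree'' then has to re-invoke reduction of $x_a$, and the argument becomes circular. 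The paper's ordering (handle $f_a$ first, then bootstrap $x_a$ from $x_1$) sidesteps all of this.
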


\begin{proof}
We use a similar method to \cite{ArKo, OgPA}. As the unit element belongs to the above family, it suffices to prove that the $F$-vector space $V$ spanned by these elements is stable under (right-)multiplication by the generators of $\widehat{\Y}_{d, n}^{\tuple{\Lambda}}(1)$.

Let us consider $\alpha \coloneqq f_w x_1^{u_1} \cdots x_n^{u_n} t_1^{v_1} \cdots t_n^{v_n}$ as in the proposition.
By \eqref{degenerate_relation:ordre_t_a} and \eqref{degenerate_relation:ta_tb} the element $\alpha t_a$ remains in $V$.
Moreover, writing (by \eqref{degenerate_relation:t_b_f_a} and \eqref{degenerate_relation:fa_xb}):
\[
\alpha f_a = f_w x_1^{u_1} \cdots x_{a-1}^{u_{a-1}} \left(x_a^{u_a} x_{a+1}^{u_{a+1}} f_a \right)x_{a+2}^{u_{a+2}} \cdots x_n^{u_n} t_1^{v_1} \cdots t_n^{v_n},
\]
and using \eqref{degenerate_equation:fa_xau_xa+1v} we conclude that $\alpha f_a \in V$, noticing that the element
\[
x_1^{u_1} \cdots x_{a-1}^{u_{a-1}} \left(e_a x_a^{u'_a} x_{a+1}^{u'_{a+1}}\right) x_{a+2}^{u_{a+2}} \cdots x_n^{u_n} t_1^{v_1} \cdots t_n^{v_n}
\]
belongs to $V$ for every $0 \leq u'_a, u'_{a+1} < \ell(\tuple{\Lambda})$.
Finally, according to \eqref{degenerate_equation:xa+1_fa_xa}, to prove that $\alpha x_a$ remains in $V$ it suffices now to prove that $\alpha x_1 \in V$, but this is clear by \eqref{degenerate_relation:xr_xs}, \eqref{degenerate_relation:xa_tb} and \eqref{degenerate_relation:x1_cyclo}.
\end{proof}

Let now $M$ be a finite-dimensional $\widehat{\Y}_{d, n}^{\tuple{\Lambda}}(1)$-module; it is a finite-dimensional $F$-vector space thanks to Proposition~\ref{proposition:degenerate_yh_finite_dimensional}. 
By \eqref{degenerate_relation:x1_cyclo}, the eigenvalues of $x_1$ on $M$ belong to $I$. We prove in Lemma~\ref{lemma:degenerate_eigenvalues_I} that all the $x_a$ have in fact their eigenvalues in $I$: this is the degenerate analogue of \cite[Lemma 5.2]{CuWa}, which we used in \textsection\ref{subsection:definition_images_quiverhecke_generators}.
 
\begin{lemme}
\label{lemme:degenerate_intertw}
We have:
\begin{gather*}
x_a \phi_a = \phi_a x_{a+1},
\\
\phi_a^2 = (x_{a+1} - x_a - e_a)(x_a - x_{a+1} - e_a),
\end{gather*}
where $\phi_a$ is the ``intertwining operator'' defined by:
\[
\phi_a \coloneqq f_a (x_a - x_{a+1}) + e_a.
\]
\end{lemme}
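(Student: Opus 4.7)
The plan is to prove both identities by direct calculation, using the defining relations together with the following auxiliary commutation facts. First, since $e_a = \frac{1}{d}\sum_{j \in J} t_a^j t_{a+1}^{-j}$ lies in the subalgebra generated by the $t_b$, relation \eqref{degenerate_relation:xa_tb} immediately gives $e_a x_b = x_b e_a$ for all $b$. Next, I would verify $e_a f_a = f_a e_a$: applying \eqref{degenerate_relation:t_b_f_a} twice yields $t_a^j t_{a+1}^{-j} f_a = f_a t_{a+1}^j t_a^{-j}$, and summing over $j$ (with the change of variable $j \mapsto -j$ inside the sum) gives $d\, e_a f_a = f_a \sum_j t_a^j t_{a+1}^{-j} = d\, f_a e_a$. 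Finally, from \eqref{degenerate_relation:fa_xa+1} we get $x_a f_a = f_a x_{a+1} - e_a$, and \eqref{degenerate_equation:xa+1_fa} (or equivalently \eqref{degenerate_equation:fa_xau} at $u=1$) gives $f_a x_a = x_{a+1} f_a - e_a$.

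For the first identity, I would expand both sides:
\[
x_a \phi_a = x_a f_a (x_a - x_{a+1}) + x_a e_a,\qquad \phi_a x_{a+1} = f_a (x_a - x_{a+1}) x_{a+1} + e_a x_{a+1}.
\]
Substituting $x_a f_a = f_a x_{a+1} - e_a$ in the first and using that $e_a$ commutes with $x_a, x_{a+1}$, together with \eqref{degenerate_relation:xr_xs} and \eqref{degenerate_equation:fa_xa_xa+1}, both sides reduce to $f_a x_a x_{a+1} - f_a x_{a+1}^2 + e_a x_{a+1}$.

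For the second identity, I would expand
\[
\phi_a^2 = f_a(x_a - x_{a+1}) f_a(x_a - x_{a+1}) + f_a(x_a - x_{a+1}) e_a + e_a f_a(x_a - x_{a+1}) + e_a^2.
\]
The key step is to simplify $f_a(x_a - x_{a+1})f_a$ using $f_a x_a = x_{a+1} f_a - e_a$ and $f_a x_{a+1} = x_a f_a + e_a$ together with $f_a^2 = 1$; this yields $(x_{a+1} - x_a) - 2 e_a f_a$. Using $e_a^2 = e_a$ (standard consequence of $t_a^d = 1$) and the fact that $e_a$ commutes with both $f_a$ and the $x_b$, the two middle cross-terms combine to $2 f_a(x_a - x_{a+1}) e_a$, which exactly cancels the $-2 e_a f_a(x_a - x_{a+1})$ contribution from the first term. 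What remains is $\phi_a^2 = -(x_a - x_{a+1})^2 + e_a$. A direct expansion of the right-hand side $(x_{a+1} - x_a - e_a)(x_a - x_{a+1} - e_a)$, again using $e_a^2 = e_a$ and the commutation of $e_a$ with $x_a$ and $x_{a+1}$, gives the same expression.

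The only potential obstacle is bookkeeping: one must carefully track signs and invoke the commutation $e_a f_a = f_a e_a$ (which is not among the defining relations and requires the small check above). Once that is in hand, everything else is a mechanical application of \eqref{degenerate_relation:ordre_fa}, \eqref{degenerate_relation:fa_xa+1}, \eqref{degenerate_relation:xr_xs}, \eqref{degenerate_relation:xa_tb}, and the consequences \eqref{degenerate_equation:xa+1_fa_xa}--\eqref{degenerate_equation:xa+1_fa}.
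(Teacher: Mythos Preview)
Your proof is correct and follows essentially the same approach as the paper: both arguments substitute $x_a f_a = f_a x_{a+1} - e_a$ and $x_{a+1} f_a = f_a x_a + e_a$ to rewrite $f_a(x_a - x_{a+1})f_a$, then use $e_a^2 = e_a$ and the commutation of $e_a$ with $f_a$ and the $x_b$ to cancel the cross terms. The only difference is that you spell out the verification of $e_a f_a = f_a e_a$ and the commutation of $e_a$ with the $x_b$, which the paper takes as granted.
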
 

\begin{proof}
These are straightforward calculations. We have, using \eqref{degenerate_relation:fa_xa+1}:
\begin{align*}
x_a \phi_a
&= (f_a x_{a+1} - e_a)(x_a - x_{a+1}) + x_a e_a
\\
&= f_a (x_a - x_{a+1}) x_{a+1} + x_{a+1} e_a
\\
x_a \phi_a &= \phi_a x_{a+1},
\end{align*}
and:
\begin{align*}
\phi_a^2
&= f_a (x_a - x_{a+1})f_a (x_a - x_{a+1}) + 2 f_a (x_a - x_{a+1}) e_a + e_a
\\
&= f_a (f_a x_{a+1} - e_a - f_a x_a - e_a)(x_a - x_{a+1}) + 2 f_a(x_a - x_{a+1})e_a + e_a
\\
&= (x_{a+1} - x_a)(x_a - x_{a+1}) + e_a
\\
\phi_a^2 &= (x_{a+1} - x_a - e_a)(x_a - x_{a+1} - e_a).
\end{align*}
\end{proof}
 
\begin{lemme}
\label{lemma:degenerate_eigenvalues_I}
The eigenvalues of $x_a$ belong to $I$ for every $1 \leq a \leq n$.
\end{lemme}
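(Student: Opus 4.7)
The plan is to argue by induction on $a$. For the base case $a=1$, the cyclotomic relation \eqref{degenerate_relation:x1_cyclo} shows that the minimal polynomial of $x_1$ (viewed as an operator on any finite-dimensional module) divides $\prod_{i\in I}(X-i)^{\Lambda_i}$, so every eigenvalue of $x_1$ already lies in $I$.

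For the inductive step, I would lean on the intertwining element $\phi_a$ of Lemma~\ref{lemme:degenerate_intertw}. Before applying it I would record a few further commutation properties that are not stated there but follow by short calculations analogous to the proof already written: $x_{a+1}\phi_a=\phi_a x_a$, $x_b\phi_a=\phi_a x_b$ for $b\neq a,a+1$, and $t_b\phi_a=\phi_a t_{s_a(b)}$. The last identity follows from \eqref{degenerate_relation:t_b_f_a}, the fact that $e_a$ is a polynomial in $t_a,t_{a+1}$, and the identity $e_a t_a=e_a t_{a+1}$ obtained by re-indexing the sum defining $e_a$. Fix a finite-dimensional $\widehat{\Y}_{d,n}^{\tuple{\Lambda}}(1)$-module $M$ and decompose it into simultaneous generalized weight spaces $M(\tuple{\mu},\tuple{j})$ for the commuting family $\{x_1,\dots,x_n,t_1,\dots,t_n\}$. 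Because $t_a^d=1$ and $\mathrm{char}(F)\nmid d$, each $t_a$ is semisimple and acts as the true scalar $\xi^{j_a}$ on $M(\tuple{\mu},\tuple{j})$; consequently $e_a$ acts as a scalar $\epsilon\in\{0,1\}$ on this space, with $\epsilon=1$ precisely when $j_a=j_{a+1}$. The intertwining relations above show that $\phi_a$ sends $M(\tuple{\mu},\tuple{j})$ into $M(s_a\cdot\tuple{\mu},s_a\cdot\tuple{j})$.

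Now let $\lambda$ be an eigenvalue of $x_{a+1}$, arising from some non-zero $M(\tuple{\mu},\tuple{j})$ with $\mu_{a+1}=\lambda$. If $\phi_a$ is not identically zero on this weight space, then $M(s_a\cdot\tuple{\mu},s_a\cdot\tuple{j})$ is non-zero and $\lambda=(s_a\cdot\tuple{\mu})_a$ is a generalized eigenvalue of $x_a$ on it, so $\lambda\in I$ by induction. Otherwise $\phi_a^2$ vanishes on $M(\tuple{\mu},\tuple{j})$; using the formula $\phi_a^2=(x_{a+1}-x_a-e_a)(x_a-x_{a+1}-e_a)$ from Lemma~\ref{lemme:degenerate_intertw} together with the nilpotence of $x_a-\mu_a$ and $x_{a+1}-\lambda$ on this space, the restriction of $\phi_a^2$ equals $(\lambda-\mu_a-\epsilon)(\mu_a-\lambda-\epsilon)$ plus a nilpotent operator. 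For this to annihilate a non-zero space its scalar part must vanish, forcing $\lambda=\mu_a\pm\epsilon$; since $\mu_a\in I$ by induction and $I$ is stable under $\pm 1$, we conclude $\lambda\in I$. The main subtlety to get right is checking that $\phi_a$ permutes the weight-space labels exactly as claimed (especially the $t$-intertwining), after which the argument reduces to the one-line scalar identity above.
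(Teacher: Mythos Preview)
Your proposal is correct and follows essentially the same strategy as the paper: induction on $a$, with the inductive step driven by the intertwining element $\phi_a$ of Lemma~\ref{lemme:degenerate_intertw} and a dichotomy on whether $\phi_a$ vanishes, using $\phi_a^2=(x_{a+1}-x_a-e_a)(x_a-x_{a+1}-e_a)$ in the vanishing case.

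The one noteworthy difference is that the paper's implementation is more economical. Rather than decomposing $M$ into generalized weight spaces for the full family $\{x_1,\dots,x_n,t_1,\dots,t_n\}$ and establishing that $\phi_a$ permutes them (which forces you to verify the extra relations $x_{a+1}\phi_a=\phi_a x_a$, $x_b\phi_a=\phi_a x_b$, and $t_b\phi_a=\phi_a t_{s_a(b)}$), the paper simply passes to an algebraic closure and picks a single common \emph{true} eigenvector $v$ for the commuting triple $\{x_a,x_{a+1},e_a\}$, with $x_a v=iv$, $x_{a+1}v=\lambda v$, $e_a v=\delta v$. Then either $\phi_a v\neq 0$, in which case $x_a(\phi_a v)=\phi_a(x_{a+1}v)=\lambda\phi_a v$ exhibits $\lambda$ as an eigenvalue of $x_a$; or $\phi_a v=0$, in which case $\phi_a^2 v=(\lambda-i-\delta)(i-\lambda-\delta)v=0$ gives $\lambda=i\pm\delta\in I$ directly, with no nilpotent correction terms to manage. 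Your generalized-weight-space argument buys you the ability to stay over $F$, but at the cost of extra bookkeeping; the paper's eigenvector argument is shorter and needs only the single relation $x_a\phi_a=\phi_a x_{a+1}$ already proved in Lemma~\ref{lemme:degenerate_intertw}.
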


\begin{proof}
We proceed by induction on $a$. The proposition is true for $a = 1$; we suppose that it is true for some $1 \leq a < n$.
Let $\lambda$ be an eigenvalue of $x_{a+1}$ (in a algebraic closure of $F$). As the family $\{x_a, x_{a+1}, e_a\}$ is commutative, we can find a common eigenvector $v$ in the eigenspace of $x_{a+1}$ associated with $\lambda$: we have $x_a v = i v$ and $e_a v = \delta v$ for some $i \in I$ (by induction hypothesis) and $\delta \in \{0, 1\}$ (since $e_a^2 = e_a$). We distinguish now whether $\phi_a v$ vanishes or not:
\begin{itemize}
\item if $\phi_a v \neq 0$, we get by Lemma~\ref{lemme:degenerate_intertw}:
\[
x_a(\phi_a v) = \phi_a (x_{a+1}v) = \lambda \phi_a v,
\]
hence $\lambda$ is an eigenvalue for $x_a$ and by induction hypothesis we get $\lambda\in I$;
\item if $\phi_a v = 0$, by the same lemma we have:
\[
\phi_a^2 v = (\lambda - i - \delta)(i - \lambda - \delta) v = 0,
\]
hence $\lambda = i \pm \delta \in I$.
\end{itemize} 
\end{proof}

\subsection{Quiver Hecke generators of \texorpdfstring{$\widehat{\Y}_{d, n}^{\tuple{\Lambda}}(1)$}{YdnLambda(1)}}

We proceed as in Section~\ref{section:quiver_hecke_generators}: we define some central idempotents, then some ``quiver Hecke generators'' on which we check the defining relations of $\H_{\alpha}^{\tuple{\Lambda}}(\Gamma)$. The proofs are entirely similar to the non-degenerate case (even easier; note that once again the ``hard work'' has been made in \cite{BrKl}), hence we won't write them down. However, we will still define the different involved elements.

\subsubsection{Image of \texorpdfstring{$e(\tuple{i}, \tuple{j})$}{e(i, j)}}

Let $M$ be a finite-dimensional $\widehat{\Y}_{d, n}^{\tuple{\Lambda}}(1)$-module. We know that the $t_a$ are diagonalizable with eigenvalues in $J$. Hence, recalling Lemma~\ref{lemma:degenerate_eigenvalues_I}, we can write (recall that the family $\{X_a, t_a\}_{1 \leq a \leq n}$ is commutative):
\[
M = \bigoplus_{(\tuple{i}, \tuple{j}) \in I^n \times J^n} M(\tuple{i}, \tuple{j}),
\]
with:
\[
M(\tuple{i}, \tuple{j}) \coloneqq \left\lbrace v \in M : (x_a - i_a)^N v = (t_a - \xi^{j_a})v = 0 \text{ for all } 1 \leq a \leq n\right\rbrace,
\]
with $N \gg 0$; since $M$ is a finite-dimensional $F$-vector space, only finitely many $M(\tuple{i}, \tuple{j})$ are non-zero.
Considering once again the family of projections $\{e(\tuple{k})\}_{\tuple{k} \in K^n}$ associated with $M = \oplus_{\tuple{k} \in K^n} M(\tuple{k})$, we define for $\alpha \comp_K n$:
\[
e(\alpha) \coloneqq \sum_{\tuple{k} \in K^{\alpha}} e(\tuple{k}),
\]
and we set $\widehat{\Y}_{\alpha}^{\tuple{\Lambda}}(1) \coloneqq e(\alpha) \widehat{\Y}_{d, n}^{\tuple{\Lambda}}(1)$. 
We can now define, for $\tuple{i} \in I^{\alpha}$ and $\tuple{j} \in J^{\alpha}$:
\begin{equation}
\label{equation:degenerate_e(i)_e(j)}
\begin{aligned}
e(\alpha)(\tuple{i}) &\coloneqq \sum_{\tuple{j} \in J^{\alpha}} e(\alpha)e(\tuple{i}, \tuple{j}),
\\
e(\alpha)(\tuple{j}) &\coloneqq \sum_{\tuple{i} \in I^{\alpha}} e(\alpha) e(\tuple{i}, \tuple{j}).
\end{aligned}
\end{equation}
In particular, with $e(\alpha)(\tuple{i})$ for $d = 1$ we recover the element $e(\tuple{i})$ of \cite[\textsection 3.1]{BrKl}.

From now on, unless mentioned otherwise we always work in $\widehat{\Y}_{\alpha}^{\tuple{\Lambda}}(1)$; every relation should be multiplied by $e(\alpha)$ and we write $e(\tuple{i})$ and $e(\tuple{j})$ without any $(\alpha)$.

\begin{lemme}
If $1 \leq a < n$ and $\tuple{j} \in J^{\alpha}$ is such that $j_a \neq j_{a+1}$ then we have:
\begin{gather*}
f_a x_{a+1} e(\tuple{j}) = x_a f_a e(\tuple{j}),
\\
x_{a+1} f_a e(\tuple{j}) = f_a x_a e(\tuple{j}).
\end{gather*}
\end{lemme}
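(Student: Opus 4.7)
The plan is to mimic exactly the proof of Lemma~\ref{lemma:ga_Xa} in the non-degenerate setting, substituting the degenerate relations for their $q$-analogues.

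First I would note the degenerate analogue of Remark~\ref{remark:e_a_e(ij)}: the element $e_a = \frac{1}{d}\sum_{j \in J} t_a^j t_{a+1}^{-j}$ acts on a weight space $M(\tuple{i}, \tuple{j})$ of a finite-dimensional $\widehat{\Y}_{d,n}^{\tuple{\Lambda}}(1)$-module as the scalar $\frac{1}{d}\sum_{j \in J} \xi^{j(j_a - j_{a+1})}$, which equals $0$ when $j_a \neq j_{a+1}$ and $1$ when $j_a = j_{a+1}$. Since $e(\tuple{j})$ is the projection onto $\bigoplus_{\tuple{i}} M(\tuple{i}, \tuple{j})$ and the formula is independent of the chosen module (argue as in Remark~\ref{remark:e(k)_independent_M}), we get $e_a e(\tuple{j}) = 0$ whenever $j_a \neq j_{a+1}$.

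Then the two equalities follow by right- and left-multiplying the defining relations \eqref{degenerate_relation:fa_xa+1} and \eqref{degenerate_equation:xa+1_fa} by $e(\tuple{j})$:
\begin{align*}
f_a x_{a+1} e(\tuple{j}) &= x_a f_a e(\tuple{j}) + e_a e(\tuple{j}) = x_a f_a e(\tuple{j}),
\\
x_{a+1} f_a e(\tuple{j}) &= f_a x_a e(\tuple{j}) + e_a e(\tuple{j}) = f_a x_a e(\tuple{j}),
\end{align*}
using in the second line that $e_a$ commutes with every $t_b$ (hence with $e(\tuple{j})$) since the $t_b$ form a commutative family.

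There is really no obstacle here: the only conceptual point is establishing the vanishing $e_a e(\tuple{j}) = 0$, which is a direct spectral computation on weight spaces, and the rest is immediate substitution. This is the degenerate counterpart of Lemma~\ref{lemma:ga_Xa}, and, as noted in the paper, the degenerate arguments are systematically simpler than their non-degenerate versions since there is no $q$-renormalisation to track.
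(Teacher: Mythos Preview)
Your proof is correct and follows exactly the approach the paper intends: the paper omits the proof of this lemma, relying on the analogy with Lemma~\ref{lemma:ga_Xa}, whose proof consists precisely in multiplying the relevant commutation relations by $e(\tuple{j})$ and using $e_a e(\tuple{j}) = 0$ from Remark~\ref{remark:e_a_e(ij)}. The only (harmless) imprecision is that you speak of ``right- and left-multiplying'' when in fact both identities are obtained by right-multiplying \eqref{degenerate_relation:fa_xa+1} and \eqref{degenerate_equation:xa+1_fa} by $e(\tuple{j})$.
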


\begin{lemme}
For $1 \leq a < n$ and $\tuple{j} \in J^{\alpha}$ we have $f_a e(\tuple{j}) = e(s_a\cdot \tuple{j}) f_a$. In particular, if $j_a = j_{a+1}$ then $f_a$ and $e(\tuple{j})$ commute. Moreover, if $j_a \neq j_{a+1}$ then $f_a e(\tuple{i}, \tuple{j}) = e(s_a\cdot (\tuple{i}, \tuple{j})) f_a$.
\end{lemme}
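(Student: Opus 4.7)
The plan is to mirror, in the degenerate setting, the strategy used for the non-degenerate analogues Lemma~\ref{lemme:g_a_e(j)} and Lemma~\ref{lemma:ga_e(i,j)}. Viewing $M \coloneqq \widehat{\Y}_{\alpha}^{\tuple{\Lambda}}(1)$ as a left module over itself, the idempotents $e(\tuple{j})$ and $e(\tuple{i}, \tuple{j})$ act as the projections onto the corresponding weight spaces of $M$, so it will suffice to check that the two operator identities hold on every summand of the decomposition $M = \bigoplus_{\tuple{k}} M(\tuple{k})$; since $M$ is unital, this will transfer to algebra identities.

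First I would establish $f_a e(\tuple{j}) = e(s_a \cdot \tuple{j}) f_a$. The only input is relation \eqref{degenerate_relation:t_b_f_a}: rewritten as $(t_b - \xi^{(s_a \cdot \tuple{j})_b}) f_a = f_a (t_{s_a(b)} - \xi^{j_{s_a(b)}})$ (using $(s_a \cdot \tuple{j})_b = j_{s_a(b)}$), it shows that $f_a$ sends $M(\tuple{j}')$ into $M(s_a \cdot \tuple{j}')$ for every $\tuple{j}' \in J^{\alpha}$. Testing both sides against an arbitrary $v \in M(\tuple{j}')$, each yields $f_a v$ when $\tuple{j}' = \tuple{j}$ and $0$ otherwise, so the two operators coincide on $M$. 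The \emph{in particular} clause is then immediate, since $j_a = j_{a+1}$ forces $s_a \cdot \tuple{j} = \tuple{j}$.

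For the \emph{moreover} clause I would run the same projection argument but on the finer $(\tuple{i}, \tuple{j})$-weight spaces, the crux being to verify that $f_a$ maps $M(\tuple{i}, \tuple{j})$ into $M(s_a \cdot (\tuple{i}, \tuple{j}))$ whenever $j_a \neq j_{a+1}$. For $b \notin \{a, a+1\}$, relation \eqref{degenerate_relation:fa_xb} immediately gives $f_a (x_b - i_b)^N = (x_b - i_b)^N f_a$, handling those coordinates. For $b = a$, I would invoke the previous lemma: since $x_{a+1}$ commutes with $e(\tuple{j})$ by \eqref{degenerate_relation:xa_tb} (as $e(\tuple{j})$ is a polynomial in the $t_c$'s), a quick induction starting from $f_a x_{a+1} e(\tuple{j}) = x_a f_a e(\tuple{j})$ yields $f_a x_{a+1}^m e(\tuple{j}) = x_a^m f_a e(\tuple{j})$ for all $m \geq 0$, so after binomial expansion $f_a (x_{a+1} - i_{a+1})^N e(\tuple{j}) = (x_a - i_{a+1})^N f_a e(\tuple{j})$. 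The case $b = a+1$ is symmetric, using the second half of the previous lemma. Combining the three cases with the $t$-argument already given in the first part proves the weight-space claim, and the projection argument then concludes.

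The step that demands the most care is the induction producing $f_a x_{a+1}^m e(\tuple{j}) = x_a^m f_a e(\tuple{j})$: it relies on sliding $x_{a+1}$ past $e(\tuple{j})$ freely, which is legitimate only because $e(\tuple{j})$ is a polynomial in the $t_c$'s and these commute with every $x_b$. Once that commutation is acknowledged, the rest is pure bookkeeping between the eigenvalues indexed by $(\tuple{i}, \tuple{j})$ and by $s_a \cdot (\tuple{i}, \tuple{j})$.
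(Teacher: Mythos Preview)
Your proposal is correct and follows essentially the same approach as the paper. The paper does not write out the degenerate proof explicitly but indicates it is entirely similar to the non-degenerate Lemmas~\ref{lemme:g_a_e(j)} and~\ref{lemma:ga_e(i,j)}, whose proofs are precisely the projection-onto-weight-spaces argument you describe, with Lemma~\ref{lemma:ga_Xa} (the analogue of your ``previous lemma'') supplying the key intertwining relation for the $x$-coordinates when $j_a\neq j_{a+1}$; your added detail about inducting to handle powers of $x_{a+1}$ and $x_a$ is the natural unpacking of that step.
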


\begin{remarque}[About Brundan and Kleshchev's proof - \theBKproof]
\stepcounter{BKproof}
\label{remark:about_BK_degenerate_quiver_generators}
Let $1 \leq a < n$; if $\tuple{j} \in J^{\alpha}$ verifies $j_a = j_{a+1}$, when a proof in \cite[\textsection 3.3]{BrKl} needs only the elements $f_a$, $x_b$, $e(\tuple{i})$  and the corresponding relations in $\widehat{\H}_{\alpha}^{\tuple{\Lambda}}(1)$, we claim that the same proof holds in $e(\tuple{j}) \widehat{\Y}_{\alpha}^{\tuple{\Lambda}}(1)e(\tuple{j})$. We extend this claim to the case $j_a = j_{a+1} = j_{a+2}$.
\end{remarque}

\subsubsection{Image of \texorpdfstring{$y_a$}{ya}}

We define the following elements of $\widehat{\Y}_{\alpha}^{\tuple{\Lambda}}(1)$ for $1 \leq a \leq n$:
\[
y_a \coloneqq \sum_{\tuple{i} \in I^{\alpha}} (x_a - i_a)e(\tuple{i}) \in \widehat{\Y}_{\alpha}^{\tuple{\Lambda}}(1).
\]

When $d = 1$ we recover the elements defined in \cite[\textsection 3.3]{BrKl}. 
These elements are nilpotent: we will be able to make calculations in the ring $F[[y_1, \dots, y_n]]$.

\begin{lemme}
For $\tuple{j} \in J^{\alpha}$ such that $j_a \neq j_{a+1}$ we have:
\begin{gather*}
f_a y_{a+1} e(\tuple{j}) = y_a f_a e(\tuple{j}),
\\
y_{a+1} f_a e(\tuple{j}) = f_a y_a e(\tuple{j}).
\end{gather*}
\end{lemme}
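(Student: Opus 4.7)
The proof should mirror, step by step, that of Lemma~\ref{lemma:ga_ya} in the non-degenerate case, since all three key ingredients have just been established in the degenerate setting: (i) the elements $x_a$ and $e(\tuple{i})$ commute, as they both lie in the commutative subalgebra generated by the $x_b$'s and the $t_b$'s; (ii) the preceding lemma gives $f_a x_{a+1} e(\tuple{j}) = x_a f_a e(\tuple{j})$ and $x_{a+1} f_a e(\tuple{j}) = f_a x_a e(\tuple{j})$ whenever $j_a \neq j_{a+1}$; (iii) the lemma just above yields $f_a e(\tuple{i}, \tuple{j}) = e(s_a \cdot (\tuple{i}, \tuple{j})) f_a$ under the same hypothesis.

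For the first identity, my plan is to expand using the definition of $y_{a+1}$, obtaining
\[
f_a y_{a+1} e(\tuple{j}) = \sum_{\tuple{i} \in I^{\alpha}} f_a (x_{a+1} - i_{a+1}) e(\tuple{i}, \tuple{j}).
\]
Then, using (ii) to push $f_a$ past $x_{a+1}$ (turning it into $x_a$ on the left of $f_a$) and (iii) to push $f_a$ past $e(\tuple{i}, \tuple{j})$, one rewrites the right-hand side as
\[
\sum_{\tuple{i} \in I^{\alpha}} (x_a - i_{a+1}) e(s_a \cdot \tuple{i}) f_a e(\tuple{j}).
\]
The final step is the reindexation $\tuple{i} \mapsto s_a \cdot \tuple{i}$, which is legitimate because $I^{\alpha}$ is $\mathfrak{S}_n$-stable; under this change $i_{a+1}$ becomes $(s_a \cdot \tuple{i})_{a+1} = i_a$ (equivalently, relabelling $\tuple{i}' = s_a \cdot \tuple{i}$ gives $i_{a+1} = i'_a$), yielding $\sum_{\tuple{i}' \in I^{\alpha}} (x_a - i'_a) e(\tuple{i}') f_a e(\tuple{j}) = y_a f_a e(\tuple{j})$.

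The second identity $y_{a+1} f_a e(\tuple{j}) = f_a y_a e(\tuple{j})$ follows by the completely symmetric argument, starting from the definition of $y_{a+1}$, using the companion relation $x_{a+1} f_a e(\tuple{j}) = f_a x_a e(\tuple{j})$ from ingredient (ii), and once again reindexing the resulting sum. There is no serious obstacle here: the calculation is fully mechanical, the only care needed being in tracking the order of $f_a$ relative to the idempotents $e(\tuple{i})$ (one has to use $f_a e(\tuple{i}) = e(s_a \cdot \tuple{i}) f_a$, which is immediate by summing the relation in (iii) over $\tuple{j}' \in J^{\alpha}$), and in verifying that the error term $e_a$ in $f_a x_{a+1} = x_a f_a + e_a$ vanishes upon multiplication by $e(\tuple{j})$, which is exactly why the hypothesis $j_a \neq j_{a+1}$ is needed.
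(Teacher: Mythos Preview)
Your proposal is correct and follows exactly the approach the paper intends: it is the degenerate analogue of the proof of Lemma~\ref{lemma:ga_ya}, using the degenerate versions of Lemmas~\ref{lemma:ga_Xa} and~\ref{lemma:ga_e(i,j)} together with the same reindexation $\tuple{i} \mapsto s_a \cdot \tuple{i}$. The paper omits this proof precisely because it is identical in structure to the non-degenerate case you have reproduced.
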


\subsubsection{Image of \texorpdfstring{$\psi_a$}{psia}}
\label{subsubsection:degenerate_image_psia}

We first define some elements $p_a(\tuple{i}, \tuple{j}) \in F[[y_a, y_{a+1}]]$ for $1 \leq a < n$ and $(\tuple{i}, \tuple{j}) \in K^{\alpha}$ by:
\[
p_a(\tuple{i}, \tuple{j}) \coloneqq
\begin{cases}
\begin{cases}
1 & \text{if } i_a = i_{a+1},
\\
(i_a - i_{a+1} + y_a - y_{a+1})^{-1}
& \text{if } i_a \neq i_{a+1},
\end{cases} & \text{if } j_a = j_{a+1},
\\
0 & \text{if } j_a \neq j_{a+1},
\end{cases}
\]
and then some invertible elements $q_a(\tuple{i}, \tuple{j}) \in F[[y_a, y_{a+1}]]^{\times}$ for $1 \leq a < n$ and $(\tuple{i}, \tuple{j}) \in K^{\alpha}$ by:
\[
q_a(\tuple{i}, \tuple{j}) \coloneqq \begin{cases}
\begin{cases}
1 + y_{a+1} - y_a & \text{if } i_a = i_{a+1},
\\
1 - p_a(\tuple{i}, \tuple{j}) & \text{if } i_a \nrelbar i_{a+1},
\\
\left(1 - p_a(\tuple{i}, \tuple{j})^2\right)/(y_{a+1} - y_a) & \text{if } i_a \to i_{a+1},
\\
1 & \text{if } i_a \leftarrow i_{a+1},
\\
(1 - p_a(\tuple{i}, \tuple{j}))/(y_{a+1} - y_a) & \text{if } i_a \leftrightarrows i_{a+1},
\end{cases}
& \text{if } j_a = j_{a+1},
\\
1 &\text{if } j_a \neq j_{a+1}.
\end{cases}
\]

\begin{remarque}
As in \cite{BrKl}, the explicit expression of $q_a(\tuple{i}, \tuple{j})$ does not really matter; we only need some properties verified by these power series.
\end{remarque}

\begin{lemme} We have:
\begin{gather*}
p_{a+1}^{s_a}(\tuple{i}, \tuple{j}) = p_a^{s_{a+1}}(s_{a+1} s_a\cdot(\tuple{i}, \tuple{j})),
\\
q_{a+1}^{s_a}(\tuple{i}, \tuple{j}) = q_a^{s_{a+1}}(s_{a+1} s_a\cdot (\tuple{i}, \tuple{j})).
\end{gather*}
\end{lemme}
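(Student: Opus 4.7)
The plan is to follow verbatim the case analysis used for Lemma~\ref{lemma:relation_Qa} in the non-degenerate setting: the elements $p_a(\tuple{i}, \tuple{j})$ and $q_a(\tuple{i}, \tuple{j})$ depend only on $(i_a, i_{a+1})$ and $(j_a, j_{a+1})$, so after applying $s_{a+1}s_a = (a, a+2, a+1)$ to $(\tuple{i}, \tuple{j})$, the positions $a$ and $a+1$ carry the indices $(i_{a+1}, i_{a+2})$ and $(j_{a+1}, j_{a+2})$ respectively. In particular, the relevant $j$-equality for the right-hand side $p_a(s_{a+1}s_a\cdot(\tuple{i}, \tuple{j}))$ is exactly $j_{a+1} = j_{a+2}$, the same condition governing the left-hand side $p_{a+1}(\tuple{i}, \tuple{j})$.

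First I would dispose of the equalities in which both sides vanish or are trivial. If $j_{a+1} \neq j_{a+2}$, then by definition $p_{a+1}(\tuple{i}, \tuple{j}) = 0$ and $p_a(s_{a+1}s_a\cdot(\tuple{i}, \tuple{j})) = 0$, so both sides of the first identity are $0$; while $q_{a+1}(\tuple{i}, \tuple{j}) = 1 = q_a(s_{a+1}s_a\cdot(\tuple{i}, \tuple{j}))$, so both sides of the second identity equal $1$. In either case the Weyl-group substitutions $s_a$ and $s_{a+1}$ act trivially on the constants $0$ and $1$, and the equalities are immediate.

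Next, assume $j_{a+1} = j_{a+2}$. Writing $y_a(i) \coloneqq (i + y_a)$ for brevity (or rather keeping the substitution rule $y_a^{s_a} = y_{a+1}$, $y_{a+1}^{s_a} = y_a$, $y_{a+1}^{s_{a+1}} = y_{a+2}$, $y_{a+2}^{s_{a+1}} = y_{a+1}$ explicit), I would go through the five cases $i_{a+1} = i_{a+2}$, $i_{a+1} \nrelbar i_{a+2}$, $i_{a+1} \to i_{a+2}$, $i_{a+1} \leftarrow i_{a+2}$ and $i_{a+1} \leftrightarrows i_{a+2}$ defined by the quiver $\Gamma_e$. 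Each case reduces to a one-line substitution in the displayed formulas for $p_a$ and $q_a$: the left-hand side $p_{a+1}^{s_a}(\tuple{i}, \tuple{j})$ (resp.~$q_{a+1}^{s_a}$) evaluates by replacing $y_{a+1}$ with $y_a$ and $y_{a+2}$ with $y_{a+2}$ inside $p_{a+1}(\tuple{i}, \tuple{j})$; the right-hand side $p_a^{s_{a+1}}(s_{a+1}s_a\cdot(\tuple{i}, \tuple{j}))$ evaluates by first evaluating $p_a$ at $(i_{a+1}, i_{a+2})$ with variables $(y_a, y_{a+1})$ and then applying $s_{a+1}$. Both procedures yield the same element of $F[[y_a, y_{a+2}]]$, exactly as in the verification performed after Lemma~\ref{lemma:relation_Qa}.

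The first identity for $p$ and $q$ follows from this direct comparison, and the second identity is obtained by substituting $(\tuple{i}', \tuple{j}') \coloneqq s_a s_{a+1}\cdot(\tuple{i}, \tuple{j})$ into the first, since $s_{a+1}s_a\cdot(\tuple{i}', \tuple{j}') = (\tuple{i}, \tuple{j})$. The only obstacle is bookkeeping: ensuring that the variable substitutions commute correctly with the permutation of indices and that the five quiver cases match up term-by-term; there is no conceptual difficulty, and the proof is essentially the transcription to the rational-degeneration setting of the argument already given for Lemma~\ref{lemma:relation_Qa}.
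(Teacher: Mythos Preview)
Your approach is correct and matches the paper's: the paper gives no explicit proof of this lemma, relying instead on the blanket statement in Section~\ref{section:degenerate} that the calculations are ``entirely similar to the non-degenerate case'', i.e.\ to Lemma~\ref{lemma:relation_Qa}. Your case analysis is exactly that transcription.

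One small misreading: the two displayed identities in this lemma are \emph{not} of the same shape as the two in Lemma~\ref{lemma:relation_Qa} (one identity and its ``inverse''); rather, both are of the first form $f_{a+1}^{s_a}(\tuple{i},\tuple{j}) = f_a^{s_{a+1}}(s_{a+1}s_a\cdot(\tuple{i},\tuple{j}))$, one for $f=p$ and one for $f=q$. So your final sentence about deducing ``the second identity'' from the first by substituting $(\tuple{i}',\tuple{j}') \coloneqq s_a s_{a+1}\cdot(\tuple{i},\tuple{j})$ is not needed here --- both identities come directly from the same case analysis you describe. This does not affect the validity of your argument.
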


We now introduce the following element of $\widehat{\Y}_{\alpha}^{\tuple{\Lambda}}(1)$:
\[
\varphi_a \coloneqq f_a + \sum_{\substack{(\tuple{i}, \tuple{j}) \in K^{\alpha} \\ i_a \neq i_{a+1} \\ j_a = j_{a+1}}} (x_a - x_{a+1})^{-1} e(\tuple{i}, \tuple{j}) + \sum_{\substack{(\tuple{i}, \tuple{j}) \in K^{\alpha} \\ i_a = i_{a+1} \\ j_a = j_{a+1}}} e(\tuple{i}),
\]
where $(x_a - x_{a+1})^{-1} e(\tuple{k})$ denotes the inverse of $(x_a - x_{a+1})e(\tuple{k})$ in $e(\tuple{k}) \widehat{\Y}_{\alpha}^{\tuple{\Lambda}}(1)e(\tuple{k})$. In particular, we have:
\begin{gather*}
\varphi_a e(\tuple{j}) = f_a e(\tuple{j}) \qquad \text{if } j_a \neq j_{a+1},
\\
\varphi_a = \sum_{\tuple{k} \in K^{\alpha}} (f_a + p_a(\tuple{k}))e(\tuple{k}).
\end{gather*}
Moreover, for $d = 1$ the element $\varphi_a$ is the ``intertwining element'' defined in \cite[\textsection 3.2]{BrKl}.

\begin{lemme}
We have the following properties:
\begin{align}
\varphi_a e(\tuple{j}) &= e(s_a\cdot \tuple{j})\varphi_a,
\\
\varphi_a e(\tuple{i}, \tuple{j}) &= e(s_a\cdot (\tuple{i}, \tuple{j}))\varphi_a,
\\
\phantom{\forall b \neq a, a+1 \qquad}\varphi_a x_b &= x_b \varphi_a  \qquad\forall  b \neq a, a+1,
\\
\phantom{\forall b \neq a, a+1 \qquad}\varphi_a y_b &= y_b \varphi_a  \qquad\forall b \neq a, a+1,
\\
\phantom{\forall |b-a| > 1 \qquad}\varphi_a q_b(\tuple{k}) &= q_b(\tuple{k}) \varphi_a \qquad\forall |b-a| > 1,
\\
\phantom{\forall |b-a| > 1 \qquad} \varphi_a \varphi_b &= \varphi_b\varphi_a \qquad \forall |b-a|>1.
\end{align}
\end{lemme}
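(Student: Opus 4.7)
The plan is to mirror the proof of Lemma~\ref{lemma:Phi_a} (the non-degenerate analogue), performing the substitutions $\Phi_a \rightsquigarrow \varphi_a$, $g_a \rightsquigarrow f_a$, $X_b \rightsquigarrow x_b$, $P_a \rightsquigarrow p_a$, $Q_a \rightsquigarrow q_a$, and using Remark~\ref{remark:about_BK_degenerate_quiver_generators} in place of Remark~\ref{remark:brundan_kleshchev_ja=ja+1}. All the preparatory lemmas (the commutation of $f_a$ with $e(\tuple{j})$, with $e(\tuple{i},\tuple{j})$ when $j_a\neq j_{a+1}$, and the $f_a x_{a+1}e(\tuple{j}) = x_a f_a e(\tuple{j})$ type identities) have already been stated in \textsection\ref{subsection:degenerate_cyclotomic_algebra} and at the start of the quiver Hecke generators paragraph, so the six relations reduce to routine checks.

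For the first two relations, I would split on whether $j_a=j_{a+1}$. If $j_a\neq j_{a+1}$ then $\varphi_a e(\tuple{j}) = f_a e(\tuple{j})$, and the preceding lemma on $f_a$ gives both identities directly (the second using $f_a e(\tuple{i},\tuple{j}) = e(s_a\cdot(\tuple{i},\tuple{j}))f_a$). If $j_a=j_{a+1}$ then $s_a\cdot\tuple{j}=\tuple{j}$ on positions $a,a+1$, and $e(\tuple{j})$ commutes with each summand of $\varphi_a$ (since $f_a$ commutes with $e(\tuple{j})$ in that case and the other summands are polynomials in the $x_b$'s and idempotents that already contain $e(\tuple{j})$ as a factor); for the second, Remark~\ref{remark:about_BK_degenerate_quiver_generators} transports the identity $\varphi_a^{\H}e^{\H}(\tuple{i})=e^{\H}(s_a\cdot\tuple{i})\varphi_a^{\H}$ of \cite[Lemma~3.1]{BrKl} into $e(\tuple{j})\widehat{\Y}_{\alpha}^{\tuple{\Lambda}}(1)e(\tuple{j})$, from which the full relation follows by combining with the first identity.

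For the third relation, the defining relation \eqref{degenerate_relation:fa_xb} gives $f_a x_b = x_b f_a$ for $b\neq a,a+1$, and each idempotent $e(\tuple{i},\tuple{j})$ is a polynomial in the commuting family $\{x_1,\dots,x_n,t_1,\dots,t_n\}$, so every summand of $\varphi_a$ commutes with $x_b$. The fourth relation follows by writing $y_b = \sum_{\tuple{i}}(x_b - i_b)e(\tuple{i})$ and combining the second and third relations exactly as in the derivation of \eqref{equation:phia_yb}. The fifth is immediate from the fourth since $q_b(\tuple{k})\in F[[y_b,y_{b+1}]]$ and $|b-a|>1$ implies $b,b+1\notin\{a,a+1\}$.

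The only relation requiring a touch of care is the sixth. Set $\varphi_a' \coloneqq \varphi_a - f_a$. I would first show $\varphi_a' f_b = f_b \varphi_a'$ for $|b-a|>1$, using \eqref{degenerate_relation:fa_xb} (which lets $f_b$ slide past the $(x_a-x_{a+1})^{-1}$ factor) together with the lemma $f_b e(\tuple{i},\tuple{j}) = e(s_b\cdot(\tuple{i},\tuple{j}))f_b$. By symmetry $f_a \varphi_b' = \varphi_b' f_a$. Since both $\varphi_a'$ and $\varphi_b'$ live in the commutative subalgebra generated by $\{x_c,t_c\}$ together with the idempotents, they commute with each other. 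Expanding $\varphi_a\varphi_b = (f_a+\varphi_a')(f_b+\varphi_b')$ and using \eqref{degenerate_relation:fa_fb} to swap $f_a$ and $f_b$ then yields $\varphi_a\varphi_b = \varphi_b\varphi_a$. The potential obstacle here is merely bookkeeping with how $e(s_b\cdot\tuple{k})$ relates to $e(\tuple{k})$ across the slide, but this is exactly the calculation already performed in the proof of \eqref{equation:phia_phib}, so no new difficulty arises.
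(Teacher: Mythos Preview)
Your proposal is correct and matches the paper's approach exactly: the paper omits the proof of this lemma entirely, stating only that the arguments are ``entirely similar to the non-degenerate case'' (i.e.\ to Lemma~\ref{lemma:Phi_a}), and your plan to mirror that proof with the substitutions $g_a \rightsquigarrow f_a$, $X_b \rightsquigarrow x_b$, $P_a \rightsquigarrow p_a$, $Q_a \rightsquigarrow q_a$ and to invoke Remark~\ref{remark:about_BK_degenerate_quiver_generators} in place of Remark~\ref{remark:brundan_kleshchev_ja=ja+1} is precisely what is intended. Your treatment of the sixth relation via $\varphi_a' \coloneqq \varphi_a - f_a$ likewise reproduces the argument for \eqref{equation:phia_phib} verbatim in the degenerate setting.
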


Our element $\psi_a$ is defined for $1 \leq a < n$ by:
\[
\psi_a \coloneqq \sum_{\tuple{k} \in K^{\alpha}} \phi_a q_a(\tuple{k})^{-1} e(\tuple{k}) \in \widehat{\Y}_{\alpha}^{\tuple{\Lambda}}(1).
\]
When $d = 1$ this element $\psi_a$ corresponds to the $\psi_a$ of \cite[\textsection 3.3]{BrKl}. Note finally that for $\tuple{j} \in J^{\alpha}$ we have:
\[
\psi_a e(\tuple{j}) = f_a e(\tuple{j}) \qquad \text{if } j_a \neq j_{a+1}.
\]

\subsubsection{Check of the defining relations}
\label{subsubsection:degenerate_check_kl_generators}
\begin{theoreme}
\label{theoreme:degenerate_kl_generators}
The elements $y_1, \dots, y_n, \psi_1, \dots, \psi_{n-1}$ and $e(\tuple{k})$ for $\tuple{k} \in K^{\alpha}$ verify the defining relations \eqref{relation:quiver_cyclo_y1}--\eqref{relation:quiver_tresse} of $\H_{\alpha}^{\tuple{\Lambda}}(\Gamma)$.
\end{theoreme}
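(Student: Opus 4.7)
The strategy is to mirror closely \textsection\ref{subsection:check_kl_generators}, now invoking Remark~\ref{remark:about_BK_degenerate_quiver_generators} in place of Remark~\ref{remark:brundan_kleshchev_ja=ja+1}. For each defining relation and each tuple $(\tuple{i}, \tuple{j}) \in K^{\alpha}$, we split according to whether the relevant $j$-coordinates coincide. When they do, Remark~\ref{remark:about_BK_degenerate_quiver_generators} transfers the verification from the corresponding calculation of \cite[\textsection 3]{BrKl} directly to the unitary algebra $e(\tuple{j})\widehat{\Y}_{\alpha}^{\tuple{\Lambda}}(1)e(\tuple{j})$. In the complementary cases we compute directly inside $\widehat{\Y}_{\alpha}^{\tuple{\Lambda}}(1)$; these computations are in fact \emph{cleaner} than in \textsection\ref{subsection:check_kl_generators}, since whenever $j_a \neq j_{a+1}$ we have $p_a(\tuple{i}, \tuple{j}) = 0$ and $q_a(\tuple{i}, \tuple{j}) = 1$, so that $\psi_a e(\tuple{j}) = \varphi_a e(\tuple{j}) = f_a e(\tuple{j})$, with no scalar analogous to $f_{a, \tuple{j}}$ to track.

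The cyclotomic relation \eqref{relation:quiver_cyclo_y1} is established exactly as in \textsection\ref{subsection:check_kl_generators}: starting from \eqref{degenerate_relation:x1_cyclo} we multiply by the idempotents $e(\tuple{i})$ and invert ${(x_1 - i)}^{\Lambda_i}e(\tuple{i})$ on $M(\tuple{i})$ for $i \neq i_1$, obtaining ${(x_1 - i_1)}^{\Lambda_{i_1}} e(\tuple{i}, \tuple{j}) = 0$; by definition of $y_1$ this is $y_1^{\Lambda_{i_1}} e(\tuple{i}, \tuple{j})$. Relations \eqref{relation:quiver_sum_alpha_e(i)}--\eqref{relation:quiver_y_ae(i)} and \eqref{relation:quiver_ya_yb} are immediate from the definitions, and \eqref{relation:quiver_psiae(i)}, \eqref{relation:quiver_psia_yb} and \eqref{relation:quiver_psia_psib} follow from the properties of $\varphi_a$ together with the fact that $q_a(\tuple{k})$ depends only on $(i_a, i_{a+1})$ and $(j_a, j_{a+1})$; the manipulations of \textsection\ref{subsection:check_kl_generators} apply verbatim. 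For the intertwining relations \eqref{relation:quiver_psia_ya+1} and \eqref{relation:quiver_ya+1_psia}, the case $j_a = j_{a+1}$ goes through Remark~\ref{remark:about_BK_degenerate_quiver_generators}, while the case $j_a \neq j_{a+1}$ reduces to a two-line computation combining $\psi_a e(\tuple{j}) = f_a e(\tuple{j})$ with the identity $f_a y_{a+1} e(\tuple{j}) = y_a f_a e(\tuple{j})$ proved earlier.

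The core of the work, and the main obstacle, is the pair \eqref{relation:quiver_psia^2} and \eqref{relation:quiver_tresse}. For the quadratic relation, the case $j_a \neq j_{a+1}$ forces $k_a \nrelbar k_{a+1}$, and the required identity $\psi_a^2 e(\tuple{k}) = e(\tuple{k})$ reduces immediately, via $\psi_a e(\tuple{j}) = f_a e(\tuple{j})$, to \eqref{degenerate_relation:ordre_fa}; no analogue of Lemma~\ref{lemme:f_aj} is required since the $f_{a, \tuple{j}}$ scalars are absent in the degenerate setting. For the braid relation we split $(j_a, j_{a+1}, j_{a+2})$ into the same four configurations as in \textsection\ref{subsection:check_kl_generators}: all equal (handled by Remark~\ref{remark:about_BK_degenerate_quiver_generators}), $j_a = j_{a+1} \neq j_{a+2}$ together with its mirror $j_a \neq j_{a+1} = j_{a+2}$, $j_a = j_{a+2} \neq j_{a+1}$, and the pairwise distinct case. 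In each subcase, the transformation rules on $p_a$ and $q_a$ established just before allow us to reduce to the degenerate braid relation \eqref{degenerate_relation:tresse_fa} for the $f_a$'s, together with \eqref{degenerate_equation:fa_xa+1v}--\eqref{degenerate_equation:fa_xau_xa+1v} whenever a factor coming from $p_a$ or $q_a$ must be commuted past an $f_b$. The scheme follows the template of \textsection\ref{subsection:check_kl_generators} line by line, but each step is easier for the reason explained above, so the remaining challenge is purely organisational rather than genuinely new.
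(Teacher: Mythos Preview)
Your proposal is correct and follows essentially the same approach as the paper: the paper's own proof is a brief sketch that says exactly this---mirror \textsection\ref{subsection:check_kl_generators}, invoke Remark~\ref{remark:about_BK_degenerate_quiver_generators} in place of Remark~\ref{remark:brundan_kleshchev_ja=ja+1} for the cases $j_a = j_{a+1}$, and handle the remaining cases directly, noting that the scalars $f_{a,\tuple{j}}$ are now all equal to $1$ and that $(x_a - x_b)$ replaces $(1-q)(1 - X_a X_b^{-1})$. One minor point: when you commute factors of $p_a$ or $q_a$ past $\psi_b$ in the braid-relation check, it is cleaner to invoke the already-verified relations \eqref{relation:quiver_psia_yb}--\eqref{relation:quiver_ya+1_psia} (the degenerate analogue of Remark~\ref{remark:f_psia}) rather than \eqref{degenerate_equation:fa_xa+1v}--\eqref{degenerate_equation:fa_xau_xa+1v}, since $p_a, q_a$ are power series in the $y$'s rather than the $x$'s.
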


The painstaking verification is exactly the same as in \textsection\ref{subsection:check_kl_generators}: we apply Remark~\ref{remark:about_BK_degenerate_quiver_generators} on the proof of \cite[Theorem 3.2]{BrKl} for the cases $j_a = j_{a+1}$, and when $j_a \neq j_{a+1}$ then entirely similar (even the same) relations as in \textsection\ref{subsection:check_kl_generators} are verified.  Note two small differences with the proof in \textsection\ref{subsection:check_kl_generators}:
\begin{itemize}
\item we shall write $(x_a - x_b)$ instead of $(1-q) (1 - X_a X_b^{-1})$;
\item the elements $f_{a, \tuple{j}}$ are equal to $1$.
\end{itemize}

\subsection{Degenerate Yokonuma--Hecke generators of \texorpdfstring{$\H_{\alpha}^{\tuple{\Lambda}}(\Gamma)$}{HalphaLambdaGamma}}

We proceed as in Section~\ref{section:yokonuma_hecke_generators}. Once again, the proofs are entirely similar to the non-degenerate case, hence we do not write them down.

First of all, since the elements $y_1, \dots, y_n \in \H_{\alpha}^{\tuple{\Lambda}}(\Gamma)$ are nilpotent we can consider power series in these variables. Hence, the quantities $p_a(\tuple{k})$, $q_a(\tuple{k})$ that we defined in \textsection\ref{subsubsection:degenerate_image_psia} are also well-defined as elements of $\H_{\alpha}^{\tuple{\Lambda}}(\Gamma)$. We define finally as in \eqref{equation:degenerate_e(i)_e(j)} the elements $e(\tuple{i})$ and $e(\tuple{j})$ of $\H_{\alpha}^{\tuple{\Lambda}}(\Gamma)$ for $\tuple{i} \in I^{\alpha}$ and $\tuple{j} \in J^{\alpha}$.

We recall that $\xi$ is a primitive $d$th root of unity in $F$. Our ``degenerate Yokonuma--Hecke generators'' of $\H_{\alpha}^{\tuple{\Lambda}}(\Gamma)$ are given below.
\begin{align*}
\phantom{\text{for } 1 \leq a < n} && f_a &\coloneqq \sum_{\tuple{k} \in K^{\alpha}}\left(\psi_a q_a(\tuple{k}) - p_a(\tuple{k})\right)e(\tuple{k}) & \text{for } 1 &\leq a < n
\\
\phantom{\text{for } 1 \leq a \leq n}& & t_a &\coloneqq \sum_{\tuple{j} \in J^{\alpha}} \xi^{j_a} e(\tuple{j})  &\text{for } 1 &\leq a \leq n
\\
\phantom{\text{for } 1 \leq a \leq n}&& x_a & \coloneqq \sum_{\tuple{i} \in I^{\alpha}} (y_a + i_a) e(\tuple{i}) &\text{for } 1 &\leq a \leq n
\end{align*}

When $d = 1$, the element $f_a$ (respectively $x_a$) is the element $s_a$ (resp. $x_a$) of \cite[\textsection 3.4]{BrKl}.

\begin{remarque}[About Brundan and Kleshchev's proof - \theBKproof]
\stepcounter{BKproof}
\label{remark:about_BK_degenerate_YH_generators}
Let $1 \leq a < n$; if $\tuple{j} \in J^{\alpha}$ verifies $j_a = j_{a+1}$, when a proof in \cite[\textsection 3.4]{BrKl} needs only the elements $\psi_a e(\tuple{j}), y_b e(\tuple{j}), e(\tuple{i}, \tuple{j})$ and the corresponding relations in $\H_{\alpha}^{\tuple{\Lambda}}(\Gamma_e)$, we claim that the same proof holds in $e(\tuple{j}) \H_{\alpha}^{\tuple{\Lambda}}(\Gamma)e(\tuple{j})$. We extend this claim to the case $j_a = j_{a+1} = j_{a+2}$.
\end{remarque}

Finally, similarly to \textsection\ref{subsubsection:degenerate_check_kl_generators}  we have the following theorem. Once again the check of the various relations is exactly the same as in \textsection\ref{subsection:check_yh_generators}.

\begin{theoreme}
\label{theoreme:degenerate_yh_generators}
The elements $f_1, \dots, f_{n-1}, t_1, \dots, t_n, x_1, \dots, x_n$ satisfy the defining relations \eqref{degenerate_relation:ordre_t_a}--\eqref{degenerate_relation:x1_cyclo} of $\widehat{\Y}_{d, n}^{\tuple{\Lambda}}(1)$.
\end{theoreme}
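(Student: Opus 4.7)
The plan is to verify each defining relation of $\widehat{\Y}_{d,n}^{\tuple{\Lambda}}(1)$ in turn, following the blueprint of \textsection\ref{subsection:check_yh_generators} almost verbatim and making use of Remark~\ref{remark:about_BK_degenerate_YH_generators} whenever a $j$-constant block arises. As in the non-degenerate case, it is enough to check each relation after left-multiplying by $e(\tuple{i}, \tuple{j})$ and summing over $(\tuple{i}, \tuple{j}) \in K^\alpha$; the essential dichotomy is whether $j_a = j_{a+1}$ (and further, whether $j_{a+1} = j_{a+2}$) or not.

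The ``purely idempotent'' and ``purely $x$/$t$'' relations \eqref{degenerate_relation:ordre_t_a}, \eqref{degenerate_relation:ta_tb}, \eqref{degenerate_relation:xr_xs}, \eqref{degenerate_relation:xa_tb} and \eqref{degenerate_relation:x1_cyclo} are immediate from the orthogonality \eqref{relation:quiver_e(i)e(i')}, the identity $\xi^d = 1$, the commutativity of the $y_a$, and (for the cyclotomic relation) exactly the same expansion and application of \eqref{relation:quiver_cyclo_y1} as in the proof of \eqref{relation:X1_cyclo} in \textsection\ref{subsection:check_yh_generators}.

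For the relations \eqref{degenerate_relation:t_b_f_a}, \eqref{degenerate_relation:ordre_fa}, \eqref{degenerate_relation:fa_fb}, \eqref{degenerate_relation:fa_xa+1} and \eqref{degenerate_relation:fa_xb}, I would split into the cases $j_a = j_{a+1}$ and $j_a \neq j_{a+1}$. When $j_a = j_{a+1}$, the corresponding relations in $e(\tuple{j})\H_\alpha^{\tuple{\Lambda}}(\Gamma)e(\tuple{j})$ are already established in \cite[\textsection 3.4]{BrKl} for the degenerate cyclotomic Hecke algebra, and Remark~\ref{remark:about_BK_degenerate_YH_generators} transports them verbatim (noting that $e_a e(\tuple{j}) = e(\tuple{j})$ in this case, which is what is needed in \eqref{degenerate_relation:fa_xa+1}). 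When $j_a \neq j_{a+1}$, one has $q_a(\tuple{k}) = 1$, $p_a(\tuple{k}) = 0$ and $e_a e(\tuple{j}) = 0$, so the definition collapses to $f_a e(\tuple{j}) = \psi_a e(\tuple{j})$ and each relation reduces to a short verification using \eqref{relation:quiver_psiae(i)}, \eqref{relation:quiver_psia_yb}, \eqref{relation:quiver_psia_ya+1}, \eqref{relation:quiver_ya+1_psia} and \eqref{relation:quiver_psia^2} (observing that $k_a$ and $k_{a+1}$ are unlinked in $\Gamma$ whenever $j_a \neq j_{a+1}$, so $\psi_a^2 e(\tuple{k}) = e(\tuple{k})$). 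The far commutation \eqref{degenerate_relation:fa_fb} is verified exactly as \eqref{relation:ga_gb} was in \textsection\ref{subsection:check_yh_generators}, using only \eqref{relation:quiver_psia_psib} together with the fact that $p_a(\tuple{k})$, $q_a(\tuple{k}) \in F[[y_a, y_{a+1}]]$ are invariant under $s_b$ and commute with $\psi_b$ for $|a-b| > 1$.

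The main obstacle is the braid relation \eqref{degenerate_relation:tresse_fa}, handled exactly as in the non-degenerate proof of \eqref{relation:tresse_ga}. After left-multiplying by $e(\tuple{i},\tuple{j})$, the case $j_a = j_{a+1} = j_{a+2}$ is delivered by Remark~\ref{remark:about_BK_degenerate_YH_generators} applied to the braid relation already established for $\widehat{\H}_\alpha^{\tuple{\Lambda}}(1)$ in \cite{BrKl}. The remaining four configurations of $(j_a, j_{a+1}, j_{a+2})$, namely $j_a = j_{a+1} \neq j_{a+2}$, $j_a \neq j_{a+1} = j_{a+2}$, $j_a = j_{a+2} \neq j_{a+1}$ and the case of three distinct values, are treated with the very same sequence of expansions and rewritings carried out in \textsection\ref{subsection:check_yh_generators}. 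The covariance identities $p_{a+1}^{s_a}(\tuple{k}) = p_a^{s_{a+1}}(s_{a+1}s_a \cdot \tuple{k})$ and $q_{a+1}^{s_a}(\tuple{k}) = q_a^{s_{a+1}}(s_{a+1}s_a \cdot \tuple{k})$ play the role that \eqref{equation:Pa_sa} and Lemma~\ref{lemma:relation_Qa} played there, and the Yang--Baxter identity \eqref{relation:quiver_tresse} supplies $\psi_{a+1}\psi_a\psi_{a+1} e(\tuple{k}) = \psi_a\psi_{a+1}\psi_a e(\tuple{k})$ in each of these four configurations, since at least one of the three pairs of consecutive $k$-coordinates is unlinked in $\Gamma$. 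Two simplifications lighten the computation compared to \textsection\ref{subsection:check_yh_generators}: every coefficient $f_{a,\tuple{j}}$ is replaced by $1$, and each factor $(1-q)(1 - X_a X_{a+1}^{-1})^{-1}$ by $(x_a - x_{a+1})^{-1}$; no new idea is required.
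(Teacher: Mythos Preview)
Your proposal is correct and follows essentially the same approach as the paper, which explicitly declines to write out this verification and simply asserts that ``the check of the various relations is exactly the same as in \textsection\ref{subsection:check_yh_generators}.'' Your case analysis, your invocation of Remark~\ref{remark:about_BK_degenerate_YH_generators} for the $j$-constant blocks, and the simplifications you note are precisely what the paper has in mind. One tiny remark: the substitution of $(x_a - x_{a+1})^{-1}$ for $(1-q)(1 - X_a X_{a+1}^{-1})^{-1}$ that you mention is actually the simplification relevant to \textsection\ref{subsubsection:degenerate_check_kl_generators} (the other direction), not to the present verification, which proceeds entirely in terms of the power series $p_a(\tuple{k}), q_a(\tuple{k}) \in F[[y_a, y_{a+1}]]$; this does not affect your argument.
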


\subsection{Isomorphism theorem}

We give now the degenerate version of Theorem~\ref{theorem:main}.

\begin{theoreme}
\label{theorem:main_degenerate}
There is a presentation of the degenerate cyclotomic Yokonuma--Hecke algebra $\widehat{\Y}_\alpha^{\tuple{\Lambda}}(1)$ given by the generators \eqref{equation:quiver_generators_alpha} and the relations \eqref{relation:quiver_sum_alpha_e(i)}--\eqref{relation:quiver_ya+1_psia}, \eqref{relation:quiver_psia^2}--\eqref{relation:quiver_tresse} and \eqref{relation:quiver_cyclo_y1}, that is, we have an algebra isomorphism:
\[
\H_n^{\tuple{\Lambda}}(\Gamma) \overset{\sim}\to \widehat{\Y}_{d, n}^{\tuple{\Lambda}}(1).
\]
\end{theoreme}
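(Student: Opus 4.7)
The plan is to mirror step by step the proof of Theorem~\ref{theorem:main}. All the preparatory work has already been done: from Theorem~\ref{theoreme:degenerate_kl_generators} we obtain an algebra homomorphism
\[
\rho : \H_{\alpha}^{\tuple{\Lambda}}(\Gamma) \to \widehat{\Y}_{\alpha}^{\tuple{\Lambda}}(1)
\]
sending the quiver Hecke generators to the elements constructed in \textsection\ref{subsubsection:degenerate_image_psia}, and from Theorem~\ref{theoreme:degenerate_yh_generators} we obtain an algebra homomorphism $\widehat{\Y}_{d,n}^{\tuple{\Lambda}}(1) \to \H_{\alpha}^{\tuple{\Lambda}}(\Gamma)$ which, after composition with the non-unitary inclusion $\widehat{\Y}_{\alpha}^{\tuple{\Lambda}}(1) \hookrightarrow \widehat{\Y}_{d,n}^{\tuple{\Lambda}}(1)$, yields a homomorphism
\[
\sigma : \widehat{\Y}_{\alpha}^{\tuple{\Lambda}}(1) \to \H_{\alpha}^{\tuple{\Lambda}}(\Gamma).
\]
It then suffices to prove $\sigma \circ \rho = \mathrm{id}_{\H_{\alpha}^{\tuple{\Lambda}}(\Gamma)}$ and $\rho \circ \sigma = \mathrm{id}_{\widehat{\Y}_{\alpha}^{\tuple{\Lambda}}(1)}$, each reducing to a check on generators. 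Summing over $\alpha \comp_K n$, \eqref{equation:HnLambda=oplus} and the analogous decomposition of $\widehat{\Y}_{d,n}^{\tuple{\Lambda}}(1)$ will then give the stated isomorphism $\H_n^{\tuple{\Lambda}}(\Gamma) \simeq \widehat{\Y}_{d,n}^{\tuple{\Lambda}}(1)$.

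For the direction $\sigma \circ \rho = \mathrm{id}$, the key identity is $\sigma(\rho(e(\tuple{k}))) = e(\tuple{k})$. We equip $M \coloneqq \H_{\alpha}^{\tuple{\Lambda}}(\Gamma)$ with the structure of a (finite-dimensional, by Theorem~\ref{theorem:generating_family_cyclotomic_quiver}) $\widehat{\Y}_{d,n}^{\tuple{\Lambda}}(1)$-module via $\sigma$; since $\sigma(x_a) = \sum_{\tuple{i}'\in I^\alpha} (y_a + i'_a) e(\tuple{i}')$ with $y_a$ nilpotent, the generalized $i_a$-eigenspace of $\sigma(x_a)$ on $M$ is $\bigl(\sum_{i'_a = i_a} e(\tuple{i}')\bigr) M$, and similarly $\sigma(t_a)$ has $\xi^{j_a}$-eigenspace $\bigl(\sum_{j'_a = j_a} e(\tuple{j}')\bigr) M$. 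Their intersection gives $M(\tuple{k}) = e(\tuple{k}) M$, so $e(\tuple{k})$ coincides with the projection $\sigma(e^{\Y}(\tuple{k}))$ obtained in \textsection\ref{subsection:degenerate_cyclotomic_algebra}. The remaining identities $\sigma(\rho(y_a)) = y_a$ and $\sigma(\rho(\psi_a)) = \psi_a$ are then routine substitutions: for $y_a$ one computes $\sigma(\rho(y_a)) = \sum_{\tuple{i}} (\sigma(x_a) - i_a) e(\tuple{i}) = \sum_{\tuple{i}} y_a e(\tuple{i}) = y_a$, and for $\psi_a$ one uses the expansion $\varphi_a = \sum_{\tuple{k}} (f_a + p_a(\tuple{k})) e(\tuple{k})$ exactly as in \textsection\ref{section:isomorphism}.

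For the reverse direction $\rho \circ \sigma = \mathrm{id}$, one verifies in turn $\rho(\sigma(f_a)) = f_a$, $\rho(\sigma(x_a)) = x_a$ and $\rho(\sigma(t_a)) = t_a$. The first reduces, via $\sigma(f_a) = \sum_{\tuple{k}} (\psi_a q_a(\tuple{k}) - p_a(\tuple{k})) e(\tuple{k})$ and $\rho(\psi_a) = \sum_{\tuple{k}} \varphi_a q_a(\tuple{k})^{-1} e(\tuple{k})$, to the identity $\varphi_a - p_a(\tuple{k}) = f_a$ on $e(\tuple{k}) \widehat{\Y}_{\alpha}^{\tuple{\Lambda}}(1)$, which is the degenerate analogue of the calculation performed in \textsection\ref{section:isomorphism}; the other two are immediate from $(y_a + i_a) e(\tuple{i}) = x_a e(\tuple{i})$ and the definition of $t_a$.

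There is no genuine obstacle: all the structural work (existence of weight-space decompositions, nilpotency of the $y_a$, the presence of $\sigma$ and $\rho$ as algebra homomorphisms, and the key identity $\varphi_a = \sum_{\tuple{k}} (f_a + p_a(\tuple{k})) e(\tuple{k})$) has been set up in \textsection\ref{subsection:degenerate_cyclotomic_algebra}--\textsection\ref{subsubsection:degenerate_check_kl_generators} precisely so that the argument of \textsection\ref{section:isomorphism} transports verbatim. The only point deserving care is that the weight-space argument used to identify $\sigma(e^{\Y}(\tuple{k}))$ with $e(\tuple{k})$ relies on Lemma~\ref{lemma:degenerate_eigenvalues_I} to know that the eigenvalues of each $\sigma(x_a)$ belong to $I$; once this is noted, the verification is mechanical and parallels the non-degenerate proof without additional complication.
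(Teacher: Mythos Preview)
Your proposal is correct and follows exactly the approach of the paper, which simply says the proof is ``entirely similar'' to that of Theorem~\ref{theorem:main} and refers to Theorems~\ref{theoreme:degenerate_kl_generators} and~\ref{theoreme:degenerate_yh_generators} for the homomorphisms $\rho$ and $\sigma$. One minor remark: in identifying $\sigma(e^{\Y}(\tuple{k}))$ with $e(\tuple{k})$, you do not actually need to invoke Lemma~\ref{lemma:degenerate_eigenvalues_I}, since the explicit formula $\sigma(x_a) = \sum_{\tuple{i}'}(y_a + i'_a)e(\tuple{i}')$ together with the nilpotency of $y_a$ (Lemma~\ref{lemme:y_a_nilpotent}) already exhibits the generalized eigenspaces directly.
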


The proof of this theorem is entirely similar to the one of Theorem~\ref{theorem:main}.
In particular, by Theorem~\ref{theoreme:degenerate_kl_generators} we can define an algebra homomorphism $\rho : \H_{\alpha}^{\tuple{\Lambda}}(\Gamma) \to \widehat{\Y}_{\alpha}^{\tuple{\Lambda}}(1)$ and by Theorem~\ref{theoreme:degenerate_yh_generators} we can define another algebra homomorphism $\sigma : \widehat{\Y}_{d, n}^{\tuple{\Lambda}}(1) \to \H_{\alpha}^{\tuple{\Lambda}}(\Gamma)$. From the inclusion $\widehat{\Y}_{\alpha}^{\tuple{\Lambda}}(1) \subseteq \widehat{\Y}_{d, n}^{\tuple{\Lambda}}(1)$ we deduce an algebra homomorphism $\sigma : \widehat{\Y}_{\alpha}^{\tuple{\Lambda}}(1) \to \H_{\alpha}^{\tuple{\Lambda}}(\Gamma)$. We prove then that $\rho$ and $\sigma$ are inverse homomorphisms, taking the images of the different defining generators.

Together with Theorem~\ref{theorem:main} we get the following corollaries (cf. \cite[Corollary 1.3]{BrKl}).

\begin{corollaire}
\label{corollary:main_improvement}
If $q$ and $q'$ are two elements of $F^{\times}$ with $\mathrm{char}_q(F) = \mathrm{char}_{q'}(F)$ then $\widehat{\Y}_{d, n}^{\tuple{\Lambda}}(q)$ and $\widehat{\Y}_{d, n}^{\tuple{\Lambda}}(q')$ are isomorphic algebras.
\end{corollaire}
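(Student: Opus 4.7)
The plan is to reduce the statement to the two isomorphism theorems already at our disposal, namely Theorem~\ref{theorem:main} (non-degenerate case) and Theorem~\ref{theorem:main_degenerate} (degenerate case), observing that both present $\widehat{\Y}_{d,n}^{\tuple{\Lambda}}(q)$ as a cyclotomic quiver Hecke algebra attached to the same combinatorial datum $\H_n^{\tuple{\Lambda}}(\Gamma)$. The critical point is that the target algebra depends only on the weight $\tuple{\Lambda}$, the integer $d$ (through $J$), and on the quiver $\Gamma = \coprod_{j \in J} \Gamma_e$, whose combinatorial structure is controlled by $e = \mathrm{char}_q(F)$. As soon as $\mathrm{char}_q(F) = \mathrm{char}_{q'}(F)$, the quivers $\Gamma$ produced from $q$ and from $q'$ coincide, and so do the cyclotomic quiver Hecke algebras $\H_n^{\tuple{\Lambda}}(\Gamma)$ built out of them.

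Concretely, I would split the argument according to whether $1$ appears among $\{q, q'\}$. If $q \neq 1$ and $q' \neq 1$, then Theorem~\ref{theorem:main} combined with \eqref{equation:HnLambda=oplus} gives
\[
\widehat{\Y}_{d,n}^{\tuple{\Lambda}}(q) \;\simeq\; \H_n^{\tuple{\Lambda}}(\Gamma) \;\simeq\; \widehat{\Y}_{d,n}^{\tuple{\Lambda}}(q'),
\]
which is exactly the Corollary stated just after Theorem~\ref{theorem:main}. If $q = q' = 1$ there is nothing to prove. The only new case is when exactly one of $q, q'$ equals $1$, say $q = 1$ and $q' \neq 1$ (the other case is symmetric). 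Since $\mathrm{char}_1(F)$ is by definition the ordinary characteristic of $F$, the hypothesis $\mathrm{char}_q(F) = \mathrm{char}_{q'}(F)$ says precisely that the integer $e$ attached to $q'$ coincides with $\mathrm{char}(F)$ (including the convention $e = \infty$ when $\mathrm{char}(F) = 0$). Hence the quivers $\Gamma$ used in Sections~\ref{section:quiver_hecke_generators} and \ref{section:degenerate} are the same, so Theorem~\ref{theorem:main_degenerate} and Theorem~\ref{theorem:main} (together with \eqref{equation:HnLambda=oplus}) yield
\[
\widehat{\Y}_{d,n}^{\tuple{\Lambda}}(1) \;\simeq\; \H_n^{\tuple{\Lambda}}(\Gamma) \;\simeq\; \widehat{\Y}_{d,n}^{\tuple{\Lambda}}(q'),
\]
and one concludes by composing the two isomorphisms.

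Since everything reduces to assembling already-proved isomorphisms, there is no serious obstacle; the only thing to watch is the bookkeeping of the quantum characteristic in the degenerate case, that is, checking that the convention $\mathrm{char}_1(F) = \mathrm{char}(F)$ chosen at the beginning of Section~\ref{section:setting} is consistent with the value $e$ attached to $q'$ when we invoke Theorem~\ref{theorem:main}. This is a one-line verification, and the rest of the proof is a diagram chase between the two isomorphism theorems.
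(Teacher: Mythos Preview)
Your proposal is correct and matches the paper's own reasoning: the paper simply states that Corollary~\ref{corollary:main_improvement} follows by combining Theorem~\ref{theorem:main} with Theorem~\ref{theorem:main_degenerate}, since both identify $\widehat{\Y}_{d,n}^{\tuple{\Lambda}}(q)$ (for $q\neq 1$ and $q=1$ respectively) with the same cyclotomic quiver Hecke algebra $\H_n^{\tuple{\Lambda}}(\Gamma)$, which depends only on $\mathrm{char}_q(F)$. Your case split and the bookkeeping remark about $\mathrm{char}_1(F)=\mathrm{char}(F)$ make this explicit, but add nothing beyond what the paper leaves implicit.
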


\begin{corollaire}
\label{corollary:link_nondegenerate_degenerate}
If $F$ has characteristic $\mathrm{char}_q(F)$ then the cyclotomic Yokonuma--Hecke algebra $\widehat{\Y}_{d, n}^{\tuple{\Lambda}}(q)$ is isomorphic to its rational degeneration $\widehat{\Y}_{d, n}^{\tuple{\Lambda}}(1)$. This applies in particular when $F$ has characteristic $0$ and $q$ is generic.
\end{corollaire}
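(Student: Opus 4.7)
The plan is simply to chain together the two isomorphism theorems. Theorem~\ref{theorem:main} provides an algebra isomorphism $\widehat{\Y}_{d,n}^{\tuple{\Lambda}}(q) \simeq \H_n^{\tuple{\Lambda}}(\Gamma)$, where $\Gamma = \coprod_{j \in J} \Gamma_e$ is built from the set $I = \mathbb{Z}/\mathrm{char}_q(F)\mathbb{Z}$; and Theorem~\ref{theorem:main_degenerate} provides a similar isomorphism $\widehat{\Y}_{d,n}^{\tuple{\Lambda}}(1) \simeq \H_n^{\tuple{\Lambda}}(\Gamma')$, where the quiver $\Gamma'$ is built from the set in which the eigenvalues of the $x_a$ live, namely $I' = \mathbb{Z}/\mathrm{char}_1(F)\mathbb{Z} = \mathbb{Z}/\mathrm{char}(F)\mathbb{Z}$ by Lemma~\ref{lemma:degenerate_eigenvalues_I}.

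Under the hypothesis $\mathrm{char}_q(F) = \mathrm{char}(F)$ one has $I = I'$ and consequently $\Gamma = \Gamma'$, so the two cyclotomic quiver Hecke algebras $\H_n^{\tuple{\Lambda}}(\Gamma)$ and $\H_n^{\tuple{\Lambda}}(\Gamma')$ literally coincide. Composing the first isomorphism with the inverse of the second then yields the sought algebra isomorphism $\widehat{\Y}_{d,n}^{\tuple{\Lambda}}(q) \simeq \widehat{\Y}_{d,n}^{\tuple{\Lambda}}(1)$. For the final sentence, if $F$ has characteristic $0$ and $q$ is generic (not a root of unity), then no finite $e \geq 1$ satisfies $1 + q + \cdots + q^{e-1} = 0$, so $\mathrm{char}_q(F) = 0 = \mathrm{char}(F)$, and the hypothesis of the corollary is fulfilled.

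There is no real obstacle here: all the work has already been done in Theorems~\ref{theorem:main} and \ref{theorem:main_degenerate}. The only mildly delicate point is to make sure that one is reading ``$\H_n^{\tuple{\Lambda}}(\Gamma)$'' on both sides as literally the same object, which ultimately comes down to matching the index sets $I$ and $I'$ as above.
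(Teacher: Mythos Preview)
Your proof is correct and is exactly the argument the paper has in mind: it simply states that Corollaries~\ref{corollary:main_improvement} and \ref{corollary:link_nondegenerate_degenerate} follow from combining Theorem~\ref{theorem:main} with Theorem~\ref{theorem:main_degenerate}, and your write-up just makes explicit the identification $I = I'$ (hence $\Gamma = \Gamma'$) forced by the hypothesis $\mathrm{char}_q(F) = \mathrm{char}(F)$.
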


\section{Another approach to the result}
\label{section:disjoint_guiver}

In \cite{Lu, JaPA, PA}, the authors have proved the following algebra isomorphism:
\begin{equation}
\label{equation:isom_JaPA}
\widehat{\Y}_{d, n}^{\tuple{\Lambda}}(q) \simeq \bigoplus_{\lambda \comp_d n} \mathrm{Mat}_{m_\lambda} \widehat{\H}_\lambda^{\tuple{\Lambda}}(q),
\end{equation}
with $F = \mathbb{C}, e = \infty,\widehat{\H}_\lambda^{\tuple{\Lambda}}(q) \coloneqq \widehat{\H}_{\lambda_1}^{\tuple{\Lambda}}(q) \otimes \cdots \otimes \widehat{\H}_{\lambda_d}^{\tuple{\Lambda}}(q)$, where we write $\comp_d$ instead of $\comp_J$ and where $m_\lambda$ are some integers (see \eqref{equation:definition_mlambda}). We will see how we can relate this isomorphism to our previous work. To that extent, we prove in Theorem~\ref{theorem:disjoint_guiver}  a general result on quiver Hecke algebras in the case where the quiver is given by a disjoint union of full subquivers. The isomorphism is built from the following map (see \eqref{equation:definition_philambda_psilambda}):
\[
\Psi_{\mathfrak{t}', \mathfrak{t}} : w \mapsto (\psi_{\pi_{\mathfrak{t}'}} w \psi_{\pi_\mathfrak{t}^{-1}}) E_{\mathfrak{t}', \mathfrak{t}},
\]
for $w \in e(\mathfrak{t}')\H_n(Q)e(\mathfrak{t})$, where $E_{\mathfrak{t}', \mathfrak{t}}$ are elementary matrices. In particular:
\begin{itemize}
\item in \textsection\ref{subsubsection:young_subgroups} we introduce the elements $\pi_\mathfrak{t}$, the minimal-length representatives of the right cosets of $\mathfrak{S}_n$ under the action of the Young subgroup $\mathfrak{S}_\lambda$ where $\lambda \comp_d n$: this will lead to some calculations which will only be needed to explicit our homomorphism $\Psi_{\mathfrak{t}', \mathfrak{t}}$;
\item in \textsection\ref{subsection:about_psit} we will study the elements $\psi_{\pi_\mathfrak{t}}$, and we will go on with the previous calculations.
\end{itemize}

\subsection{Setting}

Let $\K$ be a \emph{finite} set; we recall that $d, n \in \mathbb{N}^*$ and $J = \mathbb{Z} / d\mathbb{Z} \simeq \{1, \dots, d\}$. We consider a partition of $\K$ into $d$ parts $\K = \sqcup_{j \in J} \K_j$. We recall that the left action of $w \in \mathfrak{S}_n$ on tuples is given by $w \cdot (x_1, \dots, x_n) \coloneqq (x_{w^{-1}(1)}, \dots, x_{w^{-1}(n)})$.
We may use some elementary theory about Coxeter groups: we refer for instance to \cite{GePf} or \cite{Hum}. In particular, in that context we will denote by $\ell$ the usual length function $\mathfrak{S}_n \to \mathbb{N}$. Finally, let us mention that in this section, we will write $\mathfrak{t}$ for the elements of $J^n$.

\subsubsection{Labellings and shapes}

Let $\lambda = (\lambda_j)_{1 \leq j \leq d}\comp_d n$ be a $d$-composition of $n$: recall that it means $\lambda_j \geq 0$ and $\lambda_1 + \dots + \lambda_d = n$. We  define the integers $\boldsymbol{\lambda}_1, \dots, \boldsymbol{\lambda}_d$, given by $\boldsymbol{\lambda}_j \coloneqq \lambda_1 + \dots + \lambda_j$ for $j \in J$. In particular, $\boldsymbol{\lambda}_1 = \lambda_1$ and $\boldsymbol{\lambda}_d = n$; we also set $\boldsymbol{\lambda}_0 \coloneqq 0$. From now on, the letter $\lambda$ always stands for a $d$-composition of $n$.

\begin{definition}
\label{definition:label_shape}
Let $\tuple{k} \in \K^n$ and $\mathfrak{t} \in J^n$.
\begin{itemize}
\item We say that $\tuple{k}$ is a \emph{labelling} of $\mathfrak{t}$  when the following rule is satisfied:
\[
\forall a \in \{1, \dots, n\}, k_a \in \K_{\mathfrak{t}_a},
\]
that is:
\[
\forall a \in \{1, \dots, n\}, \forall j \in J, k_a \in \K_j \iff \mathfrak{t}_a = j.
\]
We write $\K^\mathfrak{t}$ for the elements $\K^n$ which are labellings of $\mathfrak{t}$.

\item We say that $\mathfrak{t}$ has \emph{shape} $\lambda \comp_d n$ and we write $[\mathfrak{t}] = \lambda$ if for all $j \in J$ there are exactly $\lambda_j$ components of $\mathfrak{t}$ equal to $j$, that is:
\[
\forall j \in J,\#\big\lbrace a \in \{1, \dots, n\} : \mathfrak{t}_a  = j\big\rbrace = \lambda_j.
\]
We write $J^\lambda$ for the elements $J^n$ with shape $\lambda$.
\end{itemize}
\end{definition}


The sets $J^\lambda$ are exactly the orbits of $J^n$ under the action of $\mathfrak{S}_n$, in particular $[w \cdot \mathfrak{t}] = [\mathfrak{t}]$ for every $w \in \mathfrak{S}_n$ and $\mathfrak{t} \in J^n$. Moreover, the cardinality of $J^\lambda$ is:
\begin{equation}
\label{equation:definition_mlambda}
m_{\lambda} \coloneqq \frac{n!}{\lambda_1 ! \dots \lambda_d!}.
\end{equation}

We write $\mathfrak{t}^{\lambda} \in J^\lambda$ for the trivial element of shape $\lambda$, given by:
\begin{equation}
\label{equation:definition_sigmalambda}
\forall a \in \{1, \dots, n\}, \forall j \in J, \mathfrak{t}^\lambda_a = j \iff \boldsymbol{\lambda}_{j - 1} < a \leq \boldsymbol{\lambda}_j,
\end{equation}
that is:
\[
\mathfrak{t}^\lambda \coloneqq (1, \dots, 1, \dots, d, \dots, d),
\]
where each $j \in J$ appears $\lambda_j$ times. Note that $\K^{\mathfrak{t}^\lambda} \simeq \K_1^{\lambda_1} \times \cdots \times \K_d^{\lambda_d}$.

\subsubsection{Young subgroups}
\label{subsubsection:young_subgroups}

Most results of this section are well-known; however, since in the literature they are stated either for a left or a right action (see Remark~\ref{remark:young-mathas}), for the convenience of the reader we state all of them with a left action. 
We remind the reader that some calculations made here will only be used in \textsection\ref{subsubsection:an_application}, namely with Lemmas~\ref{lemma:pisigma_sa_sigma} and \ref{lemma:reduced_expression_pisigma_sa}.

Let $\lambda \comp_d n$; the following group:
\[
\mathfrak{S}_{\lambda} \coloneqq \mathfrak{S}_{\lambda_1} \times \dots \times \mathfrak{S}_{\lambda_d},
\]
can be seen as a subgroup of $\mathfrak{S}_n$ (the ``Young subgroup''), where we consider that $\mathfrak{S}_{\lambda_j} \simeq \mathfrak{S}(\{\boldsymbol{\lambda}_{j-1} + 1, \dots, \boldsymbol{\lambda}_j\})$.
Recall that:
\begin{itemize}
\item the group $\mathfrak{S}_n$ (resp. $\mathfrak{S}_{\lambda_j}$) is generated by $s_1, \dots, s_{n-1}$ (resp. $s_{\boldsymbol{\lambda}_{j-1} + 1}, \dots, s_{\boldsymbol{\lambda}_j - 1}$);
\item the subgroup $\mathfrak{S}_{\lambda}$ is generated by all the $s_a$ for $a \in \{1, \dots, n\} \setminus \{\boldsymbol{\lambda}_1, \dots, \boldsymbol{\lambda}_d\}$.
\end{itemize} 
In particular:
\begin{equation}
\label{equation:composantes_young_commutent}
\forall j \neq j', \forall (w_j, w_{j'}) \in \mathfrak{S}_{\lambda_j} \times \mathfrak{S}_{\lambda_{j'}}, w_j w_{j'} = w_{j'} w_j \text{ in } \mathfrak{S}_n.
\end{equation}

\begin{remarque}
\label{remark:reduced_expression_Slambda}
If $w = s_{a_1} \cdots s_{a_r} \in \mathfrak{S}_\lambda$ is a reduced expression, up to a reindexation we know by \eqref{equation:composantes_young_commutent} that there is a sequence $0 \eqqcolon r_0 \leq r_1 \leq \cdots \leq r_{d-1} \leq r_d \coloneqq r$ such that for each $j \in J$, the word $s_{a_{r_{j-1} + 1}} \cdots s_{a_{r_j}}$ is reduced and lies in $\mathfrak{S}_{\lambda_j}$. The converse is also true: if for each $j \in J$ we have a reduced word $s_{a_{r_{j-1} + 1}} \cdots s_{a_{r_j}} \in \mathfrak{S}_{\lambda_j}$ then their concatenation $s_1 \cdots s_r \in \mathfrak{S}_\lambda$ is reduced.
\end{remarque}

The following proposition is straightforward.

\begin{proposition}
\label{proposition:young_stabiliser_sigma_lambda}
The stabiliser of $\mathfrak{t}^\lambda$ under the action of $\mathfrak{S}_n$ is exactly $\mathfrak{S}_{\lambda}$.
\end{proposition}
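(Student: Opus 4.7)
The plan is to simply unravel the definitions, since the proposition is essentially a reformulation of what $\mathfrak{t}^\lambda$ and $\mathfrak{S}_\lambda$ are.

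First I would fix $w \in \mathfrak{S}_n$ and write down what it means for $w$ to stabilise $\mathfrak{t}^\lambda$ under the left action: by definition $(w \cdot \mathfrak{t}^\lambda)_a = \mathfrak{t}^\lambda_{w^{-1}(a)}$, so $w \cdot \mathfrak{t}^\lambda = \mathfrak{t}^\lambda$ is equivalent to $\mathfrak{t}^\lambda_{w^{-1}(a)} = \mathfrak{t}^\lambda_a$ for every $a \in \{1, \dots, n\}$. Using \eqref{equation:definition_sigmalambda}, this in turn means that for every $a$ there is a (unique) $j \in J$ such that both $a$ and $w^{-1}(a)$ lie in the interval $\{\boldsymbol{\lambda}_{j-1} + 1, \dots, \boldsymbol{\lambda}_j\}$.

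Next I would observe that this condition, applied to every $a$, is exactly the statement that $w^{-1}$ (hence also $w$) leaves each of the $d$ intervals $\{\boldsymbol{\lambda}_{j-1} + 1, \dots, \boldsymbol{\lambda}_j\}$ globally invariant. Since $\mathfrak{S}_{\lambda_j}$ was identified with $\mathfrak{S}(\{\boldsymbol{\lambda}_{j-1} + 1, \dots, \boldsymbol{\lambda}_j\})$ and $\mathfrak{S}_\lambda = \mathfrak{S}_{\lambda_1} \times \cdots \times \mathfrak{S}_{\lambda_d}$ is precisely the subgroup of $\mathfrak{S}_n$ of permutations stabilising each of these intervals, we get $w \in \mathfrak{S}_\lambda$. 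Conversely, any $w \in \mathfrak{S}_\lambda$ clearly sends each interval to itself, hence satisfies $\mathfrak{t}^\lambda_{w^{-1}(a)} = \mathfrak{t}^\lambda_a$ for all $a$, and therefore fixes $\mathfrak{t}^\lambda$.

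There is no real obstacle here: the two inclusions are immediate from the definitions, and no Coxeter-theoretic input is needed. The only mild thing to be careful about is the convention for the action ($w^{-1}$ appearing on the right-hand side of \eqref{equation:definition_sigmalambda} rather than $w$), but this does not affect the conclusion since $\mathfrak{S}_\lambda$ is closed under inversion.
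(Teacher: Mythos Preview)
Your proof is correct and is precisely the kind of direct verification the paper has in mind: the paper does not give a proof of this proposition, merely stating that it is ``straightforward'', and your unraveling of the definitions is exactly the argument one would supply. There is nothing to add.
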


We now study the right cosets in $\mathfrak{S}_n$ for the (left) action of $\mathfrak{S}_{\lambda}$.

\begin{lemme}
\label{lemma:equation_coset}
Two words $w, w' \in \mathfrak{S}_n$ are in the same right coset if and only if $w^{-1} \cdot \mathfrak{t}^\lambda = w'^{-1} \cdot \mathfrak{t}^\lambda$.
\end{lemme}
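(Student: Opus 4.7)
The plan is to reduce this to a direct application of Proposition~\ref{proposition:young_stabiliser_sigma_lambda}, since the claim is essentially the orbit–stabiliser correspondence for the (left) action of $\mathfrak{S}_n$ on $J^n$ restricted to the orbit of $\mathfrak{t}^\lambda$.

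More precisely, the right coset of $w$ under the left action of $\mathfrak{S}_\lambda$ is $\mathfrak{S}_\lambda w$, so $w$ and $w'$ lie in the same coset exactly when $w'w^{-1}\in\mathfrak{S}_\lambda$. I would then translate the equality $w^{-1}\cdot\mathfrak{t}^\lambda=w'^{-1}\cdot\mathfrak{t}^\lambda$ into a stabiliser condition: applying $w'$ on the left (using that the assignment $u\mapsto u\cdot-$ is a group action) gives
\[
w'w^{-1}\cdot \mathfrak{t}^\lambda = \mathfrak{t}^\lambda,
\]
which by Proposition~\ref{proposition:young_stabiliser_sigma_lambda} is equivalent to $w'w^{-1}\in\mathfrak{S}_\lambda$, i.e.\ to $w'\in\mathfrak{S}_\lambda w$. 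The reverse implication is identical: if $w'=\sigma w$ with $\sigma\in\mathfrak{S}_\lambda$, then $w'^{-1}\cdot\mathfrak{t}^\lambda=w^{-1}\sigma^{-1}\cdot\mathfrak{t}^\lambda=w^{-1}\cdot\mathfrak{t}^\lambda$ because $\sigma^{-1}$ fixes $\mathfrak{t}^\lambda$.

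There is no real obstacle here; the only thing to be careful about is the convention that the action is a left action, so that inverses appear on the right side when passing between $w$ and $w^{-1}\cdot\mathfrak{t}^\lambda$, and the coset is $\mathfrak{S}_\lambda w$ rather than $w\mathfrak{S}_\lambda$. Accordingly the proof will be a one-paragraph argument quoting Proposition~\ref{proposition:young_stabiliser_sigma_lambda}.
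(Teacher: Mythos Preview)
Your proposal is correct and is exactly the approach the paper takes: it simply states that the proof is straightforward from Proposition~\ref{proposition:young_stabiliser_sigma_lambda}, and you have spelled out the one-line orbit--stabiliser argument that this entails.
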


The proof  is straightforward from Proposition~\ref{proposition:young_stabiliser_sigma_lambda}.
An element $C \in \mathfrak{S}_{\lambda}\backslash\mathfrak{S}_n$ is thus determined by the constant value $\mathfrak{t} \coloneqq w^{-1}\cdot \mathfrak{t}^\lambda \in J^\lambda$ for $w \in C$: we write $C_{\mathfrak{t}}$ for the coset $C$ (as each $\mathfrak{t} \in J^n$ has a unique shape, we do not need to precise the underlying composition in the indexation). Noticing that  $m_{\lambda} = |\mathfrak{S}_n| / |\mathfrak{S}_{\lambda}|$, we conclude that the cosets are parametrised by the whole set $J^\lambda$, that is, $\mathfrak{S}_{\lambda}\backslash \mathfrak{S}_n = \{C_{\mathfrak{t}}\}_{\mathfrak{t} \in J^\lambda}$.

We know by \cite[Proposition 2.1.1]{GePf} that each coset $C_{\mathfrak{t}}$ has a unique minimal length element: we write $\pi_\mathfrak{t} \in C_{\mathfrak{t}}$ for this unique element.  In particular, since Lemma~\ref{lemma:equation_coset} gives:
\begin{equation}
\label{equation:equation_coset}
\forall w \in \mathfrak{S}_n, w \in C_{\mathfrak{t}} \iff w \cdot \mathfrak{t} = \mathfrak{t}^\lambda,
\end{equation}
we get the following proposition.

\begin{proposition}
\label{proposition:pi_sigma_lambda_sort}
The element $\pi_\mathfrak{t}$ is the unique minimal length element of $\mathfrak{S}_n$ such that:
\begin{equation}
\label{equation:pisigmalambda_sort}
\pi_\mathfrak{t} \cdot \mathfrak{t} = \mathfrak{t}^\lambda.
\end{equation}
\end{proposition}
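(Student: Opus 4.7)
The plan is to observe that this proposition is essentially a direct translation of what has just been established: the hard content was already packaged into \eqref{equation:equation_coset}, and the cited result from \cite{GePf} gives uniqueness of the minimal length representative. So the task reduces to verifying that the set appearing in the statement coincides with the coset $C_{\mathfrak{t}}$ for which $\pi_{\mathfrak{t}}$ is defined.

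More concretely, first I would rewrite $\{w \in \mathfrak{S}_n : w \cdot \mathfrak{t} = \mathfrak{t}^\lambda\}$ using \eqref{equation:equation_coset}: this set is precisely the coset $C_{\mathfrak{t}} \in \mathfrak{S}_\lambda \backslash \mathfrak{S}_n$. (Equivalently, if $w_0 \in \mathfrak{S}_n$ is any element satisfying $w_0 \cdot \mathfrak{t} = \mathfrak{t}^\lambda$, then $w \cdot \mathfrak{t} = \mathfrak{t}^\lambda$ is equivalent to $w w_0^{-1} \in \mathrm{Stab}_{\mathfrak{S}_n}(\mathfrak{t}^\lambda) = \mathfrak{S}_\lambda$ by Proposition~\ref{proposition:young_stabiliser_sigma_lambda}, so $w \in \mathfrak{S}_\lambda w_0 = C_{\mathfrak{t}}$.) Next, I would invoke \cite[Proposition 2.1.1]{GePf}, already recalled just above the statement, which provides the unique minimal length representative of any right coset of a parabolic subgroup: applied to $C_{\mathfrak{t}}$, this gives exactly $\pi_{\mathfrak{t}}$.

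Combining the two steps yields that $\pi_{\mathfrak{t}}$ is the unique minimal length element of $\{w \in \mathfrak{S}_n : w \cdot \mathfrak{t} = \mathfrak{t}^\lambda\}$, which is the claim. There is no real obstacle here; the whole content lies in the cosets being in bijection with $J^\lambda$ via $w \mapsto w^{-1} \cdot \mathfrak{t}^\lambda$, something already established, and in the general fact about minimal length representatives being unique, for which the reference is given.
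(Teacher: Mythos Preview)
Your proof is correct and follows exactly the paper's approach: the paper also derives the proposition immediately from \eqref{equation:equation_coset} together with the definition of $\pi_\mathfrak{t}$ as the unique minimal length representative of $C_\mathfrak{t}$ given by \cite[Proposition~2.1.1]{GePf}. There is nothing to add.
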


\begin{remarque}
\label{remark:decomposition_cosets}
The decomposition into right cosets is obtained in the following way. Given $w \in \mathfrak{S}_n$, we know that $w$ belongs to the coset $C_{\mathfrak{t}}$ with $\mathfrak{t} \coloneqq w^{-1} \cdot \mathfrak{t}^\lambda$.  The element $\widetilde{w}^{-1} \coloneqq \pi_\mathfrak{t} w^{-1}$ stabilises $\mathfrak{t}^\lambda$, thus lies in $\mathfrak{S}_{\lambda}$ and we have $w =  \widetilde{w} \pi_\mathfrak{t}$. 
\end{remarque}

\begin{proposition}
\label{proposition:explicit_pi}
The elements $\pi_\mathfrak{t}$ are given by:
\[
\forall a \in \{1, \dots, n\},
\pi_\mathfrak{t}(a) = \boldsymbol{\lambda}_{\mathfrak{t}_a-1} + \# \{b \leq a : \mathfrak{t}_b = \mathfrak{t}_a\}.
\]
\end{proposition}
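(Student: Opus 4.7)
The plan is to verify the formula by exhibiting the permutation $\pi$ defined by the right-hand side and showing it coincides with $\pi_\mathfrak{t}$ via the characterization of Proposition~\ref{proposition:pi_sigma_lambda_sort}. So set
\[
\pi(a) \coloneqq \boldsymbol{\lambda}_{\mathfrak{t}_a - 1} + \#\{b \leq a : \mathfrak{t}_b = \mathfrak{t}_a\}.
\]
First I would check that $\pi$ is a well-defined element of $\mathfrak{S}_n$. For each $a$, the count lies between $1$ and $\lambda_{\mathfrak{t}_a}$ (since $a$ itself is in the set and $[\mathfrak{t}] = \lambda$), so $\boldsymbol{\lambda}_{\mathfrak{t}_a - 1} < \pi(a) \leq \boldsymbol{\lambda}_{\mathfrak{t}_a}$, which shows $\pi(a) \in \{1,\dots,n\}$. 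For injectivity: if $\pi(a) = \pi(a')$, the bounds just given force $\mathfrak{t}_a = \mathfrak{t}_{a'}$, and then equality of the counting terms forces $a = a'$.

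Next I would verify $\pi \cdot \mathfrak{t} = \mathfrak{t}^\lambda$, i.e., $\mathfrak{t}^\lambda_{\pi(a)} = \mathfrak{t}_a$ for all $a$. This is immediate from \eqref{equation:definition_sigmalambda} and the bounds $\boldsymbol{\lambda}_{\mathfrak{t}_a - 1} < \pi(a) \leq \boldsymbol{\lambda}_{\mathfrak{t}_a}$ just established.

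It remains to show that $\pi$ has minimal length among all $w \in \mathfrak{S}_n$ such that $w \cdot \mathfrak{t} = \mathfrak{t}^\lambda$; this is the only step that requires a genuine (but brief) argument. I would count inversions. Given $a < b$, I compare $\pi(a)$ and $\pi(b)$ case by case on $\mathfrak{t}_a$ versus $\mathfrak{t}_b$: if $\mathfrak{t}_a = \mathfrak{t}_b$ the $\boldsymbol{\lambda}$ terms agree and the counting terms strictly increase, so $\pi(a) < \pi(b)$; if $\mathfrak{t}_a < \mathfrak{t}_b$ then $\pi(a) \leq \boldsymbol{\lambda}_{\mathfrak{t}_a} \leq \boldsymbol{\lambda}_{\mathfrak{t}_b - 1} < \pi(b)$; and if $\mathfrak{t}_a > \mathfrak{t}_b$, symmetrically $\pi(b) < \pi(a)$. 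This yields
\[
\ell(\pi) = \#\{(a, b) : a < b, \mathfrak{t}_a > \mathfrak{t}_b\}.
\]
Now for any $w$ satisfying $w\cdot\mathfrak{t} = \mathfrak{t}^\lambda$, i.e., $\mathfrak{t}^\lambda_{w(c)} = \mathfrak{t}_c$, if $a < b$ with $\mathfrak{t}_a > \mathfrak{t}_b$, then $\mathfrak{t}^\lambda_{w(a)} > \mathfrak{t}^\lambda_{w(b)}$; since $\mathfrak{t}^\lambda$ is weakly increasing by construction, this forces $w(a) > w(b)$, giving an inversion of $w$. Hence $\ell(w) \geq \ell(\pi)$, and by uniqueness in Proposition~\ref{proposition:pi_sigma_lambda_sort} we conclude $\pi = \pi_\mathfrak{t}$. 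The mildly delicate point is the last comparison in the middle case (where one needs $\boldsymbol{\lambda}_{\mathfrak{t}_a} \leq \boldsymbol{\lambda}_{\mathfrak{t}_b - 1}$, which holds precisely because $\mathfrak{t}_a < \mathfrak{t}_b$ forces $\mathfrak{t}_a \leq \mathfrak{t}_b - 1$), but otherwise everything is a direct verification.
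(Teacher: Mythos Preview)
Your proof is correct and complete. It differs from the paper's approach in a meaningful way: the paper sets up a correspondence between elements $\mathfrak{t} \in J^\lambda$ and row-standard $\lambda$-tableaux $\mathcal{T}_\mathfrak{t}$, proves a lemma (Lemma~\ref{lemma:relation_equivalence_tableau}) matching the action on tableaux with the action on tuples, and then invokes an external reference (\cite[Proposition~3.3]{Ma}) to identify the minimal-length coset representative with the permutation $d(\mathfrak{t})^{-1}$ sending $\mathcal{T}_{\mathfrak{t}^\lambda}$ to $\mathcal{T}_\mathfrak{t}$; the explicit formula then drops out of the tableau labelling. Your argument is more direct and entirely self-contained: you verify the characterization of Proposition~\ref{proposition:pi_sigma_lambda_sort} head-on, establishing minimality by an inversion count and the observation that every pair $(a,b)$ with $a<b$ and $\mathfrak{t}_a > \mathfrak{t}_b$ is forced to be an inversion of \emph{any} $w$ with $w\cdot\mathfrak{t} = \mathfrak{t}^\lambda$ (because $\mathfrak{t}^\lambda$ is weakly increasing). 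Your route avoids the tableau machinery and the external citation; the paper's route has the advantage of situating $\pi_\mathfrak{t}$ within the standard tableau formalism, though that formalism is not used again later in the section.
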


An example is given in Figure~\ref{figure:exemple_pisigma}.
\begin{figure}[h]
\centering
\begin{tikzpicture}[>=angle 90]
\node at (-1, 2) {$\{1, \dots, 6\}$};
\foreach \i in {1,...,6}
	\node at (\i, 2) {$\i$};

\node (sigma) at (-1, 1) {$\mathfrak{t}$};
\node (sigma1) at (1, 1) {$3$};
\node (sigma2) at (2, 1) {$1$};
\node (sigma3) at (3, 1) {$3$};
\node (sigma4) at (4, 1) {$2$};
\node (sigma5) at (5, 1) {$3$};
\node (sigma6) at (6, 1) {$1$};

\node (sigmalambda) at (-1, -1) {$\mathfrak{t}^\lambda$};
\node (sigmalambda1) at (1, -1) {$1$};
\node (sigmalambda2) at (2, -1) {$1$};
\node (sigmalambda3) at (3, -1) {$2$};
\foreach \i in {4,5,6}
	\node (sigmalambda\i) at (\i, -1) {$3$};

\node at (-1, -2) {$\{1, \dots, 6\}$};
\foreach \i in {1,...,6}
	\node at (\i, -2) {$\i$};

\draw[->] (sigma1) -- (sigmalambda4);
\draw[->] (sigma2) -- (sigmalambda1);
\draw[->] (sigma3) -- (sigmalambda5);
\draw[->] (sigma4) -- (sigmalambda3);
\draw[->] (sigma5) -- (sigmalambda6);
\draw[->] (sigma6) -- (sigmalambda2);
\end{tikzpicture}
\caption{The permutation $\pi_\mathfrak{t}$ for $\lambda \coloneqq (2, 1, 3) \comp_3 6$ and $\mathfrak{t} \coloneqq (3, 1, 3, 2, 3, 1)$.}
\label{figure:exemple_pisigma}
\end{figure}
To prove Proposition~\ref{proposition:explicit_pi}, we will use the vocabulary of ``tableaux'' (see for example \cite[\textsection 3.1]{Ma}). As a quick reminder, a \emph{$\lambda$-tableau} $\mathcal{T}$ is a bijection $\{(j, m) \in \mathbb{N}^2 : 1 \leq j \leq d \text{ and } 1 \leq m \leq \lambda_j\} \to \{1, \dots, n\}$; the tableau $\mathcal{T}$ is \emph{row-standard} if in each rows, its entries increase from left to right. Here are two examples of $\lambda$-tableaux, with $\lambda \coloneqq (2, 1, 3) \comp_3 6$:
\[
\young(26,4,135),
\qquad \qquad
\young(32,5,614),
\]
the first only being row-standard.

To any $\mathfrak{t} \in J^\lambda$, we associate  the $\lambda$-tableau $\mathcal{T}_\mathfrak{t}$ given by the following rule: for $j \in J$ and $m \in \{1, \dots, \lambda_j\}$, we label the node $(j, m)$ by the index $a$ of the $m$th occurrence of $j$ in $\mathfrak{t}$, that is, by the integer $a \in \{1, \dots, n\}$ determined by:
\begin{equation}
\label{equation:young_subgroups-label_a}
\mathfrak{t}_a = j \text{ and } \# \{b \leq a : \mathfrak{t}_b = \mathfrak{t}_a\} = m.
\end{equation}
In particular, the tableau $\mathcal{T}_\mathfrak{t}$ is row-standard; conversely, each row-standard $\lambda$-tableau is a $\mathcal{T}_\mathfrak{t}$ for a unique $\mathfrak{t} \in J^\lambda$. With the notation of Figure~\ref{figure:exemple_pisigma}, here are two examples of row-standard $\lambda$-tableaux:
\[
\mathcal{T}_\mathfrak{t} = \young(26,4,135),
\qquad \qquad
\mathcal{T}_{\mathfrak{t}^\lambda} = \young(12,3,456).
\]

We consider the natural left action of the symmetric group $\mathfrak{S}_n$ on the set of $\lambda$-tableaux: if $w \in \mathfrak{S}_n$ and $\mathcal{T}$ is a $\lambda$-tableau, the tableau $w \cdot \mathcal{T}$ is obtained by applying $w$ in each box of $\mathcal{T}$.
 If now $\mathcal{T}$ and $\mathcal{T}'$ are two $\lambda$-tableaux, we write $\mathcal{T} \sim \mathcal{T}'$ if for all $j \in J$, the labels of the $j$th row of $\mathcal{T}$ are a permutation of the labels of the  $j$th row of  $\mathcal{T}'$.

\begin{lemme}
\label{lemma:relation_equivalence_tableau}
For $w \in \mathfrak{S}_n$ and $\mathfrak{t} \in J^\lambda$  we have:
\[
w \cdot \mathcal{T}_\mathfrak{t} \sim \mathcal{T}_{\mathfrak{t}^\lambda} \iff w \cdot \mathfrak{t} = \mathfrak{t}^\lambda.
\]
\end{lemme}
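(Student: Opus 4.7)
The strategy is to unpack both sides of the equivalence into the same combinatorial statement about the ``row contents'' of the underlying tableaux. For $\mathfrak{t} \in J^\lambda$ and $j \in J$, set
\[
A_j(\mathfrak{t}) \coloneqq \{a \in \{1,\dots,n\} : \mathfrak{t}_a = j\}.
\]
From the definition of $\mathcal{T}_\mathfrak{t}$ via \eqref{equation:young_subgroups-label_a}, the $j$-th row of $\mathcal{T}_\mathfrak{t}$ is exactly the set $A_j(\mathfrak{t})$ (arranged in increasing order). In particular, using \eqref{equation:definition_sigmalambda}, the $j$-th row of $\mathcal{T}_{\mathfrak{t}^\lambda}$ is $A_j(\mathfrak{t}^\lambda) = \{\boldsymbol{\lambda}_{j-1}+1,\dots,\boldsymbol{\lambda}_j\}$.

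First, I will rewrite the left-hand side of the equivalence. Since $w$ acts on tableaux by applying $w$ to each entry, the $j$-th row of $w\cdot\mathcal{T}_\mathfrak{t}$ is the set $w(A_j(\mathfrak{t}))$. By the definition of $\sim$, the condition $w \cdot \mathcal{T}_\mathfrak{t} \sim \mathcal{T}_{\mathfrak{t}^\lambda}$ is therefore equivalent to
\[
w(A_j(\mathfrak{t})) = A_j(\mathfrak{t}^\lambda) \qquad \forall j \in J.
\]

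Next, I will rewrite the right-hand side in the same language. Using the left action $(w\cdot\mathfrak{t})_b = \mathfrak{t}_{w^{-1}(b)}$, one checks that
\[
A_j(w\cdot\mathfrak{t}) = \{b : \mathfrak{t}_{w^{-1}(b)} = j\} = w(A_j(\mathfrak{t})),
\]
so that $w \cdot \mathfrak{t} = \mathfrak{t}^\lambda$ holds if and only if $A_j(w\cdot\mathfrak{t}) = A_j(\mathfrak{t}^\lambda)$ for all $j$, that is, $w(A_j(\mathfrak{t})) = A_j(\mathfrak{t}^\lambda)$ for all $j$. Comparing the two rewritings yields the claimed equivalence.

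The argument is essentially a bookkeeping exercise, so I do not expect any genuine obstacle; the only mild subtlety is keeping track of the convention for the left action of $\mathfrak{S}_n$ on $J^n$ (with $w^{-1}$ on the indices) so that the sets $A_j(\mathfrak{t})$ transform correctly by $w$ and not by $w^{-1}$.
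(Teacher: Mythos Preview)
Your proof is correct and follows essentially the same approach as the paper: both arguments unwind the relation $\sim$ into the statement that $w$ sends the set of positions $\{a : \mathfrak{t}_a = j\}$ onto $\{\boldsymbol{\lambda}_{j-1}+1,\dots,\boldsymbol{\lambda}_j\}$ for each $j$, and then identify this with $w\cdot\mathfrak{t} = \mathfrak{t}^\lambda$. The paper phrases this pointwise via the labels $a[j,m]$ of the boxes of $\mathcal{T}_\mathfrak{t}$, while you package the same information into the row-content sets $A_j(\mathfrak{t})$; the substance is identical.
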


\begin{proof}
If $j \in J$ and $m \in \{1, \dots, \lambda_j\}$, we denote by $a[j, m]$ the label of the box $(j, m)$ of $\mathcal{T}_\mathfrak{t}$; by \eqref{equation:young_subgroups-label_a} we have $\mathfrak{t}_{a[j, m]} = j$. We get:
\begin{align*}
w \cdot \mathcal{T}_\mathfrak{t} \sim \mathcal{T}_{\mathfrak{t}^\lambda}
&\iff
\forall j \in J, \forall m \in \{1, \dots, \lambda_j\}, w(a[j, m]) \in \{\boldsymbol{\lambda}_{j-1} + 1, \dots, \boldsymbol{\lambda}_j\}
\\
&\iff
\forall j \in J, \forall m \in \{1, \dots, \lambda_j\}, \mathfrak{t}^\lambda_{w(a[j, m])} = j
\\
&\iff
\forall j \in J, \forall m \in \{1, \dots, \lambda_j\}, \mathfrak{t}^\lambda_{w(a[j, m])} = \mathfrak{t}_{a[j, m]}
\\
&\iff
\forall a \in \{1, \dots, n\}, \mathfrak{t}^\lambda_{w(a)} = \mathfrak{t}_{a}
\\
&\iff
\forall a \in \{1, \dots, n\}, \mathfrak{t}^\lambda_a = \mathfrak{t}_{w^{-1}(a)}
\\
w \cdot \mathcal{T}_\mathfrak{t} \sim \mathcal{T}_{\mathfrak{t}^\lambda}
&\iff
\mathfrak{t}^\lambda = w \cdot \mathfrak{t},
\end{align*}
as desired.
\end{proof}

\begin{proof}[Proof of Proposition~\ref{proposition:explicit_pi}.]
Let $\mathfrak{t} \in J^\lambda$. There is a unique element $d(\mathfrak{t}) \in \mathfrak{S}_n$ such that $\mathcal{T}_\mathfrak{t} = d(\mathfrak{t}) \cdot \mathcal{T}_{\mathfrak{t}^\lambda}$, that is, $d(\mathfrak{t})^{-1} \cdot \mathcal{T}_\mathfrak{t} = \mathcal{T}_{\mathfrak{t}^\lambda}$. By the equation of the coset $C_\mathfrak{t}$ given at \eqref{equation:equation_coset} and Lemma~\ref{lemma:relation_equivalence_tableau}, we get that $d(\mathfrak{t})^{-1} \in C_\mathfrak{t}$. Applying \cite[Proposition 3.3]{Ma}, we know that $d(\mathfrak{t})^{-1}$ is the unique minimal length element of $C_\mathfrak{t}$. As a consequence, we have $d(\mathfrak{t})^{-1} = \pi_{\mathfrak{t}}$ and thus:
\begin{equation}
\label{equation:young_subgroup-piT=T}
\pi_\mathfrak{t} \cdot \mathcal{T}_\mathfrak{t} = \mathcal{T}_{\mathfrak{t}^\lambda}.
\end{equation}
Let $j \in J$ and  $m \in \{1, \dots, \lambda_j\}$, and let $a$ (respectively $\alpha$) be the label of the box $(j, m)$ in $\mathcal{T}_\mathfrak{t}$ (resp. $\mathcal{T}_{\mathfrak{t}^\lambda}$). In particular, by \eqref{equation:young_subgroups-label_a} we have $\alpha = \boldsymbol{\lambda}_{j - 1} + m$.
Moreover, by \eqref{equation:young_subgroup-piT=T} we have $\pi_\mathfrak{t}(a) = \alpha$: we conclude that the announced formula is satisfied, since, by a last use of \eqref{equation:young_subgroups-label_a}, we have $j = \mathfrak{t}_a$ and $m = \#\{b \leq a : \mathfrak{t}_b = \mathfrak{t}_a\}$.
\end{proof}

\begin{remarque}
\label{remark:young-mathas}
In \cite{Ma}, the author considers the elements of $\mathfrak{S}_n$ as acting on $\{1, \dots, n\}$ from the \emph{right}, by $iw := w(i)$ where $i \in \{1, \dots, n\}$ and $w \in \mathfrak{S}_n$ is a permutation. This is the right action of $\mathfrak{S}_n^{\text{op}}$: in such a setting, we read products of permutations from left to right.
\end{remarque}

\begin{lemme}
\label{lemma:pi_really_permutes}
Let $\mathfrak{t} \in J^\lambda$ and let $\pi_{\mathfrak{t}} = s_{a_1} \cdots s_{a_r}$ be a reduced expression. Then:
\[
\forall m \in \{1, \dots, r\},
s_{a_m} \cdot (w_m \cdot \mathfrak{t}) \neq w_m \cdot \mathfrak{t},
\]
where $w_m \coloneqq s_{a_{m+1}} \cdots s_{a_r}$ (with $w_m = 1$ if $m = r$).
\end{lemme}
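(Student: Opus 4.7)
The plan is to argue by contradiction against the minimality of $\pi_\mathfrak{t}$: if some simple transposition in the expression fixed the corresponding intermediate tuple, we could delete it to produce a strictly shorter element of the coset $C_\mathfrak{t}$ still sending $\mathfrak{t}$ to $\mathfrak{t}^\lambda$.

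More precisely, fix $m \in \{1, \dots, r\}$ and suppose, for a contradiction, that $s_{a_m} \cdot (w_m \cdot \mathfrak{t}) = w_m \cdot \mathfrak{t}$. Set
\[
w' \coloneqq s_{a_1} \cdots s_{a_{m-1}} s_{a_{m+1}} \cdots s_{a_r} \in \mathfrak{S}_n,
\]
so that $\ell(w') \leq r - 1$. Using the assumed equality, one computes
\[
w' \cdot \mathfrak{t} = s_{a_1} \cdots s_{a_{m-1}} \cdot (w_m \cdot \mathfrak{t}) = s_{a_1} \cdots s_{a_{m-1}} \cdot (s_{a_m} w_m \cdot \mathfrak{t}) = \pi_\mathfrak{t} \cdot \mathfrak{t} = \mathfrak{t}^\lambda,
\]
by Proposition~\ref{proposition:pi_sigma_lambda_sort}.

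By \eqref{equation:equation_coset}, this means $w' \in C_\mathfrak{t}$. But then $\ell(w') \leq r - 1 < r = \ell(\pi_\mathfrak{t})$ contradicts the fact that $\pi_\mathfrak{t}$ is the unique minimal length representative of the coset $C_\mathfrak{t}$ (as recalled just before Proposition~\ref{proposition:pi_sigma_lambda_sort}, citing \cite[Proposition 2.1.1]{GePf}). Hence $s_{a_m} \cdot (w_m \cdot \mathfrak{t}) \neq w_m \cdot \mathfrak{t}$, which is the desired conclusion.

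There is no real obstacle: the only point to be careful about is that the deletion of $s_{a_m}$ really produces an element of length at most $r-1$ (which is immediate from the definition of $\ell$ as the minimal length of an expression), and that the action identity $\pi_\mathfrak{t} \cdot \mathfrak{t} = \mathfrak{t}^\lambda$ we invoke is exactly \eqref{equation:pisigmalambda_sort}.
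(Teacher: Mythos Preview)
Your proof is correct and follows essentially the same argument as the paper's: both assume $s_{a_m}$ fixes $w_m \cdot \mathfrak{t}$, delete $s_{a_m}$ from the reduced expression to obtain a strictly shorter element still sending $\mathfrak{t}$ to $\mathfrak{t}^\lambda$, and derive a contradiction with the minimality of $\pi_\mathfrak{t}$ in its coset.
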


\begin{proof}
Let us suppose  $s_{a_m} \cdot (w_m \cdot \mathfrak{t}) = w_m \cdot \mathfrak{t}$ and define $\widetilde{\pi}_{\mathfrak{t}} \coloneqq s_{a_1} \cdots s_{a_{m-1}} s_{a_{m+1}} \cdots s_{a_r}$. Using the assumption and the equality $\pi_\mathfrak{t} \cdot \mathfrak{t} = \mathfrak{t}^\lambda$, we see that the element $\widetilde{\pi}_{\mathfrak{t}}$ verifies $\widetilde{\pi}_{\mathfrak{t}} \cdot \mathfrak{t} = \mathfrak{t}^\lambda$ too. As the element $\widetilde{\pi}_{\mathfrak{t}}$  is strictly shorter that $\pi_{\mathfrak{t}}$ (since $s_{a_1} \cdots s_{a_r}$ is reduced), this is in contradiction with  Proposition~\ref{proposition:pi_sigma_lambda_sort}.
\end{proof}

\begin{remarque}
\label{remark:pi-1_really_permutes}
Using $\mathfrak{t} = \pi_{\mathfrak{t}}^{-1} \cdot \mathfrak{t}^\lambda$ in Lemma~\ref{lemma:pi_really_permutes}, we get the following similar result for $\pi_{\mathfrak{t}}^{-1}$. If $\pi_{\mathfrak{t}}^{-1} = s_{a_r} \cdots s_{a_1}$ is a reduced expression, then:
\[
\forall m \in \{1, \dots, r\}, w'_m \cdot \mathfrak{t}^\lambda \neq s_{a_m} \cdot (w'_m \cdot \mathfrak{t}^\lambda),
\]
where $w'_m \coloneqq s_{a_{m-1}} \cdots s_{a_1}$.
\end{remarque}

%

The next two lemmas are not essential to the proof of the main theorem of this section, Theorem~\ref{theorem:disjoint_guiver}; however, they will allow us to relate our construction to the one of \cite{JaPA, PA}.
Let $\mathfrak{t} \in J^n$ and $a \in \{1, \dots, n-1\}$. We give in the next lemma the decomposition of Remark~\ref{remark:decomposition_cosets} for the element $\pi_\mathfrak{t} s_a$; this is in fact a particular case of Deodhar's lemma (see, for instance, \cite[Lemma 2.1.2]{GePf}).

\begin{lemme}
\label{lemma:pisigma_sa_sigma}
Let $\mathfrak{t} \in J^n$ and $a \in \{1, \dots, n-1\}$. The element $\pi_\mathfrak{t} s_a$ belongs to the coset $C_{s_a \cdot \mathfrak{t}}$, more precisely we have:
\[
\pi_\mathfrak{t} s_a = \begin{cases}
s_{\pi_\mathfrak{t}(a)} \pi_\mathfrak{t} & \text{if } \mathfrak{t}_a = \mathfrak{t}_{a+1},
\\
\pi_{s_a \cdot \mathfrak{t}} & \text{if } \mathfrak{t}_a \neq \mathfrak{t}_{a+1}.
\end{cases}
\]
\end{lemme}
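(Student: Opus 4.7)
First I would verify that $\pi_\mathfrak{t} s_a$ indeed belongs to $C_{s_a \cdot \mathfrak{t}}$: from the equality $\pi_\mathfrak{t} \cdot \mathfrak{t} = \mathfrak{t}^\lambda$ (Proposition~\ref{proposition:pi_sigma_lambda_sort}) we get $(\pi_\mathfrak{t} s_a)^{-1} \cdot \mathfrak{t}^\lambda = s_a \pi_\mathfrak{t}^{-1} \cdot \mathfrak{t}^\lambda = s_a \cdot \mathfrak{t}$, so by \eqref{equation:equation_coset} we are done. It remains to identify $\pi_\mathfrak{t} s_a$ inside this coset, which I would do separately in the two cases; in both the key tool is the explicit formula of Proposition~\ref{proposition:explicit_pi}.

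Case $\mathfrak{t}_a = \mathfrak{t}_{a+1}$. Here $s_a \cdot \mathfrak{t} = \mathfrak{t}$, so it suffices to show that the conjugate $\pi_\mathfrak{t} s_a \pi_\mathfrak{t}^{-1}$, which is the transposition of $\pi_\mathfrak{t}(a)$ and $\pi_\mathfrak{t}(a+1)$, equals $s_{\pi_\mathfrak{t}(a)}$. This amounts to the identity $\pi_\mathfrak{t}(a+1) = \pi_\mathfrak{t}(a)+1$, which follows at once from Proposition~\ref{proposition:explicit_pi}: since $\mathfrak{t}_a = \mathfrak{t}_{a+1}$ we have $\boldsymbol{\lambda}_{\mathfrak{t}_a-1} = \boldsymbol{\lambda}_{\mathfrak{t}_{a+1}-1}$ and $\#\{b\leq a+1 : \mathfrak{t}_b = \mathfrak{t}_{a+1}\} = \#\{b\leq a : \mathfrak{t}_b = \mathfrak{t}_a\}+1$.

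Case $\mathfrak{t}_a \neq \mathfrak{t}_{a+1}$. Setting $\mathfrak{t}' \coloneqq s_a \cdot \mathfrak{t}$, I would compute $\pi_{\mathfrak{t}'}(b)$ for every $b$ using Proposition~\ref{proposition:explicit_pi} and show directly that $\pi_{\mathfrak{t}'} = \pi_\mathfrak{t} s_a$ as permutations. For $b \notin \{a, a+1\}$ the values $\mathfrak{t}_b = \mathfrak{t}'_b$ and the counts $\#\{c\leq b : \mathfrak{t}_c = \mathfrak{t}_b\} = \#\{c\leq b : \mathfrak{t}'_c = \mathfrak{t}'_b\}$ are preserved (since either both $a, a+1 \leq b$ or both $> b$), so $\pi_{\mathfrak{t}'}(b) = \pi_\mathfrak{t}(b) = \pi_\mathfrak{t} s_a(b)$. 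For $b = a$ and $b = a+1$, the hypothesis $\mathfrak{t}_a \neq \mathfrak{t}_{a+1}$ lets me rewrite the counts as $\#\{c \leq a : \mathfrak{t}'_c = \mathfrak{t}_{a+1}\} = \#\{c \leq a+1 : \mathfrak{t}_c = \mathfrak{t}_{a+1}\}$ and $\#\{c \leq a+1 : \mathfrak{t}'_c = \mathfrak{t}_a\} = \#\{c \leq a : \mathfrak{t}_c = \mathfrak{t}_a\}$, which yield $\pi_{\mathfrak{t}'}(a) = \pi_\mathfrak{t}(a+1)$ and $\pi_{\mathfrak{t}'}(a+1) = \pi_\mathfrak{t}(a)$, i.e.\ exactly $\pi_\mathfrak{t} s_a$ on $\{a, a+1\}$.

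The main obstacle is the bookkeeping in Case 2: one must be careful about which of $a, a+1$ lies in each counting set, and the hypothesis $\mathfrak{t}_a \neq \mathfrak{t}_{a+1}$ is used precisely to match the counts before and after the swap. Everything else is a direct consequence of Proposition~\ref{proposition:explicit_pi} and the defining property of $\pi_\mathfrak{t}$.
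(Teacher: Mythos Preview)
Your argument is correct and follows the paper's line closely: the coset membership and the case $\mathfrak{t}_a = \mathfrak{t}_{a+1}$ are handled exactly as in the paper, via Proposition~\ref{proposition:explicit_pi} and the conjugation identity $\pi_\mathfrak{t} s_a \pi_\mathfrak{t}^{-1} = (\pi_\mathfrak{t}(a),\pi_\mathfrak{t}(a+1))$.

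The only difference is in the case $\mathfrak{t}_a \neq \mathfrak{t}_{a+1}$. You compute $\pi_{s_a\cdot\mathfrak{t}}(b)$ explicitly at every $b$ from the formula of Proposition~\ref{proposition:explicit_pi}, checking the counts carefully at $b=a$ and $b=a+1$. The paper instead observes (from the same proposition) only that $w \coloneqq \pi_\mathfrak{t}^{-1}\pi_{s_a\cdot\mathfrak{t}}$ is supported on $\{a,a+1\}$, hence equals $\mathrm{id}$ or $s_a$, and rules out $\mathrm{id}$ because $\mathfrak{t} \neq s_a\cdot\mathfrak{t}$ forces $\pi_\mathfrak{t} \neq \pi_{s_a\cdot\mathfrak{t}}$. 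Your explicit computation is perfectly valid and perhaps more transparent; the paper's finish is a bit shorter since it avoids tracking the two counts at $a$ and $a+1$.
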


\begin{proof}
First, from \eqref{equation:equation_coset} we have $\pi_\mathfrak{t} s_a \cdot (s_a \cdot \mathfrak{t}) = \mathfrak{t}^\lambda$ (where $\lambda \comp_d n$ is the shape of $\mathfrak{t}$) thus $\pi_\mathfrak{t} s_a$ lies in the coset $C_{s_a \cdot \mathfrak{t}}$.

We suppose that $\mathfrak{t}_a = \mathfrak{t}_{a+1}$. We have $\pi_\mathfrak{t} s_a \pi_\mathfrak{t}^{-1} = (\pi_\mathfrak{t}(a), \pi_\mathfrak{t}(a+1))$, and we conclude since $\pi_\mathfrak{t}(a+1) = \pi_\mathfrak{t}(a) + 1$ by Proposition~\ref{proposition:explicit_pi}.

We now suppose that $\mathfrak{t}_a \neq \mathfrak{t}_{a+1}$. Using the same Proposition~\ref{proposition:explicit_pi}, we know that the permutation $w \coloneqq  \pi_\mathfrak{t}^{-1} \pi_{s_a \cdot \mathfrak{t}} \in \mathfrak{S}_n$ is supported in $\{a, a+1\}$; thus either $w = s_a$  or $w=\mathrm{id}$. Since $\mathfrak{t} \neq s_a \cdot \mathfrak{t}$ we have $\pi_\mathfrak{t} \neq \pi_{s_a \cdot \mathfrak{t}}$, hence $w \neq \mathrm{id}$. Hence, we get $w = s_a$, that is, $\pi_\mathfrak{t} s_a = \pi_{s_a \cdot \mathfrak{t}}$.
\end{proof}

We now generalise the result of Lemma~\ref{lemma:pisigma_sa_sigma} in the case $\mathfrak{t}_a = \mathfrak{t}_{a+1}$.

\begin{lemme}
\label{lemma:reduced_expression_pisigma_sa}
Let $\mathfrak{t} \in J^n$ and $a \in \{1, \dots, n-1\}$ with $\mathfrak{t}_a = \mathfrak{t}_{a+1}$. Let $s_{b_1} \cdots s_{b_r}$ be a reduced expression of $\pi_\mathfrak{t}$ and set $w_m \coloneqq s_{b_{m+1}} \cdots s_{b_r}$. If $b \in \{1, \dots, n-1\}$ verifies $s_b = w_m s_a w_m^{-1}$ for some $m \in \{0, \dots, r\}$, then:
\[
\pi_\mathfrak{t} s_a = s_{b_1} \cdots s_{b_m} s_b s_{b_{m+1}} \cdots s_{b_r}
\]
is a reduced expression. Moreover, every reduced expression of $\pi_\mathfrak{t} s_a$ is as above.
\end{lemme}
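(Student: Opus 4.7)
The plan is to prove the two assertions of the lemma separately, relying on the previous Lemma~\ref{lemma:pisigma_sa_sigma} and on the Strong Exchange Condition for the Coxeter group $\mathfrak{S}_n$.

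First, I would verify that the length of $\pi_\mathfrak{t} s_a$ equals $r+1$. Indeed, since $\mathfrak{t}_a = \mathfrak{t}_{a+1}$ we have $s_a \cdot \mathfrak{t} = \mathfrak{t}$ and so $\pi_\mathfrak{t} s_a$ lies in the coset $C_\mathfrak{t}$; by minimality of $\pi_\mathfrak{t}$, the length of $\pi_\mathfrak{t} s_a$ cannot be $r-1$, hence it must be $r+1$. (Alternatively, this follows from the identity $\pi_\mathfrak{t} s_a = s_{\pi_\mathfrak{t}(a)}\pi_\mathfrak{t}$ provided by Lemma~\ref{lemma:pisigma_sa_sigma} combined with the same minimality argument.)

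For the first statement, assuming $s_b = w_m s_a w_m^{-1}$ for some $m \in \{0,\dots,r\}$, I would just multiply by $w_m$ on the right to obtain $s_b w_m = w_m s_a$, so that
\[
\pi_\mathfrak{t} s_a = s_{b_1} \cdots s_{b_m} w_m s_a = s_{b_1} \cdots s_{b_m} s_b w_m = s_{b_1} \cdots s_{b_m} s_b s_{b_{m+1}} \cdots s_{b_r}.
\]
Since this is an expression of length $r+1 = \ell(\pi_\mathfrak{t} s_a)$, it is automatically reduced.

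For the converse, I would start with any reduced expression $\pi_\mathfrak{t} s_a = s_{c_1} \cdots s_{c_{r+1}}$ and then consider $\pi_\mathfrak{t} = (\pi_\mathfrak{t} s_a) s_a$, whose length $r$ is strictly less than $r+1$. Applying the Strong Exchange Condition (see e.g. \cite[Theorem 1.2.2]{GePf}), there exists a unique index $m' \in \{1,\dots,r+1\}$ such that
\[
\pi_\mathfrak{t} = s_{c_1} \cdots \widehat{s_{c_{m'}}} \cdots s_{c_{r+1}},
\]
and the right-hand side is a reduced expression of $\pi_\mathfrak{t}$. Setting $m \coloneqq m'-1$, $b_i \coloneqq c_i$ for $i\leq m$, $b_i \coloneqq c_{i+1}$ for $i>m$ and $b \coloneqq c_{m+1}$, a direct rearrangement of the equality $s_{b_1}\cdots s_{b_m}\,w_m\,s_a = s_{b_1}\cdots s_{b_m}\,s_b\,w_m$ yields $s_b = w_m s_a w_m^{-1}$, which shows that the chosen reduced expression has the announced form.

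The only genuinely subtle point is invoking the Strong Exchange Condition in the right form; once that is in place, everything else reduces to length bookkeeping and an elementary conjugation identity, so I do not foresee a real obstacle.
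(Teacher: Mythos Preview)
Your proof is correct and follows essentially the same route as the paper: first establishing $\ell(\pi_\mathfrak{t} s_a)=r+1$ via the minimality of $\pi_\mathfrak{t}$ in its coset, then handling the direct implication by the obvious conjugation identity, and finally treating the converse by applying the Exchange Condition to $(\pi_\mathfrak{t} s_a)s_a$ and relabelling. The only cosmetic discrepancies are that the paper cites \cite[\S 5.8 Theorem]{Hum} rather than \cite{GePf} for the exchange step, and does not invoke uniqueness of the deleted index (which you mention but do not actually use).
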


\begin{proof}
We first make an observation. As $\mathfrak{t}_a = \mathfrak{t}_{a+1}$, we deduce from \eqref{equation:equation_coset} that the element $\pi_\mathfrak{t} s_a$ remains in $C_\mathfrak{t}$. Hence, by minimality of $\pi_\mathfrak{t}$ we have:
\begin{equation}
\label{equation:ell(pi_sigma sa)}
\ell(\pi_\mathfrak{t} s_a) > \ell(\pi_\mathfrak{t}).
\end{equation}
Let now $s_{b_1} \cdots s_{b_r}$ be a reduced expression of $\pi_\mathfrak{t}$ and let $b \in \{1, \dots, n-1\}$ and $m \in \{0, \dots r\}$ such that $s_b = w_m s_a w_m^{-1}$. We have:
\[
\pi_\mathfrak{t} s_a = s_{b_1} \cdots s_{b_m} w_m s_a = s_{b_1} \cdots s_{b_m} s_b w_m =  s_{b_1} \cdots s_{b_m} s_b s_{b_{m+1}} \cdots s_{b_r},
\]
and this expression is reduced since $\ell(\pi_\mathfrak{t} s_a) = \ell(\pi_\mathfrak{t}) + 1$.
Conversely, let $s_{b'_0} \cdots s_{b'_r}$ be  a reduced expression of $\pi_\mathfrak{t} s_a$. Since $\ell((\pi_\mathfrak{t} s_a) s_a) < \ell(\pi_\mathfrak{t} s_a)$, we can apply \cite[\textsection 5.8 Theorem]{Hum}: we know that there is some $m \in \{0, \dots r\}$ such that $s_{b'_0} \cdots \hat{s}_{b'_m} \cdots s_{b'_r}$ is a reduced expression of $\pi_\mathfrak{t}$, where the hat denotes the omission. We have:
\[
s_{b'_0} \cdots  s_{b'_r} = s_{b'_0} \cdots \hat{s}_{b'_m} \cdots s_{b'_r} s_a,
\]
thus:
\[
s_{b'_m} = w_m s_a w_m^{-1},
\]
where $w_m \coloneqq s_{b'_{m+1}} \cdots s_{b'_r}$. We now set $b \coloneqq b'_m$ and:
\[
b_p \coloneqq \begin{cases}
b'_{p-1} & \text{if } p \in \{1, \dots, m\},
\\
b'_p & \text{if } p \in \{m+1, \dots r\}.
\end{cases}
\]
Moreover:
\begin{itemize}
\item the expression $s_{b_1} \cdots s_{b_r} = s_{b'_0} \cdots \hat{s}_{b'_m} \cdots s_{b'_r} = \pi_\mathfrak{t}$ is reduced;
\item we have $w_m = s_{b'_{m+1}} \cdots s_{b'_r} = s_{b_{m+1}} \cdots s_{b_r}$;
\item we have $s_b = s_{b'_m} = w_m s_a w_m^{-1}$;
\end{itemize}
thus the reduced expression $\pi_\mathfrak{t} s_a = s_{b'_0} \cdots s_{b'_r} = s_{b_1} \cdots s_{b_m} s_b s_{b_{m+1}} \cdots s_{b_r}$ is of the desired form.
\end{proof}

\begin{remarque}
Let $s_{b_1} \cdots s_{b_r} = \pi_\mathfrak{t}$ be a reduced expression and set $w_m \coloneqq s_{b_{m+1}} \cdots s_{b_r}$. For $m \in \{0, \dots, r\}$, there exists $b' \in \{1, \dots, n-1\}$ such that $s_{b'} = w_m s_a w_m^{-1}$ if and only if $w_m(a+1) = w_m(a) \pm 1$. Moreover, as
Lemma~\ref{lemma:pi_really_permutes} ensures that $w_m(a+1) > w_m(a)$, we have $w_m(a+1) = w_m(a) \pm 1 \iff w_m(a+1) = w_m(a) + 1$.
\end{remarque}

We end this subsection by introducing a notation. If $\mathfrak{t} \in J^\lambda$ and $\tuple{k} \in \K^{\mathfrak{t}^\lambda}$, we define:
\begin{equation}
\label{equation:definition_isigma}
\tuple{k}^\mathfrak{t} \coloneqq \pi_\mathfrak{t}^{-1} \cdot \tuple{k} \in \K^\mathfrak{t};
\end{equation}
in particular, we may denote by $\tuple{k}^\mathfrak{t}$ the elements of $\K^\mathfrak{t}$.

\subsubsection{A ``disjoint quiver'' Hecke algebra}

We consider the setting of \textsection\ref{subsubsection:general_definition_quiver}.  Note that since $\K$ is finite, we can consider the quiver Hecke algebra $\H_n(Q)$; recall that its generators are given at \eqref{equation:quiver_generators_n}, which are subject to the relations \eqref{relation:quiver_e(i)e(i')}--\eqref{relation:quiverQ_tresse} and \eqref{relation:quiver_sum_n_e(i)}. We already said that the elements $\psi_a$ do not verify the same braid relations as the elements $s_a \in \mathfrak{S}_n$: in particular, if $s_{b_1} \cdots s_{b_r}$ is another reduced expression for $w$, we may have $\psi_{b_1} \cdots \psi_{b_r} \neq \psi_w$. 
However, according to Remark~\ref{remark:reduced_expression_Slambda} we can assume that we chose the reduced expressions such that:
\begin{equation}
\label{equation:psiw_w_in_Slambda}
\forall w = (w_1, \dots, w_d) \in \mathfrak{S}_\lambda, \psi_w = \psi_{w_1} \cdots \psi_{w_d},
\end{equation}
and in the sequel we do suppose that we did so.
To that extent, we can first choose some reduced expressions for the elements of the subgroups $\mathfrak{S}_{\lambda_j}$ for $j \in J$ and then by product we obtain the reduced expressions of the element of $\mathfrak{S}_\lambda$. Concerning the elements of $\mathfrak{S}_n \setminus \mathfrak{S}_\lambda$, we can choose their reduced expressions arbitrary.

\bigskip
We now suppose that the matrix $Q$ verifies:
\begin{equation}
\label{equation:condition_Q}
\forall j \neq j', \forall (k, k') \in \K_j \times \K_{j'}, Q_{k, k'} = 1.
\end{equation}
When the matrix $Q$ is associated with a quiver $\Gamma$ (recall \textsection\ref{subsubsection:case_quivers}), the condition \eqref{equation:condition_Q} is satisfied when $\Gamma$ is the disjoint union of $d$ proper subquivers $\Gamma^1, \dots, \Gamma^d$. It means that:
\begin{itemize}
\item if $v$ is a vertex in $\Gamma$ then there is a unique $1 \leq j \leq d$ such that $v$ is a vertex of $\Gamma^j$;
\item if $(v, w)$ is an edge in $\Gamma$ then there is a (unique) $1 \leq j \leq d$ such that:
\begin{itemize}
\item the vertices $v$ and $w$ are vertices of $\Gamma^j$,
\item the edge $(v, w)$ is an edge of $\Gamma^j$.
\end{itemize}
\end{itemize}
Such a disjoint union in $d$ proper subquivers was encountered at \eqref{equation:gamma=union_gammae}. 
Moreover, regarding the Cartan matrix of $\Gamma$ we have:
\begin{equation}
\label{equation:cartan_matrix_block_diagonal}
\forall j \neq j', \forall (k, k') \in \K_j \times \K_{j'}, c_{k, k'} = 0;
\end{equation}
that is, up to a permutation of the indexing set, the matrix is block diagonal. Finally, for $j \in J$ we define:
\begin{equation}
\label{equation:definition_Qj}
\forall k, k' \in \K_j, Q^j_{k, k'} \coloneqq Q_{k, k'},
\end{equation}
in particular for each $j \in J$ and for each $n' \in \mathbb{N}$ we have an associated quiver Hecke algebra $\H_{n'}(Q^j)$.

\subsubsection{Useful idempotents}

We define in this section some idempotents of $\H_n(Q)$ which are essential for our proof.
Thanks to the defining relations \eqref{relation:quiver_e(i)e(i')}--\eqref{relation:quiver_psiae(i)} and \eqref{relation:quiver_sum_n_e(i)}, for each $\lambda \comp_d n$ the following element:
\begin{equation}
\label{equation:definition_e(lambda)}
e(\lambda) \coloneqq \sum_{\mathfrak{t} \in J^\lambda} \sum_{\tuple{k} \in \K^\mathfrak{t}} e(\tuple{k}),
\end{equation}
is a central idempotent in $\H_n(Q)$, that is, $e(\lambda) = e(\lambda)^2$ commutes with every element of $\H_n(Q)$. Moreover:
\begin{itemize}
\item if $\lambda' \comp_d n$ is different from $\lambda$ then $e(\lambda) e(\lambda') = 0$;
\item we have $\sum_{\lambda \comp_d n} e(\lambda) = 1$;
\end{itemize}
hence we have the following decomposition into subalgebras:
\begin{equation}
\label{equation:decomposition_Hn(Q)_e(lambda)}
\H_n(Q) = \bigoplus_{\lambda \comp_d n} e(\lambda)\H_n(Q).
\end{equation}

For $\mathfrak{t} \in J^\lambda$, we also define the following idempotent:
\[
e(\mathfrak{t}) := \sum_{\tuple{k} \in \K^\mathfrak{t}} e(\tuple{k}).
\]
We can note that $e(\lambda) = \sum_{\mathfrak{t} \in J^\lambda} e(\mathfrak{t})$. 
Moreover, we have $e(\mathfrak{t}) e(\mathfrak{t}') = 0$ if $\mathfrak{t}' \in J^n \setminus \{\mathfrak{t}\}$. We now give some lemmas which involve these elements $e(\mathfrak{t})$.

%

\begin{lemme}
\label{lemma:equations_psi_and_e(sigma)}
Let $\mathfrak{t} \in J^n$. We have the following relations:
\begin{align*}
\psi_a y_{a+1} e(\mathfrak{t}) &= y_a \psi_a e(\mathfrak{t})  & \text{if } \mathfrak{t}_a \neq \mathfrak{t}_{a+1},
\\
\psi_a y_a e(\mathfrak{t}) &= y_{a+1} \psi_a e(\mathfrak{t})  & \text{if } \mathfrak{t}_a \neq \mathfrak{t}_{a+1},
\\
 \psi_a^2 e(\mathfrak{t}) &= e(\mathfrak{t}) &\text{if } \mathfrak{t}_a \neq \mathfrak{t}_{a+1},
\\
 \psi_{a+1} \psi_a \psi_{a+1} e(\mathfrak{t}) &= \psi_a \psi_{a+1} \psi_a e(\mathfrak{t}) &\text{if } \mathfrak{t}_a \neq \mathfrak{t}_{a+2}.
\end{align*}
\end{lemme}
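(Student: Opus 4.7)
The plan is to reduce each identity to the corresponding defining relation of $\H_n(Q)$ applied to a single idempotent $e(\tuple{k})$ with $\tuple{k}\in\K^\mathfrak{t}$, then to sum over $\tuple{k}\in\K^\mathfrak{t}$ using the definition $e(\mathfrak{t})=\sum_{\tuple{k}\in\K^\mathfrak{t}}e(\tuple{k})$.

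The crucial observation is that since $\K=\sqcup_{j\in J}\K_j$ is a \emph{disjoint} union, the hypothesis $\mathfrak{t}_a\neq\mathfrak{t}_{a+1}$ forces $k_a\neq k_{a+1}$ for every labelling $\tuple{k}\in\K^\mathfrak{t}$, because $k_a\in\K_{\mathfrak{t}_a}$ and $k_{a+1}\in\K_{\mathfrak{t}_{a+1}}$ lie in disjoint parts. Likewise $\mathfrak{t}_a\neq\mathfrak{t}_{a+2}$ gives $k_a\neq k_{a+2}$. Moreover, by the assumption \eqref{equation:condition_Q}, the fact that $k_a$ and $k_{a+1}$ lie in different parts also yields $Q_{k_a,k_{a+1}}(y_a,y_{a+1})e(\tuple{k})=e(\tuple{k})$ (and similarly when $k_a$ and $k_{a+2}$ are in different parts, the ``correction term'' in \eqref{relation:quiverQ_tresse} is handled by the ``otherwise'' branch).

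From here each of the four identities follows immediately. First I would note that for $\tuple{k}\in\K^\mathfrak{t}$ with $\mathfrak{t}_a\neq\mathfrak{t}_{a+1}$, relation \eqref{relation:quiver_psia_ya+1} (second case) gives $\psi_a y_{a+1}e(\tuple{k})=y_a\psi_a e(\tuple{k})$, and summing over $\tuple{k}\in\K^\mathfrak{t}$ yields the first identity. The second identity is analogous, using \eqref{relation:quiver_ya+1_psia}. For the third identity, relation \eqref{relation:quiverQ_psia^2} combined with \eqref{equation:condition_Q} gives $\psi_a^2 e(\tuple{k})=Q_{k_a,k_{a+1}}(y_a,y_{a+1})e(\tuple{k})=e(\tuple{k})$ for every $\tuple{k}\in\K^\mathfrak{t}$, and summing gives $\psi_a^2 e(\mathfrak{t})=e(\mathfrak{t})$. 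Finally, for the braid identity, for any $\tuple{k}\in\K^\mathfrak{t}$ with $\mathfrak{t}_a\neq\mathfrak{t}_{a+2}$ we have $k_a\neq k_{a+2}$, so \eqref{relation:quiverQ_tresse} takes its ``otherwise'' form $\psi_{a+1}\psi_a\psi_{a+1}e(\tuple{k})=\psi_a\psi_{a+1}\psi_a e(\tuple{k})$, and summing over $\tuple{k}\in\K^\mathfrak{t}$ concludes.

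There is essentially no obstacle here: the proof is a direct unpacking of definitions, the only subtlety being the systematic use of the disjointness of the partition $\K=\sqcup_j\K_j$ together with the assumption \eqref{equation:condition_Q} on $Q$, which together ensure that the ``same colour'' cases of the quiver Hecke relations never occur within $e(\mathfrak{t})$ under the stated hypotheses.
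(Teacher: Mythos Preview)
Your proof is correct and follows essentially the same approach as the paper: both arguments use the disjointness of the partition $\K=\sqcup_j\K_j$ to deduce $k_a\neq k_{a+1}$ (resp.\ $k_a\neq k_{a+2}$) from $\mathfrak{t}_a\neq\mathfrak{t}_{a+1}$ (resp.\ $\mathfrak{t}_a\neq\mathfrak{t}_{a+2}$), apply the relevant defining relation of $\H_n(Q)$ to each $e(\tuple{k})$, and sum over $\tuple{k}\in\K^\mathfrak{t}$.
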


\begin{proof}
Let us first prove the first one. Let $\tuple{k} \in \K^\mathfrak{t}$; we have $k_a \in \K_{\mathfrak{t}_a}$ and $k_{a+1} \in \K_{\mathfrak{t}_{a+1}}$ with $\mathfrak{t}_a \neq \mathfrak{t}_{a+1}$ thus $k_a \neq k_{a+1}$. Hence, we get the result using the defining relation \eqref{relation:quiver_psia_ya+1} by summing over all $\tuple{k} \in \K^\mathfrak{t}$. The proofs of the second and the last equalities are similar.

Let us now prove $\psi_a^2 e(\mathfrak{t}) = e(\mathfrak{t})$ if $\mathfrak{t}_a \neq \mathfrak{t}_{a+1}$. Let $\tuple{k} \in \K^\mathfrak{t}$; we have $k_a \in \K_{\mathfrak{t}_a}$ and $k_{a+1} \in \K_{\mathfrak{t}_{a+1}}$ with $\mathfrak{t}_a \neq \mathfrak{t}_{a+1}$ thus $Q_{k_a, k_{a+1}} = 1$ (see \eqref{equation:condition_Q}). Hence, the defining relation \eqref{relation:quiverQ_psia^2} gives $\psi_a^2 e(\tuple{k}) = e(\tuple{k})$, and we again conclude by summing over all $\tuple{k} \in \K^\mathfrak{t}$.
\end{proof}

%

\subsection{About the \texorpdfstring{$\psi_{\pi_\mathfrak{t}}$}{psipisigma}}
\label{subsection:about_psit}

Here we prove some identities which are satisfied by the elements we have just introduced; some of them will be essential to the proof of Theorem~\ref{theorem:disjoint_guiver}, while others will only be used in \textsection\ref{subsubsection:an_application}, namely with Lemmas~\ref{lemma:ya_psipisigma} to \ref{lemma:independence_psi_pi_sa_sigma}.
We first study some properties about the elements $\psi_{\pi_\mathfrak{t}}$ for $\mathfrak{t} \in J^n$. We begin by the most important one, which is mentioned in the proof of \cite[Lemma 3.17]{SVV}.

\begin{lemme}
\label{lemma:independence_psi_pi_sigma}
Let $\mathfrak{t} \in J^n$. If $s_{a_1} \cdots s_{a_r}$ and $s_{b_1} \cdots s_{b_r}$ are two reduced expressions of $\pi_{\mathfrak{t}}$, then:
\[
\psi_{a_1} \cdots \psi_{a_r} e(\mathfrak{t}) = \psi_{b_1} \cdots \psi_{b_r} e(\mathfrak{t}).
\]
In other words the element $\psi_{\pi_{\mathfrak{t}}} e(\mathfrak{t}) \in \H_n(Q)$ does not depend on the choice of a reduced expression for $\pi_{\mathfrak{t}}$.
\end{lemme}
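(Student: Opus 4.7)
The plan is to reduce via Matsumoto's theorem to the case where two reduced expressions differ by a single elementary Coxeter move, then verify invariance of $\psi_{c_1}\cdots\psi_{c_r}\,e(\mathfrak{t})$ under that move. Matsumoto's theorem (in type $A_{n-1}$) gives a chain of reduced expressions for $\pi_\mathfrak{t}$ linking $s_{a_1}\cdots s_{a_r}$ to $s_{b_1}\cdots s_{b_r}$, with each consecutive pair obtained either by a commutation $s_c s_{c'} \leftrightarrow s_{c'} s_c$ with $|c-c'|>1$, or by a braid move $s_c s_{c+1} s_c \leftrightarrow s_{c+1} s_c s_{c+1}$. The commutation case is immediate from \eqref{relation:quiver_psia_psib}, which holds without any correction.

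The substantive work is the braid case. Suppose the word reads $s_{c_1}\cdots s_{c_{m-1}}\cdot s_a s_{a+1} s_a\cdot s_{c_{m+3}}\cdots s_{c_r}$ and we wish to replace the middle triple by $s_{a+1} s_a s_{a+1}$. Setting $w\coloneqq s_{c_{m+3}}\cdots s_{c_r}$ and using \eqref{relation:quiver_psiae(i)} to push $e(\mathfrak{t})=\sum_{\tuple{k}\in\K^\mathfrak{t}}e(\tuple{k})$ past the trailing factor, the braid triple is localised to the expression $\psi_a\psi_{a+1}\psi_a\cdot e(\mathfrak{t}^{(m+2)})$, where $\mathfrak{t}^{(m+2)}\coloneqq w\cdot\mathfrak{t}$. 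Applying \eqref{relation:quiverQ_tresse} produces the desired $\psi_{a+1}\psi_a\psi_{a+1}\cdot e(\mathfrak{t}^{(m+2)})$ plus a correction whose contribution at each summand $e(\tuple{k}')$, $\tuple{k}'\in\K^{\mathfrak{t}^{(m+2)}}$, is nonzero only when $k'_a=k'_{a+2}$.

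To show that this correction vanishes, note that an equality $k'_a=k'_{a+2}$ would force $\K_{\mathfrak{t}^{(m+2)}_a}\cap\K_{\mathfrak{t}^{(m+2)}_{a+2}}\neq\emptyset$, hence $\mathfrak{t}^{(m+2)}_a=\mathfrak{t}^{(m+2)}_{a+2}$ since $\K=\sqcup_{j\in J}\K_j$ is a partition (this is exactly where hypothesis \eqref{equation:condition_Q} is used). But Lemma~\ref{lemma:pi_really_permutes} applied to the middle reflection $s_{c_{m+1}}=s_{a+1}$ (i.e. at step $m+1$) gives $\mathfrak{t}^{(m+1)}_{a+1}\neq\mathfrak{t}^{(m+1)}_{a+2}$; using $\mathfrak{t}^{(m+1)}=s_a\cdot\mathfrak{t}^{(m+2)}$, a direct computation of the left action turns this into $\mathfrak{t}^{(m+2)}_a\neq\mathfrak{t}^{(m+2)}_{a+2}$, a contradiction. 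Therefore the correction is zero, the braid move is neutral for $\psi_{c_1}\cdots\psi_{c_r}\,e(\mathfrak{t})$, and chaining over the whole Matsumoto path yields the lemma.

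The main obstacle I anticipate is the bookkeeping that links the \emph{middle} reflection of the braid triple to the \emph{outer} positions of $\mathfrak{t}^{(m+2)}$: it is precisely the forbidden equality of these outer positions that Lemma~\ref{lemma:pi_really_permutes} secures, provided one is careful about the conjugation by $s_a$ converting $\mathfrak{t}^{(m+1)}$ into $\mathfrak{t}^{(m+2)}$. Once this translation is done, the disjointness assumption \eqref{equation:condition_Q} and the minimality of $\pi_\mathfrak{t}$ combine exactly to kill the only term that would otherwise obstruct the independence of $\psi_{\pi_\mathfrak{t}}e(\mathfrak{t})$ from the reduced expression.
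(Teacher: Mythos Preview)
Your proof is correct and follows essentially the same route as the paper: reduce via Matsumoto to a single braid move, push $e(\mathfrak{t})$ through the tail to get $e(w\cdot\mathfrak{t})$, then use Lemma~\ref{lemma:pi_really_permutes} at the \emph{middle} letter of the braid triple to conclude that positions $a$ and $a+2$ of $w\cdot\mathfrak{t}$ differ, so the correction term in \eqref{relation:quiverQ_tresse} vanishes. Two cosmetic remarks: you write out only the orientation $s_a s_{a+1} s_a$, whereas the paper treats both orientations simultaneously by setting $a=\min(a_m,a_{m+1})$ (your argument goes through verbatim for $s_{a+1}s_a s_{a+1}$ with the middle reflection now $s_a$); and the step ``$k'_a=k'_{a+2}\Rightarrow \mathfrak{t}^{(m+2)}_a=\mathfrak{t}^{(m+2)}_{a+2}$'' uses only the partition $\K=\sqcup_j\K_j$, not hypothesis~\eqref{equation:condition_Q} itself --- the latter is what makes the $\psi_a^2$ relation in Lemma~\ref{lemma:equations_psi_and_e(sigma)} work, not the braid relation.
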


\begin{proof}
By Matsumoto's theorem, it suffices to check that every braid relation in $s_{a_1} \cdots s_{a_r}$ also occurs in $\psi_{a_1} \cdots \psi_{a_r} e(\mathfrak{t})$. By \eqref{relation:quiver_psia_psib}, it is true for length $2$-braids so it remains to check the case of the braids of length $3$.

Suppose that we have a braid of length $3$ in $s_{a_1} \cdots s_{a_r}$, at rank $m$: we have $a_m = a_{m+2} = a_{m+1} \pm 1$. We set $a \coloneqq \min(a_m, a_{m+1})$. With $w_{\mathrm{l}} \coloneqq s_{a_1} \cdots s_{a_{m-1}}$ and $w_{\mathrm{r}} \coloneqq s_{a_{m + 3}} \cdots s_{a_r}$, we have:
\[
w_{\mathrm{l}} (s_a s_{a + 1} s_a) w_{\mathrm{r}}  = w_{\mathrm{l}} (s_{a + 1} s_a s_{a + 1}) w_{\mathrm{r}},
\]
and we have to prove, with $\psi_{\mathrm{l}} \coloneqq \psi_{a_1} \cdots \psi_{a_{m-1}}$ and $\psi_{\mathrm{r}} \coloneqq \psi_{a_{m + 3}} \cdots \psi_{a_r}$:
\[
\psi_{\mathrm{l}}(\psi_a \psi_{a + 1} \psi_a)\psi_{\mathrm{r}}  e(\mathfrak{t})= \psi_{\mathrm{l}} (\psi_{a + 1} \psi_a \psi_{a + 1} )\psi_{\mathrm{r}}e(\mathfrak{t}).
\]
Using \eqref{relation:quiver_psiae(i)}, this becomes, where $\mathfrak{s} \coloneqq w_{\mathrm{r}} \cdot \mathfrak{t}$:
\begin{equation}
\label{equation:proof_psi_pi_independent}
\psi_{\mathrm{l}}(\psi_a \psi_{a + 1} \psi_a)  e(\mathfrak{s}) \psi_{\mathrm{r}}= \psi_{\mathrm{l}} (\psi_{a + 1} \psi_a \psi_{a + 1} )e(\mathfrak{s}) \psi_{\mathrm{r}}.
\end{equation}
By Lemma~\ref{lemma:pi_really_permutes}, we have $s_{a_{m+1}}\cdot (s_{a_{m+2}} \cdot \mathfrak{s}) \neq s_{a_{m+2}} \cdot \mathfrak{s}$. Thus, we have either $s_a\cdot(s_{a+1} \cdot \mathfrak{s}) \neq s_{a+1} \cdot \mathfrak{s}$ or $s_{a+1} \cdot (s_a \cdot \mathfrak{s}) \neq s_a \cdot \mathfrak{s}$; both cases give $\mathfrak{s}_a \neq \mathfrak{s}_{a+2}$. Hence, applying Lemma~\ref{lemma:equations_psi_and_e(sigma)} we know that \eqref{equation:proof_psi_pi_independent} holds.
\end{proof}

\begin{remarque}
\label{remark:independence_psi_pi_sigma_e(i)}
In particular, if $\tuple{k} \in \K^\mathfrak{t}$ then $\psi_{\pi_{\mathfrak{t}}} e(\tuple{k}) \in \H_n(Q)$ does not depend on the choice of a reduced expression for $\pi_{\mathfrak{t}}$ (note that $\psi_{\pi_{\mathfrak{t}}} e(\tuple{k}) = \psi_{\pi_{\mathfrak{t}}} e(\mathfrak{t}) e(\tuple{k})$).
\end{remarque}

Similarly to Lemma~\ref{lemma:independence_psi_pi_sigma}, using Remark~\ref{remark:pi-1_really_permutes} we prove that for $\mathfrak{t} \in J^\lambda$ the element:
\begin{equation}
\label{equation:psi_sigma-1_bien_def}
e(\mathfrak{t}) \psi_{\pi_\mathfrak{t}^{-1}} = \psi_{\pi_{\mathfrak{t}}^{-1}} e(\mathfrak{t}^\lambda) \in \H_n(Q),
\end{equation}
does not depend on the chosen reduced expression for $\pi_{\mathfrak{t}}^{-1}$. 
We now give some analogues of the results of Lemma~\ref{lemma:equations_psi_and_e(sigma)}.

\begin{proposition}
\label{proposition:inversion_psi_e(sigma)}
Let $\mathfrak{t} \in J^\lambda$. We have:
\begin{align*}
\psi_{\pi_{\mathfrak{t}}^{-1}} \psi_{\pi_{\mathfrak{t}}} e(\mathfrak{t})
&=
e(\mathfrak{t}),
\\
\psi_{\pi_{\mathfrak{t}}} \psi_{\pi_{\mathfrak{t}}^{-1}} e(\mathfrak{t}^\lambda)
&=
e(\mathfrak{t}^\lambda).
\end{align*}
\end{proposition}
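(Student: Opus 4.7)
The plan is to fix a reduced expression $\pi_\mathfrak{t} = s_{a_1} \cdots s_{a_r}$, so that $s_{a_r} \cdots s_{a_1}$ is automatically a reduced expression for $\pi_\mathfrak{t}^{-1}$, and then to exploit the two independence results at our disposal: Lemma~\ref{lemma:independence_psi_pi_sigma} ensures that $\psi_{\pi_\mathfrak{t}} e(\mathfrak{t})$ may be computed with the above reduced expression for $\pi_\mathfrak{t}$, while equation~\eqref{equation:psi_sigma-1_bien_def} allows us to compute $\psi_{\pi_\mathfrak{t}^{-1}} e(\mathfrak{t}^\lambda)$ using the reversed reduced expression $s_{a_r} \cdots s_{a_1}$. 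Using \eqref{relation:quiver_psiae(i)} to push $e(\mathfrak{t})$ leftwards, so that $\psi_{\pi_\mathfrak{t}} e(\mathfrak{t}) = e(\mathfrak{t}^\lambda)\,\psi_{a_1}\cdots\psi_{a_r}$, and then writing $\psi_{\pi_\mathfrak{t}^{-1}} e(\mathfrak{t}^\lambda)$ as $\psi_{a_r}\cdots\psi_{a_1}\,e(\mathfrak{t}^\lambda)$, the first identity reduces to
\[
\psi_{a_r}\cdots\psi_{a_1}\,\psi_{a_1}\cdots\psi_{a_r}\,e(\mathfrak{t}) \;=\; e(\mathfrak{t}).
\]

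The second step is to collapse the innermost square $\psi_{a_m}^2$ iteratively, from $m = 1$ outwards to $m = r$. Set $v_{m+1} \coloneqq s_{a_{m+1}}\cdots s_{a_r}\cdot \mathfrak{t}$ (so $v_{r+1} = \mathfrak{t}$ and $v_1 = \mathfrak{t}^\lambda$); sliding the idempotent $e(\mathfrak{t})$ through the rightmost $r - m$ factors via \eqref{relation:quiver_psiae(i)} gives
\[
\psi_{a_m}^2\,\psi_{a_{m+1}}\cdots\psi_{a_r}\,e(\mathfrak{t}) \;=\; \psi_{a_m}^2 \, e(v_{m+1})\,\psi_{a_{m+1}}\cdots\psi_{a_r}.
\]
Now Lemma~\ref{lemma:pi_really_permutes} says exactly that $s_{a_m}$ does not fix $v_{m+1}$, which means that the $a_m$-th and $(a_m+1)$-th entries of $v_{m+1}$ differ; Lemma~\ref{lemma:equations_psi_and_e(sigma)} then furnishes $\psi_{a_m}^2 e(v_{m+1}) = e(v_{m+1})$, so that the square $\psi_{a_m}^2$ may be erased. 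Iterating this erasure from $m = 1$ down to the last remaining factor $\psi_{a_r}^2 e(\mathfrak{t})$, and applying Lemma~\ref{lemma:equations_psi_and_e(sigma)} one final time (permitted again by Lemma~\ref{lemma:pi_really_permutes} at $m = r$, which gives $\mathfrak{t}_{a_r} \neq \mathfrak{t}_{a_r + 1}$), we obtain $e(\mathfrak{t})$ as required.

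For the second identity, I run the dual version of the same argument: pick a reduced expression $\pi_\mathfrak{t}^{-1} = s_{b_1}\cdots s_{b_r}$, so that $s_{b_r}\cdots s_{b_1}$ is reduced for $\pi_\mathfrak{t}$, and collapse the innermost squares of $\psi_{b_r}\cdots\psi_{b_1}\psi_{b_1}\cdots\psi_{b_r}\,e(\mathfrak{t}^\lambda)$ from inside out. The non-fixing condition at each collapse now comes from Remark~\ref{remark:pi-1_really_permutes}, which is the analogue of Lemma~\ref{lemma:pi_really_permutes} for $\pi_\mathfrak{t}^{-1}$ starting from $\mathfrak{t}^\lambda$; everything else is identical. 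I do not anticipate any genuine obstacle: the one point requiring care is the legitimate use of the two independence results to rewrite $\psi_{\pi_\mathfrak{t}^{-1}}\psi_{\pi_\mathfrak{t}}\,e(\mathfrak{t})$ as a palindromic product in a single pair of mutually reversed reduced expressions; once this rewriting is in place, the iterated collapse is routine.
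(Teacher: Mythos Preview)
Your proof is correct and follows essentially the same approach as the paper: both fix a reduced expression for $\pi_\mathfrak{t}$, use the independence results (Lemma~\ref{lemma:independence_psi_pi_sigma} and \eqref{equation:psi_sigma-1_bien_def}) to reduce to the palindromic product $\psi_{a_r}\cdots\psi_{a_1}\psi_{a_1}\cdots\psi_{a_r}\,e(\mathfrak{t})$, and then collapse the innermost squares $\psi_{a_m}^2$ one at a time, invoking Lemma~\ref{lemma:pi_really_permutes} together with Lemma~\ref{lemma:equations_psi_and_e(sigma)} at each step. The paper phrases this as an explicit induction on $m$, but the content is identical; your treatment of the second identity via Remark~\ref{remark:pi-1_really_permutes} also matches the paper's ``entirely similar'' remark.
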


\begin{remarque}
Both factors $\psi_{\pi_\mathfrak{t}^{-1}}$ and $\psi_{\pi_\mathfrak{t}}$ do not depend on the choices of reduced expressions: for instance, using \eqref{relation:quiver_psiae(i)} we have $\psi_{\pi_\mathfrak{t}^{-1}} \psi_{\pi_\mathfrak{t}} e(\mathfrak{t}) = \psi_{\pi_\mathfrak{t}^{-1}} e(\mathfrak{t}^\lambda) \psi_{\pi_\mathfrak{t}}$ thus we can apply Lemma~\ref{lemma:independence_psi_pi_sigma} and \eqref{equation:psi_sigma-1_bien_def}.
\end{remarque}

\begin{proof}
We only prove the first equality, the proof of the second one being entirely similar.
Let $s_{a_1} \cdots s_{a_r}$ be a reduced expression for $\pi_{\mathfrak{t}}$. We prove by induction that for every $m \in \{1, \dots, r+1\}$ we have:
\begin{equation}
\label{equation:recurrence_psi_psi_e(sigma)}
\psi_{\pi_{\mathfrak{t}}^{-1}} \psi_{\pi_{\mathfrak{t}}} e(\mathfrak{t})
= \psi_{a_r} \cdots \psi_{a_m} \psi_{a_m} \cdots \psi_{a_r} e(\mathfrak{t}).
\end{equation}
First, the case  $a = 1$ comes with the definition of $\psi_{\pi_{\mathfrak{t}}^{-1}} e(\mathfrak{t}^\lambda)$ and $\psi_{\pi_{\mathfrak{t}}}e(\mathfrak{t})$. Now, if \eqref{equation:recurrence_psi_psi_e(sigma)} is true for some $m \in \{1, \dots, r\}$ we have, using \eqref{relation:quiver_psiae(i)}:
\begin{equation}
\label{equation:proof_psi_inv_1}
\psi_{\pi_{\mathfrak{t}}^{-1}} \psi_{\pi_{\mathfrak{t}}} e(\mathfrak{t})
= \psi_{a_r} \cdots \psi_{a_{m+1}} \psi_{a_m}^2 e(w_m \cdot \mathfrak{t}) \psi_{a_{m+1}} \cdots \psi_{a_r},
\end{equation}
where $w_m \coloneqq s_{a_{m+1}} \cdots s_{a_r}$. By Lemma~\ref{lemma:pi_really_permutes}, we know that $(w_m \cdot \mathfrak{t})_{a_m} \neq (w_m \cdot \mathfrak{t})_{a_m + 1}$. Hence, by Lemma~\ref{lemma:equations_psi_and_e(sigma)} we have $\psi_{a_m}^2 e(w_m \cdot \mathfrak{t}) = e(w_m \cdot \mathfrak{t})$ thus \eqref{equation:proof_psi_inv_1} becomes:
\[
\psi_{\pi_{\mathfrak{t}}^{-1}} \psi_{\pi_{\mathfrak{t}}} e(\mathfrak{t})
= \psi_{a_r} \cdots \psi_{a_{m+1}} e(w_m \cdot \mathfrak{t}) \psi_{a_{m+1}} \cdots \psi_{a_r},
\]
which becomes, with a last use of \eqref{relation:quiver_psiae(i)}:
\[
\psi_{\pi_{\mathfrak{t}}^{-1}} \psi_{\pi_{\mathfrak{t}}} e(\mathfrak{t})
= \psi_{a_r} \cdots \psi_{a_{m+1}} \psi_{a_{m+1}} \cdots \psi_{a_r} e(\mathfrak{t}).
\]
Thus \eqref{equation:recurrence_psi_psi_e(sigma)} holds for every $m \in \{1, \dots, r+1\}$, in particular for $m = r + 1$ we get the statement of the Proposition.
\end{proof}

Once again, what follows is not essential to the proof of the main result Theorem~\ref{theorem:disjoint_guiver}; however, it will allow us to relate our construction to the one of \cite{JaPA, PA}.
With a similar proof as Proposition~\ref{proposition:inversion_psi_e(sigma)}, we obtain the following lemma.

\begin{lemme}
\label{lemma:ya_psipisigma}
Let $a \in \{1, \dots, n\}$ and $\mathfrak{t} \in J^\lambda$. We have:
\begin{gather*}
y_a \psi_{\pi_\mathfrak{t}} e(\mathfrak{t}) = \psi_{\pi_\mathfrak{t}} y_{\pi_\mathfrak{t}^{-1}(a)} e(\mathfrak{t}),
\\
y_a \psi_{\pi_\mathfrak{t}^{-1}} e(\mathfrak{t}^\lambda) = \psi_{\pi_\mathfrak{t}^{-1}} y_{\pi_\mathfrak{t}(a)} e(\mathfrak{t}^\lambda).
\end{gather*}
\end{lemme}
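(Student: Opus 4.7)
My plan is to prove the first identity by induction on the length $r$ of a reduced expression $s_{a_1}\cdots s_{a_r}$ of $\pi_\mathfrak{t}$; the second identity will follow by a symmetric argument based on Remark~\ref{remark:pi-1_really_permutes}. The key technical ingredient is that all the "bad" idempotents we encounter along the way involve adjacent entries which differ, so Lemma~\ref{lemma:equations_psi_and_e(sigma)} applies \emph{without any correction term} (as opposed to the generic relations \eqref{relation:quiver_psia_ya+1}--\eqref{relation:quiver_ya+1_psia}, which would produce extra summands).

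More precisely, set $w_{(m)} \coloneqq s_{a_1}\cdots s_{a_m}$ and $\mathfrak{s}^{(m)} \coloneqq s_{a_{m+1}}\cdots s_{a_r} \cdot \mathfrak{t}$, so that $w_{(0)} = \mathrm{id}$, $w_{(r)} = \pi_\mathfrak{t}$, $\mathfrak{s}^{(r)} = \mathfrak{t}$ and $\mathfrak{s}^{(0)} = \mathfrak{t}^\lambda$. I will prove by induction on $m \in \{0,\dots,r\}$ the statement
\[
y_a \, \psi_{a_1}\cdots\psi_{a_m}\, e(\mathfrak{s}^{(m)}) \;=\; \psi_{a_1}\cdots\psi_{a_m}\, y_{w_{(m)}^{-1}(a)}\, e(\mathfrak{s}^{(m)}),
\]
the case $m = r$ being the first assertion of the lemma (using Lemma~\ref{lemma:independence_psi_pi_sigma} to identify the left-hand product with $\psi_{\pi_\mathfrak{t}} e(\mathfrak{t})$).

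The inductive step amounts, after inserting the intermediate idempotent via $\psi_{a_{m+1}} e(\mathfrak{s}^{(m+1)}) = e(\mathfrak{s}^{(m)}) \psi_{a_{m+1}}$, to checking that
\[
y_b \, \psi_{a_{m+1}}\, e(\mathfrak{s}^{(m+1)}) \;=\; \psi_{a_{m+1}}\, y_{s_{a_{m+1}}(b)}\, e(\mathfrak{s}^{(m+1)})
\]
for $b = w_{(m)}^{-1}(a)$. When $b \notin \{a_{m+1}, a_{m+1}+1\}$ this is just \eqref{relation:quiver_psia_yb} since $s_{a_{m+1}}(b) = b$. When $b \in \{a_{m+1}, a_{m+1}+1\}$, Lemma~\ref{lemma:pi_really_permutes} (applied to the reduced expression of $\pi_\mathfrak{t}$) gives $(\mathfrak{s}^{(m+1)})_{a_{m+1}} \neq (\mathfrak{s}^{(m+1)})_{a_{m+1}+1}$, which is precisely the hypothesis needed to invoke Lemma~\ref{lemma:equations_psi_and_e(sigma)}; the two equations there handle the two subcases and yield the index $s_{a_{m+1}}(b)$ on the right. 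The identity $w_{(m+1)}^{-1} = s_{a_{m+1}} w_{(m)}^{-1}$ then closes the induction.

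The main (and essentially only) obstacle was identifying why no correction terms appear: a priori, pushing $y_a$ past $\psi_{a_{m+1}}$ when $b \in \{a_{m+1}, a_{m+1}+1\}$ could create an extra idempotent term via \eqref{relation:quiver_psia_ya+1}. The fact that this never happens is exactly the content of Lemma~\ref{lemma:pi_really_permutes}, and this is the reason the statement is clean. For the second equation I will run the same induction with the reduced expression $\pi_\mathfrak{t}^{-1} = s_{a_r}\cdots s_{a_1}$, pushing $y_a$ rightward through the factors of $\psi_{\pi_\mathfrak{t}^{-1}} e(\mathfrak{t}^\lambda)$ and invoking Remark~\ref{remark:pi-1_really_permutes} in place of Lemma~\ref{lemma:pi_really_permutes} to ensure the relevant idempotents still satisfy the "distinct adjacent entries" condition.
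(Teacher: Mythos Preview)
Your proof is correct and follows exactly the approach indicated by the paper, which merely says ``With a similar proof as Proposition~\ref{proposition:inversion_psi_e(sigma)}'': an induction along a reduced expression of $\pi_\mathfrak{t}$, using Lemma~\ref{lemma:pi_really_permutes} at each step to guarantee the adjacent entries differ so that Lemma~\ref{lemma:equations_psi_and_e(sigma)} applies without correction terms. You have in fact spelled out more detail than the paper provides, and your handling of the second identity via Remark~\ref{remark:pi-1_really_permutes} is precisely what the paper intends.
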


%

We now want to see what is happening with Lemma~\ref{lemma:pisigma_sa_sigma} for the associated elements $\psi_w$.

\begin{lemme}
\label{lemma:psi_pi-1_e(t)_psi_pisa}
Let $\mathfrak{t} \in J^n$ and $a \in \{1, \dots, n-1\}$ such that $\mathfrak{t}_a \neq \mathfrak{t}_{a+1}$. We have:
\[
 e(\mathfrak{t}) \psi_{\pi_\mathfrak{t}^{-1}}  \psi_{\pi_{s_a \cdot \mathfrak{t}}} = e(\mathfrak{t})\psi_a.
\]
\end{lemme}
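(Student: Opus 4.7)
The plan is to reduce the lemma to the auxiliary identity
\[
\psi_{\pi_{s_a\cdot\mathfrak{t}}}\,e(s_a\cdot\mathfrak{t}) \;=\; \psi_{\pi_\mathfrak{t}}\,e(\mathfrak{t})\,\psi_a,
\]
and then to apply Proposition~\ref{proposition:inversion_psi_e(sigma)}. Denoting by $\lambda$ the shape of $\mathfrak{t}$, the identities $e(\mathfrak{t})\psi_{\pi_\mathfrak{t}^{-1}} = \psi_{\pi_\mathfrak{t}^{-1}}e(\mathfrak{t}^\lambda)$ and $e(\mathfrak{t}^\lambda)\psi_{\pi_{s_a\cdot\mathfrak{t}}} = \psi_{\pi_{s_a\cdot\mathfrak{t}}}e(s_a\cdot\mathfrak{t})$ (from \eqref{relation:quiver_psiae(i)} and \eqref{equation:psi_sigma-1_bien_def}) allow us to insert $e(s_a\cdot\mathfrak{t})$ harmlessly on the right; the auxiliary identity followed by Proposition~\ref{proposition:inversion_psi_e(sigma)} and $e(\mathfrak{t})^2=e(\mathfrak{t})$ then yields
\[
e(\mathfrak{t})\,\psi_{\pi_\mathfrak{t}^{-1}}\,\psi_{\pi_{s_a\cdot\mathfrak{t}}} \;=\; e(\mathfrak{t})\,\psi_{\pi_\mathfrak{t}^{-1}}\,\psi_{\pi_\mathfrak{t}}\,e(\mathfrak{t})\,\psi_a \;=\; e(\mathfrak{t})\,\psi_a.
\]

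To establish the auxiliary identity, first invoke Lemma~\ref{lemma:pisigma_sa_sigma}: since $\mathfrak{t}_a\neq\mathfrak{t}_{a+1}$ we have $\pi_{s_a\cdot\mathfrak{t}}=\pi_\mathfrak{t} s_a$, hence $\ell(\pi_{s_a\cdot\mathfrak{t}}) = \ell(\pi_\mathfrak{t})\pm 1$, and I would split into two cases. In the case $\ell(\pi_{s_a\cdot\mathfrak{t}})=\ell(\pi_\mathfrak{t})+1$, picking a reduced expression $s_{b_1}\cdots s_{b_r}$ of $\pi_\mathfrak{t}$ yields a reduced expression $s_{b_1}\cdots s_{b_r}s_a$ of $\pi_{s_a\cdot\mathfrak{t}}$, and Lemma~\ref{lemma:independence_psi_pi_sigma} combined with $\psi_a e(s_a\cdot\mathfrak{t}) = e(\mathfrak{t})\psi_a$ gives
\[
\psi_{\pi_{s_a\cdot\mathfrak{t}}}\,e(s_a\cdot\mathfrak{t}) = \psi_{b_1}\cdots\psi_{b_r}\psi_a\,e(s_a\cdot\mathfrak{t}) = \psi_{b_1}\cdots\psi_{b_r}\,e(\mathfrak{t})\,\psi_a = \psi_{\pi_\mathfrak{t}}\,e(\mathfrak{t})\,\psi_a.
\]
In the case $\ell(\pi_{s_a\cdot\mathfrak{t}})=\ell(\pi_\mathfrak{t})-1$, a reduced expression $s_{c_1}\cdots s_{c_{r-1}}$ of $\pi_{s_a\cdot\mathfrak{t}}$ extends to a reduced expression $s_{c_1}\cdots s_{c_{r-1}}s_a$ of $\pi_\mathfrak{t}$, so Lemma~\ref{lemma:independence_psi_pi_sigma} applied to $\pi_\mathfrak{t}$ yields
\[
\psi_{\pi_\mathfrak{t}}\,e(\mathfrak{t})\,\psi_a = \psi_{c_1}\cdots\psi_{c_{r-1}}\psi_a^2\,e(s_a\cdot\mathfrak{t}).
\]
Since $(s_a\cdot\mathfrak{t})_a = \mathfrak{t}_{a+1}\neq\mathfrak{t}_a = (s_a\cdot\mathfrak{t})_{a+1}$, Lemma~\ref{lemma:equations_psi_and_e(sigma)} collapses $\psi_a^2\, e(s_a\cdot\mathfrak{t})$ to $e(s_a\cdot\mathfrak{t})$, and a final use of Lemma~\ref{lemma:independence_psi_pi_sigma} gives $\psi_{\pi_\mathfrak{t}}\,e(\mathfrak{t})\,\psi_a = \psi_{\pi_{s_a\cdot\mathfrak{t}}}\,e(s_a\cdot\mathfrak{t})$.

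The main subtle point is the second case, where the extra $\psi_a^2$ must be absorbed; this works precisely because $s_a\cdot\mathfrak{t}$ inherits from $\mathfrak{t}$ the hypothesis of distinct entries at positions $a$ and $a+1$, activating the quadratic relation of Lemma~\ref{lemma:equations_psi_and_e(sigma)}. Everything else is bookkeeping with reduced expressions, carefully positioning the idempotents so that Lemma~\ref{lemma:independence_psi_pi_sigma} can be invoked to switch between reduced expressions at will.
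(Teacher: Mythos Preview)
Your proof is correct and follows essentially the same strategy as the paper: invoke Lemma~\ref{lemma:pisigma_sa_sigma} to get $\pi_{s_a\cdot\mathfrak{t}}=\pi_\mathfrak{t} s_a$ and split into the two cases $\ell(\pi_{s_a\cdot\mathfrak{t}})=\ell(\pi_\mathfrak{t})\pm 1$, then use Lemma~\ref{lemma:independence_psi_pi_sigma} and Proposition~\ref{proposition:inversion_psi_e(sigma)} to conclude. The only organizational difference is in the second case: you factor $\psi_{\pi_\mathfrak{t}}e(\mathfrak{t})$ as $\psi_{c_1}\cdots\psi_{c_{r-1}}\psi_a e(\mathfrak{t})$ and then collapse the resulting $\psi_a^2 e(s_a\cdot\mathfrak{t})$ via Lemma~\ref{lemma:equations_psi_and_e(sigma)}, whereas the paper works on the inverse side, writing $e(\mathfrak{t})\psi_{\pi_\mathfrak{t}^{-1}} = e(\mathfrak{t})\psi_a\psi_{\pi_{s_a\cdot\mathfrak{t}}^{-1}}$ (from $\pi_\mathfrak{t}^{-1}=s_a\pi_{s_a\cdot\mathfrak{t}}^{-1}$ and~\eqref{equation:psi_sigma-1_bien_def}) and then applying Proposition~\ref{proposition:inversion_psi_e(sigma)} to $s_a\cdot\mathfrak{t}$. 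Both variants are equally short and rest on the same ingredients.
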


\begin{proof}
By Lemma~\ref{lemma:pisigma_sa_sigma} we have $\ell(\pi_{s_a \cdot \mathfrak{t}}) = \ell(\pi_\mathfrak{t}) \pm 1$. We now simply distinguish cases.
\begin{itemize}
\item
We first assume that $\ell(\pi_{s_a \cdot \mathfrak{t}}) = \ell(\pi_\mathfrak{t}) + 1$. Hence, applying Lemma~\ref{lemma:independence_psi_pi_sigma} for the elements $\psi_{\pi_{s_a \cdot \mathfrak{t}}}$ and $\psi_{\pi_\mathfrak{t}}$ we have $e(\mathfrak{t}^\lambda) \psi_{\pi_{s_a \cdot \mathfrak{t}}} = e(\mathfrak{t}^\lambda) \psi_{\pi_\mathfrak{t}} \psi_a$. Finally, using Proposition~\ref{proposition:inversion_psi_e(sigma)} we have:
\[
e(\mathfrak{t})
\psi_{\pi_\mathfrak{t}^{-1}}  \psi_{\pi_{s_a \cdot \mathfrak{t}}}
=
\psi_{\pi_\mathfrak{t}^{-1}} e(\mathfrak{t}^\lambda) \psi_{\pi_\mathfrak{t}} \psi_a
=
\psi_{\pi_\mathfrak{t}^{-1}} \psi_{\pi_\mathfrak{t}} e(\mathfrak{t}) \psi_a
=
e(\mathfrak{t}) \psi_a.
\]
\item
We now suppose that $\ell(\pi_{s_a \cdot \mathfrak{t}}) = \ell(\pi_\mathfrak{t}) - 1$; hence, $\ell(\pi_\mathfrak{t}^{-1}) = \ell(\pi_{s_a \cdot \mathfrak{t}}^{-1}) + 1$. Recall that $\pi_\mathfrak{t}^{-1} = s_a \pi_{s_a \cdot \mathfrak{t}}^{-1}$; using the extension \eqref{equation:psi_sigma-1_bien_def} of Lemma~\ref{lemma:independence_psi_pi_sigma}, we get $e(\mathfrak{t})\psi_{\pi_\mathfrak{t}^{-1}} = e(\mathfrak{t})\psi_a \psi_{\pi_{s_a \cdot \mathfrak{t}}^{-1}}$ and finally:
\[
e(\mathfrak{t}) \psi_{\pi_\mathfrak{t}^{-1}}  \psi_{\pi_{s_a \cdot \mathfrak{t}}}
=
\psi_a e(s_a \cdot \mathfrak{t}) \psi_{\pi_{s_a \cdot \mathfrak{t}}^{-1}} \psi_{\pi_{s_a \cdot \mathfrak{t}}} 
=
\psi_a e(s_a \cdot \mathfrak{t})
=
e(\mathfrak{t}) \psi_a. 
\]
\end{itemize}
\end{proof}

\begin{lemme}
\label{lemma:independence_psi_pi_sa_sigma}
Let $\mathfrak{t} \in J^n$ and $a \in \{1, \dots, n-1\}$ such that $\mathfrak{t}_a = \mathfrak{t}_{a+1}$. The element $\psi_{\pi_\mathfrak{t} s_a} e(\mathfrak{t}) \in \H_n(Q)$ does not depend on the choice of a reduced expression for $\pi_\mathfrak{t} s_a$. In particular:
\[
\psi_{\pi_\mathfrak{t}} \psi_a e(\mathfrak{t}) = \psi_{\pi_\mathfrak{t}(a)} \psi_{\pi_\mathfrak{t}} e(\mathfrak{t}).\]
\end{lemme}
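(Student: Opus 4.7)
The plan is to mirror the proof of Lemma~\ref{lemma:independence_psi_pi_sigma}, replacing the appeal to Lemma~\ref{lemma:pi_really_permutes} by a direct argument exploiting the hypothesis $\mathfrak{t}_a = \mathfrak{t}_{a+1}$. First, by \eqref{equation:ell(pi_sigma sa)} we have $\ell(\pi_\mathfrak{t} s_a) = r+1$, where $r := \ell(\pi_\mathfrak{t})$. Given any reduced expression $s_{b_1} \cdots s_{b_{r+1}}$ of $\pi_\mathfrak{t} s_a$, it suffices by Matsumoto's theorem to check that every braid move preserves the product $\psi_{b_1} \cdots \psi_{b_{r+1}} e(\mathfrak{t})$. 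Length-$2$ braids are handled by \eqref{relation:quiver_psia_psib}. For a length-$3$ braid at position $m$ involving $s_{a'}$ and $s_{a'+1}$ (with $a' := \min(b_m, b_{m+1})$), using \eqref{relation:quiver_psiae(i)} we push the idempotent through and rewrite the product as $\psi_\mathrm{l} (\psi_{a'+\varepsilon}\psi_{a'+1-\varepsilon}\psi_{a'+\varepsilon}) e(\mathfrak{s}) \psi_\mathrm{r}$, where $\mathfrak{s} := (s_{b_{m+3}} \cdots s_{b_{r+1}}) \cdot \mathfrak{t}$. By the last equation of Lemma~\ref{lemma:equations_psi_and_e(sigma)}, the braid identity holds provided that $\mathfrak{s}_{a'} \neq \mathfrak{s}_{a'+2}$.

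The main obstacle is verifying this non-equality, which I would do by contradiction. If $\mathfrak{s}_{a'} = \mathfrak{s}_{a'+2}$, the triple product $s_{b_m} s_{b_{m+1}} s_{b_{m+2}} = s_{a'}s_{a'+1}s_{a'}$ acts on the three coordinates $a',a'+1,a'+2$ of $\mathfrak{s}$ by reversal and thus fixes $\mathfrak{s}$. Consider the element
\[
u := s_{b_1} \cdots s_{b_{m-1}} s_{b_{m+3}} \cdots s_{b_{r+1}} \in \mathfrak{S}_n,
\]
of length at most $r-2$. Using $\mathfrak{t}_a = \mathfrak{t}_{a+1}$, hence $s_a \cdot \mathfrak{t} = \mathfrak{t}$, we compute
\[
u \cdot \mathfrak{t} = s_{b_1} \cdots s_{b_{m-1}} \cdot \mathfrak{s} = s_{b_1} \cdots s_{b_{r+1}} \cdot \mathfrak{t} = (\pi_\mathfrak{t} s_a) \cdot \mathfrak{t} = \pi_\mathfrak{t} \cdot \mathfrak{t} = \mathfrak{t}^\lambda,
\]
so $u \in C_\mathfrak{t}$ with $\ell(u) < \ell(\pi_\mathfrak{t})$, contradicting Proposition~\ref{proposition:pi_sigma_lambda_sort}. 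This establishes the first assertion.

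For the ``in particular'' part, Lemma~\ref{lemma:pisigma_sa_sigma} gives $\pi_\mathfrak{t} s_a = s_{\pi_\mathfrak{t}(a)} \pi_\mathfrak{t}$ in $\mathfrak{S}_n$. If $s_{c_1} \cdots s_{c_r}$ is the chosen reduced expression of $\pi_\mathfrak{t}$, then both $s_{c_1} \cdots s_{c_r} s_a$ and $s_{\pi_\mathfrak{t}(a)} s_{c_1} \cdots s_{c_r}$ are reduced expressions of $\pi_\mathfrak{t} s_a$ (both of length $r+1$). Applying the first part of the lemma, together with Lemma~\ref{lemma:independence_psi_pi_sigma} to identify $\psi_{c_1} \cdots \psi_{c_r} e(\mathfrak{t})$ with $\psi_{\pi_\mathfrak{t}} e(\mathfrak{t})$, yields the desired equality $\psi_{\pi_\mathfrak{t}} \psi_a e(\mathfrak{t}) = \psi_{\pi_\mathfrak{t}(a)} \psi_{\pi_\mathfrak{t}} e(\mathfrak{t})$.
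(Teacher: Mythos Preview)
Your proof is correct, and in fact your argument for the key step $\mathfrak{s}_{a'} \neq \mathfrak{s}_{a'+2}$ is cleaner than the paper's. The paper first invokes Lemma~\ref{lemma:reduced_expression_pisigma_sa} to show that any reduced expression of $\pi_\mathfrak{t} s_a$ arises from a reduced expression of $\pi_\mathfrak{t}$ by inserting a single letter $s_{b_m}$ at a specific position, and then performs a four-way case analysis on where the $3$-braid sits relative to this inserted letter, each time reducing to Lemma~\ref{lemma:pi_really_permutes}. Your contradiction bypasses all of this: you observe directly that if $\mathfrak{s}_{a'} = \mathfrak{s}_{a'+2}$ then the longest element of the relevant copy of $\mathfrak{S}_3$ fixes $\mathfrak{s}$, so deleting the three braid letters produces an element of $C_\mathfrak{t}$ of length at most $r-2 < \ell(\pi_\mathfrak{t})$, contradicting Proposition~\ref{proposition:pi_sigma_lambda_sort}. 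This uses only the minimality of $\pi_\mathfrak{t}$ and the hypothesis $\mathfrak{t}_a = \mathfrak{t}_{a+1}$; in particular it makes Lemma~\ref{lemma:reduced_expression_pisigma_sa} unnecessary for the present statement. The treatment of the ``in particular'' part is the same in both proofs.
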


\begin{proof}
As in the proof of Lemma~\ref{lemma:independence_psi_pi_sigma}, it suffices to prove that every 3-braid relation which occurs in a reduced expression of $\pi_\mathfrak{t} s_a$ is also verified in the corresponding element of $\H_n(Q) e(\mathfrak{t})$. By Lemma~\ref{lemma:reduced_expression_pisigma_sa}, we know that any reduced expression of $\pi_\mathfrak{t} s_a$ can be written $s_{b_0} \cdots s_{b_r}$, where there is $m \in \{0, \dots r\}$ such that:
\begin{itemize}
\item the word $s_{b_0} \cdots \hat{s}_{b_m} \cdots s_{b_r}$ is a reduced expression of $\pi_\mathfrak{t}$;
\item we have $s_{b_m} = w_m s_a w_m^{-1}$ with $w_m \coloneqq s_{b_{m+1}} \cdots s_{b_r}$. 
\end{itemize}
We suppose that a 3-braid appears in $s_{b_0} \cdots s_{b_r}$ at index $l$, that is, we have $b_l = b_{l + 2} = b_{l+1} \pm 1$; we set $b \coloneqq \min(b_l, b_{l + 1})$. We want to prove that, with $\psi_\mathrm{l} \coloneqq \psi_{b_0} \cdots \psi_{b_{l - 1}}$ and $\psi_\mathrm{r} \coloneqq \psi_{b_{l + 3}} \cdots \psi_{b_r}$:
\[
\psi_{\mathrm{l}}(\psi_b \psi_{b + 1} \psi_b)\psi_{\mathrm{r}}  e(\mathfrak{t})= \psi_{\mathrm{l}} (\psi_{b + 1} \psi_b \psi_{b + 1} )\psi_{\mathrm{r}}e(\mathfrak{t}).
\]
To that extent, as in the proof of Lemma~\ref{lemma:independence_psi_pi_sigma}, thanks to Lemma~\ref{lemma:equations_psi_and_e(sigma)} it suffices to prove that $\mathfrak{s}_b \neq \mathfrak{s}_{b + 2}$ where $\mathfrak{s} \coloneqq s_{b_{l + 3}} \cdots s_{b_r} \cdot \mathfrak{t}$.
\begin{itemize}
\item If $m < l + 1$ then applying  Lemma~\ref{lemma:pi_really_permutes} we have $s_{b_{l+1}} \cdot(s_{b_{l + 2}} \cdot \mathfrak{s}) \neq s_{b_{l + 2}} \cdot \mathfrak{s}$  thus $\mathfrak{s}_b \neq \mathfrak{s}_{b + 2}$.
\item The case $m = l + 1$ is impossible: as $b_l = b_{l + 2}$, if $m = l + 1$ we would have $b_{m-1} = b_{m+1}$ and this is nonsense since the expression $s_{b_{m-1}} s_{b_{m+1}}$ is reduced (as a subexpression of the reduced expression $s_{b_0} \cdots \hat{s}_{b_m} \cdots s_{b_r} = \pi_\mathfrak{t}$).
\item If $m = l + 2$ then by Lemma~\ref{lemma:pi_really_permutes} we get $s_{b_l} \cdot (s_{b_{l + 1}} \cdot \mathfrak{s}) \neq s_{b_l} \cdot \mathfrak{s}$ thus $\mathfrak{s}_b \neq \mathfrak{s}_{b+2}$.
\item Finally, if $m > l + 2$ then we can notice that:
\[
\mathfrak{s} = s_{b_{l + 3}} \cdots \hat{s}_{b_m} \cdots s_{b_r} \cdot \mathfrak{t}
\]
(since $s_{b_m} \cdot( w_m \cdot \mathfrak{t}) = w_m \cdot (s_a \cdot \mathfrak{t}) = w_m \cdot \mathfrak{t}$; recall that $s_{b_m} = w_m s_a w_m^{-1}$ and $\mathfrak{t}_a = \mathfrak{t}_{a+1}$). Hence, we deduce once again the result from Lemma~\ref{lemma:pi_really_permutes}.
\end{itemize}

The last statement of the lemma is now immediate. As $\mathfrak{t}_a = \mathfrak{t}_{a+1}$, we can use \eqref{equation:ell(pi_sigma sa)} hence $\psi_{\pi_\mathfrak{t} s_a} e(\mathfrak{t}) = \psi_{\pi_\mathfrak{t}} \psi_a e(\mathfrak{t})$. Moreover, applying Lemma~\ref{lemma:pisigma_sa_sigma} another consequence of \eqref{equation:ell(pi_sigma sa)} is $\ell(s_{\pi_\mathfrak{t}(a)} \pi_\mathfrak{t}) = \ell(\pi_\mathfrak{t}) + 1$, thus we get $\psi_{\pi_\mathfrak{t} s_a} e(\mathfrak{t}) = \psi_{\pi_\mathfrak{t}(a)} \psi_{\pi_\mathfrak{t}} e(\mathfrak{t})$. Finally, we have $\psi_{\pi_\mathfrak{t} s_a} e(\mathfrak{t}) = \psi_{\pi_\mathfrak{t}} \psi_a e(\mathfrak{t}) = \psi_{\pi_\mathfrak{t}(a)} \psi_{\pi_\mathfrak{t}} e(\mathfrak{t})$.
\end{proof}

\subsection{Decomposition along the subquiver Hecke algebras}

We are now ready to prove the main result of this section: we will give in Theorem~\ref{theorem:disjoint_guiver} a decomposition of $\H_n(Q)$ involving the  algebras $\H_{n_j}(Q^j)$ for $j \in J$.

\subsubsection{A distinguished subalgebra}

In this paragraph, we prove the key of Theorem~\ref{theorem:disjoint_guiver}.
Recall that we have set in \eqref{equation:definition_Qj}:
\[
\forall k, k' \in \K_j, Q^j_{k, k'} \coloneqq Q_{k, k'},
\]
for any $j \in J$.
Let $\lambda \comp_d n$ be a $d$-composition of $n$. We define the following algebra:
\[
\H_\lambda(Q) \coloneqq \H_{\lambda_1}(Q^1) \otimes \cdots \otimes \H_{\lambda_d}(Q^d).
\]

With $e_\lambda \coloneqq e(\mathfrak{t}^\lambda)$, we prove here that we can identify $\H_\lambda(Q)$ with the subalgebra $e_\lambda \H_n(Q) e_\lambda$ (with unit $e_\lambda$).  
We reindex the generators $\psi_1, \dots, \psi_{\lambda_j - 1}$ and $y_1, \dots, y_{\lambda_j}$ of $\H_{\lambda_j}(Q^j)$, respectively by $\psi_{\boldsymbol{\lambda}_{j - 1} + 1}, \dots, \psi_{\boldsymbol{\lambda}_j - 1}$ and $y_{\boldsymbol{\lambda}_{j - 1} + 1},\dots, y_{\boldsymbol{\lambda}_j}$. 
In particular, we set:
\[
\psi^\otimes_w \coloneqq \psi_{w_1} \otimes \cdots \otimes \psi_{w_d} \in \H_\lambda(Q),
\]
for $w = (w_1, \dots, w_d) \in \mathfrak{S}_\lambda$ and:
\begin{equation}
\label{equation:e_otimes}
e^{\otimes}(\tuple{k}^1, \dots, \tuple{k}^d) \coloneqq e(\tuple{k}^1)\otimes \cdots \otimes e(\tuple{k}^d) \in \H_\lambda(Q),
\end{equation}
for $\tuple{k} = (\tuple{k}^1, \dots, \tuple{k}^d) \in \K_1^{\lambda_1} \times \cdots \times \K_d^{\lambda_d}$.

\begin{lemme}
\label{lemma:bases_e(sigmalambda)He(sigmalambda)}
The following family:
\begin{equation}
\label{equation:basis_e(sigmalambda)He(sigmalambda)}
\left\lbrace
\psi_w y_1^{r_1} \cdots y_n^{r_n} e(\tuple{k}), w \in \mathfrak{S}_\lambda, r_a \in \mathbb{N}, \tuple{k} \in \K^{\mathfrak{t}^\lambda}\right\rbrace,
\end{equation}
is an $A$-basis of $e_\lambda\H_n(Q)e_\lambda$. Moreover, the algebra $e_\lambda \H_n(Q) e_\lambda$ is exactly the (non-unitary) subalgebra of $\H_n(Q)$ generated by:
\begin{itemize}
\item the $\psi_a e_\lambda$ for $a \in \{1, \dots, n\} \setminus \{\boldsymbol{\lambda}_1, \dots, \boldsymbol{\lambda}_d\}$;
\item the $y_a e_\lambda$ for $1 \leq a \leq n$;
\item the $e(\tuple{k})$ for $\tuple{k} \in \K^{\mathfrak{t}^\lambda}$.
\end{itemize}
\end{lemme}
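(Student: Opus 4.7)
The plan is to rely on the explicit basis of $\H_n(Q)$ provided by Theorem~\ref{theorem:base_quiver} (and the decomposition \eqref{equation:Hn(Q)_simeq_oplus_Halpha(Q)}), and then to describe precisely which basis elements survive when one sandwiches them between two copies of $e_\lambda$. More precisely, take a typical basis vector $b = \psi_w y_1^{r_1}\cdots y_n^{r_n} e(\tuple{k})$ with $w \in \mathfrak{S}_n$, $r_a \in \mathbb{N}$ and $\tuple{k} \in \K^n$. Right-multiplication by $e_\lambda$ gives $b$ if $\tuple{k} \in \K^{\mathfrak{t}^\lambda}$ (i.e.~$\tuple{k}$ has \emph{shape} the trivial element $\mathfrak{t}^\lambda$) and $0$ otherwise, by pairwise orthogonality of the $e(\tuple{k}')$.

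For the left-multiplication, I would move $e_\lambda$ past $\psi_w$ and $y_1^{r_1}\cdots y_n^{r_n}$: since the $y_a$ commute with every $e(\tuple{k}')$ by \eqref{relation:quiver_y_ae(i)}, and since a straightforward induction on a reduced expression of $w$ using \eqref{relation:quiver_psiae(i)} yields $\psi_w e(\tuple{k}) = e(w\cdot \tuple{k}) \psi_w$, one ends up computing $e_\lambda \cdot e(w\cdot \tuple{k})$. This is $e(w\cdot\tuple{k})$ precisely when $w\cdot\tuple{k} \in \K^{\mathfrak{t}^\lambda}$, and because $\tuple{k}\in\K^{\mathfrak{t}^\lambda}$ is a labelling of $\mathfrak{t}^\lambda$, this amounts to $w\cdot\mathfrak{t}^\lambda = \mathfrak{t}^\lambda$, i.e.~$w \in \mathfrak{S}_\lambda$ by Proposition~\ref{proposition:young_stabiliser_sigma_lambda}. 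Thus $e_\lambda b e_\lambda = b$ precisely when $w \in \mathfrak{S}_\lambda$ and $\tuple{k}\in \K^{\mathfrak{t}^\lambda}$, and equals $0$ otherwise; applying this to every basis element of $\H_n(Q)$ yields that \eqref{equation:basis_e(sigmalambda)He(sigmalambda)} is an $A$-basis of $e_\lambda\H_n(Q)e_\lambda$.

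For the second statement, let $B$ denote the (non-unitary) subalgebra of $\H_n(Q)$ generated by the three listed families. The inclusion $B \subseteq e_\lambda \H_n(Q) e_\lambda$ is clear since each listed generator is already of the form $e_\lambda(\cdot)e_\lambda$: indeed the $e(\tuple{k})$ for $\tuple{k}\in \K^{\mathfrak{t}^\lambda}$ are summands of $e_\lambda$ and so fixed by multiplication by $e_\lambda$ on either side, while for $a \notin \{\boldsymbol{\lambda}_1,\dots,\boldsymbol{\lambda}_d\}$ one has $s_a \in \mathfrak{S}_\lambda$, so $s_a\cdot\mathfrak{t}^\lambda = \mathfrak{t}^\lambda$ and the same computation as above gives $e_\lambda \psi_a = \psi_a e_\lambda$ and hence $\psi_a e_\lambda \in e_\lambda \H_n(Q) e_\lambda$; likewise $y_a e_\lambda = e_\lambda y_a e_\lambda$. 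Moreover, this commutation relation $e_\lambda \psi_a = \psi_a e_\lambda$ for $a$ in the allowed set implies that a product of generators of $B$ collapses: $(\psi_{a_1} e_\lambda)\cdots(\psi_{a_r}e_\lambda)(y_1 e_\lambda)^{r_1}\cdots(y_n e_\lambda)^{r_n} e(\tuple{k}) = \psi_{a_1}\cdots \psi_{a_r} y_1^{r_1}\cdots y_n^{r_n} e(\tuple{k})$.

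For the reverse inclusion it then suffices to exhibit each element of the basis \eqref{equation:basis_e(sigmalambda)He(sigmalambda)} inside $B$. This is where the choice of reduced expressions made in \eqref{equation:psiw_w_in_Slambda} intervenes: for $w = (w_1,\dots,w_d)\in\mathfrak{S}_\lambda$, we have $\psi_w = \psi_{w_1}\cdots\psi_{w_d}$, which by Remark~\ref{remark:reduced_expression_Slambda} is a product of the $\psi_a$ with $a \notin \{\boldsymbol{\lambda}_1,\dots,\boldsymbol{\lambda}_d\}$ only. Consequently $\psi_w y_1^{r_1}\cdots y_n^{r_n} e(\tuple{k})$ lies in $B$ for every $w \in \mathfrak{S}_\lambda$, $r_a\in\mathbb{N}$ and $\tuple{k}\in \K^{\mathfrak{t}^\lambda}$, and the two inclusions together give the announced equality. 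No step is really an obstacle; the only slightly delicate point is to be careful that the convention \eqref{equation:psiw_w_in_Slambda} is genuinely needed to avoid having to deal with comparing reduced expressions of elements of $\mathfrak{S}_\lambda$ via the $\psi$-braid relations.
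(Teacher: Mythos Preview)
Your proof is correct and follows essentially the same approach as the paper's own argument: use the basis of Theorem~\ref{theorem:base_quiver} together with \eqref{relation:quiver_y_ae(i)}, \eqref{relation:quiver_psiae(i)} and Proposition~\ref{proposition:young_stabiliser_sigma_lambda} to identify which basis vectors survive under $e_\lambda(\cdot)e_\lambda$, then check the two inclusions for the subalgebra statement. Your write-up is simply more explicit than the paper's terse version; one minor remark is that your closing comment slightly overstates the role of convention \eqref{equation:psiw_w_in_Slambda}, since any reduced expression of an element of the parabolic subgroup $\mathfrak{S}_\lambda$ already uses only the generators $s_a$ with $a \notin \{\boldsymbol{\lambda}_1,\dots,\boldsymbol{\lambda}_d\}$.
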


\begin{proof}
The first part is a immediate application of Theorem~\ref{theorem:base_quiver}, \eqref{relation:quiver_y_ae(i)}, \eqref{relation:quiver_psiae(i)} and Proposition~\ref{proposition:young_stabiliser_sigma_lambda}.
It remains to check that $e_\lambda \H_n(Q) e_\lambda$ is the described subalgebra, subalgebra that we temporarily denote by $\H$. First, all the listed elements belong to $e_\lambda \H_n(Q) e_\lambda$. In particular, for $a \neq \boldsymbol{\lambda}_j$ we have indeed $\psi_a e_\lambda \in e_\lambda \H_n(Q) e_\lambda$: we can either use the above basis, or we can simply use \eqref{relation:quiver_psiae(i)} to get $ \psi_a e_\lambda = \psi_a e_\lambda^2 = e_\lambda \psi_a e_\lambda$. Hence, we have $\H \subseteq e_\lambda \H_n(Q) e_\lambda$.  Finally, we conclude since every element of the basis \eqref{equation:basis_e(sigmalambda)He(sigmalambda)} lies in $\H$.
\end{proof}

\begin{lemme}
\label{lemme:morphisme_Hlambda_H}
There is a unitary algebra homomorphism from $\H_\lambda(Q)$ to $e_\lambda\H_n(Q) e_\lambda$.
\end{lemme}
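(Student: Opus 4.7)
The natural approach is to define the homomorphism on generators. I would set
\[
\Phi\bigl(\psi_a^{(j)}\bigr) := \psi_{\boldsymbol{\lambda}_{j-1}+a}\,e_\lambda,\qquad \Phi\bigl(y_a^{(j)}\bigr):=y_{\boldsymbol{\lambda}_{j-1}+a}\,e_\lambda,\qquad \Phi\bigl(e^{\otimes}(\tuple{k}^1,\dots,\tuple{k}^d)\bigr):=e(\tuple{k}^1,\dots,\tuple{k}^d),
\]
where $\psi_a^{(j)}$, $y_a^{(j)}$ denote the quiver Hecke generators of the $j$th tensor factor $\H_{\lambda_j}(Q^j)$, and where $(\tuple{k}^1,\dots,\tuple{k}^d)\in\K_1^{\lambda_1}\times\cdots\times\K_d^{\lambda_d}$ is identified with the concatenated tuple in $\K^{\mathfrak{t}^\lambda}$. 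I would then check that these assignments respect the defining relations of the tensor product $\H_\lambda(Q)$; by the universal property this yields a well-defined $A$-algebra homomorphism with image contained in $e_\lambda\H_n(Q)e_\lambda$ (the right factor $e_\lambda$ lies on the left and right of every image thanks to \eqref{relation:quiver_y_ae(i)} and \eqref{relation:quiver_psiae(i)}).

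The verifications split into two kinds. For relations supported within a single tensor factor $\H_{\lambda_j}(Q^j)$, the images satisfy the corresponding relations of $\H_n(Q)$ \emph{verbatim}: the defining relations \eqref{relation:quiver_e(i)e(i')}--\eqref{relation:quiver_ya+1_psia} and \eqref{relation:quiverQ_psia^2}--\eqref{relation:quiverQ_tresse} only involve consecutive indices in the $j$th block, and for $\tuple{k}\in\K^{\mathfrak{t}^\lambda}$ the relevant components $k_{\boldsymbol{\lambda}_{j-1}+a},k_{\boldsymbol{\lambda}_{j-1}+a+1},k_{\boldsymbol{\lambda}_{j-1}+a+2}$ all lie in $\K_j$, so $Q$ and $Q^j$ agree on these entries. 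The only subtlety is the sum-to-one relation \eqref{relation:quiver_sum_alpha_e(i)}, which is handled at the end together with the unitarity check.

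For relations between generators coming from different tensor factors $j\neq j'$, I must prove pairwise commutation of the images. This will be immediate from \eqref{relation:quiver_ya_yb}, \eqref{relation:quiver_psia_yb} and \eqref{relation:quiver_psia_psib} once I observe that the two index sets $\{\boldsymbol{\lambda}_{j-1}+1,\dots,\boldsymbol{\lambda}_j-1\}$ and $\{\boldsymbol{\lambda}_{j'-1}+1,\dots,\boldsymbol{\lambda}_{j'}-1\}$ for $\psi$-indices (and the analogous sets with $\boldsymbol{\lambda}_j$ included for $y$-indices) lie at distance at least $2$ when $j\neq j'$: the crucial point is precisely that the boundary generators $\psi_{\boldsymbol{\lambda}_j}$ are \emph{excluded} from the Young-subgroup generators, so no index pair falls in the forbidden cases $b=a$ or $b=a\pm 1$ of \eqref{relation:quiver_psia_yb}--\eqref{relation:quiver_psia_psib}. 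This is the one point where the tensor decomposition really uses the hypothesis \eqref{equation:condition_Q} on $Q$, albeit indirectly (the commutation of $\psi$'s via \eqref{relation:quiver_psia_psib} requires no assumption on $Q$; I expect this step to be the least mechanical).

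Finally, to check unitarity, the unit of $\H_\lambda(Q)$ equals $\sum_{(\tuple{k}^1,\dots,\tuple{k}^d)} e^{\otimes}(\tuple{k}^1,\dots,\tuple{k}^d)$, whose image is
\[
\sum_{(\tuple{k}^1,\dots,\tuple{k}^d)\in\K_1^{\lambda_1}\times\cdots\times\K_d^{\lambda_d}} e(\tuple{k}^1,\dots,\tuple{k}^d)=\sum_{\tuple{k}\in\K^{\mathfrak{t}^\lambda}}e(\tuple{k})=e(\mathfrak{t}^\lambda)=e_\lambda,
\]
the unit of $e_\lambda\H_n(Q)e_\lambda$, which completes the proof.
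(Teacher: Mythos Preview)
Your proposal is correct and follows essentially the same approach as the paper: define the map on generators (with the reindexing $a\mapsto\boldsymbol{\lambda}_{j-1}+a$) and check the defining relations block by block, noting that for $\tuple{k}\in\K^{\mathfrak{t}^\lambda}$ the entries $k_a,k_{a+1}$ at a non-boundary index lie in the same $\K_j$. One small remark: contrary to your parenthetical, the hypothesis \eqref{equation:condition_Q} is \emph{not} needed anywhere in this lemma---the inter-factor commutation follows purely from \eqref{relation:quiver_ya_yb}--\eqref{relation:quiver_psia_psib}, and \eqref{equation:condition_Q} only enters later (in Lemma~\ref{lemma:equations_psi_and_e(sigma)} and Proposition~\ref{proposition:inversion_psi_e(sigma)}).
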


\begin{proof}
We define the algebra homomorphism from $\H_\lambda(Q)$ to $e_\lambda \H_n(Q)e_\lambda$ by sending:
\begin{itemize}
\item the generators $\psi^\otimes_a \in \H_\lambda(Q)$ for $a \in \{1, \dots, n \} \setminus \{\boldsymbol{\lambda}_1, \dots, \boldsymbol{\lambda}_d\}$ to $\psi_a e_\lambda \in e_\lambda\H_n(Q)e_\lambda$;
\item the generators $y_b \in \H_\lambda(Q)$ for $b \in \{1, \dots, n\}$ to $y_b e_\lambda \in e_\lambda\H_n(Q)e_\lambda$;
\item the generators $e^{\otimes}(\tuple{k}) \in \H_\lambda(Q)$ for $\tuple{k} \in \K^{\mathfrak{t}^\lambda}$ to $e(\tuple{k}) \in e_\lambda\H_n(Q)e_\lambda$.
\end{itemize}

It suffices now to check the defining relations of $\H_\lambda(Q)$. We will only check \eqref{relation:quiver_psia_ya+1}--\eqref{relation:quiverQ_tresse}, the remaining ones being straightforward.

\begin{description}
\item[\eqref{relation:quiver_psia_ya+1}.] Let $a \notin \{\boldsymbol{\lambda}_1, \dots, \boldsymbol{\lambda}_d\}$ and $\tuple{k} \in \K^{\mathfrak{t}^\lambda}$. If $k_a \in \K_j$, as $a \neq \boldsymbol{\lambda}_{j'}$ for any $j'$ we have $k_{a+1} \in \K_j$ (cf. \eqref{equation:definition_sigmalambda}). Hence, in $\H_\lambda(Q)$ the relation \eqref{relation:quiver_psia_ya+1}:
\[
\psi^\otimes_a y_{a+1} e^{\otimes}(\tuple{k}) = \begin{cases}
(y_a \psi^\otimes_a + 1)e^{\otimes}(\tuple{k}) & \text{if } k_a = k_{a+1},
\\
y_a \psi^\otimes_a e^{\otimes}(\tuple{k}) & \text{if } k_a \neq k_{a+1},
\end{cases}
\]
comes from the corresponding relation in $\H_{\lambda_j}(Q^j)$. The same relation:
\[
\psi_a y_{a+1} e(\tuple{k}) = \begin{cases}
(y_a \psi_a + 1)e(\tuple{k}) & \text{if } k_a = k_{a+1},
\\
y_a \psi_a e(\tuple{k}) & \text{if } k_a \neq k_{a+1},
\end{cases}
\]
is verified in $e_\lambda \H_n(Q) e_\lambda$, as a relation in $\H_n(Q)$.

\item[\eqref{relation:quiver_ya+1_psia}.] Similar.

\item[\eqref{relation:quiverQ_psia^2}.] Similarly, the indices $k_a, k_{a+1}$ are in a same $\K_j$. Hence, the relation \eqref{relation:quiverQ_psia^2} in $\H_\lambda(Q)$ comes from a relation in $\H_{\lambda_j}(Q^j)$, and the same relation is verified in $e_\lambda \H_n(Q) e_\lambda$.

\item[\eqref{relation:quiverQ_tresse}.] For $j \in J$ and $a \in \{\boldsymbol{\lambda}_{j-1} + 1, \dots, \boldsymbol{\lambda}_j - 2\}$, the relation \eqref{relation:quiverQ_tresse} is a relation from $\H_{\lambda_j}(Q^j)$, and this same relation is verified in $e_\lambda \H_n(Q) e_\lambda$.
\end{description}
\end{proof}

\begin{proposition}
\label{proposition:Hlambda_ssalg}
The previous algebra homomorphism $\H_\lambda(Q) \to e_\lambda \H_n(Q) e_\lambda$ is an isomorphism. In particular, we can identify $\H_\lambda(Q)$ to a (non-unitary) subalgebra of $\H_n(Q)$.
\end{proposition}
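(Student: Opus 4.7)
The plan is to show that the algebra homomorphism constructed in Lemma~\ref{lemme:morphisme_Hlambda_H} sends an explicit $A$-basis of $\H_\lambda(Q)$ bijectively onto the basis of $e_\lambda \H_n(Q) e_\lambda$ exhibited in Lemma~\ref{lemma:bases_e(sigmalambda)He(sigmalambda)}. Once we know the map is an isomorphism of free $A$-modules (hence of $A$-algebras), the identification of $\H_\lambda(Q)$ with the non-unitary subalgebra $e_\lambda \H_n(Q) e_\lambda \subseteq \H_n(Q)$ is immediate.

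First I would build the basis of $\H_\lambda(Q)$. Applying Theorem~\ref{theorem:base_quiver} to each factor $\H_{\lambda_j}(Q^j)$, I fix a reduced expression for every element of $\mathfrak{S}_{\lambda_j}$; by Remark~\ref{remark:reduced_expression_Slambda} and our running convention \eqref{equation:psiw_w_in_Slambda}, these choices are compatible with the chosen reduced expression for each $w = (w_1, \dots, w_d) \in \mathfrak{S}_\lambda$ viewed inside $\mathfrak{S}_n$, namely $\psi_w = \psi_{w_1} \cdots \psi_{w_d}$. Tensoring gives the $A$-basis
\[
\bigl\{\psi^\otimes_w \, y_1^{r_1} \cdots y_n^{r_n}\, e^\otimes(\tuple{k}) : w \in \mathfrak{S}_\lambda,\ r_a \in \mathbb{N},\ \tuple{k} \in \K^{\mathfrak{t}^\lambda}\bigr\}
\]
of $\H_\lambda(Q)$, with $\psi^\otimes_w := \psi^\otimes_{w_1} \cdots \psi^\otimes_{w_d}$.

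Next I would compute the image of such a basis element under the homomorphism of Lemma~\ref{lemme:morphisme_Hlambda_H}. The image of $e^\otimes(\tuple{k})$ is $e(\tuple{k})$ and the image of $y_b$ is $y_b e_\lambda$, so the factors $y_1^{r_1} \cdots y_n^{r_n} e^\otimes(\tuple{k})$ map to $y_1^{r_1} \cdots y_n^{r_n} e(\tuple{k})$. For the $\psi$-part, note that for every $a \in \{1, \dots, n\}\setminus\{\boldsymbol{\lambda}_1, \dots, \boldsymbol{\lambda}_d\}$ the transposition $s_a$ stabilises $\mathfrak{t}^\lambda$, so \eqref{relation:quiver_psiae(i)} gives $e_\lambda \psi_a = \psi_a e_\lambda$; iterating over a reduced expression of each $w_j$ and using the compatibility of the chosen expressions, the image of $\psi^\otimes_w$ is $\psi_w e_\lambda$. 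Altogether the image of $\psi^\otimes_w\, y_1^{r_1} \cdots y_n^{r_n}\, e^\otimes(\tuple{k})$ is exactly $\psi_w\, y_1^{r_1} \cdots y_n^{r_n}\, e(\tuple{k})$, which is precisely the basis element of $e_\lambda \H_n(Q) e_\lambda$ indexed by $(w, r_1, \dots, r_n, \tuple{k})$ in Lemma~\ref{lemma:bases_e(sigmalambda)He(sigmalambda)}.

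Finally I conclude: the homomorphism carries a basis of $\H_\lambda(Q)$ bijectively onto a basis of $e_\lambda \H_n(Q) e_\lambda$, hence is an $A$-module (and thus algebra) isomorphism. The only potential subtlety lies in ensuring that the image of $\psi^\otimes_w$ is $\psi_w e_\lambda$ rather than some variant depending on a different reduced expression; this is precisely what our compatible choice \eqref{equation:psiw_w_in_Slambda} guarantees, combined with the fact that $e_\lambda$ commutes with each $\psi_a$ appearing in these expressions. No further calculation is required.
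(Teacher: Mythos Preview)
Your proof is correct and follows essentially the same approach as the paper: apply Theorem~\ref{theorem:base_quiver} to each factor to obtain a basis of $\H_\lambda(Q)$, then use the compatibility convention \eqref{equation:psiw_w_in_Slambda} to see that the homomorphism of Lemma~\ref{lemme:morphisme_Hlambda_H} carries this basis bijectively onto the basis \eqref{equation:basis_e(sigmalambda)He(sigmalambda)} of $e_\lambda \H_n(Q) e_\lambda$. You supply slightly more detail (in particular the observation that $e_\lambda$ commutes with each relevant $\psi_a$), but the argument is the same.
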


\begin{proof}
We know by Theorem~\ref{theorem:base_quiver} that $\H_{\lambda_j}(Q^j)$ has for basis:
\[
\left\lbrace\psi_{w_j} y_{\boldsymbol{\lambda}_{j-1}+1}^{r_{\boldsymbol{\lambda}_{j-1}+1}} \cdots y_{\boldsymbol{\lambda}_j}^{r_{\boldsymbol{\lambda}_j}} e(\tuple{k}^j) : w_j \in \mathfrak{S}_{\lambda_j}, r_a \in \mathbb{N}, \tuple{k}^j \in \K_j^{\lambda_j}\right\rbrace,
\]
hence,  the following family:
\[
\left\lbrace\psi^\otimes_w y_1^{r_1} \cdots y_n^{r_n} e^{\otimes}(\tuple{k}) : w \in \mathfrak{S}_\lambda, r_a \in \mathbb{N}, \tuple{k} \in \K_1^{\lambda_1} \times \cdots \times \K_d^{\lambda_d}
\right\rbrace,
\]
is a basis of $\H_\lambda(Q)$. We conclude since by \eqref{equation:psiw_w_in_Slambda} the homomorphism of Lemma~\ref{lemme:morphisme_Hlambda_H} sends this basis onto the basis of $e_\lambda \H_n(Q) e_\lambda$ given in Lemma~\ref{lemma:bases_e(sigmalambda)He(sigmalambda)}.
 (In particular, note that $\psi^\otimes_w \in \H_\lambda(Q)$ for $w \in \mathfrak{S}_\lambda$ is sent to $\psi_w e_\lambda \in e_\lambda \H_n(Q) e_\lambda$.)
\end{proof}


\subsubsection{Decomposition theorem}
\label{subsubsection:theorem_disjoint_quiver}

We recall the notation $m_\lambda$ for $\lambda \comp_d n$ introduced at \eqref{equation:definition_mlambda}.

\begin{theoreme}
\label{theorem:disjoint_guiver}
We have an $A$-algebra isomorphism:
\[
\H_n(Q) \simeq \bigoplus_{\lambda \comp_d n} \Mat_{m_\lambda}\H_\lambda(Q).
\]
\end{theoreme}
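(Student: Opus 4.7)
The plan is to combine the central idempotent decomposition \eqref{equation:decomposition_Hn(Q)_e(lambda)} with the identification $\H_\lambda(Q) \simeq e_\lambda \H_n(Q) e_\lambda$ from Proposition~\ref{proposition:Hlambda_ssalg} to reduce the statement to showing, for each $\lambda \comp_d n$, an algebra isomorphism $e(\lambda)\H_n(Q) \simeq \Mat_{m_\lambda}(e_\lambda \H_n(Q) e_\lambda)$. Since $e(\lambda)$ is central, we have $e(\lambda) \H_n(Q) = e(\lambda) \H_n(Q) e(\lambda) = \bigoplus_{\mathfrak{t}, \mathfrak{t}' \in J^\lambda} e(\mathfrak{t}') \H_n(Q) e(\mathfrak{t})$, so it is natural to package these $m_\lambda \times m_\lambda$ ``corner pieces'' into a matrix algebra, provided we can identify each corner $e(\mathfrak{t}')\H_n(Q)e(\mathfrak{t})$ with $e_\lambda \H_n(Q) e_\lambda$ in a compatible way.

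The key elements for this identification are $u_\mathfrak{t} \coloneqq \psi_{\pi_\mathfrak{t}} e(\mathfrak{t}) = e_\lambda \psi_{\pi_\mathfrak{t}} \in e_\lambda \H_n(Q) e(\mathfrak{t})$ and $v_\mathfrak{t} \coloneqq e(\mathfrak{t}) \psi_{\pi_\mathfrak{t}^{-1}} = \psi_{\pi_\mathfrak{t}^{-1}} e_\lambda \in e(\mathfrak{t}) \H_n(Q) e_\lambda$, which are well-defined by Lemma~\ref{lemma:independence_psi_pi_sigma} and \eqref{equation:psi_sigma-1_bien_def}. Proposition~\ref{proposition:inversion_psi_e(sigma)} delivers the Morita-style inversion relations $u_\mathfrak{t} v_\mathfrak{t} = e_\lambda$ and $v_\mathfrak{t} u_\mathfrak{t} = e(\mathfrak{t})$ for every $\mathfrak{t} \in J^\lambda$. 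I would then define
\[
\Phi : \bigoplus_{\lambda \comp_d n} \Mat_{m_\lambda} \H_\lambda(Q) \longrightarrow \H_n(Q), \qquad x E_{\mathfrak{t}', \mathfrak{t}} \longmapsto v_{\mathfrak{t}'} \, x \, u_\mathfrak{t} = \psi_{\pi_{\mathfrak{t}'}^{-1}} \, x \, \psi_{\pi_\mathfrak{t}},
\]
for $x \in e_\lambda \H_n(Q) e_\lambda$ and $\mathfrak{t}, \mathfrak{t}' \in J^\lambda$, and define its candidate inverse by the formula from the introduction,
\[
\Psi : w \longmapsto \sum_\lambda \sum_{\mathfrak{t}', \mathfrak{t} \in J^\lambda} \bigl( \psi_{\pi_{\mathfrak{t}'}} e(\mathfrak{t}') w e(\mathfrak{t}) \psi_{\pi_\mathfrak{t}^{-1}} \bigr) E_{\mathfrak{t}', \mathfrak{t}}.
\]

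To conclude I would verify four points: (i) $\Phi$ is an algebra homomorphism; (ii) $\Psi$ lands in the prescribed target; (iii) $\Phi \circ \Psi = \mathrm{id}$; (iv) $\Psi \circ \Phi = \mathrm{id}$. Point (i) is the most delicate: given $x \in e_\lambda \H_n(Q) e_\lambda$ and $y \in e_{\lambda'} \H_n(Q) e_{\lambda'}$, the product $\Phi(x E_{\mathfrak{t}', \mathfrak{t}}) \Phi(y E_{\mathfrak{u}', \mathfrak{u}})$ features the middle factor $u_\mathfrak{t} v_{\mathfrak{u}'} = \psi_{\pi_\mathfrak{t}} e(\mathfrak{t}) e(\mathfrak{u}') \psi_{\pi_{\mathfrak{u}'}^{-1}}$, which vanishes unless $\lambda = \lambda'$ and $\mathfrak{t} = \mathfrak{u}'$, and collapses to $e_\lambda$ in the remaining case again by Proposition~\ref{proposition:inversion_psi_e(sigma)}; this reproduces exactly the matrix-unit multiplication rule $E_{\mathfrak{t}', \mathfrak{t}} E_{\mathfrak{u}', \mathfrak{u}} = \delta_{\lambda, \lambda'} \delta_{\mathfrak{t}, \mathfrak{u}'} E_{\mathfrak{t}', \mathfrak{u}}$. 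For (iii) and (iv) the compositions telescope via the two equalities of Proposition~\ref{proposition:inversion_psi_e(sigma)} together with $\sum_{\mathfrak{t} \in J^\lambda} e(\mathfrak{t}) = e(\lambda)$ and the fact that $\psi_{\pi_{\mathfrak{t}'}^{-1}} x \psi_{\pi_\mathfrak{t}}$ already lies in $e(\mathfrak{t}') \H_n(Q) e(\mathfrak{t})$, so only the diagonal $(\mathfrak{t}', \mathfrak{t})$-term of $\Psi$ survives.

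The hard part has essentially already been done: the independence of $\psi_{\pi_\mathfrak{t}} e(\mathfrak{t})$ from the chosen reduced expression (Lemma~\ref{lemma:independence_psi_pi_sigma}) and the inversion identities of Proposition~\ref{proposition:inversion_psi_e(sigma)} are exactly the ingredients that make the matrix-unit combinatorics go through. Given these, the remaining verifications are routine bookkeeping that matches the corner pieces $e(\mathfrak{t}') \H_n(Q) e(\mathfrak{t})$ of the central block $e(\lambda) \H_n(Q)$ to the matrix units of $\Mat_{m_\lambda} \H_\lambda(Q)$.
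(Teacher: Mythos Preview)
Your proposal is correct and follows essentially the same approach as the paper: both define the maps $\Phi$ and $\Psi$ via conjugation by $\psi_{\pi_\mathfrak{t}}$ and $\psi_{\pi_\mathfrak{t}^{-1}}$, and both rely on Lemma~\ref{lemma:independence_psi_pi_sigma} and Proposition~\ref{proposition:inversion_psi_e(sigma)} for the inversion identities that make the matrix-unit combinatorics work. The only cosmetic difference is that the paper verifies the multiplicativity of $\Psi$ (equation~\eqref{equation:proof_Psilambda_morphism}) rather than of $\Phi$, but the computation is the same in spirit.
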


The remaining part of this paragraph is devoted to the proof of Theorem~\ref{theorem:disjoint_guiver}. Due to \eqref{equation:decomposition_Hn(Q)_e(lambda)}, it suffices to prove that we have an $A$-algebra isomorphism:
\begin{equation}
\label{equation:reduction_theoreme_lambda}
e(\lambda)\H_n(Q) \simeq \Mat_{m_{\lambda}}\H_\lambda(Q).
\end{equation}
%
Let us label the rows and the columns of the elements of $\mathrm{Mat}_{m_\lambda} \H_\lambda(Q)$ by $(\mathfrak{t}', \mathfrak{t}) \in ({J^\lambda})^2$,  and let us write $E_{\mathfrak{t}', \mathfrak{t}}$ for the elementary matrix with one $1$ at position $(\mathfrak{t}', \mathfrak{t})$ and $0$ everywhere else. Recall the following property verified by the $E_{\mathfrak{t}', \mathfrak{t}}$:
\begin{equation}
\label{equation:Esigmasigma'_tautau'}
\forall \mathfrak{t}, \mathfrak{t}', \mathfrak{s}, \mathfrak{s}' \in J^\lambda, E_{\mathfrak{t}', \mathfrak{t}} E_{\mathfrak{s}', \mathfrak{s}} = \delta_{\mathfrak{t}, \mathfrak{s}'} E_{\mathfrak{t}', \mathfrak{s}}.
\end{equation}

We have the following $A$-module isomorphism, where $\mathfrak{t}, \mathfrak{t}' \in J^\lambda$:
\[
e(\mathfrak{t}') \H_n(Q) e(\mathfrak{t}) \simeq \H_\lambda(Q) E_{\mathfrak{t}', \mathfrak{t}}.
\]
Indeed, let us define:
\begin{equation}\label{equation:definition_philambda_psilambda}
\begin{gathered}
\Phi_{\mathfrak{t}', \mathfrak{t}} : \H_\lambda(Q) E_{\mathfrak{t}', \mathfrak{t}} \to e(\mathfrak{t}') \H_n(Q) e(\mathfrak{t}),
\\
\Psi_{\mathfrak{t}', \mathfrak{t}} : e(\mathfrak{t}') \H_n(Q) e(\mathfrak{t}) \to  \H_\lambda(Q) E_{\mathfrak{t}', \mathfrak{t}},
\end{gathered}
\end{equation}
by:
\begin{gather*}
\forall v \in \H_\lambda(Q), \Phi_{\mathfrak{t}', \mathfrak{t}}(v E_{\mathfrak{t}', \mathfrak{t}}) \coloneqq \psi_{\pi_{\mathfrak{t}'}^{-1}} v \psi_{\pi_\mathfrak{t}},
\\
\forall w \in e(\mathfrak{t}')\H_n(Q) e(\mathfrak{t}), \Psi_{\mathfrak{t}', \mathfrak{t}}(w) \coloneqq (\psi_{\pi_{\mathfrak{t}'}} w \psi_{\pi_\mathfrak{t}^{-1}}) E_{\mathfrak{t}', \mathfrak{t}}.
\end{gather*}

The goal sets of \eqref{equation:definition_philambda_psilambda} are respected, according to \eqref{relation:quiver_psiae(i)}, \eqref{equation:pisigmalambda_sort}  and Proposition~\ref{proposition:Hlambda_ssalg}. Indeed, for instance we have, for $v \in \H_\lambda(Q) \simeq e_\lambda \H_n(Q) e_\lambda$:
\begin{align*}
\Phi_{\mathfrak{t}', \mathfrak{t}}(v E_{\mathfrak{t}', \mathfrak{t}})
&= 
\psi_{\pi_{\mathfrak{t}'}^{-1}} v \psi_{\pi_\mathfrak{t}}
\\
&=
\psi_{\pi_{\mathfrak{t}'}^{-1}} e(\mathfrak{t}^\lambda) v  e(\mathfrak{t}^\lambda) \psi_{\pi_\mathfrak{t}}
\\
&=
e(\pi_{\mathfrak{t}'}^{-1} \cdot \mathfrak{t}^\lambda) \psi_{\pi_{\mathfrak{t}'}^{-1}} v \psi_{\pi_\mathfrak{t}} e(\pi_\mathfrak{t}^{-1} \cdot \mathfrak{t}^\lambda)
\\
\Phi_{\mathfrak{t}', \mathfrak{t}}(v E_{\mathfrak{t}', \mathfrak{t}})
&=
e(\mathfrak{t}') \psi_{\pi_{\mathfrak{t}'}^{-1}} v \psi_{\pi_\mathfrak{t}} e(\mathfrak{t})
\in
e(\mathfrak{t}')\H_n(Q)e(\mathfrak{t}).
\end{align*}

\begin{remarque}
Our map $\Phi_{\mathfrak{t}', \mathfrak{t}}$ is similar to \cite[$(20)$]{SVV}.
\end{remarque}

Furthermore, these two maps $\Phi_{\mathfrak{t}', \mathfrak{t}}$ and $\Psi_{\mathfrak{t}', \mathfrak{t}}$ are clearly $A$-linear and by Proposition~\ref{proposition:inversion_psi_e(sigma)} these are inverse isomorphisms.
We now set:
\begin{equation}
\begin{gathered}
\Phi_\lambda \coloneqq \bigoplus_{\mathfrak{t}, \mathfrak{t}' \in J^\lambda} \Phi_{\mathfrak{t}', \mathfrak{t}} : \mathrm{Mat}_{m_\lambda} \H_\lambda(Q) \to e(\lambda) \H_n(Q),
\\
\Psi_\lambda \coloneqq \bigoplus_{\mathfrak{t}, \mathfrak{t}' \in J^\lambda} \Psi_{\mathfrak{t}', \mathfrak{t}} : e(\lambda) \H_n(Q) \to \mathrm{Mat}_{m_\lambda} \H_\lambda(Q).
\end{gathered}
\end{equation}
From the properties of $\Phi_{\mathfrak{t}', \mathfrak{t}}$ and $\Psi_{\mathfrak{t}', \mathfrak{t}}$, the above maps are inverse $A$-module isomorphisms; it now suffices to check that $\Psi_\lambda$ is an $A$-algebra homomorphism. This property comes from the following one:
\begin{equation}
\label{equation:proof_Psilambda_morphism}
\Psi_{\mathfrak{t}', \mathfrak{t}}(w_{\mathfrak{t}', \mathfrak{t}}) \Psi_{\mathfrak{s}', \mathfrak{s}}(w_{\mathfrak{s}', \mathfrak{s}}) = \Psi_{\mathfrak{t}', \mathfrak{s}}(w_{\mathfrak{t}', \mathfrak{t}} w_{\mathfrak{s}', \mathfrak{s}}),
\end{equation}
where $\mathfrak{t}, \mathfrak{t}', \mathfrak{s}, \mathfrak{s}' \in J^\lambda$, $w_{\mathfrak{t}', \mathfrak{t}} \in  e(\mathfrak{t}')\H_n(Q) e(\mathfrak{t})$ and $w_{\mathfrak{s}', \mathfrak{s}} \in e(\mathfrak{s}') \H_n(Q) e(\mathfrak{s})$. The equality \eqref{equation:proof_Psilambda_morphism} is obviously satisfied when $\mathfrak{t} \neq \mathfrak{s}'$ since both sides are zero, thus we assume $\mathfrak{t} = \mathfrak{s}'$. We have, using Proposition~\ref{proposition:inversion_psi_e(sigma)} and noticing that $w_{\mathfrak{t}, \mathfrak{s}} = e(\mathfrak{t}) w_{\mathfrak{t}, \mathfrak{s}}$:
\begin{align*}
\Psi_{\mathfrak{t}', \mathfrak{t}}(w_{\mathfrak{t}', \mathfrak{t}}) \Psi_{\mathfrak{s}', \mathfrak{s}}(w_{\mathfrak{s}', \mathfrak{s}})
&=
\Psi_{\mathfrak{t}', \mathfrak{t}}(w_{\mathfrak{t}', \mathfrak{t}}) \Psi_{\mathfrak{t}, \mathfrak{s}}(w_{\mathfrak{t}, \mathfrak{s}})
\\
&=
(\psi_{\pi_{\mathfrak{t}'}} w_{\mathfrak{t}', \mathfrak{t}} \psi_{\pi_\mathfrak{t}^{-1}}) (\psi_{\pi_\mathfrak{t}} w_{\mathfrak{t}, \mathfrak{s}} \psi_{\pi_\mathfrak{s}^{-1}}) E_{\mathfrak{t}', \mathfrak{t}} E_{\mathfrak{t}, \mathfrak{s}}
\\
&=
\psi_{\pi_{\mathfrak{t}'}} w_{\mathfrak{t}', \mathfrak{t}} [\psi_{\pi_\mathfrak{t}^{-1}} \psi_{\pi_\mathfrak{t}} e(\mathfrak{t})] w_{\mathfrak{t}, \mathfrak{s}} \psi_{\pi_\mathfrak{s}^{-1}} E_{\mathfrak{t}', \mathfrak{s}}
\\
&=
\psi_{\pi_{\mathfrak{t}'}} w_{\mathfrak{t}', \mathfrak{t}} w_{\mathfrak{t}, \mathfrak{s}} \psi_{\pi_\mathfrak{s}^{-1}} E_{\mathfrak{t}', \mathfrak{s}}
\\
\Psi_{\mathfrak{t}', \mathfrak{t}}(w_{\mathfrak{t}', \mathfrak{t}}) \Psi_{\mathfrak{s}', \mathfrak{s}}(w_{\mathfrak{s}', \mathfrak{s}})
&=
\Psi_{\mathfrak{t}', \mathfrak{s}}( w_{\mathfrak{t}', \mathfrak{t}} w_{\mathfrak{s}', \mathfrak{s}}).
\end{align*} 
Finally, the maps $\Phi_\lambda$ and $\Psi_\lambda$ are inverse $A$-algebra isomorphisms; we deduce the isomorphism \eqref{equation:reduction_theoreme_lambda} and thus Theorem~\ref{theorem:disjoint_guiver}.

\begin{remarque}
\label{remark:image_generators}
For $\tuple{k} \in \K^\mathfrak{t}$, we write here $\tuple{k}^* \coloneqq \pi_\mathfrak{t} \cdot \tuple{k} \in \K^{\mathfrak{t}^\lambda}$.
Using Proposition~\ref{proposition:inversion_psi_e(sigma)} and Lemmas~\ref{lemma:ya_psipisigma}, \ref{lemma:independence_psi_pi_sa_sigma}, we can give the images of the generators of $e(\lambda)\H_n(Q)$ for each $\mathfrak{t} \in J^\lambda$ and $\tuple{k} \in \K^\mathfrak{t}$:
\begin{align*}
\Psi_\lambda(e(\tuple{k})) &=
e(\tuple{k}^*) E_{\mathfrak{t}, \mathfrak{t}},
\\
\forall a \in \{1, \dots, n\},
\Psi_\lambda(y_a e(\tuple{k}))
&=
y_{\pi_\mathfrak{t}(a)} e(\tuple{k}^*) E_{\mathfrak{t}, \mathfrak{t}},
\\
\forall a \in \{1, \dots, n-1\}, 
\Psi_\lambda(\psi_a e(\tuple{k}))
&=
\psi_{\pi_{s_a \cdot \mathfrak{t}} s_a \pi_\mathfrak{t}^{-1}} e(\tuple{k}^*) E_{s_a \cdot \mathfrak{t}, \mathfrak{t}}
\\
&= 
\begin{cases}
e(\tuple{k}^*) E_{s_a \cdot \mathfrak{t}, \mathfrak{t}} & \text{if } \mathfrak{t}_a \neq \mathfrak{t}_{a+1},
\\
\psi_{\pi_\mathfrak{t}(a)} e(\tuple{k}^*) E_{\mathfrak{t}, \mathfrak{t}} & \text{if } \mathfrak{t}_a = \mathfrak{t}_{a+1}
\end{cases}
\end{align*}
(note that $\pi_{s_a \cdot \mathfrak{t}} s_a \pi_{\mathfrak{t}}^{-1} =
\begin{cases}
\mathrm{id} & \text{if } \mathfrak{t}_a \neq \mathfrak{t}_{a+1}
\\
s_{\pi_\mathfrak{t}(a)} & \text{if } \mathfrak{t}_a = \mathfrak{t}_{a+1}
\end{cases}$, cf. Lemma~\ref{lemma:pisigma_sa_sigma}). We observe that these images look like those described in \cite[(22)]{JaPA} and \cite[(3.2)--(3.4)]{PA}.
\end{remarque}

\begin{remarque}
We consider the setting of Proposition~\ref{proposition:gradation_quiver_Hecke_algebra}. We can prove that the algebra isomorphism $\H_n(Q) \simeq \oplus_{\lambda \comp_d n} \mathrm{Mat}_{m_\lambda} \H_\lambda(Q)$ is  a graded isomorphism  (with the canonical gradings on the direct sum,  matrix algebras and tensor products).
In particular, we have (recall the notation $\tuple{k}^\mathfrak{t}$ of \eqref{equation:definition_isigma}):
\[
\deg \psi_{\pi_{\mathfrak{t}'}^{-1}} e(\tuple{k}) = \deg \psi_{\pi_\mathfrak{t}} e(\tuple{k}^\mathfrak{t}) = 0,
\]
as a consequence of Lemma~\ref{lemma:pi_really_permutes} and Remark~\ref{remark:pi-1_really_permutes}. Indeed,
if for $\mathfrak{s} \in J^n$  and $a \in \{1, \dots, n-1\}$ we have $\mathfrak{s}_a \neq \mathfrak{s}_{a+1}$ then for any $\tuple{k} \in \K^\mathfrak{s}$ we have
$c_{k_a, k_{a+1}} = 0$ (cf. \eqref{equation:cartan_matrix_block_diagonal}).
\end{remarque}

\subsection{Cyclotomic version}

Let us consider a weight $\tuple{\Lambda} = (\Lambda_k)_{k \in \K} \in \mathbb{N}^{\K}$; for $j \in J$, we write $\tuple{\Lambda}^j \in \mathbb{N}^{\K_j}$ the restriction of $\tuple{\Lambda}$ to $\K_j$. We show here how the isomorphism of Theorem~\ref{theorem:disjoint_guiver} is compatible with cyclotomic quotients, as defined in \eqref{relation:quiver_cyclo_KK}.

\subsubsection{Factorisation theorem}

For $\lambda \comp_d n$, we define the cyclotomic quotient of $\H_\lambda(Q)$  by:
\[
\H^{\tuple{\Lambda}}_\lambda(Q) \coloneqq \H_{\lambda_1}^{\tuple{\Lambda}^1}(Q^1) \otimes \cdots \otimes \H_{\lambda_d}^{\tuple{\Lambda}^d}(Q^d),
\]
that is, $\H_\lambda^{\tuple{\Lambda}}(Q)$ is the quotient of $\H_\lambda(Q)$ by the two-sided ideal generated by the elements:
\begin{equation}
\label{equation:relation_cyclo_tensor}
\sum_{m = 0}^{\Lambda_{k_a}} c_m y_a^m e(\tuple{k}) = 0 \qquad \forall \tuple{k} \in \K^{\mathfrak{t}^\lambda}, \forall a \in \{\boldsymbol{\lambda}_0 + 1, \dots, \boldsymbol{\lambda}_{d-1} + 1\}.
\end{equation}

\begin{theoreme}
\label{theorem:disjoint_guiver_cyclotomic}
The isomorphism of Theorem~\ref{theorem:disjoint_guiver} factors through the cyclotomic quotients, in other words we have:
\[
\H_n^{\tuple{\Lambda}}(Q) \simeq \bigoplus_{\lambda \comp_d n} \mathrm{Mat}_{m_\lambda} \H_\lambda^{\tuple{\Lambda}}(Q).
\]
\end{theoreme}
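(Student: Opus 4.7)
The plan is to show that the algebra isomorphism $\Psi_\lambda : e(\lambda)\H_n(Q) \to \Mat_{m_\lambda} \H_\lambda(Q)$ of \textsection\ref{subsubsection:theorem_disjoint_quiver} sends the cyclotomic ideal of the source bijectively onto that of the target, for each $\lambda \comp_d n$. Since the generators $\sum_m c_m y_1^m e(\tuple{k})$ of the cyclotomic ideal $\mathcal{I}_n$ of $\H_n(Q)$ from \eqref{relation:quiver_cyclo_KK} each lie in a unique summand $e(\lambda)\H_n(Q)$ (namely, the one indexed by the shape of $\tuple{k}$), and since $\Mat_{m_\lambda}$ commutes with quotienting by a two-sided ideal, this will yield the theorem by summing over $\lambda$. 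Denote $\mathcal{I}^\lambda := e(\lambda)\mathcal{I}_n$ and $\mathcal{J}^\lambda := \Mat_{m_\lambda}(\mathcal{I}_\lambda)$, where $\mathcal{I}_\lambda$ is the ideal defining $\H_\lambda^{\tuple{\Lambda}}(Q)$ via \eqref{equation:relation_cyclo_tensor}.

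The forward inclusion $\Psi_\lambda(\mathcal{I}^\lambda) \subseteq \mathcal{J}^\lambda$ is immediate from Remark~\ref{remark:image_generators}, which gives
\[
\Psi_\lambda\!\Bigl(\sum_m c_m y_1^m e(\tuple{k})\Bigr) = \Bigl(\sum_m c_m y_{\pi_{\mathfrak{t}}(1)}^m e(\tuple{k}^*)\Bigr) E_{\mathfrak{t},\mathfrak{t}}
\]
for $\tuple{k} \in \K^{\mathfrak{t}}$ with $\mathfrak{t} \in J^\lambda$ and $\tuple{k}^* := \pi_\mathfrak{t} \cdot \tuple{k} \in \K^{\mathfrak{t}^\lambda}$. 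By Proposition~\ref{proposition:explicit_pi}, the index $\pi_\mathfrak{t}(1) = \boldsymbol{\lambda}_{\mathfrak{t}_1 - 1} + 1$ is the initial position of the $\mathfrak{t}_1$-th block in the tensor factorisation $\H_\lambda(Q) = \bigotimes_j \H_{\lambda_j}(Q^j)$, and since $(\tuple{k}^*)_{\pi_\mathfrak{t}(1)} = k_1$, the degree $\Lambda_{k_1}$ of the cyclotomic polynomial matches. The parenthesised sum is therefore a generator of the cyclotomic ideal of the $\mathfrak{t}_1$-th tensor factor of $\H_\lambda(Q)$, so the whole expression lies in $\mathcal{J}^\lambda$.

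For the reverse inclusion, $\mathcal{J}^\lambda$ is generated as a two-sided ideal of $\Mat_{m_\lambda}\H_\lambda(Q)$ by the elements $\bigl(\sum_m c_m y_{\boldsymbol{\lambda}_{j-1}+1}^m e^\otimes(\tilde{\tuple{k}})\bigr) E_{\mathfrak{t}^\lambda, \mathfrak{t}^\lambda}$ for $j \in J$ with $\lambda_j > 0$ and $\tilde{\tuple{k}} \in \K^{\mathfrak{t}^\lambda}$, since arbitrary elementary matrices may be inserted by multiplication. For any such pair, choose $\mathfrak{t}^j \in J^\lambda$ with $(\mathfrak{t}^j)_1 = j$ (possible since $\lambda_j > 0$) and set $\tuple{k} := \pi_{\mathfrak{t}^j}^{-1} \cdot \tilde{\tuple{k}}$; the previous display then yields
\[
\Psi_\lambda\!\Bigl(\sum_m c_m y_1^m e(\tuple{k})\Bigr) = \Bigl(\sum_m c_m y_{\boldsymbol{\lambda}_{j-1}+1}^m e^\otimes(\tilde{\tuple{k}})\Bigr) E_{\mathfrak{t}^j, \mathfrak{t}^j},
\]
and multiplying on the left by $E_{\mathfrak{t}^\lambda, \mathfrak{t}^j}$ and on the right by $E_{\mathfrak{t}^j, \mathfrak{t}^\lambda}$ produces the desired generator inside the two-sided ideal $\Psi_\lambda(\mathcal{I}^\lambda)$. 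Thus $\Psi_\lambda(\mathcal{I}^\lambda) = \mathcal{J}^\lambda$ and the induced map on quotients is an isomorphism, whence the theorem follows by summing over $\lambda$. The only mild subtlety is matching the cyclotomic relations of $\H_n(Q)$, which all involve $y_1$, with those of the tensor factors of $\H_\lambda(Q)$, which involve the distinct first-variables $y_{\boldsymbol{\lambda}_{j-1}+1}$ for each $j$ with $\lambda_j > 0$; this is precisely what the freedom to vary $\mathfrak{t}_1$ together with the identity $\pi_\mathfrak{t}(1) = \boldsymbol{\lambda}_{\mathfrak{t}_1-1}+1$ provide.
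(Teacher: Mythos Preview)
Your proof is correct and follows essentially the same strategy as the paper's: both show that $\Psi_\lambda$ carries the cyclotomic ideal of $e(\lambda)\H_n(Q)$ onto $\Mat_{m_\lambda}\mathcal{I}_\lambda$ by checking generators in each direction, using the key identity $\pi_\mathfrak{t}(1) = \boldsymbol{\lambda}_{\mathfrak{t}_1-1}+1$ and, for the reverse inclusion, an auxiliary $\mathfrak{t}$ (the paper's $\mathfrak{s}$, your $\mathfrak{t}^j$) with prescribed first entry. The only cosmetic difference is that you invoke Remark~\ref{remark:image_generators} as a black box for the action of $\Psi_\lambda$ on $y_a e(\tuple{k})$, whereas the paper unwinds that computation inline via Proposition~\ref{proposition:inversion_psi_e(sigma)} and Lemma~\ref{lemma:ya_psipisigma}; and you handle the reverse inclusion by staying on the $\Psi_\lambda$ side, while the paper phrases it via $\Phi_\lambda$.
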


\begin{proof}
Let $\lambda \comp_d n$ and let $\mathfrak{I}$ (resp. $\mathfrak{I}_\lambda$) be the two-sided ideal of $e(\lambda)\H_n(Q)$ (resp. $\H_\lambda(Q)$) generated by the elements in \eqref{relation:quiver_cyclo_KK} (resp. \eqref{equation:relation_cyclo_tensor}). It suffices to prove that $\Psi_\lambda(\mathfrak{I})  = \mathrm{Mat}_{m_\lambda}\mathfrak{I}_{\lambda}$: we will prove $\Psi_\lambda(\mathfrak{I}) \subseteq \mathrm{Mat}_{m_\lambda} \mathfrak{I}_\lambda$ and $\mathfrak{I} \supseteq \Phi_\lambda(\mathrm{Mat}_{m_\lambda}\mathfrak{I}_\lambda)$.
\begin{itemize}
\item Each element of $\mathfrak{I}$ can be written as:
\[
\sum_{v, w \in \H_n(Q)} \sum_{\mathfrak{t} \in J^\lambda} \sum_{\tuple{k}^\mathfrak{t} \in \K^\mathfrak{t}} v \left[\sum_{m = 0}^{\Lambda_{k^\mathfrak{t}_1}} c_m y_1^m e(\tuple{k}^\mathfrak{t})\right] w.
\]
By \eqref{relation:quiver_psiae(i)}, Proposition~\ref{proposition:inversion_psi_e(sigma)} and Lemma~\ref{lemma:ya_psipisigma}, the previous element becomes:
\[
\sum_{v, w \in \H_n(Q)} \sum_{\mathfrak{t} \in J^\lambda} \sum_{\tuple{k}^\mathfrak{t} \in \K^\mathfrak{t}} v \psi_{\pi_\mathfrak{t}^{-1}} \left[ \sum_{m = 0}^{\Lambda_{k^\mathfrak{t}_1}} c_m y_{\pi_\mathfrak{t}(1)}^m  e(\pi_\mathfrak{t} \cdot \tuple{k}^\mathfrak{t})\right] \psi_{\pi_\mathfrak{t}} w.
\]
As $\Psi_\lambda$ is an algebra homomorphism and $\mathrm{Mat}_{m_\lambda} \mathfrak{I}_\lambda$ is a two-sided ideal, it suffices to prove that for any $\tuple{k} \in \K^{\mathfrak{t}^\lambda}$ and $\mathfrak{t} \in J^\lambda$ we have, with $a \coloneqq \pi_\mathfrak{t}(1)$:
\[
\sum_{m = 0}^{\Lambda_{k_a}} c_m \Psi_\lambda (y_a^m e(\tuple{k})) \in \mathrm{Mat}_{m_\lambda}(\mathfrak{I}_\lambda).
\]
The above element is exactly:
\[
\sum_{m = 0}^{\Lambda_{k_a}} c_m y_a^m e(\tuple{k}) E_{\mathfrak{t}^\lambda, \mathfrak{t}^\lambda}
\]
(recall that $\pi_{\mathfrak{t}^\lambda} = \mathrm{id}$), hence we are done since the element $\sum_{m = 0}^{\Lambda_{k_a}} c_m y_a^m e(\tuple{k})$ lies in $\mathfrak{I}_\lambda$; in particular, there is a $j \in J$ such that $a = \boldsymbol{\lambda}_{j-1} + 1$, cf. Proposition~\ref{proposition:explicit_pi}.

\item Each element of $\mathrm{Mat}_{m_\lambda} \mathfrak{I}_\lambda$ can be written:
\[
\sum_{\mathfrak{t}, \mathfrak{t}' \in J^\lambda} \sum_{v, w \in \H_\lambda(Q)} \sum_{\tuple{k} \in \K^{\mathfrak{t}^\lambda}} \sum_{a \in \{\boldsymbol{\lambda}_0 + 1, \dots, \boldsymbol{\lambda}_{d-1} + 1\}} v \left[\sum_{m = 0}^{\Lambda_{k_a}} c_m y_a^m e(\tuple{k}) E_{\mathfrak{t}', \mathfrak{t}}\right] w.
\]

As $\Phi_\lambda$ is an algebra homomorphism and $\mathfrak{I}$ is a two-sided ideal, it suffices to prove that for any $\mathfrak{t}, \mathfrak{t}' \in J^\lambda, \tuple{k} \in \K^{\mathfrak{t}^\lambda}$ and $a = \boldsymbol{\lambda}_{j-1} + 1$ for $j \in J$ we have:
\[
\sum_{m = 0}^{\Lambda_{k_a}} c_m \Phi_\lambda(y_a^m e(\tuple{k}) E_{\mathfrak{t}', \mathfrak{t}}) \in \mathfrak{I}.
\]

We consider an element $\mathfrak{s} \in J^\lambda$  which verifies $\mathfrak{s}_1 = j$ (we can take for instance $\mathfrak{s} \coloneqq (1, a) \cdot \mathfrak{t}^\lambda$); note that $\pi_\mathfrak{s}(1) = \boldsymbol{\lambda}_{j-1} + 1 = a$. Using \eqref{equation:Esigmasigma'_tautau'}, we can write the above element:
\[
\sum_{m = 0}^{\Lambda_{k_a}} c_m \Phi_\lambda(y_a^m e(\tuple{k}) E_{\mathfrak{t}', \mathfrak{s}}) \Phi_\lambda (E_{\mathfrak{s}, \mathfrak{t}}),
\]
hence it suffices to prove that
\[
\alpha \coloneqq \sum_{m = 0}^{\Lambda_{k_a}} c_m \Phi_\lambda(y_a^m e(\tuple{k}) E_{\mathfrak{t}', \mathfrak{s}}),
\]
belongs to the ideal $\mathfrak{I}$. We get:
\begin{align*}
\alpha
&= \sum_{m = 0}^{\Lambda_{k_a}} c_m \psi_{\pi_{\mathfrak{t}'}^{-1}} y_a^m e(\tuple{k}) \psi_{\pi_\mathfrak{s}}
\\
&= \sum_{m = 0}^{\Lambda_{k_a}} c_m \psi_{\pi_{\mathfrak{t}'}^{-1}} \psi_{\pi_\mathfrak{s}} y_{\pi_\mathfrak{s}^{-1}(a)}^m e(\tuple{k}^\mathfrak{s})
\\
\alpha &= \psi_{\pi_{\mathfrak{t}'}^{-1}} \psi_{\pi_\mathfrak{s}} \sum_{m = 0}^{\Lambda_{k_a}} c_m  y_1^m e(\tuple{k}^\mathfrak{s}),
\end{align*}
where we recall the notation $\tuple{k}^\mathfrak{s} \in \K^\mathfrak{s}$ from \eqref{equation:definition_isigma}. We have $k^\mathfrak{s}_1 = k_a$, thus we get:
\[
\alpha = \psi_{\pi_{\mathfrak{t}'}^{-1}} \psi_{\pi_\mathfrak{s}} \underbrace{\sum_{m = 0}^{\Lambda_{k^\mathfrak{s}_1}} c_m  y_1^m e(\tuple{k}^\mathfrak{s})}_{\in \mathfrak{I}},
\]
and we are done.
\end{itemize}
\end{proof}

\subsubsection{An alternative proof}

We explain here how we can get Theorem~\ref{theorem:disjoint_guiver_cyclotomic} from \cite[\textsection 3.2.3]{SVV}. We make the following assumption:
\[
Q_{k, k'}(u, v) = r_{k, k'} (u-v)^{-c_{k, k'}},
\]
where the $r_{k, k'}, c_{k, k'}$ are some scalars; we refer to \cite[\textsection 3.2.2]{SVV} for more details about the setting. In particular, we suppose that $\tuple{\Lambda} = \sum_{k \in \K} \Lambda_k \omega_k$ where the $\omega_k$ are the fundamental weights related to the ambient Cartan datum. 
We define the set $\K_{\tuple{\Lambda}}$ by (cf. \cite[\textsection 3.1.3]{SVV}):
\[
\K_{\tuple{\Lambda}} \coloneqq \big\lbrace (k, m) : k \in \K, m \in \{1, \dots, \Lambda_k\}\big\rbrace;
\]
we will also use the sets $\K_{\tuple{\Lambda}^j} \coloneqq {(\K_j)}_{\tuple{\Lambda}^j}$ for $j \in J$. For $t = (k, m) \in \K_{\tuple{\Lambda}}$, we write $k_t \coloneqq k$ and $\omega_t \coloneqq \omega_k$.
Finally, for convenience we introduce some notation:
\begin{itemize}
\item we write $\comp_{\tuple{\Lambda}}$ instead of $\comp_{\K_{\tuple{\Lambda}}}$;
\item if $\mu$ (respectively $\nu$) is an $r$-composition (resp. $s$-composition), we set $m_\mu^\nu \coloneqq \frac{\nu_1!\dots \nu_s!}{\mu_1 ! \dots \mu_r!}$; in particular with the $1$-partition $\nu \coloneqq (\mu_1 + \cdots + \mu_r)$ we recover $m_\mu^\nu = m_\mu$.
\end{itemize}

\begin{theoreme}[{\cite[Theorem~$3.15$]{SVV}}]
\label{theorem:svv}
There is an algebra isomorphism
\[
\H_n^{\tuple{\Lambda}}(Q) \simeq \bigoplus_{(n_t)_t \comp_{\tuple{\Lambda}} n} \mathrm{Mat}_{m_{(n_t)}} \left( \bigotimes_{t \in \K_{\tuple{\Lambda}}} \H_{n_t}^{\omega_t}(Q)\right).
\]
\end{theoreme}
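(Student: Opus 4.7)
Since the result is cited directly from \cite[Theorem $3.15$]{SVV}, the natural approach is to invoke their argument; nevertheless, the strategy is a refinement of our Theorem~\ref{theorem:disjoint_guiver_cyclotomic}, and I sketch how I would carry it out.

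The plan is as follows. The index set $J$ (which in Theorem~\ref{theorem:disjoint_guiver_cyclotomic} parametrised the parts of a partition of $\K$) is here replaced by $\K_{\tuple{\Lambda}}$, the multiset of fundamental weights occurring in $\tuple{\Lambda}$. First I would construct refined orthogonal idempotents $e(\tuple{t})$ for $\tuple{t} \in \K_{\tuple{\Lambda}}^n$ whose sum over the fibre above $\tuple{k} \coloneqq (k_{t_a})_a$ recovers the usual $e(\tuple{k}) \in \H_n^{\tuple{\Lambda}}(Q)$. The existence of such a refinement is the heart of the matter and requires splitting the cyclotomic minimal polynomial $\prod_k (y_1 - \cdot)^{\Lambda_k}$ into individual fundamental-weight factors within the algebra; this is where the assumption $Q_{k, k'}(u, v) = r_{k, k'}(u - v)^{-c_{k, k'}}$ enters, as the categorification theorem of Kang--Kashiwara \cite{KanKa} supplies the necessary dimension control and weight-space structure.

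Once the refined idempotents are in place, the compositions $(n_t)_t \comp_{\tuple{\Lambda}} n$ parametrise the $\mathfrak{S}_n$-orbits of tuples $\tuple{t} \in \K_{\tuple{\Lambda}}^n$, each orbit having size $m_{(n_t)}$. I would then follow \textsection\ref{subsubsection:theorem_disjoint_quiver} essentially verbatim: for each composition $(n_t)$, define intertwining maps $\Phi$ and $\Psi$ between elementary matrix positions and $e(\mathfrak{t}')\H_n^{\tuple{\Lambda}}(Q)e(\mathfrak{t})$ via $\psi$-words attached to minimal-length Young-coset representatives $\pi_{\mathfrak{t}}$, and invoke Proposition~\ref{proposition:inversion_psi_e(sigma)} (whose proof adapts unchanged) to check they are mutually inverse. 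The diagonal block is then identified with the tensor product $\bigotimes_{t} \H_{n_t}^{\omega_t}(Q)$ by an analogue of Proposition~\ref{proposition:Hlambda_ssalg}, noting that the refined cyclotomic relations distribute across the tensor factors precisely because the splitting of idempotents was engineered to separate the fundamental weights.

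The main obstacle is the first step: constructing the refined idempotents. Without the Cartan-datum hypothesis on $Q$, the cyclotomic relation may fail to split cleanly into fundamental-weight factors, and one would be unable to realise the splitting by genuine elements of $\H_n^{\tuple{\Lambda}}(Q)$. Everything else --- the combinatorics of minimal-length coset representatives (\textsection\ref{subsubsection:young_subgroups}), the identification of the tensor-product subalgebra with an idempotent-cut subalgebra (Proposition~\ref{proposition:Hlambda_ssalg}), and the matrix-algebra reassembly via $\Phi_\lambda, \Psi_\lambda$ --- generalises mechanically from the case of a partition of $\K$ to the case of the multiset $\K_{\tuple{\Lambda}}$, at the cost only of bookkeeping.
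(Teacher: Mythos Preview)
The paper does not prove this statement at all; it simply cites \cite[Theorem~3.15]{SVV} and remarks in one sentence that the idea there is to consider refined idempotents indexed by $\K_{\tuple{\Lambda}}^n$ rather than $\K^n$, after which the matrix-algebra argument proceeds as in Theorem~\ref{theorem:disjoint_guiver}. Your proposal correctly identifies the result as a citation, and your sketch of the SVV strategy (refined idempotents splitting the cyclotomic relation into fundamental-weight pieces, followed by the same Young-coset/intertwiner machinery as in \textsection\ref{subsubsection:theorem_disjoint_quiver}) matches the paper's own one-line description of that proof.
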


If we look closer to the proof of \cite{SVV}, comparing to the proof of Theorem~\ref{theorem:disjoint_guiver} the idea is still to consider some elements $e(\mathfrak{t})$ but with idempotents which ``refine'' the idempotents $e(\tuple{k})$ (these idempotents are indexed by $\K_{\tuple{\Lambda}}^n$, which is much bigger than $\K^n$). In particular, the following isomorphism for any $\lambda \comp_d n$,  writing $(n^j_t)_t$ for the restriction of $(n_t)_{t \in \K_{\tuple{\Lambda}}}$ to $\K_{\tuple{\Lambda}^j}$:
\begin{equation}
\label{equation:svv_e(lambda)part}
\mathrm{Mat}_{m_\lambda} \H_\lambda^{\tuple{\Lambda}}(Q) \simeq \bigoplus_{\substack{
(n_t)_t \comp_{\tuple{\Lambda}} n
\\
\text{s.t.}\, \forall j, (n^j_t)_t \comp_{\tuple{\Lambda}^j} \lambda_j
}}
\mathrm{Mat}_{m_{(n_t)}} \left( \bigotimes_{t \in \K_{\tuple{\Lambda}}} \H_{n_t}^{\omega_t}(Q)\right)
\end{equation}
implies our Theorem~\ref{theorem:disjoint_guiver_cyclotomic} by summing over all $\lambda \comp_d n$.
In order to prove \eqref{equation:svv_e(lambda)part}, we can simply apply Theorem~\ref{theorem:svv} to the factors $\H_{\lambda_j}^{\tuple{\Lambda}^j}(Q^j)$ of $\H_\lambda^{\tuple{\Lambda}}(Q)$. We have, for $j \in J$:
\begin{equation}
\label{equation:svv_facteur}
\H_{\lambda_j}^{\tuple{\Lambda}_j}(Q^j)
\simeq
\bigoplus_{\left(n^j_t\right)_t \comp_{\tuple{\Lambda}^j} \lambda_j} \mathrm{Mat}_{m_{\left(n^j_t\right)}} \left( \bigotimes_{t \in \K_{\tuple{\Lambda}^j}} \H_{n^j_t}^{\omega_t}(Q^j)\right).
\end{equation}

Before going further, we give the following lemma. Let us mention that we can find a non-cyclotomic statement in \cite[Corollary~3.8]{Rou}; see also \cite[Proposition 2.4.6]{Ma2} and \cite[Lemma 1.16]{BoMa}.

\begin{lemme}
\label{lemma:svv_restriction_carquois}
Let $j \in J$. If $k \in \K_j$ then
$
\H_n^{\omega_k}(Q) \simeq \H_n^{\omega_k}(Q^j)$.
\end{lemme}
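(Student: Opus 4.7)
The plan is in two steps: first, show that in $\H_n^{\omega_k}(Q)$ every idempotent $e(\tuple{k})$ indexed by a tuple $\tuple{k} \in \K^n \setminus \K_j^n$ vanishes; second, use this to produce mutually inverse unital algebra homomorphisms between $\H_n^{\omega_k}(Q)$ and $\H_n^{\omega_k}(Q^j)$.

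For the first step, I would proceed by strong induction on the integer $b(\tuple{k}) \coloneqq \min\{a : k_a \notin \K_j\}$. The base case $b = 1$ follows directly from the cyclotomic relation \eqref{relation:quiver_cyclo_KK}: since $k_1 \neq k$, we have $\Lambda_{k_1} = \omega_k(k_1) = 0$, and with $c_{\Lambda_{k_1}} = c_0 = 1$ the relation collapses to $e(\tuple{k}) = 0$. For $b \geq 2$, by minimality of $b$ the component $k_{b-1}$ lies in $\K_j$ while $k_b$ does not, so these two indices belong to different blocks and \eqref{equation:condition_Q} gives $Q_{k_{b-1}, k_b} = 1$; hence \eqref{relation:quiverQ_psia^2} reads $\psi_{b-1}^2 e(\tuple{k}) = e(\tuple{k})$. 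On the other hand, \eqref{relation:quiver_psiae(i)} gives $\psi_{b-1} e(\tuple{k}) = e(s_{b-1} \cdot \tuple{k}) \psi_{b-1}$, and the tuple $s_{b-1} \cdot \tuple{k}$ has bad position $b - 1 < b$, so by the induction hypothesis $e(s_{b-1} \cdot \tuple{k}) = 0$. Combining these equalities yields $e(\tuple{k}) = \psi_{b-1}^2 e(\tuple{k}) = 0$.

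Granting this vanishing, $\sum_{\tuple{k} \in \K_j^n} e(\tuple{k}) = 1$ inside $\H_n^{\omega_k}(Q)$. I would then define $\phi : \H_n^{\omega_k}(Q^j) \to \H_n^{\omega_k}(Q)$ by sending each defining generator to its namesake: this is a well-defined unital algebra homomorphism because the defining relations of $\H_n(Q^j)$ are precisely the restrictions of those of $\H_n(Q)$ to tuples in $\K_j^n$, and the cyclotomic relation at the weight $\omega_k$ is identical in both (since $\omega_k$ is supported on $\K_j$). In the reverse direction, define $\tilde{\phi} : \H_n^{\omega_k}(Q) \to \H_n^{\omega_k}(Q^j)$ by $e(\tuple{k}) \mapsto e(\tuple{k})$ when $\tuple{k} \in \K_j^n$ and $e(\tuple{k}) \mapsto 0$ otherwise, while fixing $y_a$ and $\psi_a$. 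The well-definedness is a routine check: relations of $\H_n(Q)$ involving only idempotents from $\K_j^n$ become the corresponding relations of $\H_n(Q^j)$, whereas any relation involving an idempotent indexed outside $\K_j^n$ becomes trivial in the image; the cyclotomic relations transport similarly.

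The main obstacle is the vanishing argument in step 1: its content is that the cyclotomic relation at position $1$ \emph{propagates} to every later position precisely because of the disjoint-block hypothesis \eqref{equation:condition_Q} on $Q$. Once this is in place, both $\phi$ and $\tilde{\phi}$ restrict to the identity on generators and are therefore mutually inverse.
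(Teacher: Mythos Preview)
Your proposal is correct and follows essentially the same approach as the paper. The paper's proof also reduces the lemma to the vanishing of all idempotents $e(\tuple{k})$ with $\tuple{k}\in\K^n\setminus\K_j^n$ and establishes this by the same mechanism: for a tuple whose first ``bad'' entry sits at position $b$, one uses $Q_{k_{b-1},k_b}=1$ (from \eqref{equation:condition_Q}) to get $\psi_{b-1}^2 e(\tuple{k})=e(\tuple{k})$, and then $\psi_{b-1}e(\tuple{k})=e(s_{b-1}\cdot\tuple{k})\psi_{b-1}$ together with the inductive hypothesis to conclude. The only cosmetic differences are that the paper organises the induction on the index~$a$ (showing that any bad entry among the first $a$ positions forces $e(\tuple{k})=0$) rather than on the minimal bad position, and that the paper leaves the construction of the inverse homomorphisms implicit once the vanishing is in hand, whereas you spell out $\phi$ and $\tilde\phi$ explicitly.
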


\begin{proof}
It suffices to prove that every $e(\tuple{k}) \in \H_n(Q)$ with $\tuple{k} \in \K^n \setminus \K_j^n$ vanishes in $\H_n^{\omega_k}(Q)$. To that extent, we prove by induction on $a \in \{1, \dots, n\}$ the following statement:
\begin{equation}
\label{equation:proof_svv_induction}
\forall \tuple{k} \in \K^n, [\exists b \in \{1, \dots, a\}, k_b \notin \K_j \implies e(\tuple{k}) = 0 \text{ in } \H_n^{\omega_k}(Q)].
\end{equation}
First, we shall verify this proposition for $a = 1$: let $\tuple{k} \in \K^n$ such that $\exists b \in \{1, \dots, 1\}, k_b \notin \K_j$. We obviously have $k_1 \notin \K_j$, in particular $k_1 \neq k$ thus it follows directly from the cyclotomic condition \eqref{relation:quiver_cyclo_KK} that $e(\tuple{k}) = 0$ in $\H_n^{\omega_k}(Q)$. We now assume that \eqref{equation:proof_svv_induction} is verified for some $a \in \{1, \dots, n - 1\}$ and we let $\tuple{k} \in \K^n$ such that $\exists b \in \{1, \dots, a+1\}, k_b \notin \K_j$. We know from the induction hypothesis that $e(\tuple{k}) = 0$ in $\H_n^{\omega_k}(Q)$ if $k_a \notin \K_j$ or $k_{a+1} \in \K_j$, hence it remains to deal with the case $k_a \in \K_j$ and $k_{a+1} \notin \K_j$. Recalling \eqref{equation:condition_Q}, this implies that $Q_{k_a, k_{a+1}} = 1$. In particular, the defining relation \eqref{relation:quiverQ_psia^2} becomes:
\begin{equation}
\label{equation:proof_svv_psi2}
\psi_a^2 e(\tuple{k}) = e(\tuple{k}).
\end{equation}
Besides, using \eqref{relation:quiver_psiae(i)} and the induction hypothesis, we have:
\[
\psi_a e(\tuple{k}) = \underbrace{e(s_a \cdot \tuple{k})}_{= 0} \psi_a = 0 \text{ in } \H_n^{\omega_k}(Q).
\]
Thus, left-multiplying by $\psi_a$ and using \eqref{equation:proof_svv_psi2} we get $e(\tuple{k}) = 0$ in $\H_n^{\omega_k}(Q)$ which ends the induction. Finally, for $a = n$, we get that if $\tuple{k} \in \K^n \setminus \K_j^n$ then $e(\tuple{k}) = 0$ in $\H_n^{\omega_k}(Q)$.
\end{proof}

We now go back to the proof of \eqref{equation:svv_e(lambda)part}.
Using Lemma~\ref{lemma:svv_restriction_carquois}, the isomorphism \eqref{equation:svv_facteur} gives:
\[
\H_\lambda^{\tuple{\Lambda}}(Q)
\simeq
\bigotimes_{j \in J} \H_{\lambda_j}^{\tuple{\Lambda}_j}(Q^j)
\simeq
\bigotimes_{j \in J}
\bigoplus_{\left(n^j_t\right)_t \comp_{\tuple{\Lambda}^j} \lambda_j}
\mathrm{Mat}_{m_{\left(n^j_t\right)}} \left( \bigotimes_{t \in \K_{\tuple{\Lambda}^j}} \H_{n^j_t}^{\omega_t}(Q)\right).
\]
We obtain:
\begin{align*}
\H_\lambda^{\tuple{\Lambda}}(Q)
&\simeq 
 \bigoplus_{\substack{
 (n_t)_t \comp_{\tuple{\Lambda}} n
 \\
 \text{s.t.}\, \forall j, (n^j_t)_t \comp_{\tuple{\Lambda}^j} \lambda_j
 }}
 \bigotimes_{j \in J}
 \mathrm{Mat}_{m_{\left(n^j_t\right)}} \left( \bigotimes_{t \in \K_{\tuple{\Lambda}^j}} \H_{n^j_t}^{\omega_t}(Q)\right)
\\
&\simeq
\bigoplus_{\substack{
 (n_t)_t \comp_{\tuple{\Lambda}} n
 \\
 \text{s.t.}\, \forall j, (n^j_t)_t \comp_{\tuple{\Lambda}^j} \lambda_j
 }}
 \mathrm{Mat}_{m_{\left(n^1_t\right)} \cdots m_{\left(n^d_t\right)}} \left(
 \bigotimes_{j \in J}
  \bigotimes_{t \in \K_{\tuple{\Lambda}^j}} \H_{n^j_t}^{\omega_t}(Q)\right)
\\
\H_\lambda^{\tuple{\Lambda}}(Q)
&\simeq
\bigoplus_{\substack{
 (n_t)_t \comp_{\tuple{\Lambda}} n
 \\
 \text{s.t.}\, \forall j, (n^j_t)_t \comp_{\tuple{\Lambda}^j} \lambda_j
 }}
 \mathrm{Mat}_{m_{(n_t)}^\lambda} \left(
\bigotimes_{t \in \K_{\tuple{\Lambda}}} \H_{n_t}^{\omega_t}(Q)\right).
\end{align*}
Finally, we deduce \eqref{equation:svv_e(lambda)part} and thus Theorem~\ref{theorem:disjoint_guiver_cyclotomic} from the equality $m_\lambda m_{(n_t)}^\lambda = m_{(n_t)}$.

\subsubsection{An application}
\label{subsubsection:an_application}

Here we explain how our Theorem~\ref{theorem:disjoint_guiver_cyclotomic} is related to our previous result Theorem~\ref{theorem:main}. We suppose that $A = \mathbb{C}$ and that all the $\K_j$ for $j \in J \simeq \{1, \dots, d\}$ have the same finite cardinality $e \geq 2$. Identifying the sets $\K_j$ with $I = \mathbb{Z}/e\mathbb{Z}$, we have $\K \simeq K = I \times J$. We also assume that $\tuple{\Lambda}^1 = \cdots = \tuple{\Lambda}^d$; in particular, we will simply write $\tuple{\Lambda}$ for each of these parts. Finally, we consider $q \in \mathbb{C}^{\times}$ a primitive $e$th root of unity, and we write:
\[
\mathrm{BK} : \widehat{\H}_n^{\tuple{\Lambda}}(q) \overset{\sim}\to \H_n^{\tuple{\Lambda}}(\Gamma_e)
\]
for the $\mathbb{C}$-algebra isomorphism of \cite{BrKl}.

We have an algebra isomorphism, constructed by Poulain d'Andecy \cite{PA}:
\[
\mathrm{JPA} : 
\widehat{\Y}_{d, n}^{\tuple{\Lambda}}(q) \overset{\sim}{\to} \bigoplus_{\lambda \comp_d n} \mathrm{Mat}_{m_\lambda} \widehat{\H}_\lambda^{\tuple{\Lambda}}(q),
\]
where $\widehat{\H}_\lambda^{\tuple{\Lambda}}(q) \coloneqq \widehat{\H}_{\lambda_1}^{\tuple{\Lambda}}(q) \otimes \cdots \otimes \widehat{\H}_{\lambda_d}^{\tuple{\Lambda}}(q)$. This isomorphism is a generalisation of the main result of \cite{JaPA} (which is in fact a particular case of a result of Lusztig~\cite{Lu}), and is defined on the generators as follows:
\begin{align}
\label{equation:JPA_ta}
\mathrm{JPA}(t_a) &= \sum_{\mathfrak{t} \in J^n} \xi^{\mathfrak{t}_a} E_{\mathfrak{t}, \mathfrak{t}},
\\
\label{equation:JPA_Xa}
\mathrm{JPA}(X_a) &= \sum_{\mathfrak{t} \in J^n} X_{\pi_\mathfrak{t}(a)} E_{\mathfrak{t}, \mathfrak{t}},
\\
\label{equation:JPA_psia}
\mathrm{JPA}(g_a) &= \sum_{\substack{\mathfrak{t} \in J^n \\ \mathfrak{t}_a = \mathfrak{t}_{a+1}}} g_{\pi_\mathfrak{t}(a)} E_{\mathfrak{t}, \mathfrak{t}}
+ \sum_{\substack{\mathfrak{t} \in J^n \\ \mathfrak{t}_a \neq \mathfrak{t}_{a+1}}} \sqrt{q} E_{\mathfrak{t}, s_a \cdot \mathfrak{t}}.
\end{align}

\begin{remarque}
Note two slight differences with \cite{PA}:
\begin{itemize}
\item our elements $E_{\mathfrak{t}', \mathfrak{t}}$ are written $E_\chi$, where $\chi$ is a character of $(\mathbb{Z}/d\mathbb{Z})^n = J^n$;
\item Poulain d'Andecy considers left cosets instead of our right ones, in particular his minimal length representatives $\pi_\chi$ verify $\pi_\chi = \pi_\mathfrak{t}^{-1}$.
\end{itemize}
\end{remarque}

We recall from \eqref{equation:gamma=union_gammae} that $\Gamma = \amalg_{j \in J} \Gamma_e$; in particular, its vertex set is exactly $\K \simeq K = I \times J$.
The two previous results, together with our Theorem~\ref{theorem:disjoint_guiver_cyclotomic}, gives straightforwardly the following theorem.

\begin{theoreme}
\label{theorem:yh_are_cyclotomic_qh}
We have an algebra isomorphism:
\[
\Phi_n^{\tuple{\Lambda}} \circ \mathrm{BK} \circ \mathrm{JPA} : 
\widehat{\Y}_{d, n}^{\tuple{\Lambda}}(q) \simeq \H_n^{\tuple{\Lambda}}(\Gamma),
\]
where:
\begin{itemize}
\item the homomorphism $\mathrm{BK} : \oplus_{\lambda} \mathrm{Mat}_{m_\lambda} \widehat{\H}_\lambda^{\tuple{\Lambda}}(q) \to \oplus_\lambda \mathrm{Mat}_{m_\lambda} \H_\lambda^{\tuple{\Lambda}}(\Gamma)$ is naturally induced by
$\mathrm{BK} : \widehat{\H}_n^{\tuple{\Lambda}}(q) \to \H_n^{\tuple{\Lambda}}(\Gamma_e)$;
\item the homomorphism $\Phi_n^{\tuple{\Lambda}} : \oplus_\lambda \mathrm{Mat}_{m_\lambda} \H_\lambda^{\tuple{\Lambda}}(\Gamma) \to \H_n^{\tuple{\Lambda}}(\Gamma)$ is the isomorphism of Theorem~\ref{theorem:disjoint_guiver_cyclotomic}, that is, induced by the homomorphism $\Phi_n : \oplus_\lambda \mathrm{Mat}_{m_\lambda} \H_\lambda(\Gamma) \to \H_n(\Gamma)$.
\end{itemize}
\end{theoreme}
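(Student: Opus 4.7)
The plan is to observe that the stated map is a composition of three algebra isomorphisms already established, so the only work is to check that the three pieces can be correctly chained together.

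First I would verify that $\mathrm{JPA}$ is a well-defined $\mathbb{C}$-algebra isomorphism. This is given by \cite{PA}, with the explicit formulas \eqref{equation:JPA_ta}--\eqref{equation:JPA_psia} on generators and target $\bigoplus_{\lambda\comp_d n}\mathrm{Mat}_{m_\lambda}\widehat{\H}_\lambda^{\tuple{\Lambda}}(q)$; nothing extra is needed from us. Next, since $\widehat{\H}_\lambda^{\tuple{\Lambda}}(q)=\widehat{\H}_{\lambda_1}^{\tuple{\Lambda}}(q)\otimes\cdots\otimes\widehat{\H}_{\lambda_d}^{\tuple{\Lambda}}(q)$, applying the Brundan--Kleshchev isomorphism $\widehat{\H}_{\lambda_j}^{\tuple{\Lambda}}(q)\simeq\H_{\lambda_j}^{\tuple{\Lambda}}(\Gamma_e)$ to each tensor factor and then inducing on matrices yields an algebra isomorphism
\[
\mathrm{BK}:\bigoplus_{\lambda\comp_d n}\mathrm{Mat}_{m_\lambda}\widehat{\H}_\lambda^{\tuple{\Lambda}}(q)\overset{\sim}{\longrightarrow}\bigoplus_{\lambda\comp_d n}\mathrm{Mat}_{m_\lambda}\H_\lambda^{\tuple{\Lambda}}(\Gamma),
\]
where the target is rewritten using $\H_\lambda^{\tuple{\Lambda}}(\Gamma)=\bigotimes_{j\in J}\H_{\lambda_j}^{\tuple{\Lambda}}(\Gamma_e)$: this is exactly the definition given just before Theorem~\ref{theorem:disjoint_guiver_cyclotomic}, since here $\Gamma=\amalg_{j\in J}\Gamma_e$ so each subquiver $\Gamma^j$ equals $\Gamma_e$.

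Finally, I would invoke Theorem~\ref{theorem:disjoint_guiver_cyclotomic} with $Q$ being the matrix associated with $\Gamma$: this provides the algebra isomorphism $\Phi_n^{\tuple{\Lambda}}:\bigoplus_{\lambda\comp_d n}\mathrm{Mat}_{m_\lambda}\H_\lambda^{\tuple{\Lambda}}(\Gamma)\overset{\sim}{\to}\H_n^{\tuple{\Lambda}}(\Gamma)$. Composing the three maps yields the claimed $\mathbb{C}$-algebra isomorphism $\widehat{\Y}_{d,n}^{\tuple{\Lambda}}(q)\simeq\H_n^{\tuple{\Lambda}}(\Gamma)$.

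There is no real obstacle here since all three ingredients have been proved elsewhere or earlier in the paper; the only point requiring care is purely bookkeeping, namely that the tensor decomposition $\H_\lambda^{\tuple{\Lambda}}(\Gamma)=\bigotimes_j\H_{\lambda_j}^{\tuple{\Lambda}}(\Gamma_e)$ used to transport $\mathrm{BK}$ factor-wise matches the one used to define $\H_\lambda(Q)$ in Theorem~\ref{theorem:disjoint_guiver_cyclotomic}. The more substantial content of this comparison—namely, how the images of the generators of $\widehat{\Y}_{d,n}^{\tuple{\Lambda}}(q)$ under the composite match the Yokonuma--Hecke generators of $\H_{\alpha}^{\tuple{\Lambda}}(\Gamma)$ constructed directly in Section~\ref{section:yokonuma_hecke_generators}—is what will be addressed in the subsequent Theorem~\ref{theorem:commutative_diagram_JaPA}; it is not needed for the mere existence of the isomorphism asserted here.
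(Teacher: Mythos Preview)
Your proposal is correct and follows exactly the paper's own approach: the paper simply states that the two previous results (JPA and BK) together with Theorem~\ref{theorem:disjoint_guiver_cyclotomic} give the theorem ``straightforwardly'', i.e.\ as a composition of three established isomorphisms. Your additional remark that the generator-level comparison is deferred to Theorem~\ref{theorem:commutative_diagram_JaPA} is also accurate.
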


An algebra isomorphism $\widehat{\Y}_{d, n}^{\tuple{\Lambda}}(q) \to \H_n^{\tuple{\Lambda}}(\Gamma)$ was already constructed in Theorem~\ref{theorem:main}; we shall denote it by $\widetilde{\mathrm{BK}}$.
An interesting question is to know whether we recover the same isomorphism as above. In other words, does the diagram of Figure~\ref{figure:commutative_diagram} commute?
\begin{figure}[h]
\begin{center}
\begin{tikzpicture}
[descr/.style={fill=white,inner sep=5pt},
>=angle 90]
\matrix (m) [matrix of math nodes, row sep=3em,
column sep=5em]
{\widehat{\Y}_{d, n}^{\tuple{\Lambda}}(q)
&
\displaystyle\bigoplus_{\lambda \comp_d n}^{\phantom{\lambda \comp_d n}}
\mathrm{Mat}_{m_\lambda} \widehat{\H}_\lambda^{\tuple{\Lambda}}(q)
\\
\H_n^{\tuple{\Lambda}}(\Gamma)
&\displaystyle\bigoplus_{\lambda \comp_d n}^{\phantom{\lambda \comp_d n}}
\mathrm{Mat}_{m_\lambda} \H_\lambda^{\tuple{\Lambda}}(\Gamma)
\\};

\coordinate [shift={(0ex, -2.3ex)}]
(m12) at (m-1-2);
\coordinate [shift={(0ex, 2.3ex)}]
(m22) at (m-2-2);

\path[->]
(m-1-1)
edge node[auto] {$\mathrm{JPA}$} 
(m-1-2);

\path[->] (m-1-1) edge node[auto] {$\widetilde{\mathrm{BK}}$} (m-2-1);

\path[->]
(m12) edge node[auto] {$\mathrm{BK}$} (m22);

\path[->]
(m-2-2)
edge node[auto] {$\Phi_n^{\tuple{\Lambda}}$} (m-2-1);
\end{tikzpicture}
\caption{A commutative diagram?}
\label{figure:commutative_diagram}
\end{center}
\end{figure}

As we deal with algebra homomorphisms, it suffices to check that the images of the generators of $\widehat{\Y}_{d, n}^{\tuple{\Lambda}}(q)$ are the same. We will use the following notation: for $\mathfrak{t} \in J^n$ we set $\mathfrak{t}^* \coloneqq \pi_\mathfrak{t} \cdot \mathfrak{t}$. (With $\lambda \coloneqq [\mathfrak{t}]$, we have of course $\mathfrak{t}^* = \mathfrak{t}^\lambda$.) Moreover, we will keep on using the notation $\mathfrak{t} \in J^n$ of Section~\ref{section:disjoint_guiver} for the elements we denoted by $\tuple{j} \in J^n$ from Section~\ref{section:setting} to Section~\ref{section:degenerate}.

\paragraph*{Image of $t_a$.} Let $1 \leq a \leq n$. Recall from \textsection\ref{subsection:definition_YH_generators} that:
\begin{equation}
\label{equation:image_BKtilde_ta}
\widetilde{\mathrm{BK}}(t_a) = \sum_{\tuple{j} \in J^n} e(\tuple{j}) \xi^{j_a} = \sum_{\mathfrak{t} \in J^n} e(\mathfrak{t}) \xi^{\mathfrak{t}_a} \in \H_n^{\tuple{\Lambda}}(\Gamma).
\end{equation}
Recalling \eqref{equation:JPA_ta}, we obtain:
\[
\mathrm{BK} \circ \mathrm{JPA}(t_a) = \sum_{\mathfrak{t} \in J^n} \sum_{\tuple{k} \in K^{\mathfrak{t}^*}} \xi^{\mathfrak{t}_a} e(\tuple{k}) E_{\mathfrak{t}, \mathfrak{t}}
\in \bigoplus_{\lambda \comp_d n} \mathrm{Mat}_{m_\lambda}\H_\lambda^{\tuple{\Lambda}}(\Gamma).
\]
Hence, with the usual manipulations, we get:
\begin{align*}
\Phi_n^{\tuple{\Lambda}} \circ \mathrm{BK} \circ \mathrm{JPA}(t_a)
&=
 \sum_{\mathfrak{t} \in J^n} \sum_{\tuple{k} \in K^{\mathfrak{t}^*}} \xi^{\mathfrak{t}_a} \psi_{\pi_\mathfrak{t}^{-1}} \psi_{\pi_\mathfrak{t}} e(\pi_\mathfrak{t}^{-1} \cdot \tuple{k})
\\
&=
\sum_{\mathfrak{t} \in J^n} \sum_{\tuple{k} \in K^{\mathfrak{t}}} \xi^{\mathfrak{t}_a} e(\tuple{k})
\\
\Phi_n^{\tuple{\Lambda}} \circ \mathrm{BK} \circ \mathrm{JPA}(t_a)
&=
\sum_{\mathfrak{t} \in J^n} \xi^{\mathfrak{t}_a} e(\mathfrak{t}) \in \H_n^{\tuple{\Lambda}}(\Gamma),
\end{align*}
thus it coincides with \eqref{equation:image_BKtilde_ta}.

\paragraph*{Image of $X_a$.} (It is in fact enough to study the case $a = 1$).
Let $1 \leq a \leq n$. Recall from  \textsection\ref{subsection:definition_YH_generators} that:
\begin{equation}
\label{equation:image_BKtilde_Xa}
\widetilde{\mathrm{BK}}(X_a) = \sum_{\tuple{i} \in I^n} q^{i_a}(1-y_a) e(\tuple{i}) = \sum_{\tuple{k} \in K^n} q^{k_a} (1 - y_a) e(\tuple{k}) \in \H_n^{\tuple{\Lambda}}(\Gamma),
\end{equation}
where we write $q^k \coloneqq q^i$ for $k = (i, j) \in K = I \times J$. Recalling \eqref{equation:JPA_Xa}, we get:
\[
\mathrm{BK} \circ \mathrm{JPA}(X_a) = \sum_{\mathfrak{t} \in J^n} X_{\pi_\mathfrak{t}(a)} E_{\mathfrak{t}, \mathfrak{t}}
\in
\bigoplus_{\lambda \comp_d n} \mathrm{Mat}_{m_\lambda}\H_\lambda^{\tuple{\Lambda}}(\Gamma).
\]
We write, where $j \coloneqq \mathfrak{t}_a$ and $\lambda \coloneqq [\mathfrak{t}]$:
\[
X_{\pi_\mathfrak{t}(a)} = \sum_{\tuple{k}^j \in \K_j^{\lambda_j}} q^{k^j_{\pi_\mathfrak{t}(a)}}(1-y_{\pi_\mathfrak{t}(a)})e(\tuple{k}^j) \in \H_{\lambda_j}^{\tuple{\Lambda}}(\Gamma_e) \subseteq \H_\lambda^{\tuple{\Lambda}}(\Gamma),
\]
where $\tuple{k}^j \in \K_j^{\lambda_j}$ is indexed by $(\boldsymbol{\lambda}_{j-1}+1,\dots, \boldsymbol{\lambda}_j)$. Hence, we have:
\[
X_{\pi_\mathfrak{t}(a)} = \sum_{\tuple{k} \in K^{\mathfrak{t}^\lambda}}  q^{k_{\pi_\mathfrak{t}(a)}}(1-y_{\pi_\mathfrak{t}(a)})e(\tuple{k}) \in \H_\lambda^{\tuple{\Lambda}}(\Gamma),
\]
and thus:
\begin{align*}
\Phi_n^{\tuple{\Lambda}} \circ \mathrm{BK} \circ \mathrm{JPA} (X_a)
&=
\sum_{\mathfrak{t} \in J^n} \sum_{\tuple{k} \in K^{\mathfrak{t}^*}} q^{k_{\pi_\mathfrak{t}(a)}} \Phi_n^{\tuple{\Lambda}}\big((1-y_{\pi_\mathfrak{t}(a)})e(\tuple{k}) E_{\mathfrak{t}, \mathfrak{t}}\big)
\\
&=
\sum_{\mathfrak{t} \in J^n} \sum_{\tuple{k} \in K^{\mathfrak{t}^*}} q^{k_{\pi_\mathfrak{t}(a)}} \psi_{\pi_\mathfrak{t}^{-1}}  (1-y_{\pi_\mathfrak{t}(a)})e(\tuple{k}) \psi_{\pi_\mathfrak{t}}
\\
&=
\sum_{\mathfrak{t} \in J^n} \sum_{\tuple{k} \in K^{\mathfrak{t}^*}} q^{k_{\pi_\mathfrak{t}(a)}} \psi_{\pi_\mathfrak{t}^{-1}}  (1-y_{\pi_\mathfrak{t}(a)})\psi_{\pi_\mathfrak{t}} e(\tuple{k}^\mathfrak{t}) 
\\
\Phi_n^{\tuple{\Lambda}} \circ \mathrm{BK} \circ \mathrm{JPA} (X_a)
&=
\sum_{\mathfrak{t} \in J^n} \sum_{\tuple{k}^\mathfrak{t} \in K^\mathfrak{t}} q^{k^\mathfrak{t}_a} \psi_{\pi_\mathfrak{t}^{-1}} \psi_{\pi_\mathfrak{t}} (1 - y_a) e(\tuple{k}^\mathfrak{t}),
\end{align*}
where we have used Lemma~\ref{lemma:ya_psipisigma}. Hence, using Proposition~\ref{proposition:inversion_psi_e(sigma)} we finally get:
\[
\Phi_n^{\tuple{\Lambda}} \circ \mathrm{BK} \circ \mathrm{JPA} (X_a)
=
\sum_{\mathfrak{t} \in J^n} \sum_{\tuple{k} \in K^\mathfrak{t}} q^{k_a} (1 - y_a) e(\tuple{k})
=
\sum_{\tuple{k} \in K^n} q^{k_a} (1 - y_a) e(\tuple{k}),
\]
which is \eqref{equation:image_BKtilde_Xa}.

\paragraph*{Image of $g_a$.}
Let $1 \leq a < n$. Recall from \textsection\ref{subsection:definition_YH_generators} that:
\begin{equation}
\label{equation:image_BKtilde_ga}
\widetilde{\mathrm{BK}}(g_a) = \sum_{\tuple{k} \in K^n} (\psi_a Q_a(\tuple{k}) - P_a(\tuple{k})) e(\tuple{k})
\in \H_n^{\tuple{\Lambda}}(\Gamma),
\end{equation}
where $Q_a(\tuple{k}), P_a(\tuple{k}) \in \mathbb{C}[[y_a, y_{a+1}]]$ are some power series. For convenience, we write $\widehat{Q}_a(\tuple{k}), \widehat{P}_a(\tuple{k}) \in \mathbb{C}[[Y_a, Y_{a+1}]]$ the underlying power series, which verify $\widehat{Q}_a(\tuple{k})(y_a, y_{a+1}) = Q_a(\tuple{k})$ and $\widehat{P}_a(\tuple{k})(y_a, y_{a+1}) = P_a(\tuple{k})$. These power series depend only on $k_a$ and $k_{a+1}$, that is, $\widehat{Q}_a(\tuple{k})(Y, Y') = \widehat{Q}_{a'}(\tuple{k}')(Y, Y')$ and $\widehat{P}_a(\tuple{k})(Y, Y') = \widehat{P}_{a'}(\tuple{k}')(Y, Y')$ if $k_a = k'_{a'}$ and $k_{a+1} = k'_{a'+1}$.
Moreover, recall that if $\tuple{k}$ and $s_a \cdot \tuple{k}$ are labellings of two different $\mathfrak{t} \in J^n$, we have  $Q_a(\tuple{k}) = \sqrt{q}$ and $P_a(\tuple{k}) = 0$ (cf. Remark~\ref{remark:f_aj}).

Recalling \eqref{equation:JPA_psia}, we get:
\[
\mathrm{BK} \circ \mathrm{JPA}(g_a) = \sum_{\substack{\mathfrak{t} \in J^n \\ \mathfrak{t}_a = \mathfrak{t}_{a+1}}} g_{\pi_\mathfrak{t}(a)} E_{\mathfrak{t}, \mathfrak{t}}
+ \sum_{\substack{\mathfrak{t} \in J^n \\ \mathfrak{t}_a \neq \mathfrak{t}_{a+1}}} \sqrt{q} E_{\mathfrak{t}, s_a \cdot \mathfrak{t}}
\in
\bigoplus_{\lambda \comp_d n} \mathrm{Mat}_{m_\lambda}\H_\lambda^{\tuple{\Lambda}}(\Gamma).
\]
With $j \coloneqq \mathfrak{t}_a$ and $\lambda \coloneqq [\mathfrak{t}]$, we have:
\[
g_{\pi_\mathfrak{t}(a)} = \sum_{\tuple{k}^j \in \K_j^{\lambda_j}} (\psi_{\pi_\mathfrak{t}(a)} Q_{\pi_\mathfrak{t}(a)}(\tuple{k}^j) - P_{\pi_\mathfrak{t}(a)}(\tuple{k}^j)) e(\tuple{k}^j)
\in \H_{\lambda_j}^{\tuple{\Lambda}}(\Gamma_e) \subseteq \H_\lambda^{\tuple{\Lambda}}(\Gamma)
\]
(recall that $\tuple{k}^j \in \K_j^{\lambda_j}$ is indexed by $(\boldsymbol{\lambda}_{j-1}+1,\dots, \boldsymbol{\lambda}_j)$),
hence:
\[
g_{\pi_\mathfrak{t}(a)} = \sum_{\tuple{k} \in K^{\mathfrak{t}^\lambda}} (\psi_{\pi_\mathfrak{t}(a)} Q_{\pi_\mathfrak{t}(a)}(\tuple{k}) - P_{\pi_\mathfrak{t}(a)}(\tuple{k})) e(\tuple{k})
\in \H_\lambda^{\tuple{\Lambda}}(\Gamma).
\]

We obtain:
\begin{multline}
\label{equation:image_ga_beginning}
\Phi_n^{\tuple{\Lambda}} \circ \mathrm{BK} \circ \mathrm{JPA}(g_a)
=
\underbrace{
\sum_{\substack{\mathfrak{t} \in J^n \\ \mathfrak{t}_a = \mathfrak{t}_{a+1}}}
\sum_{\tuple{k} \in K^{\mathfrak{t}^*}} \psi_{\pi_\mathfrak{t}^{-1}} (\psi_{\pi_\mathfrak{t}(a)} Q_{\pi_\mathfrak{t}(a)}(\tuple{k}) - P_{\pi_\mathfrak{t}(a)}(\tuple{k}))  e(\tuple{k}) \psi_{\pi_\mathfrak{t}}
}_{S_1}
\\
+ \underbrace{\sum_{\substack{\mathfrak{t} \in J^n \\ \mathfrak{t}_a \neq \mathfrak{t}_{a+1}}}  \sum_{\tuple{k} \in K^{\mathfrak{t}^*}} \sqrt{q}\psi_{\pi_\mathfrak{t}^{-1}} e(\tuple{k}) \psi_{\pi_{s_a \cdot \mathfrak{t}}}.
}_{S_2}
\end{multline}

We first focus on the first sum $S_1$; let $\mathfrak{t} \in J^n$ such that $\mathfrak{t}_a = \mathfrak{t}_{a+1}$ and $\tuple{k} \in K^{\mathfrak{t}^*}$. 
We can notice that thanks to Proposition~\ref{proposition:explicit_pi} we have $\pi_\mathfrak{t}(a+1) = \pi_\mathfrak{t}(a) + 1$. Using Lemma~\ref{lemma:ya_psipisigma} and the properties of $\widehat{Q}$ we have (recalling the notation $\tuple{k}^\mathfrak{t}$ introduced at \eqref{equation:definition_isigma}):
\begin{align*}
Q_{\pi_\mathfrak{t}(a)}(\tuple{k}) e(\tuple{k}) \psi_{\pi_\mathfrak{t}}
&=
\widehat{Q}_{\pi_\mathfrak{t}(a)}(\tuple{k})(y_{\pi_\mathfrak{t}(a)}, y_{\pi_\mathfrak{t}(a)+1}) \psi_{\pi_\mathfrak{t}} e(\tuple{k}^\mathfrak{t})
\\
&= \psi_{\pi_\mathfrak{t}} \widehat{Q}_{\pi_\mathfrak{t}(a)}(\tuple{k})(y_a, y_{a+1}) e(\tuple{k}^\mathfrak{t})
\\
&= \psi_{\pi_\mathfrak{t}} \widehat{Q}_{a}(\tuple{k}^\mathfrak{t})(y_a, y_{a+1}) e(\tuple{k}^\mathfrak{t})
\\
Q_{\pi_\mathfrak{t}(a)}(\tuple{k}) e(\tuple{k}) \psi_{\pi_\mathfrak{t}}
&=
\psi_{\pi_\mathfrak{t}} Q_a(\tuple{k}^\mathfrak{t}) e(\tuple{k}^\mathfrak{t}).
\end{align*}
The same proof gives $P_{\pi_\mathfrak{t}(a)}(\tuple{k}) e(\tuple{k}) \psi_{\pi_\mathfrak{t}}
=
\psi_{\pi_\mathfrak{t}} P_a(\tuple{k}^\mathfrak{t}) e(\tuple{k}^\mathfrak{t})$. 
We have thus:
\[
\psi_{\pi_\mathfrak{t}^{-1}} (\psi_{\pi_\mathfrak{t}(a)} Q_{\pi_\mathfrak{t}(a)}(\tuple{k}) - P_{\pi_\mathfrak{t}(a)}(\tuple{k}))  e(\tuple{k}) \psi_{\pi_\mathfrak{t}}
=
 \psi_{\pi_\mathfrak{t}^{-1}} (\psi_{\pi_\mathfrak{t}(a)} \psi_{\pi_\mathfrak{t}} Q_a(\tuple{k}^\mathfrak{t})
- \psi_{\pi_\mathfrak{t}} P_a(\tuple{k}^\mathfrak{t}) )  e(\tuple{k}^\mathfrak{t}).
\]
Using \eqref{relation:quiver_y_ae(i)} and Lemma~\ref{lemma:independence_psi_pi_sa_sigma}, we obtain:
\[
\psi_{\pi_\mathfrak{t}^{-1}} (\psi_{\pi_\mathfrak{t}(a)} Q_{\pi_\mathfrak{t}(a)}(\tuple{k}) - P_{\pi_\mathfrak{t}(a)}(\tuple{k}))  e(\tuple{k}) \psi_{\pi_\mathfrak{t}}
=
\psi_{\pi_\mathfrak{t}^{-1}} \psi_{\pi_\mathfrak{t}}(\psi_a  Q_a(\tuple{k}^\mathfrak{t}) - P_a(\tuple{k}^\mathfrak{t}) )  e(\tuple{k}^\mathfrak{t}).
\]
Finally, since $s_a \cdot \mathfrak{t} = \mathfrak{t}$, we get by Proposition~\ref{proposition:inversion_psi_e(sigma)}:
\[
\psi_{\pi_\mathfrak{t}^{-1}} (\psi_{\pi_\mathfrak{t}(a)} Q_{\pi_\mathfrak{t}(a)}(\tuple{k}) - P_{\pi_\mathfrak{t}(a)}(\tuple{k}))  e(\tuple{k}) \psi_{\pi_\mathfrak{t}}
=
(\psi_a Q_a(\tuple{k}^\mathfrak{t}) - P_a(\tuple{k}^\mathfrak{t})) e(\tuple{k}^\mathfrak{t}),
\]
so the first sum becomes:
\begin{multline}
\label{equation:image_ga_sigmaa=}
\sum_{\substack{\mathfrak{t} \in J^n \\ \mathfrak{t}_a = \mathfrak{t}_{a+1}}}
\sum_{\tuple{k} \in K^{\mathfrak{t}^*}} \psi_{\pi_\mathfrak{t}^{-1}} (\psi_{\pi_\mathfrak{t}(a)} Q_{\pi_\mathfrak{t}(a)}(\tuple{k}) - P_{\pi_\mathfrak{t}(a)}(\tuple{k}))  e(\tuple{k}) \psi_{\pi_\mathfrak{t}}
\\
= \sum_{\substack{\mathfrak{t} \in J^n \\ \mathfrak{t}_a = \mathfrak{t}_{a+1}}} \sum_{\tuple{k}^\mathfrak{t} \in K^\mathfrak{t}} (\psi_a Q_a(\tuple{k}^\mathfrak{t}) - P_a(\tuple{k}^\mathfrak{t}))e(\tuple{k}^\mathfrak{t}).
\end{multline}

We now focus on the second sum $S_2$; let $\mathfrak{t} \in J^n$ with $\mathfrak{t}_a \neq \mathfrak{t}_{a+1}$ and let $\tuple{k} \in K^{\mathfrak{t}^*}$. By Lemma~\ref{lemma:psi_pi-1_e(t)_psi_pisa} we have directly:
\[
\psi_{\pi_\mathfrak{t}^{-1}} e(\tuple{k}) \psi_{\pi_{s_a \cdot \mathfrak{t}}} = e(\tuple{k}^\mathfrak{t}) \psi_{\pi_\mathfrak{t}^{-1}} \psi_{\pi_{s_a \cdot \mathfrak{t}}}
=
e(\tuple{k}^\mathfrak{t}) \psi_a,
\]
thus:
\begin{align}
\sum_{\substack{\mathfrak{t} \in J^n \\ \mathfrak{t}_a \neq \mathfrak{t}_{a+1}}}  \sum_{\tuple{k} \in K^{\mathfrak{t}^*}} \sqrt{q}\psi_{\pi_\mathfrak{t}^{-1}} e(\tuple{k}) \psi_{\pi_{s_a \cdot \mathfrak{t}}}
&=
\sum_{\substack{\mathfrak{t} \in J^n \\ \mathfrak{t}_a \neq \mathfrak{t}_{a+1}}}  \sum_{\tuple{k}^\mathfrak{t} \in K^\mathfrak{t}}\sqrt{q} e(\tuple{k}^\mathfrak{t}) \psi_a
\notag
\\
&=
\sum_{\substack{\mathfrak{t} \in J^n \\ \mathfrak{t}_a \neq \mathfrak{t}_{a+1}}} \sum_{\tuple{k}^\mathfrak{t} \in K^\mathfrak{t}} \sqrt{q} \psi_a e(s_a \cdot \tuple{k}^\mathfrak{t})
\notag
\\
&=
\sum_{\substack{\mathfrak{t} \in J^n \\ \mathfrak{t}_a \neq \mathfrak{t}_{a+1}}} \sum_{\tuple{k}^{s_a \cdot \mathfrak{t}} \in K^{s_a \cdot \mathfrak{t}}} \sqrt{q} \psi_a e(\tuple{k}^{s_a \cdot \mathfrak{t}})
\notag
\\
&=
\sum_{\substack{\mathfrak{t} \in J^n \\ \mathfrak{t}_a \neq \mathfrak{t}_{a+1}}} \sum_{\tuple{k}^\mathfrak{t} \in K^\mathfrak{t}} \sqrt{q} \psi_a e(\tuple{k}^\mathfrak{t})
\notag
\\
\label{equation:image_ga_sigmaa<>}
\sum_{\substack{\mathfrak{t} \in J^n \\ \mathfrak{t}_a \neq \mathfrak{t}_{a+1}}}  \sum_{\tuple{k} \in K^{\mathfrak{t}^*}} \sqrt{q}\psi_{\pi_\mathfrak{t}^{-1}} e(\tuple{k}) \psi_{\pi_{s_a \cdot \mathfrak{t}}}
&=
\sum_{\substack{\mathfrak{t} \in J^n \\ \mathfrak{t}_a \neq \mathfrak{t}_{a+1}}} \sum_{\tuple{k}^\mathfrak{t} \in K^\mathfrak{t}} (Q_a(\tuple{k}^\mathfrak{t}) \psi_a - P_a(\tuple{k}^\mathfrak{t})) e(\tuple{k}^\mathfrak{t}).
\end{align}

Finally, by \eqref{equation:image_ga_beginning}--\eqref{equation:image_ga_sigmaa<>} we get:
\begin{align*}
\Phi_n^{\tuple{\Lambda}} \circ \mathrm{BK} \circ \mathrm{JPA}(g_a)
&=
\sum_{\substack{\mathfrak{t} \in J^n \\ \mathfrak{t}_a = \mathfrak{t}_{a+1}}} \sum_{\tuple{k} \in K^\mathfrak{t}} (\psi_a Q_a(\tuple{k}) - P_a(\tuple{k})) e(\tuple{k})
+
\sum_{\substack{\mathfrak{t} \in J^n \\ \mathfrak{t}_a \neq \mathfrak{t}_{a+1}}} \sum_{\tuple{k} \in K^\mathfrak{t}} (\psi_a Q_a(\tuple{k}) - P_a(\tuple{k})) e(\tuple{k})
\\
&=
\sum_{\mathfrak{t} \in J^n} \sum_{\tuple{k} \in K^\mathfrak{t}}
(\psi_a Q_a(\tuple{k}) - P_a(\tuple{k})) e(\tuple{k})
\\
\Phi_n^{\tuple{\Lambda}} \circ \mathrm{BK} \circ \mathrm{JPA}(g_a)
&=
\sum_{\tuple{k} \in K^n} (\psi_a Q_a(\tuple{k}) - P_a(\tuple{k})) e(\tuple{k}),
\end{align*}
which is \eqref{equation:image_BKtilde_ga}.

To conclude, we have checked that the algebra homomorphisms $\widetilde{\mathrm{BK}}$ and $\Phi_n^{\tuple{\Lambda}} \circ \mathrm{BK} \circ \mathrm{JPA}$ coincide on every generator of $\widehat{\Y}_{d, n}^{\tuple{\Lambda}}(q)$, hence we have the following theorem.

\begin{theoreme}
\label{theorem:commutative_diagram_JaPA}
We have
\[
\widetilde{\mathrm{BK}} = \Phi_n^{\tuple{\Lambda}} \circ \mathrm{BK} \circ \mathrm{JPA},
\]
and the diagram of Figure~\ref{figure:commutative_diagram} commutes.
\end{theoreme}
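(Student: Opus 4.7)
The plan is to check that the two algebra homomorphisms $\widetilde{\mathrm{BK}}$ and $\Phi_n^{\tuple{\Lambda}} \circ \mathrm{BK} \circ \mathrm{JPA}$ from $\widehat{\Y}_{d,n}^{\tuple{\Lambda}}(q)$ to $\H_n^{\tuple{\Lambda}}(\Gamma)$ agree on each of the generators $t_a$, $X_a$, $g_a$ listed in \eqref{equation:generators_yokonumahecke}. The expressions for $\widetilde{\mathrm{BK}}$ on these generators can be read off directly from the definitions in \textsection\ref{subsection:definition_YH_generators}; for the composition we first apply $\mathrm{JPA}$ according to \eqref{equation:JPA_ta}--\eqref{equation:JPA_psia}, then $\mathrm{BK}$ tensor-wise on each factor of $\widehat{\H}_\lambda^{\tuple{\Lambda}}(q)$ (which effectively replaces $X_b$ by $\sum_{\tuple{k}}q^{k_b}(1-y_b)e(\tuple{k})$ and $g_b$ by $\sum(\psi_b Q_b - P_b)e(\tuple{k})$ inside the matrix entries), and finally invoke $\Phi_n^{\tuple{\Lambda}}$, whose action on a matrix entry $v E_{\mathfrak{t}',\mathfrak{t}}$ is conjugation by $\psi_{\pi_{\mathfrak{t}'}^{-1}}(-)\psi_{\pi_\mathfrak{t}}$.

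For $t_a$ the verification is immediate: $\mathrm{JPA}(t_a)$ is diagonal with entries $\xi^{\mathfrak{t}_a}$, and applying $\Phi_n^{\tuple{\Lambda}}$ gives $\sum_\mathfrak{t}\xi^{\mathfrak{t}_a}\psi_{\pi_\mathfrak{t}^{-1}}\psi_{\pi_\mathfrak{t}}e(\mathfrak{t}^*) = \sum_\mathfrak{t}\xi^{\mathfrak{t}_a}e(\mathfrak{t})$ by Proposition~\ref{proposition:inversion_psi_e(sigma)}, matching $\widetilde{\mathrm{BK}}(t_a)$. For $X_a$ the same machinery works, but one needs to move a polynomial in $y_{\pi_\mathfrak{t}(a)}$ through $\psi_{\pi_\mathfrak{t}}$: this is exactly what Lemma~\ref{lemma:ya_psipisigma} provides, turning $y_{\pi_\mathfrak{t}(a)}\psi_{\pi_\mathfrak{t}}e(\mathfrak{t})$ into $\psi_{\pi_\mathfrak{t}}y_a e(\mathfrak{t})$, after which Proposition~\ref{proposition:inversion_psi_e(sigma)} collapses the product $\psi_{\pi_\mathfrak{t}^{-1}}\psi_{\pi_\mathfrak{t}}e(\mathfrak{t})$ to $e(\mathfrak{t})$.

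The verification for $g_a$ is the main obstacle and splits along the dichotomy of \eqref{equation:JPA_psia}. For $\mathfrak{t}_a = \mathfrak{t}_{a+1}$, $\mathrm{JPA}$ gives a diagonal contribution $g_{\pi_\mathfrak{t}(a)}E_{\mathfrak{t},\mathfrak{t}}$; one then needs to commute $Q_{\pi_\mathfrak{t}(a)}(\tuple{k})$ and $P_{\pi_\mathfrak{t}(a)}(\tuple{k})$ (which are power series in $y_{\pi_\mathfrak{t}(a)}, y_{\pi_\mathfrak{t}(a)+1}$) past $\psi_{\pi_\mathfrak{t}}$ using Lemma~\ref{lemma:ya_psipisigma}, together with the key observation that $\pi_\mathfrak{t}(a+1) = \pi_\mathfrak{t}(a)+1$ (Proposition~\ref{proposition:explicit_pi}) and the fact that $\widehat{Q}_a,\widehat{P}_a$ depend only on $(k_a,k_{a+1})$. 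Then Lemma~\ref{lemma:independence_psi_pi_sa_sigma} rewrites $\psi_{\pi_\mathfrak{t}(a)}\psi_{\pi_\mathfrak{t}}e(\mathfrak{t})$ as $\psi_{\pi_\mathfrak{t}}\psi_a e(\mathfrak{t})$, and Proposition~\ref{proposition:inversion_psi_e(sigma)} finishes the case. For $\mathfrak{t}_a \neq \mathfrak{t}_{a+1}$, the off-diagonal term $\sqrt{q}E_{\mathfrak{t},s_a\cdot\mathfrak{t}}$ yields $\sqrt{q}\psi_{\pi_\mathfrak{t}^{-1}}e(\tuple{k})\psi_{\pi_{s_a\cdot\mathfrak{t}}}$ after $\Phi_n^{\tuple{\Lambda}}$, which by Lemma~\ref{lemma:psi_pi-1_e(t)_psi_pisa} equals $\sqrt{q}e(\tuple{k}^\mathfrak{t})\psi_a$; since $Q_a(\tuple{k}^\mathfrak{t}) = \sqrt{q}$ and $P_a(\tuple{k}^\mathfrak{t}) = 0$ when $\mathfrak{t}_a\neq\mathfrak{t}_{a+1}$ (Remark~\ref{remark:f_aj}), a reindexation in $\mathfrak{S}_n$-orbits yields precisely $(\psi_a Q_a(\tuple{k}^\mathfrak{t})-P_a(\tuple{k}^\mathfrak{t}))e(\tuple{k}^\mathfrak{t})$. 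Summing the two cases recovers $\widetilde{\mathrm{BK}}(g_a) = \sum_{\tuple{k}\in K^n}(\psi_a Q_a(\tuple{k})-P_a(\tuple{k}))e(\tuple{k})$, completing the check and thus the commutativity of the diagram.
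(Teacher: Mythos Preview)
Your proposal is correct and follows essentially the same approach as the paper: both verify the equality on the generators $t_a$, $X_a$, $g_a$, using Proposition~\ref{proposition:inversion_psi_e(sigma)} for the collapse $\psi_{\pi_\mathfrak{t}^{-1}}\psi_{\pi_\mathfrak{t}}e(\mathfrak{t})=e(\mathfrak{t})$, Lemma~\ref{lemma:ya_psipisigma} to slide $y$-variables through $\psi_{\pi_\mathfrak{t}}$, and the split of the $g_a$ computation into the cases $\mathfrak{t}_a=\mathfrak{t}_{a+1}$ (handled via Lemma~\ref{lemma:independence_psi_pi_sa_sigma} and Proposition~\ref{proposition:explicit_pi}) and $\mathfrak{t}_a\neq\mathfrak{t}_{a+1}$ (handled via Lemma~\ref{lemma:psi_pi-1_e(t)_psi_pisa} and the values $Q_a=\sqrt{q}$, $P_a=0$). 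The only difference is that the paper spells out the reindexation $\tuple{k}^\mathfrak{t}\mapsto s_a\cdot\tuple{k}^\mathfrak{t}$, $\mathfrak{t}\mapsto s_a\cdot\mathfrak{t}$ in the off-diagonal case explicitly, whereas you summarise it in a phrase.
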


\end{document}